\documentclass{article}
\usepackage[utf8]{inputenc}
\usepackage{authblk}

\usepackage{amsthm}
\usepackage{xcolor}
\usepackage{ulem}

\usepackage{latexsym,amsthm,amscd}
\usepackage{amsmath}
\usepackage{amssymb}
\usepackage{amsfonts}
\usepackage{tikz}
\usepackage{stmaryrd}
\usepackage{hyperref}
\usepackage[most]{tcolorbox}
\usetikzlibrary{patterns}
\usepackage{enumerate}
\usepackage{units}
\usepackage{CJKutf8}
\usepackage{ulem}
\usepackage{geometry}
\usepackage[capitalise]{cleveref}

\newcommand{\blue}{\color{blue}}

\newcommand\rst{\bgroup\markoverwith{\textcolor{red}{\rule[0.5ex]{2pt}{0.4pt}}}\ULon}

\title{Residual stratification and the Cantor-Bendixson structures of dual algebraic coframes}

\author[1]{Silvère Gangloff\footnote{Corresponding author}}
\affil[1]{
	University of Ostrava
    
    \url{silvere.gangloff@osu.cz}
	}

\author[2]{Alonso N\'u\~nez\footnote{Alonso N\'u\~nez is funded by the ANR Project \textit{``Ordinal Time Computations''} (ANR-24-CE48-0335).}}
\affil[2]{
\'Ecole Polytechnique

\url{anunez@lix.polytechnique.fr}
}

\date{\today}

\newtheorem{theorem}{Theorem}[section]
\newtheorem*{theorem*}{Theorem}
\newtheorem{lemma}[theorem]{Lemma}
\newtheorem{corollary}[theorem]{Corollary}
\newtheorem{question}[theorem]{Question}
\newtheorem{definition}[theorem]{Definition}
\newtheorem{example}[theorem]{Example}

\newtheorem*{problem*}{Problem}
\newtheorem{notation}[theorem]{Notation}
\newtheorem{proposition}[theorem]{Proposition}
\newtheorem{remark}[theorem]{Remark}

\begin{document}

\maketitle

\begin{abstract}
    We introduce the notion of \textit{residual derivative} for elements of a preordered set, a construction that generalizes both the Frattini subgroup in algebra and the Cantor-Bendixson derivative in T1 topological spaces. For dual algebraic coframes with topologies compatible with order, we establish a partial correspondence between 
    the Cantor-Bendixson structure of the lattice
    and the residual derivatives of its elements. 
    Within this framework, we provide a complete characterization of the first two Cantor-Bendixson levels in terms of the lattice’s residual structure. This provides a unified lens
    to study the Cantor-Bendixson structures of topological spaces across domains ranging from algebra to functional analysis and dynamics, facilitating the transfer of analytic techniques between them.
\end{abstract}

\tableofcontents

\section{Introduction}

\paragraph{Cantor-Bendixson structure}

The \textit{Cantor-Bendixson derivative} of a set $X$ relative to a topology $\tau$ on $X$ is the set $X'$ of elements of $X$ that are non-isolated for $\tau$. For every ordinal $\alpha$, we denote by 
$\mathcal{S}_{\alpha}(X,\tau)$ the $\alpha$th derivative of $X$, defined by transfinite induction as follows: $\mathcal{S}_{0}(X,\tau) := X$, for any non-limit ordinal $\alpha$, $\mathcal{S}_{\alpha+1}(X,\tau) := (\mathcal{S}_{\alpha}(X,\tau))'$, and for every limit ordinal $\lambda$, $\mathcal{S}_{\lambda}(X,\tau) := \bigcap_{\alpha < \lambda}\mathcal{S}_{\alpha}(X,\tau)$. The \textit{Cantor-Bendixson rank} of $X$, when it exists, is the smallest ordinal $\alpha$ such that $\mathcal{S}_{\alpha+1}(X,\tau) = \mathcal{S}_{\alpha}(X,\tau)$. We denote it by $\texttt{r}(X,\tau)$. Historically, this structure emerged from the work of Georg Cantor on a question posed by Riemann: whether a trigonometric series that converges to zero everywhere must have all its coefficients equal to zero. Cantor proved that if a series converges to zero outside a {``thin''} set 
{, that is, a set whose Cantor-Bendixson derivative vanishes after finitely many iterations}, then the coefficients of the series are zero. The idea of classifying such thin sets 
{led to the introduction} of transfinite numbers, 
{which quantify how many derivation steps are required before the process stabilises}. In this way, the Cantor-Bendixson process appears as one of the earliest instances of a transfinite construction in analysis. This line of work was further developed by Bendixson, who proved that every closed subset of $\mathbb{R}$ 
{admits a unique decomposition} into a perfect set and a countable set.
{The latter consists precisely of the points removed during the transfinite derivation} process. This result, now known as the Cantor-Bendixson theorem, was later extended to more general topological spaces such as Polish spaces, and became a foundational tool in descriptive set theory.

\paragraph{Applications of the Cantor-Bendixson process} More recently, the Cantor-Bendixson process has found applications across several areas of mathematics, including algebra, functional analysis, and dynamical systems. In algebra, it appears naturally in the study of spaces of subgroups endowed with the Chabauty topology. In this setting, it was used to 
classify the topological spaces that arise as the space of subgroups of a countable abelian group \cite{CornulierGuyotPitsch_SubgroupsAbelian}, and 
{studied as an object of independent interest}. The Cantor-Bendixson rank was computed for the space of sugroups of Grigorchuk group and of the Gupta-Sidki 3-group \cite{skipper2020cantorbendixsonrankgrigorchukgroup} and virtually metabelian groups with high Cantor-Bendixson ranks were constructed \cite{Cornulier_CBRank_Metabelian}. 
In functional analysis, the Cantor-Bendixson structure of countable compact Hausdorff spaces governs the structure of the associated function spaces. The Banach space $\mathcal{C}(K,\mathbb R)$, the set of continuous functions from $K$ to $\mathbb R$, is determined, up to isomorphism, by the Cantor-Bendixson rank of $K$ together with the cardinalities of its successive derived layers, leading to classification results for spaces of continuous functions \cite{BessagaPelczynski1960}. 
In model theory, it is used to prove, for instance, that certain models are almost prime \cite{BazhenovMarchuk2023}.




\paragraph{Multidimensional symbolic dynamics and the hyperspace of shifts} In symbolic dynamics, the Cantor-Bendixson process has been used to analyze the structure of multidimensional shifts \cite{ballier2013structuringmultidimensionalsubshifts}, connecting it to a pre-order on configurations that compares their finite parts and to the notion of complexity. It has also been studied in the context of countable shifts, through the realization of high Cantor-Bendixson ranks \cite{salo2013constructionscountablesubshiftsfinite}. More recently, we have proved that 
{in sharp contrast to the case} $d=1$ \cite{pavlov2022structuregenericsubshifts} , the hyperspace of $d$-dimensional shifts with the Hausdorff topology has infinite Cantor-Bendixson rank when $d \ge 2$ \cite{GN25}.
In the same work, we also characterized the isolated points in terms of maximal subsystems---a purely order-theoretic notion.

\paragraph{The residual derivative}
In the present document, we generalize this perspective to dual algebraic coframes equipped with topologies compatible with the order. This includes the Lawson topology and the Hausdorff topology on the hyperspace of shifts. {To capture the phenomenon that Cantor-Bendixson layers can be expressed in purely order-theoretic notions at an abstract level, we introduce a new order-theoretic operation: the \textit{residual derivative}.}
While it can be defined on any complete lower semilattice, 
{the residual derivative finds a natural setting in dual algebraic coframes. In this context, it provides a powerful}
tool to investigate the Cantor-Bendixson process. In particular, it allows us to characterize the second layer of the Cantor-Bendixson process, that is, the set of elements of the poset that become isolated once isolated elements have been removed. 
Consider a complete lower semilattice $(L,\le)$. 
Let $x$ be an element of $L$. We denote by $\mathcal{M}(x)$ the set of \textit{maximal subelements} of $x$, that is, the maximal elements 
{of the poset \(\{z\in L\mid z<x\}\)}. The residual derivative of $x$ is 
{defined by:}
\[\mu(x) = \bigwedge_{z \in \mathcal{M}(x)} z,\]
{whenever \(\mathcal{M}(x)=\emptyset\), and by \(\mu(x)=x\) otherwise.}

When $L$ is the set of closed subsets of a T1 space, the residual derivative coincides with the Cantor-Bendixson derivative. When $L$ is the set of subgroups of a group $G$, the derivative coincides with the Frattini sugroup 
$\Phi(G)$ of $G$ (which is originally defined as the intersection of all maximal subgroups of $G$). By analogy with the Cantor-Bendixson structure, we also define the residual rank and the residual kernel, leading to a whole conceptual structure and a battery of natural results presented in Section \ref{section.decomposition}. In particular, we prove that every element can be decomposed into its residual core and the co-Heyting subtractions of its maximal elements, which we call \textit{residues}. Every element that is smaller than $x$ and larger than its core can be decomposed into the core of $x$ and 
the elements of its \textit{residual poset}, which are the completely co-irreducible subelements of $x$ that are not smaller than its residual core. We show that these elements are exactly the ones obtained by taking residues iteratively. We also show that the residual derivative is a $\vee$-homomorphism.
Our main results exhibit a deep relation between the Cantor-Bendixson structure of any dual algebraic coframe and the residual structure of its elements by characterizing exactly the first two Cantor-Bendixson layers in terms of the residual structure. 
The characterizations of the first and second layers can be found in Section \ref{section.lattice.proofs.1} and in Section \ref{section.lattice.proofs.2}, respectively.

Our results are reminiscent of the work of Simmons \cite{Simmons_NearDiscreteness}, who proved that on the family of near-discrete lattices, Gabriel derivative and Cantor-Bendixson derivative coincide. However, our results differs from Simmons' in the sense that we do not prove an identity but a structured relation between two different derivatives. 





\paragraph{Significance} During the last decade, following the work of Mike Hochman \cite{HochmanMeyerovitch2010} \cite{Hochman2009}, establishing a strong relation between multidimensional symbolic dynamics and computability theory, most of the research efforts in the field has been devoted to deepen this relation. However multidimensional symbolic systems are still not well understood, and we believe that more conceptual structure is required for this understanding. We hope that this research direction will contribute to this. 
Recently, Van Cyr, Bryna Kra and Scott Schmieding \cite{CyrKraSchmieding_ChaoticAlmostMinimal} introduced the notion of chaotic almost minimal shifts, a class of shifts that includes Rudolph's discrete analogue of the $\times 2 \times 3$ system on the circle \cite{Rudolph1990}. They showed that Furstenberg's $\times 2 \times 3$ conjecture fails within this class.

Our framework provides a different perspective on systems related to Rudolph's shift, since Rudolph's example belongs to the second layer of the Cantor--Bendixson structure of the hyperspace of two-dimensional shifts. Studying its position within this broader topological framework, and comparing it with other systems that occupy the same layer, may help reveal some of the structural features underlying Rudolph's shift.


More broadly, although many natural topologies on ordered structures are closely tied to the underlying order and can be analysed through structural correspondences, the Cantor--Bendixson process is defined independently of any order-theoretic considerations. One of the main contributions of this work is to show that, in a broad class of ordered topological structures, the Cantor--Bendixson hierarchy can nevertheless be recovered from a canonical order-theoretic operation, namely the residual derivative. This reveals a previously unnoticed connection between topological and order-theoretic stratifications and provides a common framework for studying Cantor--Bendixson structures in diverse mathematical contexts.

\bigskip

The remainder of this text is structured as follows. We provide some background on {dual algebraic coframes and order-compatible topologies on posets} 
in Section \ref{section.setup} and detail some examples. Section \ref{section.cb.defs} defines the residual derivative and Section \ref{section.decomposition} presents the residual structure that follows from this definition. Section \ref{section.lattice.proofs.1} and Section \ref{section.lattice.proofs.2} contain the characterizations of the first two layers of the Cantor-Bendixson structures of dual algebraic coframes, 
and Section \ref{section.comments} concludes with open questions.

\section{\label{section.setup} Dual algebraic coframes and topology}

In this section, we recall definitions of order theory in Section \ref{section.background.lattices}, and of dual algebraic lattices and coframes in Section \ref{section.def.densely.dominated}. We also define a notion of order-compatible topology on a poset in Section \ref{section.def.order.compatible}, and detail examples of interest in Section \ref{section.ex.densely.dominated}.

\subsection{Background\label{section.background.lattices}}

A \textit{partially ordered set} (poset) is a pair $(L,\le)$ where $L$ is a set and $\le$ is a partial order. 
Given a poset $(L,\le)$, for all $x \in L$, we denote by $\mathop{\downarrow}x$ the set $\{z \in L : z \le x\}$ and by $\mathop{\uparrow}x$ the set $\{z \in L : x \le z\}$. 
We say that a poset $(L,\le)$ is a \textit{lower (resp. upper) semilattice} when for all $x,y \in L$, the set $\{z \in L : z \le x , z \le y\}$ (resp. $\{z \in L : x \le z , y \le z\}$) admits a maximum (resp. minimum) denoted by $x \wedge y$ (resp. $x \vee y$). We say that a lower (resp. upper) semilattice is \textit{complete} when for every subset $S$ of $L$, the set $\{x \in L : \forall s \in S, x \le s\}$ has a maximum (resp. the set $\{x \in L : \forall s \in S, s \le x\}$ has a minimum), denoted by $\bigwedge S$ or $\bigwedge_{s \in S} s$ (resp. $\bigvee S$ or $\bigvee_{s \in S}s$). A poset is called a \textit{lattice} when it is both a lower semilattice and an upper semilattice. A lattice is said to be lower (resp. upper) complete if it is complete as a lower (resp. upper) semilattice. A lattice $(L,\le)$ is said to be \textit{distributive} when for all $x,y,z \in L$, $x \wedge (y \vee z) = (x \wedge y) \vee (x \wedge z)$.

\begin{lemma}\label{lemma.upper.complete.ideals}
    In a 
    lower complete lattice $(L,\le)$, for every $x \in L$, $(\mathop{\downarrow}x,\le)$ is upper complete.
\end{lemma}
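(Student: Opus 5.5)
The idea is the classical one: in a complete lower semilattice, joins exist whenever there is at least one upper bound, and inside $\mathop{\downarrow}x$ the element $x$ always provides one. Fix $x \in L$ and a subset $S \subseteq \mathop{\downarrow}x$. We must show that the set $U_S := \{u \in \mathop{\downarrow}x : \forall s \in S,\ s \le u\}$ of upper bounds of $S$ inside $\mathop{\downarrow}x$ has a minimum. Note that $U_S \neq \emptyset$, since $x \in U_S$.

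The candidate is $m := \bigwedge U_S$, which exists because $L$ is lower complete. The verification has three steps.

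\begin{enumerate}
\item Since $x \in U_S$, we have $m \le x$, so $m \in \mathop{\downarrow}x$.
\item For every $s \in S$, $s$ is a lower bound of $U_S$, because each $u \in U_S$ satisfies $s \le u$. Since $m$ is the greatest lower bound of $U_S$, this gives $s \le m$. Hence $m \in U_S$.
\item Since $m \le u$ for all $u \in U_S$, $m$ is the minimum of $U_S$.
\end{enumerate}

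So $m$ is the join of $S$ in $(\mathop{\downarrow}x,\le)$, which proves that $(\mathop{\downarrow}x,\le)$ is upper complete.

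The only delicate point is the edge case $S=\emptyset$. Then $U_S = \mathop{\downarrow}x$, and $m = \bigwedge \mathop{\downarrow}x$ is the least element of $L$. This element lies in $\mathop{\downarrow}x$ and serves as the empty join, so the argument above covers it without modification.

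Only lower completeness is used here; the lattice hypothesis is not needed. The one thing to keep straight is that the joins produced live in $\mathop{\downarrow}x$ and need not agree with joins computed in $L$. Here they do agree whenever the latter exist, since every upper bound of $S$ meets $x$ in a bound lying in $U_S$.
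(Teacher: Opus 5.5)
Your proof is correct and is essentially the paper's argument. The paper also takes the meet of the set of upper bounds of $S$, though over all upper bounds in $L$ rather than those in $\mathop{\downarrow}x$, and notes that every $s \in S$ is a lower bound of that set, so the meet is itself an upper bound and therefore the minimum; your version also makes explicit that this join lies in $\mathop{\downarrow}x$ and equals the join in $L$, which the paper leaves implicit.
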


\begin{proof}
    For all $S \subset \mathop{\downarrow}x$, we prove that the set 
    $S^+ := \{x \in L : \forall s \in S, x \ge s\}$ has a minimum. Since the lattice $(L,\le)$ is lower complete, we know that $S^{++} := \{x \in L : \forall s \in S^+, x \le s\}$ has a maximum, denoted by $\bigwedge S^+$. Since by definition, $S \subset S^{++}$, we have $\bigwedge S^+ \in S^+$. By definition, $\bigwedge S^+$ is a minimum of $S^+$.
\end{proof}


A \textit{directed set} is a poset $(D,\le)$ in which any two elements admit a common upper bound; that is, given $d,d' \in D$, there exists $d'' \in D$ such that $d,d' \le d''$. Dually, a \textit{filtered set} is a poset $(F,\le)$ in which any two elements admit a common lower bound; that is, given $f,f' \in F$, there exists $f'' \in F$ such that $f'' \le f$ and $f'' \le f'$. A \textit{net} (resp. \textit{filter}) is a function whose domain is a directed (resp. filtered) set. Observe that the image of an increasing (resp. non-increasing) net is a directed (resp. filtered) set, whereas the image of an increasing (resp. non-increasing) filter is a filtered (resp. directed) set.

\subsection{Dual algebraic coframes\label{section.def.densely.dominated}}

In this section, we recall the definition of dual algebraic coframe and prove that the dually compact elements of a dual algebraic coframe form a lower semilattice.


\begin{definition}
    Consider a poset $(L,\le)$. An element $x$ of $L$ is said to be \textbf{dually compact} when for all filtered set $F$ in $L$ which has an infimum $\bigwedge F \le x$, then there exists $f \in F$ such that $f \le x$. We denote by $\mathcal{K}(L,\le)$ the set of dually compact elements of $(L,\le)$.
\end{definition}

\begin{definition}
    A poset $(L,\le)$ is called a \textbf{dual algebraic} lattice when it is a lower complete lattice and for every $x \in L$ there exists a filtered set $F_x$ in $\mathcal{K}(L,\le)$ such that $\bigwedge F_x = x$.
\end{definition}

\begin{definition}
    {A poset $(L,\le)$ is called a \textbf{coframe} when it is an upper lattice, a lower complete lattice, and satisfies the dual infinite distributivity: for all $x \in L$ and every subset $S$ of $L$, $x \vee  \bigwedge_{s \in S} s = \bigwedge_{s \in S} (x \vee s)$.}
\end{definition}

{Note that every coframe is a distributive lattice.}
{In the following, we will call \textbf{dual algebraic coframe} a poset that is both a dual algebraic lattice and a coframe.}

\begin{lemma}\label{lemma.loc.dominant.semilattice}
    {For every dual algebraic coframe $(L,\le)$,  the ordered set $(\mathcal{K}(L,\le),\le)$ is a lower semilattice.}
\end{lemma}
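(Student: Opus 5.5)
The plan is to show that $\mathcal{K}(L,\le)$ is closed under the binary meet of $L$. Once that holds, for $x,y\in\mathcal{K}(L,\le)$ the element $x\wedge y$ computed in $L$ lies in $\mathcal{K}(L,\le)$. It is then automatically the maximum of $\{z\in\mathcal{K}(L,\le) : z\le x, z\le y\}$, because it bounds every such $z$ from above and belongs to the set. So $(\mathcal{K}(L,\le),\le)$ is a lower semilattice with the meet inherited from $L$. The binary meet exists in $L$ because a dual algebraic coframe is in particular a lower complete lattice.

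The key step is a direct check of the definition of dual compactness for $x\wedge y$. Take a filtered set $F\subseteq L$ with infimum $\bigwedge F\le x\wedge y$. Then $\bigwedge F\le x$, and since $x$ is dually compact there is $f_1\in F$ with $f_1\le x$. In the same way, $\bigwedge F\le y$ and the dual compactness of $y$ give $f_2\in F$ with $f_2\le y$. Because $F$ is filtered, there is $f\in F$ with $f\le f_1$ and $f\le f_2$. Hence $f\le x$ and $f\le y$, so $f\le x\wedge y$. This shows that $x\wedge y$ is dually compact.

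I do not expect a real obstacle. The only points to watch are the definitional ones above: the meet used must be the one of $L$, and the filtered set must be taken in all of $L$ as in the definition of dual compactness. In particular, the argument uses only the filteredness of $F$. Neither the dual infinite distributivity nor the dual algebraicity of $L$ is needed for this lemma; they only guarantee the ambient setting in which $\wedge$ exists. If one also wanted $\mathcal{K}(L,\le)$ to be nonempty, one would invoke dual algebraicity, which gives for any $x\in L$ a filtered set $F_x\subseteq\mathcal{K}(L,\le)$, and a filtered set is nonempty by convention. This is not required for the lower semilattice property itself.
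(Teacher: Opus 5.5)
Your proof is correct and follows the paper's core argument: use dual compactness to get $f_1\le x$ and $f_2\le y$ in $F$, then take a common lower bound in $F$, which then lies below $x\wedge y$. The paper obtains $f_1$ by a detour through dual infinite distributivity, applying dual compactness of $x$ to the filtered set $\{x\vee f : f\in F\}$, whose meet is $x$. As you observe, this detour is unnecessary since $\bigwedge F\le x$ already; you also conclude correctly with $f\le x\wedge y$, whereas the paper's final claim $x\wedge z\le\bigwedge F$ (hence $x\wedge z\in F$) is a slip that is not needed.
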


\begin{proof}
{Fix \(x,z\in\mathcal{K}(L,\le)\). We show that \(x\wedge z\in\mathcal{K}(L,\le)\). Let \(F\) be a filtered set with \(\bigwedge F \le x\wedge z\). Since \(x\wedge z\leq x\), we have that \[x=x\vee (x\wedge z)=x\vee\bigwedge F.\]
As $(L,\le)$ is a coframe, we have 
\[x = \bigwedge_{f\in F}(x\vee f).\]
The dual compactness of \(x\) then implies that \(x \in \{x\vee f\mid f\in F\}\). Hence, there exists \(f_x\in F\) such that \(x\vee f_x=x\), so \(f_x\leq x\). Symmetrically, there exists $f_z \in F$ such that $f_z \le z$. Since \(F\) is filtered, there exists \(f\in F\) with \(f\leq f_x\) and \(f\leq f_z\), so that \(f\leq x\wedge z \le \bigwedge F\), hence, \(x\wedge z=f\in F\).}
\end{proof}

For further reading on algebraic lattices and frames, one can consult \cite{Gierz2003}.

\subsection{\label{section.def.order.compatible}{Order-compatible topologies}}

Given a directed set $(D,\le)$ and a topological space $(X,\tau)$, we say that a net $\boldsymbol{x} : D \rightarrow X$ converges to a point $x \in X$ when for every $o \in \tau$ with $x \in o$, there exists $d \in D$ such that for all $d' \ge d$, $\boldsymbol{x}_{d'} \in o$.

\begin{definition}\label{def.comp.top}
Let $(L,\leq)$ be a poset. We say that a topology $\tau$ on $L$ is \textbf{order-compatible} when the following conditions hold:
\begin{itemize}
\item[(i)] every non-increasing (resp. non-decreasing) net $\boldsymbol{x}:D\to L$ for which $\bigwedge_{d\in D} \boldsymbol{x}_d$ (resp. $\bigvee_{d\in D} \boldsymbol{x}_d$) exists converges to it;
\item[(ii)] the map \(\vee : L \times L \to L\) is continuous with respect to $\tau$ and $\tau \times \tau$;
\item[(iii)] the order $\le$ is closed as a subset of $L \times L$.
\end{itemize}
\end{definition}

The notion of order-compatible topology is naturally related to multiple classical ideas at the interface of order and topology. Condition \((ii)\) sets this structure within the framework of topological semilattices \cite{Gierz2003}. The closedness of the order in Condition \((iii)\) is a standard in ordered topological spaces, in particular, in the sense of Nachbin \cite{NachbinLeopoldo1965Tao}. We notice that, under the additional assumption that the space is Hausdorff, Condition \((iii)\) is implied by Condition \((ii)\): since \(x\leq y\) if and only if \(x\vee y=y\), the order is the equalizer of the continuous maps \((x,y)\mapsto x\vee y\) and \((x,y)\mapsto y\) and is therefore closed whenever the topology is Hausdorff. Thus, Condition \((iii)\) is relevant in non-Hausdorff spaces. Finally, Condition \((i)\) is a compatibility between topological and order-theoretic convergence which is involved in the definition of  Lawson topology \cite{Gierz2003}.
To the best of our knowledge, these three conditions have not been combined into a single definition in the existing literature. A canonical source of examples is the class of compact Hausdorff topological semilattices. \cref{def.comp.top} then provides a more general framework than the aforementioned class, as it has been designed to handle non-Hausdorff or non-compact situations. 

\begin{lemma}
Let \((L,\leq)\) be a dual algebraic lattice and let \(\tau\) be an order-compatible topology on $L$. Then, the topological space $(L,\tau)$ is Hausdorff.
\end{lemma}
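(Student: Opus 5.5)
Given two distinct points $x\neq y$ of $L$, I will produce disjoint open neighbourhoods. Since $x\neq y$, one of $x\not\le y$ or $y\not\le x$ holds; by symmetry assume $x\not\le y$. The plan is to find a point $k$ that lies strictly between the two points in the following sense: $k\not\le y$, and every element close enough to $x$ is $\ge k$. Closedness of the order (Condition (iii)) will then separate $y$ from the set $\mathop{\uparrow}k$, and Condition (i) will show that $\mathop{\uparrow}k$ contains a neighbourhood of $x$.

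\textbf{Step 1: a dually compact element above $x$ but not above $y$.} By dual algebraicity, there is a filtered set $F_x\subset\mathcal{K}(L,\le)$ with $\bigwedge F_x=x$. If every $k\in F_x$ satisfied $k\ge y$, then $x=\bigwedge F_x\ge y$. That does not contradict $x\not\le y$, so this attempt is wrong as stated. Instead I will approximate $y$: take $F_y\subset\mathcal{K}(L,\le)$ filtered with $\bigwedge F_y=y$. If $x\le k$ for all $k\in F_y$, then $x\le y$, which is false. Hence there is a dually compact $k\ge y$ with $x\not\le k$. So I swap roles and aim to separate $y$ (inside a neighbourhood contained in $\mathop{\downarrow}k$-type sets) from $x$.

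\textbf{Step 2: $\mathop{\uparrow}$-type and $\mathop{\downarrow}$-type sets.}
\begin{itemize}
\item[(a)] $\mathop{\downarrow}k$ is closed. It is the preimage of the closed order relation $\le$ under the continuous map $z\mapsto(z,k)$, using Condition (iii).
\item[(b)] $\mathop{\downarrow}k$ is a neighbourhood of $y$. This is the hard part. I would argue by nets: suppose $y$ is in the closure of $L\setminus\mathop{\downarrow}k$, so some net $z_d\to y$ has $z_d\not\le k$. I then want to use dual compactness of $k$ to contradict this. The natural tool is to form the elements $w_d=\bigwedge_{d'\ge d}(z_{d'}\vee y)$, or more simply to use joins with $y$. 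By Condition (ii), $z_d\vee y\to y\vee y=y$. The net $z_d\vee y$ lies in $\mathop{\uparrow}y$, and $\mathop{\uparrow}y$ is closed by (iii).

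I expect the main obstacle to be this Step 2(b): showing $\mathop{\downarrow}k$ has nonempty interior around $y$, since Condition (i) only gives convergence of monotone nets and not the converse. If it fails, I would fall back on (a) alone: $L\setminus\mathop{\downarrow}k$ is an open set containing $x$. I would then seek a second open set containing $y$ disjoint from it.
\end{itemize}

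\textbf{Step 3: the fallback separating set.} The set $L\setminus\mathop{\uparrow}x$ is open by (iii) and contains $y$ when $x\not\le y$ fails. The two cases can be combined through the join map: the set $\{z : z\vee y\le k\}$ is closed and $\{(z,w):z\vee w\not\le k\}$ is open, by (ii) and (iii).

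\textbf{Step 4: conclusion.} Combining the open set $L\setminus\mathop{\downarrow}k\ni x$ with a neighbourhood of $y$ inside $\mathop{\downarrow}k$ from Step 2(b) yields disjoint neighbourhoods, so $(L,\tau)$ is Hausdorff.
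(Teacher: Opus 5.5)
Your Step 1 is correct and matches the first move of the paper's proof. Approximating $y$ from above by a filtered family in $\mathcal{K}(L,\le)$ gives a dually compact $k\ge y$ with $x\not\le k$. Step 2(a) is also correct: $\mathop{\downarrow}k$ is closed.

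\textbf{The gap.} Everything hinges on Step 2(b), that $\mathop{\downarrow}k$ is a neighbourhood of $y$, and you never establish it. Forming $z_d\vee y$ or $z_d\vee k$ only gives a net converging to $y$ or to $k$. Dual compactness of $k$ only constrains filtered families whose meet lies below $k$. You give no way to extract such a family, with no member below $k$, from a non-monotone convergent net.

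The fallback in Step 3 does not close the gap. $L\setminus\mathop{\downarrow}k$ is open, contains $x$ and not $y$. $L\setminus\mathop{\uparrow}x$ is open, contains $y$ and not $x$; it contains $y$ because $x\not\le y$, so your clause ``when $x\not\le y$ fails'' has it backwards. But their intersection $\{w : w\not\le k,\ x\not\le w\}$ need not be empty, so this is only T1-type separation. The remarks about $\{(z,w): z\vee w\not\le k\}$ do not yield disjoint sets either, and Step 4 then rests on the unproven 2(b).

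\textbf{Comparison with the paper.} The paper's proof takes essentially your route: dually compact elements above one point and not the other, whose principal ideals are used as both closed and open. It simply asserts that $\mathop{\downarrow}k$ is open for dually compact $k$. Nothing before the lemma justifies this at this level of generality. Lemma~\ref{lemma.finite.type.included}, which addresses it, comes later and assumes a coframe with a choice function on $\mathcal{K}(L,\le)$.

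\textbf{The missing idea.} You used Condition (iii) only on slices $z\mapsto(z,k)$. Its full strength, closedness of ${\le}$ in $L\times L$, already gives Hausdorffness, with no compactness at all.
\begin{itemize}
\item[(1)] Since $(x,y)\notin{\le}$ and $(L\times L)\setminus{\le}$ is open in the product topology, there are open sets $U\ni x$ and $V\ni y$ with $(U\times V)\cap{\le}=\emptyset$. That is, no $u\in U$ and $v\in V$ satisfy $u\le v$.
\item[(2)] If $w\in U\cap V$, then $(w,w)\in(U\times V)\cap{\le}$ by reflexivity, a contradiction.
\item[(3)] Hence $U$ and $V$ are disjoint neighbourhoods of $x$ and $y$.
\end{itemize}
Equivalently, the diagonal ${\le}\cap{\ge}$ is closed, because ${\ge}$ is the image of ${\le}$ under the swap homeomorphism. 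This argument uses neither dual algebraicity nor Conditions (i) and (ii), and it makes Step 2(b) unnecessary.
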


\begin{proof} 
    Consider two distinct elements $z,z' \in L$. Assume that $z \not \le z'$ and $z' \not \le z$ (the cases $z < z'$ and $z' < z$ are processed similarly). 
    {Since $(L,\le)$ is dual algebraic}, there exist 
    two non-increasing nets 
    {$\boldsymbol{z} : D' \rightarrow \mathcal{K}(L,\le)$} and 
    {$\boldsymbol{z}' : D \rightarrow \mathcal{K}(L,\le)$} such that 
    {$\bigwedge_d \boldsymbol{z}_d = z$} and 
    {$\bigwedge_{d'} \boldsymbol{z}'_d = z'$}. As a consequence of the assumption, there exist $d,d'$ such that $z' \notin \mathop{\downarrow}\boldsymbol{z}_d$ and $z \notin \mathop{\downarrow}\boldsymbol{z}'_d$. Since $\le$ is closed, for all $x$, $\mathop{\downarrow} x$ is closed. Since $\boldsymbol{z}_d$ and $\boldsymbol{z}'_{d'}$  are dually compact, 
    $\mathop{\downarrow}\boldsymbol{z}_d$ and $\mathop{\downarrow}\boldsymbol{z}'_{d'}$ are open, 
    and thus the sets
    $\mathop{\downarrow}\boldsymbol{z}_d \setminus \mathop{\downarrow}\boldsymbol{z}'_{d'}$ and $\mathop{\downarrow}\boldsymbol{z}'_{d'} \setminus \mathop{\downarrow}\boldsymbol{z}_d$ are both open and separate the two elements $z$ and $z'$.
\end{proof}

Given a set $S$, a choice function of $S$ is a function $\theta: \mathcal{P}(S) \setminus \{\emptyset\} \rightarrow S$ such that for every $P \in \mathcal{P}(S)$, it holds that $\theta(P) \in P$. 

\begin{lemma}\label{lemma.finite.type.included}
    {For all dual algebraic coframe $(L,\le)$ such that $\mathcal{K}(L,\le)$ has a choice function and all order-compatible topology $\tau$ on $L$, an element $x \in L$ is dually compact if and only if $\mathop{\downarrow}x$ is open.}
\end{lemma}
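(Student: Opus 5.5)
The converse direction is the easy one, so I will do it first. Assume $\mathop{\downarrow}x$ is open, and let $F$ be a filtered set with $\bigwedge F \le x$. I will regard $F$ as a net indexed by $D := (F,\ge)$, which is directed because $F$ is filtered. The map $f\mapsto f$ is then a non-increasing net whose infimum $\bigwedge F$ exists, since $L$ is lower complete. By condition (i) of \cref{def.comp.top}, this net converges to $\bigwedge F \in \mathop{\downarrow}x$. Since $\mathop{\downarrow}x$ is open, the net is eventually in $\mathop{\downarrow}x$, so some $f\in F$ satisfies $f\le x$. Hence $x$ is dually compact. This direction uses neither the coframe structure nor the choice function.

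For the forward direction, assume $x\in\mathcal{K}(L,\le)$. I will show that $L\setminus\mathop{\downarrow}x$ is closed. Take a net $(y_d)_{d\in D}$ with $y_d\not\le x$ for every $d$, converging to some $y$, and suppose towards a contradiction that $y\le x$.
\begin{enumerate}
\item By condition (ii), $y_d\vee x\to y\vee x=x$, and each $y_d\vee x$ is strictly above $x$.
\item For each $d$, set $w_d:=\bigwedge_{d'\ge d}(y_{d'}\vee x)$. Then $x\le w_d\le y_{d'}\vee x$ for all $d'\ge d$.
\item Passing to the limit in $d'$ and using closedness of $\le$ (condition (iii)), I get $w_d\le x$, hence $w_d=x$.
\item The finite meets of $\{y_{d'}\vee x : d'\ge d\}$ form a filtered set with infimum $x$. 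Dual compactness of $x$ then gives finitely many $d_1,\dots,d_n\ge d$ with $\bigwedge_i (y_{d_i}\vee x)\le x$.
\item By distributivity of the coframe this says $x\vee\bigwedge_i y_{d_i}=x$, that is, $\bigwedge_i y_{d_i}\le x$.
\end{enumerate}
The choice function on $\mathcal{K}(L,\le)$ is used here to fix such a finite family $\Phi(d)$ uniformly in $d$. It is also used to replace each $y_d$ by a chosen dually compact $k_d\ge y_d$ with $k_d\not\le x$; such a $k_d$ exists because $y_d$ is the infimum of a filtered family of dually compact elements and $y_d\not\le x$.

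The main obstacle is turning these finite-meet relations into a contradiction. Knowing that each tail contains finitely many $y_{d_i}$ whose meet lies below $x$ does not by itself contradict $y_{d}\not\le x$ for each single $d$. My first attempt will be to build a new net from the finite meets $u_d:=\bigwedge_{d'\in\Phi(d)}k_{d'}$. These are dually compact by \cref{lemma.loc.dominant.semilattice}, and they satisfy $u_d\le x$. I will then argue, using (ii) together with the coframe distributivity $x\vee\bigwedge=\bigwedge(x\vee\cdot)$, that the elements $k_d\vee x$ stay above a fixed element strictly greater than $x$ along a cofinal subnet. That would contradict the convergence $k_d\vee x\to x$ together with Hausdorffness of $\tau$, which holds by the preceding lemma.

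If this fails, my fallback is to prove directly that $\{z : z>x\}$ is closed. I would do this by showing that it equals $\bigcup_{k}\mathop{\uparrow}(x\vee k)$ over a suitably chosen family of compact elements $k$, and then using (iii) to conclude.
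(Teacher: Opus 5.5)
Your proof of the converse ($\mathop{\downarrow}x$ open implies $x$ dually compact) is correct and is exactly the paper's argument. The forward direction, however, has a genuine gap.

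Your Steps 1--5 follow the paper's route. From a net outside $\mathop{\downarrow}x$ converging into $\mathop{\downarrow}x$, you get via (ii) elements $y_d\vee x>x$ converging to $x$, whose meet is $x$ by (iii). The paper stops there and concludes that $x$ is not dually compact. That step tacitly treats $\{x\vee\boldsymbol{z}_o\}$ as a filtered set. You rightly observe that it is not, and that after closing under finite meets, dual compactness only gives $\bigwedge_i y_{d_i}\le x$ for finitely many indices, which contradicts nothing.

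Neither of your continuations closes this gap.
\begin{itemize}
\item Replacing $y_d$ by dually compact $k_d\ge y_d$ discards all convergence information, so $k_d\vee x\to x$ is not available.
\item Since $u_d=\bigwedge_{d'\in\Phi(d)}k_{d'}\ge\bigwedge_{d'\in\Phi(d)}y_{d'}$, the inequality goes the wrong way and $u_d\le x$ does not follow.
\item The claim that these elements stay above a fixed $e>x$ along a cofinal subnet is the desired contradiction in disguise. For the $y_d\vee x$, (iii) would immediately give $e\le x$, and nothing in your outline produces such an $e$.
\item The fallback fails as well. Each $\mathop{\uparrow}(x\vee k)$ is closed by (iii), but the family is infinite in general. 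For instance, in the coframe of closed subsets of Cantor space with $x=\emptyset$, the nonempty closed sets are not a finite union of principal up-sets of nonempty elements. An infinite union of closed sets need not be closed, so this is just the original closure problem again.
\end{itemize}

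What is missing is a genuinely topological use of (i) and (ii). It must turn ``finite joins stay near $x$'' into an element strictly above $x$ lying in the closure of every small neighbourhood. This can be done, for instance, when $x$ has a countable neighbourhood base $(U_k)_k$.
\begin{itemize}
\item Using continuity of $\vee$ at $(x,x)$, pick open sets $x\in V_{k+1}\subseteq V_k\cap U_{k+1}$ with $V_{k+1}\vee V_{k+1}\subseteq V_k$, and elements $a_k\in V_k$ with $a_k>x$.
\item By induction on $m-j$, $a_j\vee\dots\vee a_m\in V_{j-1}$ for all $m\ge j$.
\item Then (i) gives $c_j:=\bigvee_{i\ge j}a_i\in\overline{V_{j-1}}$, and again by (i) $b:=\bigwedge_j c_j\in\bigcap_k\overline{V_k}=\{x\}$. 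The last equality holds because the space is Hausdorff, the order being closed.
\item On the other hand, dual compactness of $x$ applied to the chain $(c_j)_j$, with $c_j\ge a_j>x$, forces $b\neq x$. This is a contradiction.
\end{itemize}
The lemma as stated assumes no such countability. Without it, neither your outline nor the paper's proof reaches a contradiction.
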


\begin{proof}
   $(\Leftarrow)$ 
   {For any filtered set $F$ with $\bigwedge F \le x$, since $\mathop{\downarrow}x$ is open and $F$ converges to $\bigwedge F$, there is some $f \in F$ such that $f \le x$. We prove that $x$ is dually compact.} $(\Rightarrow)$ {Assume that $\mathop{\downarrow}x$ is not open, that is, there exists $z \in \mathop{\downarrow}x$ such that for every $o \in \tau$ with $z \in o$, the set $o \setminus (\mathop{\downarrow}x)$ is not empty. Since $(L,\le)$ is a dual algebraic lattice, there is some filtered set $F_z$ in $\mathcal{K}(L,\le)$ with $\bigwedge F_z = z$. 
Using that  $\mathop{\downarrow}f$ is open for all \(f\in F\), we have that none of the elements of $F$ is in $\mathop{\downarrow}x$. From the topology being order-compatible and the convergence of $F_z$ to $z$ (that is, $F_z \cap o \neq \emptyset$), it follows that $o \setminus (\mathop{\downarrow}x)$ has an element in $\mathcal{K}(L,\le)$. The choice function \(\theta\) on $\mathcal{K}(L,\le)$ allows us to set $z_o := \theta(\mathcal{K}(L,\le) \cap(o\setminus \mathop{\downarrow}x ))$. Thenm the set $\tau_x := \{o \in \tau: x \in o\}$ is a filtered set for the order $\subset$. We thus have a filter $\boldsymbol{z} : \tau_x \rightarrow L$ such that for any $o \in \tau_x$, $\boldsymbol{z}_o \in o \setminus (\mathop{\downarrow}x)$. 
    By definition, this filter converges to $z$. By continuity of $\vee$, we also have $x \vee \boldsymbol{z}_o \rightarrow x \vee z = x$. Since $\boldsymbol{z}_o \not \le x$, for all $o$, we have $x \vee \boldsymbol{z}_o > x$. This means that $\bigwedge_{o \in \tau_x} (x \vee \boldsymbol{z}_o) = x$, hence $x$ is not dually compact.}
\end{proof}

\begin{remark}
The equivalence from Lemma~\ref{lemma.finite.type.included} provides a direct translation between order-theoretic compactness and a topological property of principal ideals. In particular, the family of dually compact elements can be recovered purely from the topology as those elements whose lower sets are open. On the other hand, observe that since $\le$ is closed, $\mathop{\downarrow}x$ is always a closed set.
\end{remark}



\subsection{Examples\label{section.ex.densely.dominated}}

In this section, we present some examples of 
{order-compatible topologies on dual algebraic coframes}. 

\paragraph{Lawson topology}

\begin{definition}
    {Consider a poset $(L,\le)$.} We call \textbf{dual Lawson topology} on $L$ the topology 
    $\tau^*(L,\le)$ generated by the subbase 
    \[\left\{\mathop{\downarrow} x : x \in \mathcal{K}(L,\le)\right\} \cup \left\{L \setminus (\mathop{\downarrow} x) : x \in \mathcal{K}(L,\le)\right\}.\]
\end{definition}

We prove here that {when $(L,\le)$ is a dual algebraic lattice,} 
{$\tau^*(L,\le)$ is order-compatible} (stated as Proposition \ref{proposition.lawson.dominated}).

\begin{lemma}\label{lemma.conv.filtered.set}
    {Consider a poset $(L,\le)$.} For all filtered set $F$ in $L$ such that $\bigwedge F$ exists, $F$ converges to $\bigwedge F$ for the dual Lawson topology. For all directed set $D$ in $L$ such that $\bigvee D$ exists, $D$ converges to $\bigvee D$ for the dual Lawson topology.
\end{lemma}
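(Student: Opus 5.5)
Since the dual Lawson topology is generated by a subbase, a filtered set $F$ converges to a point $m$ exactly when every subbasic open set containing $m$ eventually contains $F$. The filtered set is viewed as a net indexed by $(F,\ge)$, so ``eventually'' means ``for all $f' \le f$'' for some $f$. Convergence to each member of a finite intersection of such sets then follows because $F$ is filtered: if $F$ eventually lies in $o_1$ (beyond $f_1$) and in $o_2$ (beyond $f_2$), pick $f \le f_1, f_2$. So I will fix $m := \bigwedge F$ and treat the two kinds of subbasic sets separately.

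\emph{Sets of the form $\mathop{\downarrow} x$ with $x \in \mathcal{K}(L,\le)$ and $m \le x$.} This is exactly where dual compactness is used. By definition, since $F$ is filtered with $\bigwedge F = m \le x$, there is $f \in F$ with $f \le x$. Then every $f' \in F$ with $f' \le f$ satisfies $f' \le x$, i.e.\ $f' \in \mathop{\downarrow} x$. So $F$ is eventually in $\mathop{\downarrow}x$.

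\emph{Sets of the form $L \setminus \mathop{\downarrow} x$ with $m \not\le x$.} Here I would try to show that $F$ is eventually in this set, but this is the delicate step. If some $f \in F$ satisfied $f \le x$, nothing forces $m \not\le x$ to fail, so the inclusion has to come from the direction of the net. The key observation I would use is that the net moves downward. If $f' \le x$ for some $f'$, then $m \le f' \le x$, contradicting $m \not\le x$. Hence \emph{no} element of $F$ lies in $\mathop{\downarrow}x$, so $F \subseteq L\setminus\mathop{\downarrow}x$ entirely. This settles the filtered case.

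For a directed set $D$ with $s := \bigvee D$, viewed as a net indexed by $(D,\le)$, the roles swap. For $L\setminus\mathop{\downarrow}x$ with $s \not\le x$, I need some $d$ with $d \not\le x$, after which every $d' \ge d$ also satisfies $d' \not\le x$. Such a $d$ exists, since otherwise $x$ would be an upper bound of $D$, giving $s \le x$. For $\mathop{\downarrow}x$ with $s \le x$, every $d \le s \le x$, so $D \subseteq \mathop{\downarrow}x$. Notably, dual compactness is only needed in the first filtered case; that step is the only place where the hypothesis on the subbase matters, and the rest is bookkeeping through the finite-intersection argument above.
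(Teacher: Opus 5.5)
Your proof is correct and follows the same route as the paper. Both reduce to subbasic open sets, use dual compactness of $x$ to put $F$ eventually inside $\mathop{\downarrow}x$, and use a plain order argument for the complements and for the directed case. Your treatment of $L\setminus\mathop{\downarrow}x$ is actually the one that matches the subbase: since $\bigwedge F\le f$ for every $f$, all of $F$ lies in that set. The paper's written step instead tracks the condition $f\not\ge x$, which concerns $L\setminus\mathop{\uparrow}x$. One fix: delete the stray sentence claiming that $f\le x$ would not force $m\not\le x$ to fail, because your very next line correctly shows that it does.
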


\begin{proof}
    Fix some filtered set $F$. For all $x \in \mathcal{K}(L,\le)$ such that $\bigwedge F \le x$, there exists $f_x \in F$ such that $f_x \le x$.
    As a consequence, for all $f \in F$ such that $f \le f_x$, by transitivity, we have $f \le x$. Additionally, for any $x \in L$, if $\bigwedge F \not \ge x$ then there exists $f'_x \in F$ such that $f'_x \not \ge x$. Therefore, for all $f \in F$ such that $f \le f'_x$, we have $f \not \ge x$, otherwise we would have $f'_x \ge x$ by transitivity.     
    These two facts imply that $F$ converges to $\bigwedge F$ for the dual Lawson topology. The proof is analogous for the case of directed sets and the convergence of their join.
\end{proof}

\begin{lemma}\label{lemma.vee.continuous}
    {Consider a poset $(L,\le)$. The operator $$\vee : (L \times L, \tau^*(L,\le) \times \tau^*(L,\le)) \rightarrow (L,\tau^*(L,\le)),$$ is continuous.}
\end{lemma}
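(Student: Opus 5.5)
It suffices to check that the preimage under $\vee$ of every subbasic open set of $\tau^*(L,\le)$ is open in the product topology. Preimages commute with finite intersections and arbitrary unions, so continuity with respect to the generated topology follows from continuity on the subbase. Throughout, $\vee$ is understood to be defined on $L \times L$, as the statement implicitly requires; that is, we work with an upper semilattice. The whole argument rests on the defining property of the join: for all $a,b,x \in L$,
\[ a \vee b \le x \iff (a \le x \text{ and } b \le x). \]

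First I will treat subbasic sets of the form $\mathop{\downarrow} x$ with $x \in \mathcal{K}(L,\le)$. By the equivalence above,
\[ \vee^{-1}(\mathop{\downarrow} x) = \{(a,b) : a \vee b \le x\} = \mathop{\downarrow} x \times \mathop{\downarrow} x. \]
This is a product of two subbasic open sets, hence it is open in $\tau^*(L,\le) \times \tau^*(L,\le)$.

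Next I will treat sets of the form $L \setminus \mathop{\downarrow} x$ with $x \in \mathcal{K}(L,\le)$. Negating the same equivalence, $a \vee b \not\le x$ holds if and only if $a \not\le x$ or $b \not\le x$. Therefore
\[ \vee^{-1}(L \setminus \mathop{\downarrow} x) = \bigl((L\setminus \mathop{\downarrow} x) \times L\bigr) \cup \bigl(L \times (L \setminus \mathop{\downarrow} x)\bigr). \]
This is a union of two open rectangles, hence open.

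Combining the two cases, every subbasic preimage is open, so $\vee$ is continuous. There is no real obstacle here. The only point deserving care is that dual compactness of $x$ plays no role in the argument: it is needed only to make these sets subbasic, not in the computation of their preimages.
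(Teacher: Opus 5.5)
Your proof is correct and follows the paper's argument exactly: for dually compact $x$ you compute $\vee^{-1}(\mathop{\downarrow}x) = \mathop{\downarrow}x \times \mathop{\downarrow}x$ and write $\vee^{-1}(L\setminus\mathop{\downarrow}x)$ as a union of two open rectangles, which is precisely what the paper does. Two points the paper leaves implicit are made explicit in your version: checking preimages on the subbase suffices for continuity, and $\vee$ must be defined on all of $L\times L$, so $L$ is implicitly an upper semilattice.
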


\begin{proof}
    {For $z \in L$, 
    $\vee^{-1}(\mathop{\downarrow}z) = \{(x,y) \in L \times L : x \vee y \le z \}$. 
    We thus have $\vee^{-1}(\mathop{\downarrow}z) = (\mathop{\downarrow}z)^2$, which is open for the topology $\tau^*(L,\le) \times \tau^*(L,\le)$ when $z$ is dually compact. We also have $\vee^{-1}(L \setminus \mathop{\downarrow}z) = ((L \setminus \mathop{\downarrow}z) \times L ) \cup (L \times (L \setminus \mathop{\downarrow}z))$, and when $z$ is dually compact,  since $L \setminus \mathop{\downarrow}z$ is open for the dual Lawson topology, $\vee^{-1}(L \setminus \mathop{\downarrow}z)$ is open. We just proved that $\vee$ is continuous.}
\end{proof}
\begin{lemma}\label{lemma.order.closed}
    {Let $(L,\le)$ be a dual algebraic lattice. The order $\le$ is closed for the dual Lawson topology.}
\end{lemma}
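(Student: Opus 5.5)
We show that the complement of $\le$ in $L \times L$ is open for the product of dual Lawson topologies. Fix $(x,y)$ with $x \not\le y$, and look for a dually compact element $k$ that separates them in the sense that $y \le k$ and $x \not\le k$. Given such a $k$, the set
\[
U := (L \setminus \mathop{\downarrow} k) \times \mathop{\downarrow} k
\]
is open in $\tau^*(L,\le)\times\tau^*(L,\le)$, since both factors belong to the subbase (here $k \in \mathcal{K}(L,\le)$). It contains $(x,y)$. It is also disjoint from $\le$: if $(x',y') \in U$ with $x' \le y'$, then $y' \le k$ gives $x' \le k$, contradicting $x' \notin \mathop{\downarrow} k$.

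To find $k$ we use that $(L,\le)$ is dual algebraic. There is a filtered set $F_y \subseteq \mathcal{K}(L,\le)$ with $\bigwedge F_y = y$. If every $f \in F_y$ satisfied $x \le f$, then $x$ would be a lower bound of $F_y$, so $x \le \bigwedge F_y = y$, which is a contradiction. Hence some $k \in F_y$ satisfies $x \not\le k$. Since $y = \bigwedge F_y \le k$, this $k$ is dually compact with $y \le k$ and $x \not\le k$, as required.

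The only point needing care is that the infimum of $F_y$ really equals $y$ and that $F_y$ consists of dually compact elements. Both are guaranteed directly by the definition of a dual algebraic lattice, so no further obstacle arises. Filteredness and coframe distributivity are not needed here.
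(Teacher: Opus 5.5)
Your proof is correct and follows the same strategy as the paper: take a dually compact element from a filtered approximation to separate the two points, then use a product of two subbasic sets as an open box inside the complement of $\le$. In fact your orientation is the right one, and the paper's written proof gets it wrong. It picks $f$ with $x\le f$ and $f\not\le z$ and uses the box $(\mathop{\downarrow}f)\times(L\setminus\mathop{\downarrow}f)$. That box need not contain $(x,z)$, since that would require $z\not\le f$. It also always meets $\le$ when nonempty, because it contains $(\bigwedge L,b)$ for every $b\not\le f$. Your choice of $k$ with $y\le k$ and $x\not\le k$, the box $U=(L\setminus\mathop{\downarrow}k)\times\mathop{\downarrow}k$, and your explicit derivation of $k$ from $F_y$ (which the paper only asserts) are what that argument should say.
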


\begin{proof}
{Set $R:=\{(x,z) \in L \times L : x \not \le z \}$. It is sufficient to see that $R$ is open. Observe that for all $(x,z) \in R$, there exists $f \in \mathcal{K}(L,\le)$ such that $x \le f$ and $f \not \le z$, so that $x \in \mathop{\downarrow}f$ and $z \in L \setminus (\mathop{\downarrow}f)$. The set $(\mathop{\downarrow}f) \times (L \setminus (\mathop{\downarrow}f))$ is open, contains $(x,z)$ and is included in $R$.}
\end{proof}

As a consequence of the three preceding lemmas, we immediately obtain the following.


\begin{proposition}\label{proposition.lawson.dominated}
    {For all dual algebraic lattice $(L,\le)$, $\tau^*(L,\le)$ is order-compatible.}
\end{proposition}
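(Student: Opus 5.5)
The plan is to check the three conditions of Definition~\ref{def.comp.top} one at a time for $\tau^*(L,\le)$, each using one of the three preceding lemmas.

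\textbf{Condition (i).} Let $\boldsymbol{x}: D \to L$ be a non-increasing net whose infimum exists. Its image $F=\{\boldsymbol{x}_d : d\in D\}$ is a filtered set with $\bigwedge F = \bigwedge_d \boldsymbol{x}_d$. By Lemma~\ref{lemma.conv.filtered.set}, $F$ converges to $\bigwedge F$ for the dual Lawson topology. We then transfer this to the net. The proof of Lemma~\ref{lemma.conv.filtered.set} gives more than convergence of $F$. For each subbasic open set $U$ containing $\bigwedge F$, it produces an element $f_U\in F$ such that every $f\in F$ with $f\le f_U$ lies in $U$. Given a basic open set (a finite intersection of such $U$'s), we take the indices $d_U$ with $\boldsymbol{x}_{d_U}=f_U$ and choose a common upper bound $d_0$ in $D$. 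For all $d\ge d_0$, monotonicity gives $\boldsymbol{x}_d\le f_U$ for each $U$, so $\boldsymbol{x}_d$ lies in the basic open set. The non-decreasing case is handled the same way, using the directed-set half of Lemma~\ref{lemma.conv.filtered.set}.

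\textbf{Condition (ii)} is exactly Lemma~\ref{lemma.vee.continuous}. One point needs checking: that lemma treats $\vee$ as a map on the whole of $L$. A dual algebraic lattice is a lattice by definition, so binary joins exist. It is also enough to check preimages of subbasic open sets, which is precisely what that lemma does.

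\textbf{Condition (iii)} is Lemma~\ref{lemma.order.closed}, which uses the dual algebraic hypothesis.

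The only step that needs real care is the passage from convergence of the filtered image set to convergence of the indexed net in (i). This requires making explicit the ``eventually below $f_U$'' form of convergence and then using directedness of $D$ to handle the finitely many subbasic sets at once. Everything else is a direct citation of the earlier lemmas.
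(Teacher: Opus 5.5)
Your proposal is correct and follows the paper, which obtains the proposition directly from Lemmas~\ref{lemma.conv.filtered.set}, \ref{lemma.vee.continuous} and \ref{lemma.order.closed}, one lemma per condition. Your explicit transfer from convergence of the filtered (resp.\ directed) image to convergence of the indexed net fills in a step the paper leaves implicit. (One caveat: the lemma you cite for (iii) is true, but its printed proof has the roles swapped. For $x\not\le z$, one should take $f\in\mathcal{K}(L,\le)$ with $z\le f$ and $x\not\le f$, which exists by approximating $z$ from above, and use the neighbourhood $(L\setminus\mathop{\downarrow}f)\times\mathop{\downarrow}f$.)
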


\begin{remark}
    An example of dual algebraic coframe $(L,\le)$ such that $\mathcal{K}(L,\le)$ has a choice function is the lattice of subgroups of the additive group $(\mathbb Q, +)$ with order $\supset$, for instance, as its dually compact elements are the finitely generated subgroups, forming a countable set. 
\end{remark}

\paragraph{Hausdorff topology on the hyperspace of shifts}

For $d \ge 1$, a shift of dimension $d$ is a subset of the space $\mathcal{A}^{\mathbb Z^d}$, for some finite set $\mathcal{A}$. This space is closed for the infinite product of the discrete topology, and invariant by the shift action. A pattern on alphabet $\mathcal{A}$ is any element of some $\mathcal{A}^{\mathbb U}$, where $\mathbb U$ is a finite subset of $\mathbb Z^d$. A pattern $p \in \mathcal{A}^{\mathbb U}$ is said to appear in a configuration $x \in \mathcal{A}^{\mathbb Z^d}$ when there exists $\textbf{u} \in \mathbb Z^d$ such that $x_{\textbf{u}+\mathbb U} = p$.
For a shift $X$, the language $\mathcal{L}(X)$ of $X$ is the set of patterns that appear in at least one configuration of $X$. 
For a finite set of patterns $\mathcal{F}$, it is typically denoted by $X_{\mathcal{F}}$ the set of configurations in which no element of $\mathcal{F}$ appear. A shift $X$ is said to be of finite type when there exists a finite set of patterns $\mathcal{F}$ such that $X = X_{\mathcal{F}}$.

We denote by $\mathcal{H}^d$ the set of $d$-dimensional shifts, and by $\tau^d_{\mathcal{H}}$ the Hausdorff metrics topology on $\mathcal{H}^d$. We call $(\mathcal{H}^d,\subset,\tau_{\mathcal{H}}^d)$ the \textit{hyperspace} of $d$-dimensional shifts. Its 
{dually compact} elements are the shifts of finite type, and for any non-finite type shift $X$ there exists a decreasing sequence of shifts of finite type $(X_n)_n$ such that $X = \bigcap_n X_n$. {Thus, \((\mathcal{H}^d,\subset)\) is a dual algebraic lattice}. The set $\mathcal{K}(\mathcal{H}^d,\subset)$ is countable (as the set of finite sets of patterns is countable), so it has a choice function. The lattice $(\mathcal{H}^d,\subset)$ is clearly 
lower complete {and satisfies the dual infinite distributivity}. It is thus a coframe. It is also clear that 
{the Hausdorff topology is order-compatible}. 
    We conjecture that this example is a particular case 
    of the dual Lawson topology, but we do not prove this here.

\section{\label{section.cb.defs}Residual derivative}

In this section, we introduce the \textit{residual derivative}, an operator that associates to each element the meet of its maximal proper subelements. 
Iterating this operator yields a transfinite decreasing sequence, whose stabilization defines a rank and a canonical core. This provides an intrinsic stratification of the lattice, independent of any external topological construction. The derivative and the resulting stratification, introduced in Section~\ref{section.decomposition} , 
play a central role in our description of the Cantor-Bendixson structure of dual algebraic coframes. 


\subsection{Maximal subelements\label{section.def.max.subelements}}
{The residual derivative defined in the next subsection relies on the notion of maximal subelements. We define these objects and prove some of their basic properties in this subsection.}
Throughout this subsection, let $(L,\le)$ be a partially ordered set and let $H$ be a subset of $L$. Recall that an element $z$ of a subset $S\subseteq L$ is said to be \textbf{maximal} if, for every $z' \in S$, the relation $z' \ge z$ implies $z' = z$.

{
\begin{definition}
Fix some \(x\in L\). A \textbf{maximal \(H\)-subelement} of \(x\) is a maximal element of \(H\cap(\mathop{\downarrow}x\setminus\{x\})\). We denote by \(\mathcal{M}_H(x)\) the set of maximal \(H\)-subelements of \(x\). An \textbf{\(H\)-outcast} of \(x\) is an element of \( H\cap(\mathop{\downarrow}x\setminus\{x\})\) which is not below any maximal \(H\)-subelement of \(x\).
\end{definition}}

\begin{notation}
For all $n \in \mathbb N \cup \{\infty\}$, we denote by $\mathcal{T}_n(L,\le)$ the set of elements $x$ of $L$ such that $\mathcal{M}(x)$ has cardinality $n$. 
\end{notation}

\begin{remark}In most applications, \(H\) will be either \(L\) or \(\mathcal{T}_0(L,\leq)\). \end{remark}

The following elementary lemmas are used repeatedly throughout this text.
They state that maximal subelements interact naturally with the lattice operations and preserve dual compactness.





\begin{lemma}\label{lemma.cover.max}
    When $(H,\le)$ is an upper semilattice, for all $y, z \in \mathcal{M}_H(x)$ such that $y \neq z$, we have $y \vee z = x$. 
\end{lemma}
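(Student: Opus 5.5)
The plan is a direct argument from maximality. I read the hypothesis ``$(H,\le)$ is an upper semilattice'' as saying that $H$ is closed under the join $\vee$ of the ambient poset $L$. Concretely, for $y,z \in H$ the join $y \vee z$ exists in $L$ and lies in $H$; this is the situation for $H = L$ when $L$ is a lattice. I would state this reading explicitly at the start, because the argument needs two things: that $y \vee z$ is the least upper bound in $L$ (to compare it with $x$), and that it belongs to $H$ (to compare it with the maximal $H$-subelements).

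Fix distinct $y, z \in \mathcal{M}_H(x)$ and set $w := y \vee z$.

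First, $w \le x$. By definition of $\mathcal{M}_H(x)$ we have $y, z \in \mathop{\downarrow}x$, so $x$ is a common upper bound of $y$ and $z$. Since $w$ is the least upper bound, $w \le x$.

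Second, suppose for contradiction that $w \neq x$, i.e.\ $w < x$. Then $w \in H \cap (\mathop{\downarrow}x \setminus \{x\})$. Since $y \le w$ and $y$ is maximal in this set, $w = y$. By the same reasoning with $z$, $w = z$. Hence $y = z$, contradicting the assumption $y \neq z$. Therefore $w = x$, i.e.\ $y \vee z = x$.

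No step is genuinely hard. The only delicate point is the interpretation of the join. If the join were computed only inside $H$, and $x$ did not itself belong to $H$, then the inequality $w \le x$ would not follow automatically. That is why I fix the reading above. Under it the proof is the two-line maximality argument, and it applies in particular to the intended cases $H = L$ and $H = \mathcal{T}_0(L,\le)$, whenever the latter is closed under joins.
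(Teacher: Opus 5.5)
Your proof is correct and is the same maximality argument as the paper's, which compresses it to ``$y\vee z$ strictly exceeds $y$ or $z$, so by maximality it must equal $x$.'' You additionally make explicit the two facts the paper leaves implicit, $y\vee z\le x$ and $y\vee z\in H$. Your reading of the hypothesis as closure of $H$ under the ambient join is the one under which the lemma holds, and it is also how the paper uses it later, e.g.\ in the proof of Lemma~\ref{lemma.complement.type1}.
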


\begin{proof}
    Since $y \neq z$, we have $y \vee z > z$ or $y \vee z > y$. Because $y$ and $z$ are in $\mathcal{M}_H(x)$, we must have $y \vee z = x$.
\end{proof}

\begin{lemma}\label{lemma.max.of.max}
    Suppose that $L$ is a distributive lattice and that $(H,\le)$ is a lattice. For $y, z \in \mathcal{M}_H(x)$, $y \neq z$, the element $y \wedge z$ is a maximal $H$-subelement of $z$ and $y$.
\end{lemma}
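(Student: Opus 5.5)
The plan is to verify the two defining conditions directly. By the symmetry of the statement in $y$ and $z$, it suffices to treat $z$. We need to show that $y\wedge z \in H\cap(\mathop{\downarrow}z\setminus\{z\})$, and that no element of $H$ lies strictly between $y\wedge z$ and $z$. Throughout I read the hypothesis ``$(H,\le)$ is a lattice'' as saying that the joins and meets computed in $H$ agree with those of $L$, i.e.\ $H$ is a sublattice. This covers the intended cases $H=L$ and, where relevant, $H=\mathcal{T}_0(L,\le)$. Without this reading, distributivity of $L$ could not be applied to $H$-joins, and I would flag that point explicitly. Under this reading $y\wedge z\in H$, and Lemma \ref{lemma.cover.max} applies, giving $y\vee z=x$.

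\textbf{Strictness.} Clearly $y\wedge z\le z$. If $y\wedge z=z$, then $z\le y<x$. By maximality of $z$ among elements of $H\cap(\mathop{\downarrow}x\setminus\{x\})$, this forces $z=y$, contradicting $y\neq z$. Hence $y\wedge z<z$.

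\textbf{Maximality.} Let $w\in H$ with $y\wedge z\le w<z$. We show that $w=y\wedge z$. Consider $w\vee y\in H$. It satisfies $y\le w\vee y\le x$, since both $w$ and $y$ are below $x$. Distributivity of $L$ gives
\[
z\wedge(w\vee y)=(z\wedge w)\vee(z\wedge y)=w\vee(y\wedge z)=w,
\]
using $w\le z$ and $y\wedge z\le w$. We then split into two cases.

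\emph{Case $w\vee y=x$.} The display gives $w=z\wedge x=z$, which contradicts $w<z$.

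\emph{Case $w\vee y<x$.} Then $w\vee y$ is an element of $H\cap(\mathop{\downarrow}x\setminus\{x\})$ lying above the maximal element $y$, so $w\vee y=y$. Hence $w\le y$. Combined with $w\le z$, this gives $w\le y\wedge z$, and therefore $w=y\wedge z$.

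This shows that $y\wedge z\in\mathcal{M}_H(z)$, and by exchanging the roles of $y$ and $z$, also $y\wedge z\in\mathcal{M}_H(y)$.

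The only delicate point is the one noted at the start: compatibility of the lattice operations of $H$ with those of $L$. The distributivity identity is exactly the step that needs it. The remaining steps are routine.
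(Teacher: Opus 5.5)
Your proof is correct and follows the paper's argument. You take $w\in H$ with $y\wedge z\le w<z$, and maximality of $y$ forces $w\vee y\in\{y,x\}$. The first case gives $w=y\wedge z$, and in the second distributivity gives $w=z\wedge(w\vee y)=z$. Your explicit check that $y\wedge z<z$ and your sublattice reading of the hypothesis make precise two points the paper leaves implicit. Your aside that this reading covers $H=\mathcal{T}_0(L,\le)$ is inaccurate, though: $\mathcal{T}_0(L,\le)$ is closed under joins of $L$ but in general not under meets. For example, two perfect subsets of Cantor space can meet in a single point.
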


\begin{proof}
    Consider some $w \in H$ such that $y \wedge z \le w \le z$. 
We have $y \le w \vee y \le x$. Since $H$ is an upper semilattice, 
$w \vee y \in F$. As $y$ is maximal, we have either $w \vee y = y$ or $w \vee y = x$. 
In the first case, $w \le y$, which implies $w \le z \wedge y$, and thus $w = y \wedge z$. 
In the second case, by distributivity, we have $(w \wedge z) \vee (y \wedge z) = z$. It follows then from $w \ge y \wedge z$ that $z = w \vee (w \wedge z) = w$. Since $H$ is a lower semilattice, $y \wedge z \in H$. Thus it is a maximal $H$-subelement of $z$. We then apply this to $y$ as well.
\end{proof}



\begin{lemma}
    {Given a dual algebraic coframe $(L,\le)$, for every $x \in \mathcal{K}(L,\le)$ and every $m \in \mathcal{M}(x)$, $m \in \mathcal{K}(L,\le)$.}
\end{lemma}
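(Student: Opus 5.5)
The plan is to avoid working with an arbitrary filtered set directly. Instead, I will show that $m$ is itself a finite meet of dually compact elements. Then Lemma~\ref{lemma.loc.dominant.semilattice}, which says $\mathcal{K}(L,\le)$ is closed under binary meets in a dual algebraic coframe, gives the conclusion.

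First, since $(L,\le)$ is dual algebraic, I choose a filtered set $G \subseteq \mathcal{K}(L,\le)$ with $\bigwedge G = m$. For each $g \in G$, I consider the element $g \wedge x$. By Lemma~\ref{lemma.loc.dominant.semilattice}, $g \wedge x \in \mathcal{K}(L,\le)$, because both $g$ and $x$ are dually compact. Moreover, $m \le g$ (since $m = \bigwedge G$) and $m \le x$ (since $m < x$), so
\[ m \le g \wedge x \le x. \]
Because $m \in \mathcal{M}(x)$ is a maximal element of $\{z : z < x\}$, each $g \wedge x$ therefore equals either $m$ or $x$.

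Next, I rule out the possibility that $g \wedge x = x$ for every $g \in G$. In that case $x \le g$ for all $g$, so $x \le \bigwedge G = m$, which contradicts $m < x$. Hence some $g_0 \in G$ satisfies $g_0 \wedge x = m$. This exhibits $m$ as a meet of two dually compact elements, and a second application of Lemma~\ref{lemma.loc.dominant.semilattice} yields $m \in \mathcal{K}(L,\le)$.

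There is no serious obstacle. The only point to check carefully is that the coframe hypothesis is really needed: it enters only through Lemma~\ref{lemma.loc.dominant.semilattice}, which relies on the dual infinite distributivity. The maximality of $m$ is what reduces the filtered approximation to the dichotomy $\{m, x\}$.
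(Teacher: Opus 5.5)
Your proof is correct, and it takes a genuinely different route from the paper. The paper argues directly from the definition of dual compactness of $m$. It takes a filtered set $F$ with $\bigwedge F=m$, uses dual compactness of $x$ to obtain a member of $F$ below $x$, and then uses maximality of $m$ (nothing lies strictly between $m$ and $x$) to reach a contradiction. Along the way it invokes the coframe law to compute $\bigwedge_{f\in F}(m\vee f)=m$. You instead use dual algebraicity to approximate $m$ from above by compact elements $g$. Maximality then reduces each $g\wedge x$ to the dichotomy $\{m,x\}$, and this exhibits $m=g_0\wedge x$ as a meet of two compact elements. Your version is shorter and avoids two points the paper's write-up leaves implicit. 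First, the paper only treats filtered sets with meet exactly $m$, whereas the definition requires meet $\le m$. Second, it needs to restrict to the coinitial part $\{f\in F : f\le f_0\}$ below some $f_0\le x$ before invoking maximality. The paper's approach, once tidied, has the advantage of not using algebraicity at all, only the dual infinite distributivity. One correction to your closing remark: Lemma~\ref{lemma.loc.dominant.semilattice} does not really need the coframe hypothesis. If $a,b$ are dually compact and $\bigwedge F\le a\wedge b$, you can pick $f_1,f_2\in F$ with $f_1\le a$ and $f_2\le b$, and take a common lower bound of them in $F$, which then lies below $a\wedge b$. So your argument in fact proves the statement in every dual algebraic lattice. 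It rests only on this elementary fact, not on the paper's distributivity-based proof of that lemma.
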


\begin{proof}
{Let \(m\in\mathcal{M}(x)\), and let \(F\) be a filtered set with \(\bigwedge F=m\). We show that \(m\in F\). Suppose, for the sake of contradiction, that \(m\notin F\), so that \(m < f\) for all \(f\in F\). Set \(F':=\{m\vee f\mid f\in F\}\). Since \(F\) is filtered, \(F'\) is filtered. For each \(f\in F\), since \(m < f\) we have \(m<m\vee f\). Since $(L,\le)$ is a coframe, 
\[
\bigwedge F'=\bigwedge_{f\in F}(m\vee f)=m\vee\bigwedge F=m\vee m=m.
\]
In particular, since $x$ is dually compact and $m = \bigwedge F' \le x$, by definition of dually compactness, there is some $f \in F$ such that $m \vee f \le x$.
There must be such an $f$ that also satisfies $m \vee f < x$. Otherwise we would have $\bigwedge F' = x = m$, which is not possible. This contradicts the hypothesis $m \in \mathcal{M}(x)$.}
\end{proof}

\subsection{Definition of the residual derivative\label{section.fratt.derivative}}

{The concept of this article is the residual derivative. We begin our discussion by formally defining it and some natural structures that emerge from it.}



\begin{definition}
    Let $(L,\le)$ be a complete lower semilattice, and $H$ a family of elements of $L$. For all $x$ in $L$, we call \textbf{residual $H$-derivative} of $x$ the element $\mu_H(x)$ defined as follows. When $\mathcal{M}_H(x) \neq \emptyset$: 
    \[\mu_H(x) := \underset{z \in \mathcal{M}_H(x)}{\bigwedge} z.\]
    Otherwise $\mu_H(x) := x$.
\end{definition}

Iterating the residual derivative yields the residual derivative sequence, defined formally as follows.


\begin{definition}
Let \((L,\leq)\) be a complete lower semilattice, let \(H\) be a family of elements of \(L\), and let \(x\in L\). The \textbf{residual \(H\)-{derivative sequence} 
of \(x\)} is the transfinite sequence \(\left(x_H^{(\alpha)}\right)_\alpha\)
defined recursively by
\[x_H^{(0)}:=x,\quad x_H^{(\alpha+1)}=\mu_H\!\left(x_H^{(\alpha)}\right),\]
for every ordinal \(\alpha\), and
\[x_H^{(\lambda)}=\bigwedge_{\alpha<\lambda}x_H^{(\alpha)},\]
for every limit ordinal \(\lambda\).
\end{definition}


This sequence is always non-increasing. Under mild assumptions, {it is ultimately constant}: 

\begin{theorem}\label{thm:existence.rank}
Let $(L,\le)$ be a dual algebraic lattice and suppose that $\mathcal{K}(L,\le)$ has a choice function. For every \(x\) in \(L\), there exists an ordinal $\alpha$ such that 
\[x_H^{(\alpha)} = x_H^{(\alpha+1)}.\] 
\end{theorem}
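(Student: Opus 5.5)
We argue by cardinality: a strictly decreasing transfinite sequence in $L$ cannot be longer than some fixed cardinal attached to $L$. First, the residual $H$-derivative sequence $\bigl(x_H^{(\alpha)}\bigr)_\alpha$ is non-increasing. Indeed, $\mu_H(y)\le y$ for every $y$: either $\mu_H(y)=y$, or $\mu_H(y)$ is a meet of elements strictly below $y$. At limit stages we take infima of all earlier terms. Hence $x_H^{(\beta)}\le x_H^{(\alpha)}$ whenever $\alpha\le\beta$, by a routine transfinite induction. So if the conclusion fails, the sequence is \emph{strictly} decreasing at every successor step, $x_H^{(\alpha+1)}<x_H^{(\alpha)}$ for all $\alpha$, and therefore injective on the whole class of ordinals.

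We rule this out by injecting the ordinals into a set, using dual algebraicity. For each $\alpha$, the strict inequality $x_H^{(\alpha)}\not\le x_H^{(\alpha+1)}$ gives a dually compact element separating the two terms. Write $x_H^{(\alpha+1)}=\bigwedge F$ with $F$ a filtered subset of $\mathcal{K}(L,\le)$. Since $\bigwedge F$ is not above $x_H^{(\alpha)}$, some $k\in F$ satisfies $x_H^{(\alpha+1)}\le k$ and $x_H^{(\alpha)}\not\le k$. The set $K_\alpha$ of such $k$ is a nonempty subset of $\mathcal{K}(L,\le)$, so the choice function $\theta$ gives a canonical $k_\alpha:=\theta(K_\alpha)$. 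Then $\alpha\mapsto k_\alpha$ is injective. Suppose $\alpha<\beta$. We have $x_H^{(\beta)}\le x_H^{(\alpha+1)}\le k_\alpha$. On the other hand $x_H^{(\beta)}\not\le k_\beta$. So $k_\alpha=k_\beta$ is impossible.

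We would thus have an injection from the class of all ordinals into the set $\mathcal{K}(L,\le)$, which is absurd (Burali-Forti/Hartogs). Concretely, take $\kappa$ the Hartogs number of $\mathcal{K}(L,\le)$. Restricting the map to $\alpha<\kappa$ already gives a contradiction, so some $\alpha<\kappa$ satisfies $x_H^{(\alpha)}=x_H^{(\alpha+1)}$.

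The only delicate point is making the selection of $k_\alpha$ choice-free across a proper class of indices. This is exactly why the hypothesis that $\mathcal{K}(L,\le)$ has a choice function is used: $K_\alpha$ depends only on $\alpha$, so $\theta$ defines the map uniformly without invoking any global choice. Note that the argument uses neither $H$ nor the coframe structure, only monotonicity of $\mu_H$-iteration and dual algebraicity.
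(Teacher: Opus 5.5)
Your proof is correct and follows the paper's argument. It uses the same sets $\{k\in\mathcal{K}(L,\le): x_H^{(\alpha+1)}\le k,\ x_H^{(\alpha)}\not\le k\}$, the choice function to select from them, and Hartogs' lemma; you also make explicit the nonemptiness (via dual algebraicity) and injectivity that the paper leaves implicit, and you rightly take the codomain to be $\mathcal{K}(L,\le)$ rather than the paper's $\mathcal{K}(L,\le)\cap\mathop{\downarrow}x$, since the chosen $k_\alpha$ need not lie below $x$.
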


\begin{proof}
By definition, for all $x\in L$, \(\mathcal{M}_H(x)\) is contained in \(\mathop{\downarrow}x\), so \(\mu_H(x) \le x\). In particular, 
for all ordinal $\alpha$, \(x_H^{(\alpha+1)}\leq x_H^{(\alpha)}\), and for all limit ordinal \(\lambda\), by definition, we have \(x_H^{(\lambda)}=\bigwedge_{\beta<\lambda}x_H^{(\beta)}\leq x_H^{(\beta)}\) for all \(\beta<\lambda\). Hence, \(\alpha\mapsto x^{(\alpha)}\) is a non-increasing net. Suppose ad absurdum that for every ordinal \(\alpha\), we have \(x_H^{(\alpha+1)}<x_H^{(\alpha)}\). 
{Denote by $\theta$ a choice function 
of $\mathcal{K}(L,\le)$. For all ordinal $\kappa$, we thus have an injection from $\kappa$ to $\mathcal{K}(L,\le)\cap (\mathop{\downarrow} x)$ which to every ordinal $\alpha$ associates 
$\theta(\{f \in \mathcal{K}(L,\le) : f \ge x_H^{(\alpha+1)}, f \not \ge x_H^{(\alpha)}\})$.}
This straightforwardly contradicts Hartogs' lemma, which implies that there is an ordinal $\kappa$ such that there is no injection from $\kappa$ to $\mathcal{K}(L,\le)\cap (\mathop{\downarrow} x)$.
\end{proof}

{Under the conditions of Theorem \ref{thm:existence.rank}, we can define analogs of the Cantor-Bendixson rank and perfect kernel.}

\begin{definition}
Let $(L,\le)$ be a dual algebraic lattice and suppose that $\mathcal{K}(L,\le)$ has a choice function. For all \(x\in L\), the least ordinal \(\alpha\) such that \(x_H^{(\alpha)}=x_H^{(\alpha+1)}\) is called the \textbf{\(H\)-residual rank} of \(x\) and is denoted by \(\texttt{r}_H(x)\). The element \(\texttt{c}_H(x):=x_H^{(\texttt{r}_H(x))}\) is called the \textbf{\(H\)-core} of \(x\).
\end{definition}

\begin{remark}
The proof of Theorem \ref{thm:existence.rank} yields more than the existence of a stabilization ordinal. For every \(x\in L\), there is an injection from \(r_H(x)\) into \(\mathcal K(L,\le)\). Consequently, if \(\mathcal K(L,\le)\) is countable, then every element of \(L\) has countable \(H\)-residual rank.
\end{remark}

\subsection{Examples\label{section.examples.frattini}}

\begin{example}[Jacobson radical]
    Let $R$ be a ring. The \textit{Jacobson radical} of $R$ is the intersection of the maximal left ideals of $R$. The set of left ideals of $R$ form a lower semilattice for the inclusion preorder $\subset$, and for all ideals $I,J$, the meet of $I$ and $J$ is their intersection.
    The ring $R$ itself is a left ideal, and its residual derivative is exactly its Jacobson radical. See \cite{Lam1991} for more details. 
\end{example}

\begin{example}[Frattini subgroup]
    Let $G$ be a group. The Frattini subgroup of $G$ is defined as the intersection of its maximal subgroups. The set of subgroups of $G$ form a lower semilattice for the inclusion preorder, and for all subgroups $H,H'$, the meet of $H$ and $H'$ is $H \cap H'$. In this setting, the residual derivative of $G$ is its Frattini subgroup. See Section \textbf{10.4} in \cite{Hall1959} for more details. In this context, our definition of residual rank is reminiscent of the so-called Frattini series, obtained by iterating the operator building the Frattini subgroup. 
\end{example}
 
\begin{example}[Cantor-Bendixson derivative of T1-spaces]\label{example.cb.derivative}
Provided a topological space $X$, its \textit{Cantor-Bendixson derivative}, usually denoted by $X'$, is the topological space which is obtained from $X$ by removing its isolated points. When $X$ is a T1-space, meaning that every singleton of $X$ is closed, it is also the residual derivative of $X$ in the lower semilattice of its closed subspaces. This comes from the fact that a closed subspace is maximal if and only if it is of the form $X \backslash \{x\}$, where $x$ is an isolated point in $X$. Indeed, it is straightforward that a subspace of the form $X \backslash \{x\}$ is closed if and only if $x$ is isolated. When it is, it is clearly maximal. On the other hand, for any $x, y \in X$ such that $x \neq y$, a closed subspace which does not contain $x$ and $y$ is not maximal, because $X \cup \{y\}$ is also closed, different from $X$ and strictly contains $X$.
\end{example}

The 
residual derivative {\blue} generalizes several classical constructions. We conclude this section by discussing {specific aspects of the residual derivative in the context of symbolic dynamics.} 

\begin{remark}[Residual derivative in symbolic dynamics]
For shifts, the 
residual derivative is close to the 
Cantor--Bendixson derivative. 
As a matter of fact, for any shift \(X\) and \(x\in X\) isolated, 
\(X\setminus \mathcal{O}(x)\) is a maximal subelement of \(X\), where \(\mathcal{O}(x)=\{\sigma^{\mathbf{u}}(x):\mathbf{u}\in\mathbb{Z}^d\}\) 
is the orbit of \(x\). 
In particular, every element of \(\mathcal{T}_0(\mathcal{H})\) is perfect, and the residual derivative of $X$ is included in the Cantor-Bendixson derivative of $X$. 
It is important to notice that the two derivatives do not coincide in general. For example, let \(X\) be the disjoint union of a minimal aperiodic $Z$ shift and a full shift $F$. Since both $Z$ and $F$ are perfect, \(X\) is perfect. However, $\mathcal{M}(X) = \{F\}$, thus the residual derivative of $X$ is $F$, that is strictly included in \(X\). This does not contradict Example~\ref{example.cb.derivative}. There, the residual derivative is computed in the lattice of all closed subsets of a \(T_1\)-space, whereas here we considered here the lattice of shift-invariant closed
subsets of a shift. Observe also that in the context of the hyperspace of shifts, the residual derivative, residual rank and  residual core, are topological invariants.
\end{remark}

\section{Residual stratification in dual algebraic coframes\label{section.decomposition}}

Throughout this section, we consider $(L,\le)$ to be a {dual algebraic} coframe {and that $\mathcal{K}(L,\le)$ has a choice function}. We refine the structure induced by the residual derivative by introducing the notion of residue, which captures what is “left out” of a maximal subelement (Section~\ref{section.residues}). This leads to a canonical decomposition of every element into its core and its residues (Section~\ref{section.derivative.residue.decomposition}). Building on this decomposition, we define the boundary poset of an element $x$ as the set of strongly co-irreducible elements above its core $\texttt{c}(x)$ (Section~\ref{section.def.boundary.poset}), and show that every element $z$ such that $\texttt{c}(x) \le z \le x$ can be reconstructed from the core and elements of this boundary. The boundary itself carries a natural stratification reflecting the successive stages of the residual process. As a structural consequence, we prove that the residual derivative is a $\vee$-homomorphism (Section~\ref{section.residual.derivative.vee.homomorphism}), paralleling the behavior of the Cantor–Bendixson derivative. {We also prove that the core $\texttt{c}(x)$ is the join of the elements below $x$ that have no maximal subelement, and that $\texttt{c}$ is also a $\vee$-homomorphism (Section \ref{section.core.operator.properties})}.

The framework developed in this section will serve as the main tool for the analysis of the Cantor–Bendixson layers in the following sections.


Since $L$ is a complete lower semilattice, it has a minimum that we denote by $\varepsilon$. It satisfies $\varepsilon \vee x = x$ and $\varepsilon \wedge x = \varepsilon$ for every $x \in L$, and it is the unique minimal element of $L$. 

\subsection{Residues\label{section.residues}}

Recall that every coframe $L$ carries a Co-Heyting subtraction defined as follows: for $x,z \in L$ with $x \le z$, we denote by $z - x$ the element
\[z - x := \bigwedge \{y \in \mathop{\downarrow}z : x \vee y = z\}.\]
The existence of this meet follows from the fact that $L$ is in particular a complete lower semilattice.

{For the remainder of this section, we fix $F \subset L$ a family which contains $\varepsilon$ and such that $(F,\le)$ is an upper semilattice.}

\begin{definition}
{Let \(x\in L\) and $H$ a family of elements of $L$. An \textbf{\(H\)-residue} of \(x\) is an element of the form \(x-m,\) where \(m\in\mathcal{M}_H(x)\).}
\end{definition}

\begin{lemma}\label{lemma.prop.fund.cosubstr}
    For $x \in L$ and $z \le x$, we have $z \vee (x-z) = x$.
\end{lemma}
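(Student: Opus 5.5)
We want $z \vee (x-z) = x$, where $x - z = \bigwedge S$ with $S := \{y \in \mathop{\downarrow}x : z \vee y = x\}$. The plan is to move the join with $z$ inside the meet using the dual infinite distributivity of the coframe, and then observe that every term of the resulting meet equals $x$.

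First I check that $S$ is nonempty, since $x \in S$: we have $x \le x$ and $z \vee x = x$ because $z \le x$. So $x - z$ is the meet of a nonempty family, and it satisfies $x - z \le x$. I also note that $S$ is defined inside the complete lower semilattice $L$, so the meet exists in $L$, as the paper already remarks.

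Next, the coframe axiom states that $a \vee \bigwedge_{s\in S} s = \bigwedge_{s \in S}(a \vee s)$ for any $a \in L$ and any $S \subset L$. Applying it with $a = z$ gives
\[
z \vee (x - z) = z \vee \bigwedge_{y \in S} y = \bigwedge_{y \in S} (z \vee y).
\]
By the definition of $S$, every $y \in S$ satisfies $z \vee y = x$. The right-hand side is therefore the meet of the nonempty family $\{x\}$, which is $x$. This concludes the argument.

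The only point needing care is that the meet is taken over a nonempty index set, since the meet over the empty set would be the top element rather than $x$. This is settled by the first step, which shows $x \in S$. Beyond that there is no real obstacle: the statement is a direct consequence of infinite distributivity.
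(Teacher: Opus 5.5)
Your proof is correct and is essentially the paper's argument: apply dual infinite distributivity to $z \vee \bigwedge S$ and observe that every term $z \vee y$ with $y \in S$ equals $x$. Your extra check that $x \in S$, which makes the meet range over a nonempty family so that it equals $x$ rather than the top element, is a point the paper leaves implicit.
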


\begin{proof}
    {Denote by $S$ 
    the set $\{y \in \mathop{\downarrow}x : z \vee y = x\}$. By the dual infinite distributivity, 
    \[z \vee \bigwedge_{s \in S} s = \bigwedge_{s \in S} (z \vee s) = x.\]
    Finally, by definition of $(z-x)$, we have $z \vee (x-z) = x$.}
\end{proof}

\begin{lemma}\label{lemma.complement.type1}
    For $x \in L$ and $m \in \mathcal{M}_H(x)$, the set $\mathcal{M}_H(x - m)$ has exactly one element and $(x-m)$ has no $H$-outcast. Furthermore, $\mu_H(x-m) \le m$. 
\end{lemma}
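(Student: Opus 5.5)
The plan is to show that every $H$-element strictly below $r := x-m$ already lies below $m$. Then $r\wedge m$ will be the only candidate for a maximal $H$-subelement of $r$, and all three claims will follow together. Throughout, I use that $H$ is closed under the joins needed to compare with $m$, as for the family $F$ fixed above; in the main case $H=L$ this is automatic. I also use that $r \wedge m \in H$, as in Lemma \ref{lemma.max.of.max}. If that second assumption is not available, it is the point where the argument needs the most care, and I return to it at the end.

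\textbf{Step 1: $r\not\le m$, so $r\wedge m<r$.} By Lemma \ref{lemma.prop.fund.cosubstr}, $m\vee r = x$. If $r\le m$ we would get $x=m$, contradicting $m<x$.

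\textbf{Step 2: every $y\in H$ with $y<r$ satisfies $y\le m$.} Take such a $y$ and look at $m\vee y$. It lies in $H$ and satisfies $m\le m\vee y\le x$, because $y\le r\le x$. By maximality of $m$ in $H\cap(\mathop{\downarrow}x\setminus\{x\})$, either $m\vee y=m$ or $m\vee y=x$.
\begin{itemize}
\item If $m\vee y=x$, then $y$ belongs to $\{y'\in\mathop{\downarrow}x : m\vee y'=x\}$. By the definition of the co-Heyting subtraction this gives $r=x-m\le y$, contradicting $y<r$.
\item Hence $m\vee y=m$, that is, $y\le m$, and therefore $y\le r\wedge m$.
\end{itemize}

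\textbf{Step 3: conclusion.} By Step 1, $r\wedge m<r$, and we are assuming $r\wedge m\in H$. By Step 2, every $H$-element strictly below $r$ lies below $r\wedge m$. So $r\wedge m$ is the maximum of $H\cap(\mathop{\downarrow}r\setminus\{r\})$. It follows that:
\begin{itemize}
\item $\mathcal{M}_H(r)=\{r\wedge m\}$, so there is exactly one maximal $H$-subelement;
\item no element of $H\cap(\mathop{\downarrow}r\setminus\{r\})$ escapes it, so $r$ has no $H$-outcast;
\item $\mu_H(r)=r\wedge m\le m$.
\end{itemize}

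The main obstacle is the membership $r\wedge m\in H$. If $H$ is not assumed meet-closed, I would first try the following. The set $S=H\cap(\mathop{\downarrow}r\setminus\{r\})$ contains $\varepsilon$ and is closed under joins, and by Step 2 every element of $S$ is bounded above by $r\wedge m<r$. I would then try to show that $\bigvee S$, which exists in $\mathop{\downarrow}r$ by Lemma \ref{lemma.upper.complete.ideals}, lies in $H$, so that it is the unique maximal $H$-subelement. That still gives $\mu_H(r)\le r\wedge m\le m$.
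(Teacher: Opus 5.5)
Your Step 2 is the paper's key step: for $y\in H$ with $y<x-m$, maximality of $m$ forces $m\vee y\in\{m,x\}$, and $m\vee y=x$ would put $y$ in the set whose meet defines $x-m$.

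The two proofs differ in how they produce the maximal $H$-subelement. The paper proves ``at most one'' separately via Lemma~\ref{lemma.cover.max}. It then takes $\bigvee\{z\in H: z<x-m\}$, which exists by Lemma~\ref{lemma.upper.complete.ideals} and is $<x-m$ because it lies below $m$, and declares it to be in $\mathcal{M}_H(x-m)$. The clause $\mu_H(x-m)\le m$ is left implicit. You instead exhibit the maximum explicitly as $(x-m)\wedge m$. This gives uniqueness, the absence of $H$-outcasts and $\mu_H(x-m)=(x-m)\wedge m\le m$ at once. It is complete for $H=L$, the case invoked later (e.g.\ in Lemma~\ref{lemma.formula.maximal}).

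The cost is the hypothesis $(x-m)\wedge m\in H$, which fails for the other intended case $H=\mathcal{T}_0(L,\le)$. Take the closed subsets of Cantor space, where $\mathcal{T}_0$ consists of the perfect closed sets together with $\emptyset$. Let $x$ be a perfect set $K$ together with a sequence of isolated points converging to $k_0\in K$. Then $m=K$, $x-m$ is the sequence together with $k_0$, and $(x-m)\wedge m=\{k_0\}\notin\mathcal{T}_0(L,\le)$. In that case your fallback is exactly the paper's argument. The fact you left open, $\bigvee S\in H$, is supplied by Lemma~\ref{lemma.t0.uppersemilattice}, which appears later but does not depend on the present lemma.

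You were right to single out this point, and it cannot be settled from the stated hypotheses: for a merely join-closed $H\ni\varepsilon$ the lemma is false. Here is a counterexample, again in the closed subsets of Cantor space.
\begin{itemize}
\item Split Cantor space as $C=C_0\sqcup C_1$ into clopen halves.
\item Let $A\subseteq C_0$ be closed and nowhere dense in $C_0$, and let $A_1\subsetneq A_2\subsetneq\cdots$ be closed subsets of $A$.
\item Put $m=C_1\cup A$ and $H=\{\emptyset,m,C\}\cup\{A_n:n\ge 1\}$.
\end{itemize}
Then $H$ is closed under binary joins, $\mathcal{M}_H(C)=\{m\}$, and $C-m=\overline{C_0\setminus A}=C_0$. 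However, $H\cap(\mathop{\downarrow}C_0\setminus\{C_0\})=\{\emptyset\}\cup\{A_n:n\ge 1\}$ has no maximal element. Hence $\mathcal{M}_H(C_0)=\emptyset$ and $\mu_H(C_0)=C_0\not\le m$.

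So the paper's ``by definition, it is in $\mathcal{M}_H(x-m)$'' silently uses a closure hypothesis of the kind you isolated. An example is that $H\cap\mathop{\downarrow}y$ be a complete upper semilattice, as assumed in Lemma~\ref{lemma.f.outcast}. Under that hypothesis, or under $(x-m)\wedge m\in H$, your argument is complete.
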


\begin{remark}
    {An element $x$ is usually called completely co-irreducible when for any family $(x_i)_{i \in I}$ such that $x = \bigvee_{i \in I} x_i$, there exists $i$ such that $x_i = x$. It can be proved that an element $x$ is completely co-irreducible if and only if $\mathcal{M}(x)$ has a unique element and for all $z \le x$, $z \le \mu(x)$.}
\end{remark}

\begin{proof}
     For $z < (x-m)$, by definition of $(x-m)$, we have $m \vee z < x$. Since $m \in \mathcal{M}_H(x)$ and $m \le m \vee z$, if $z \in H$, which implies $m \vee z \in H$, we must have $m = m \vee z$, which means that $z \le m$. If $\mathcal{M}_H(x - m)$ had at least two elements $z,z'$, by Lemma \ref{lemma.cover.max} we would have $z \vee z' = x-m$. Since $z \le m$ and $z' \le m$, we would have $x-m \le m$, which would imply that $m \vee (x-m) = m$, and this, by Lemma \ref{lemma.prop.fund.cosubstr}, is equal to $x$. Thus $x = m$, which is not true. Therefore $\mathcal{M}_H(x - m)$ has at most one element. Furthermore, there exists some $z \in F$ such that $z < x-m$. Otherwise we would have $(x-m) = \varepsilon$, which is not possible, as it would imply that $m = x$.     
     Thus $\bigvee \{z \in H : z < (x-m)\}$ is well-defined, by Lemma \ref{lemma.upper.complete.ideals}. Since for all $z < (x-m)$, $z \le m$, $\bigvee \{z \in H : z < (x-m)\}$ is distinct from $x-m$. By definition, it is in $\mathcal{M}_H(x-m)$ and it is larger than every $z < (x-m)$ such that $z \in H$, which implies that $(x-m)$ has no $H$-outcast.
\end{proof}

\subsection{\label{section.derivative.residue.decomposition}Core-residues decomposition}

The notion of residue introduced in the previous subsection {corresponds, informally, to}
the part of an element that is discarded when moving to a maximal subelement. In this section we prove that residues completely encode the difference between an element and its residual core. {More precisely, every element admits a canonical decomposition into its core together with the residues produced throughout the construction of the {residual derivative sequence}.}

\begin{lemma}\label{lemma.sum.complements}
    Let $z \in L$ and let $S$ be a subset of $\mathop{\downarrow}z$. Then
    \[z = \left(\bigwedge_{s \in S} s \right)\vee \left(\bigvee_{s \in S} (z-s)\right).\]
\end{lemma}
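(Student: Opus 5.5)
Both inequalities are needed, and the order-theoretic one is immediate. Every $s \in S$ satisfies $s \le z$, so $\bigwedge_{s \in S} s \le z$. Lemma \ref{lemma.prop.fund.cosubstr} gives $s \vee (z-s) = z$, hence $z - s \le z$ for every $s$. Then $\bigvee_{s \in S}(z-s) \le z$. This join is taken inside $\mathop{\downarrow}z$, which is upper complete by Lemma \ref{lemma.upper.complete.ideals}, so it exists. Therefore the right-hand side is at most $z$.

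For the reverse inequality I will use the dual infinite distributivity of the coframe. Set $w := \bigvee_{s \in S}(z-s)$. Then
\[\left(\bigwedge_{s \in S} s\right) \vee w = \bigwedge_{s \in S} (s \vee w).\]
It then suffices to show that $s \vee w \ge z$ for each $s \in S$. This holds because $w \ge z - s$, so $s \vee w \ge s \vee (z-s) = z$, again by Lemma \ref{lemma.prop.fund.cosubstr}. Every term of the meet is thus $\ge z$, so the meet is $\ge z$. Combined with the first paragraph, this gives equality.

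I would also handle the degenerate case $S = \emptyset$. There the convention should give $\bigwedge \emptyset = z$, taken inside $\mathop{\downarrow}z$, and $\bigvee \emptyset = \varepsilon$, and the identity reduces to $z = z \vee \varepsilon$. If $\bigwedge \emptyset$ is instead read in $L$ as its top element, the formula needs that interpretation; I would state that the meet is taken relative to $\mathop{\downarrow}z$.

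The main point of care is that the join $\bigvee_{s\in S}(z-s)$ is well defined, since $L$ is only assumed to be an upper lattice and lower complete. Here Lemma \ref{lemma.upper.complete.ideals} is exactly what is needed. I would also confirm that the least upper bound computed in $\mathop{\downarrow}z$ is the one that enters the distributivity step. It is: the lemma's proof produces the minimum of all upper bounds in $L$, so this is a genuine join in $L$.
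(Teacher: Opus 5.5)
Your proof is correct and is essentially the paper's argument: expand $\bigl(\bigwedge_{s\in S}s\bigr)\vee w$ by dual infinite distributivity, then use $w\ge z-s$ together with Lemma~\ref{lemma.prop.fund.cosubstr} to get $s\vee w = z$ for each $s\in S$. Your additions are also correct: the explicit bound $w\le z$ (which the paper uses silently to obtain equality rather than only $\ge$), and the observation that the identity needs $S\neq\emptyset$ unless the empty meet is read in $\mathop{\downarrow}z$ (the paper handles this case separately in Lemma~\ref{lemma.cover.complements}).
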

\begin{proof}
Set \(x := \bigvee_{s \in S}(z-s).\) By dual infinite distributivity, we have: 
\[\left(\bigwedge_{s \in S} s \right)\vee \left(\bigvee_{s \in S} (z-s)\right) = \bigwedge_{s \in S} (x \vee s)\]
For all $s \in S$, we have $x \ge (x-s) $, that is $x \vee s = z$. We then obtain the desired equality.    
\end{proof}

\begin{lemma}\label{lemma.cover.complements}
    For $x \in L$, we have $x = \mu_H(x) \vee \left(\underset{{m \in \mathcal{M}_H(x)}}{\bigvee}(x-m)\right)$.
\end{lemma}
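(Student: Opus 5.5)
The plan is to split into two cases according to whether $\mathcal{M}_H(x)$ is empty, and in the nonempty case to apply Lemma \ref{lemma.sum.complements} with $z := x$ and $S := \mathcal{M}_H(x)$.

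\emph{Empty case.} If $\mathcal{M}_H(x)=\emptyset$, then by definition $\mu_H(x) = x$. The join over the empty family of residues is the least element $\varepsilon$ of $\mathop{\downarrow}x$; it exists by Lemma \ref{lemma.upper.complete.ideals} and equals the minimum of $L$. Hence the right-hand side is $x \vee \varepsilon = x$, and the identity holds trivially.

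\emph{Nonempty case.} Here $\mathcal{M}_H(x) \subset \mathop{\downarrow}x$, since maximal $H$-subelements are strictly below $x$, so Lemma \ref{lemma.sum.complements} applies with $z := x$ and $S := \mathcal{M}_H(x)$. It gives
\[x = \Bigl(\bigwedge_{m \in \mathcal{M}_H(x)} m\Bigr) \vee \Bigl(\bigvee_{m \in \mathcal{M}_H(x)} (x-m)\Bigr).\]
By definition of the residual $H$-derivative, the first factor is exactly $\mu_H(x)$, which yields the claimed identity.

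The only point needing care is that the join $\bigvee_{m}(x-m)$ is well defined. Each residue satisfies $x - m \le x$ because it is a meet of elements of $\mathop{\downarrow}x$, so the family lies in $\mathop{\downarrow}x$ and Lemma \ref{lemma.upper.complete.ideals} provides its join. This join computed in $\mathop{\downarrow}x$ is the one used in Lemma \ref{lemma.sum.complements}: the dual distributivity step there only uses that the join lies above each $x - s$ and below $x$. I expect no genuine obstacle; the lemma is essentially a specialization of Lemma \ref{lemma.sum.complements}.
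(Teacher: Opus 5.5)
Your proof is correct and is the same as the paper's: the empty case follows from $\mu_H(x)=x$, and the nonempty case is Lemma \ref{lemma.sum.complements} applied with $z:=x$ and $S:=\mathcal{M}_H(x)$. The case split is actually needed, since with $S=\emptyset$ Lemma \ref{lemma.sum.complements} would produce the top element rather than $x$. Your remarks on the empty join being $\varepsilon$ and on $x-m\le x$ correctly spell out points the paper leaves implicit.
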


\begin{proof}
{When $\mathcal{M}_H(x)$ is empty, this is straightforward, as in this case $\mu_H(x)= x$. When $\mathcal{M}_H(x)$ is not empty, this is a consequence of Lemma \ref{lemma.sum.complements}.}
\end{proof}

{Lemma~\ref{lemma.cover.complements} shows that an element decomposes into its derivative ad its `first-level' residues. The next two lemmas show how maximal subelements can be reconstructed from these residues. This allows us to iterate the decomposition along the residual derivative sequence.}

\begin{lemma}\label{lemma.formula.maximal}
    For every $x \in L$ and every $m \in \mathcal{M}(x)$, we have: 
    \[m = \mu(x) \vee \left(\underset{\underset{n \neq m}{n \in \mathcal{M}(x)}}{\bigvee} (x-n)\right).\]
\end{lemma}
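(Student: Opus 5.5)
Set $w := \mu(x) \vee \bigvee_{n \in \mathcal{M}(x),\, n \neq m} (x-n)$. The plan is to prove $w \le m$ and $m \le w$ separately. Here $\mathcal{M}=\mathcal{M}_L$, so $H=L$ is itself a lattice, and Lemma~\ref{lemma.cover.max} and Lemma~\ref{lemma.max.of.max} both apply.

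\emph{The inequality $w \le m$.} First, $\mu(x) \le m$ by the definition of $\mu(x)$ as a meet over $\mathcal{M}(x)$. Next, for $n \neq m$ in $\mathcal{M}(x)$, Lemma~\ref{lemma.cover.max} gives $m \vee n = x$, so $m$ belongs to the set $\{y \in \mathop{\downarrow}x : n \vee y = x\}$. This set is the one whose meet defines $x - n$, hence $x - n \le m$. Taking joins, $w \le m$. (If $\mathcal{M}(x)=\{m\}$, the join over $n \neq m$ is empty, hence equal to $\varepsilon$, and this inequality is trivial.)

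\emph{The inequality $m \le w$.} We use dual infinite distributivity, as in Lemma~\ref{lemma.sum.complements}. Put $r := \bigvee_{n \neq m}(x-n)$. Then
\[
m \wedge w = m \wedge \Bigl(r \vee \bigwedge_{n \in \mathcal{M}(x)} n\Bigr) = m \wedge \bigwedge_{n \in \mathcal{M}(x)} (r \vee n).
\]
For $n \neq m$ we have $r \ge x-n$, so Lemma~\ref{lemma.prop.fund.cosubstr} gives $r \vee n \ge (x-n) \vee n = x$. Hence all factors with $n \neq m$ equal $x$, and the expression reduces to $m \wedge (r \vee m) = m$. Thus $m \le w$.

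Combining the two inequalities gives $m = w$. The main point needing care is the bookkeeping in the last computation: the meet runs over all of $\mathcal{M}(x)$ including $m$, and the factor for $m$ is harmless because we intersect with $m$ anyway. The degenerate case where $m$ is the only maximal subelement also needs a check: then $\mu(x)=m$ and $r=\varepsilon$, so the formula reads $m = m \vee \varepsilon$, which holds. No compactness or choice hypotheses are needed, only the coframe structure.
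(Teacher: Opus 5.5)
Your proof is correct, and it takes a shorter route than the paper's.

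\textbf{The paper's route.} It sets $m^* := w$ and introduces the auxiliary element $m' := m^* \vee \mu(x-m)$. It first shows $m' \le m$ using Lemma~\ref{lemma.complement.type1}. It then proves that $m'$ is itself a maximal subelement of $x$, by a case analysis on $z \wedge (x-m)$ for $m' \le z \le x$. This step relies on $x-m$ being completely co-irreducible (unique maximal subelement $\mu(x-m)$, no outcast), together with Lemma~\ref{lemma.cover.complements} and finite distributivity. Maximality then forces $m' = m$. A second step shows $\mu(x-m) \le \mu(x)$, so the extra term $\mu(x-m)$ can be dropped.

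\textbf{Your route.} You apply dual infinite distributivity once to write $w = \bigwedge_{n \in \mathcal{M}(x)} (r \vee n)$. Every factor with $n \neq m$ equals $x$, and the factor for $m$ is $r \vee m = m$ by your first half. So in fact you get $w = m$ directly. This uses only the definition of $x-n$, Lemma~\ref{lemma.prop.fund.cosubstr} and Lemma~\ref{lemma.cover.max}.

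\textbf{What each buys.} Your argument never uses the residue structure of Lemma~\ref{lemma.complement.type1}, and it proves more. For any family $S \subseteq \mathop{\downarrow}x$ whose distinct members pairwise join to $x$, each $s \in S$ satisfies $s = \bigwedge S \vee \bigvee_{t \in S \setminus \{s\}} (x-t)$. This is essentially Lemma~\ref{lemma.sum.complements} with one index removed. The paper's detour gives, as by-products, the inequality $\mu(x-m) \le \mu(x)$ and a template for verifying that a given join is a maximal subelement of $x$. The paper reuses that template later (Lemma~\ref{lemma.surface.tree} and the proof that $\mu$ is a $\vee$-homomorphism), where one must establish maximality rather than identify a known maximal element. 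Neither by-product is needed for the statement itself. Your reference to Lemma~\ref{lemma.max.of.max} is superfluous, since it is never used.
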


\begin{proof} For every $m \in \mathcal{M}(x)$, 
set the two following elements: 
 \[m^* := \mu(x) \vee \left(\underset{\underset{n \neq m}{n \in \mathcal{M}(x)}}{\bigvee} (x-n)\right) \quad \text{and} \quad m' := m^* \vee \mu(x-m).\]
 We prove the statement in two steps: first, we prove that for every $m$, we have $m = m'$; and hen that $m=m^*$.
 \textbf{1.} 
    By definition of $\mu(x)$, we have $\mu(x) \le m$. We also have $x-n \le m $ for all $n \in \mathcal{M}(x)$ such that $n \neq m$---this comes from the definition of $x-n$ and the fact that $m \vee n = x$. Because $m$ is a maximal subelement of $x$, we have $ \mu(x-m) \le m$ by Lemma \ref{lemma.complement.type1}. This implies that $m \ge m'$. In order to finish the proof, it suffices to see that $m'$ is a maximal subelement of $x$. Let us consider some element $z \in L$ such that $m' \le z \le x$.
    If we have $z \wedge (x-m) = (x-m)$, then $x-m \le z$, and since $m' \le z$, we have $(x-m) \vee m' \le z$.
    By definition of $m'$, we have 
    $(x-m)\vee m' = m^* \vee (x-m)$. By definition of $m^*$ and Lemma \ref{lemma.cover.complements}, 
    we have $(x-m) \vee m' = x$, which implies that $z = x$. Otherwise, 
    we have $z \wedge (x-m) < (x-m)$. 
    Since $\mu(x-m) \le m' \le z$, 
    we have $z \wedge (x-m) \ge \mu(x-m)$, and thus $z \wedge (x-m) = \mu(x-m)$. By distributivity and Lemma \ref{lemma.cover.complements}, we have $z = z \wedge x = z \wedge (m^* \vee (x-m)) = (z \wedge m^*)\vee (z \wedge (x-m))$. 
    It follows from $m^* \le m' \le z$ that $z \wedge m^* = m^*$. From $z \wedge (x-m) = \mu(x-m)$, we obtain that $z = m'$. This proves that $m'$ is maximal and since $m' \le m$ and $m$ is maximal, we conclude that $m = m'$. \noindent \textbf{2.} As a consequence of the first point,
    \[\left(\underset{{n \in \mathcal{M}(x)}}{\bigvee} \mu(x-n)\right) \le m\]
    for every maximal subelement \(m\) of $x$. Thus: 
    \[\left(\underset{{n \in \mathcal{M}(x)}}{\bigvee} \mu(x-n)\right) \le \mu(x).\]
    In particular, for $n \in \mathcal{M}(x)$, it holds that $\mu(x-n) \le \mu(x)$. By using the first point again, for every $m \in \mathcal{M}(x)$ we have that: 
    \[m = \mu(x) \vee \left(\underset{\underset{n \neq m}{n \in \mathcal{M}(x)}}{\bigvee} (x-n)\right).\]
\end{proof}


\begin{lemma}\label{lemma.surface.tree}
    For $x \in L$ and $m \in \mathcal{M}(\mu(x))$, we have $\mu(x) - m \le \underset{n \in \mathcal{M}(x)}{\bigvee} (x-n)$. Furthermore, when $\mathcal{M}(x)$ is finite, there exists $n$ such that $\mu(x)-m \le x-n$.
\end{lemma}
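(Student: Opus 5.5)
Write $R := \bigvee_{n \in \mathcal{M}(x)} (x-n)$. The plan is to exploit the decomposition $x = \mu(x) \vee R$ of Lemma \ref{lemma.cover.complements}, in the case where $\mathcal{M}(x)$ is nonempty. If $\mathcal{M}(x)$ is empty, then $\mu(x) = x$ and $R = \varepsilon$. This degenerate case has to be handled, or excluded by convention, separately. I will assume $\mathcal{M}(x) \neq \emptyset$ in the main argument.

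For the first claim, fix $m \in \mathcal{M}(\mu(x))$ and consider the element $m \vee R$. Since $m \le \mu(x) \le x$ and $R \le x$, we have $m \vee R \le x$. The goal is to show that
\[
\mu(x) \wedge (m \vee R) = \mu(x).
\]
Once this holds, the inequality follows directly from the definition of the co-Heyting subtraction, which is a meet over all $y \le \mu(x)$ with $m \vee y = \mu(x)$. Indeed, distributivity gives
\[
\mu(x) = (\mu(x)\wedge m) \vee (\mu(x) \wedge R) = m \vee (\mu(x)\wedge R),
\]
so $\mu(x)\wedge R$ is one of the elements in that meet. Hence $\mu(x) - m \le \mu(x)\wedge R \le R$.

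So everything reduces to showing $\mu(x) \le m \vee R$. Suppose not. Then $\mu(x) \wedge (m\vee R)$ lies between $m$ and $\mu(x)$ and is different from $\mu(x)$, so by maximality of $m$ it equals $m$, i.e.\ $\mu(x) \wedge R \le m$. I then want to derive a contradiction from the assumption that $m \vee R$ is a proper element of $\mathop{\downarrow} x$ containing $R$. The natural tool is Lemma \ref{lemma.formula.maximal}. Every maximal subelement $n'$ of $x$ satisfies
\[
n' = \mu(x) \vee \bigvee_{n \neq n'} (x-n).
\]
One can check that $m \vee R$, joined with $\mu(x)$, gives $x$. I also expect $m \vee R$ to lie below no maximal subelement $n'$, because it contains $x - n'$ and $n' \vee (x-n') = x$ by Lemma \ref{lemma.prop.fund.cosubstr}. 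So $m \vee R$ would be a proper subelement of $x$ lying below no element of $\mathcal{M}(x)$, that is, an outcast. This is the main obstacle: I must show that this forces $m \vee R = x$. My first attempt will use dual algebraicity. I will write $m \vee R$ as a filtered meet of dually compact elements, and use the fact that below a dually compact element, any strictly smaller element is dominated by a maximal subelement, obtained by a Zorn-type argument via Lemma \ref{lemma.upper.complete.ideals}. This should push a maximal subelement above $m \vee R$, which is a contradiction. If that fails, I would instead try to show directly that $\mu(x) \wedge R \not\le m$, by comparing $\mu(x)-m$ with $\mu(x-n)$, using the inequality $\mu(x-n) \le \mu(x)$ obtained inside the proof of Lemma \ref{lemma.formula.maximal}.

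For the finite case, suppose $\mathcal{M}(x)$ is finite. The first part gives $\mu(x)-m \le \bigvee_n (x-n)$, which is a finite join. I will then use the fact that $\mu(x) - m$ has a unique maximal subelement and no outcast, by Lemma \ref{lemma.complement.type1}, so it is join-irreducible. By finite distributivity,
\[
\mu(x)-m = \bigvee_n \big((\mu(x)-m)\wedge(x-n)\big).
\]
Join-irreducibility then forces $\mu(x)-m = (\mu(x)-m)\wedge(x-n)$ for some $n$, which gives $\mu(x)-m \le x-n$.
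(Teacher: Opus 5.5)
Your reduction is correct: $\mu(x)-m\le R$ is equivalent to $\mu(x)\le m\vee R$, and if this fails then $m\vee R$ is a proper subelement of $x$ lying below no $n'\in\mathcal{M}(x)$, i.e.\ an outcast of $x$. The case $\mathcal{M}(x)=\emptyset$ needs no separate treatment, because then $\mu(x)=x$ and $\mathcal{M}(\mu(x))=\emptyset$, so the statement is vacuous. Your finite-case argument via Lemma~\ref{lemma.complement.type1} is the paper's.

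\textbf{The gap.} The step you flag as the main obstacle is not proved, and the route you propose for it fails. It is false that, in a dual algebraic coframe, every proper subelement of a dually compact element lies below a maximal subelement. Take the lattice of closed subsets of the Cantor space, a dual algebraic coframe whose dually compact elements are the clopen sets. The whole space is dually compact, has no maximal subelement (it has no isolated point), so $\varepsilon=\emptyset$ is an outcast of it. The Zorn argument breaks because dual compactness controls filtered meets, not joins of chains: a chain of proper subelements can have join equal to the element. More generally, Lemma~\ref{lemma.char.outcasts} shows outcasts exist whenever $\partial x<x$. So no general principle excludes your outcast $m\vee R$; you must use its specific form. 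Your fallback (comparing with $\mu(x-n)$) is not yet an argument.

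\textbf{The missing idea.} Under your assumption, $m\vee R$ is itself a maximal subelement of $x$.
\begin{itemize}
\item By Lemmas~\ref{lemma.cover.complements} and~\ref{lemma.prop.fund.cosubstr}, $x=\mu(x)\vee R=(m\vee R)\vee(\mu(x)-m)$.
\item So for $m\vee R\le z\le x$, distributivity gives $z=(m\vee R)\vee\bigl(z\wedge(\mu(x)-m)\bigr)$.
\item By Lemma~\ref{lemma.complement.type1} applied to $\mu(x)$ and $m$, every proper subelement of $\mu(x)-m$ lies below $\mu(\mu(x)-m)\le m$.
\item Hence either $z\wedge(\mu(x)-m)=\mu(x)-m$ and $z=x$, or $z=m\vee R$.
\end{itemize}
Thus $m\vee R\in\mathcal{M}(x)$. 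Your own observation then finishes: $x-(m\vee R)\le R\le m\vee R$ and $x=(m\vee R)\vee\bigl(x-(m\vee R)\bigr)$ force $m\vee R=x$, a contradiction. This is exactly the paper's proof.
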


\begin{proof}
    We proceed by contradiction. Suppose that there exists $m \in \mathcal{M}(\mu(x))$ 
    such that $\mu(x) - m \not \le \underset{n \in \mathcal{M}(x)}{\bigvee} (x-n)$.
    Then 
    \[m':= m \vee \left( \underset{n \in \mathcal{M}(x)}{\bigvee} (x-n)\right)\] is a maximal subelement of $x$.
    Indeed, we have $m' < x$. Otherwise, by distributivity, \[(\mu(x)-m) = (m \wedge (\mu(x)-m)) \vee \left( \left(\underset{n \in \mathcal{M}(x)}{\bigvee} (x-n)\right) \wedge (\mu(x)-m)\right),\]
    and since $(m \wedge (\mu(x)-m)) \le \mu(\mu(x)-m)$, we must have 
    \[\left(\underset{n \in \mathcal{M}(x)}{\bigvee} (x-n)\right) \wedge (\mu(x)-m) = \mu(x)-m.\]
    that is, $\mu(x)-m \le \bigvee_{n \in \mathcal{M}(x)} (x-n)$, which was assumed false. Using Lemma \ref{lemma.cover.complements} on $x$ and $\mu(x)$, for any $z \le x$ that satisfies $z \ge m'$, $z \wedge (\mu(x)-m) = \mu(x)-m$ implies $z =x$ and $z \wedge (\mu(x)-m) < \mu(x)-m$ implies $z= m'$. We have proved that $m'$ is a maximal subelement of $x$. From the definition of $m'$, we have $(x-m') \le m'$, and thus $m' = x$, which is impossible.  
    When $\mathcal{M}(x)$ is finite we thus have, since $L$ is distributive: 
    \[z-m = \underset{n \in \mathcal{M}(x)}{\bigvee} ((x-n) \wedge (z-m)).\]
    Since elements of $\mathop{\downarrow}(z-m)$ different from $z-m$ are smaller than $\mu(z-m)$ (by Lemma \ref{lemma.complement.type1}), there is some $n$ such that $(x-n) \wedge (z-m) = (z-m)$, which means that $(z-m) \le (x-n)$.
\end{proof}

{We can now iterate the decomposition of Lemma~\ref{lemma.cover.complements} through the residual stratification. This yields a decomposition of every element into its residual core and its residues.}


\begin{proposition}\label{lemma.core.complements}
    For every $x \in L$, we have $x = \texttt{c}(x) \vee \left( \underset{n \in \mathcal{M}(x)}{\bigvee} (x-n)\right)$.
\end{proposition}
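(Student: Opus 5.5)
We argue by transfinite induction along the residual derivative sequence $(x^{(\alpha)})_\alpha$ of $x$, with $H=L$. The claim to establish for every ordinal $\alpha$ is
\[
x = x^{(\alpha)} \vee R, \qquad R := \bigvee_{n \in \mathcal{M}(x)} (x-n).
\]
Taking $\alpha = \texttt{r}(x)$, which exists by Theorem \ref{thm:existence.rank}, then gives the statement, since $x^{(\texttt{r}(x))} = \texttt{c}(x)$.

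\textbf{Base and successor cases.} The case $\alpha=0$ is trivial, and $\alpha=1$ is exactly Lemma \ref{lemma.cover.complements}. For the successor step, assume $x = x^{(\alpha)} \vee R$ and write $y := x^{(\alpha)}$. Lemma \ref{lemma.cover.complements} applied to $y$ gives
\[
y = \mu(y) \vee \bigvee_{m \in \mathcal{M}(y)} (y-m).
\]
It therefore suffices to show that every residue $y-m$ with $m\in\mathcal{M}(y)$ lies below $R$. Substituting then yields $x = \mu(y)\vee R = x^{(\alpha+1)}\vee R$.

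\textbf{The key claim.} We must show $y - m \le R$ for all $m \in \mathcal{M}(x^{(\alpha)})$. Lemma \ref{lemma.surface.tree} gives this directly at the first level, namely $\mu(x)-m \le R$ for $m\in\mathcal{M}(\mu(x))$. To reach deeper levels, I would prove the stronger statement $x^{(\beta)} - m \le R$ for all $\beta$ and all $m \in \mathcal{M}(x^{(\beta)})$, by imitating the contradiction argument of Lemma \ref{lemma.surface.tree}. Suppose $y-m \not\le R$ and set $m' := m \vee R \vee \bigvee_{\gamma<\beta}(\text{residues at level }\gamma)$; by the inductive hypothesis this is just $m\vee R$. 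Then:
\begin{itemize}
\item Distributivity, together with the fact from Lemma \ref{lemma.complement.type1} that every proper subelement of $y-m$ lies below $\mu(y-m)\le m$, shows $m' < x$.
\item The decomposition $x = y\vee R$ shows that any $z$ with $m'\le z\le x$ equals $m'$ or $x$, so $m'\in\mathcal{M}(x)$.
\item But then $x - m' \le R \le m'$ by the definition of $R$, so Lemma \ref{lemma.prop.fund.cosubstr} forces $m'=x$, a contradiction.
\end{itemize}
The step I expect to be the main obstacle is the maximality of $m'$: one has to redo the case split on whether $z\wedge(y-m)$ equals $y-m$ or lies below $\mu(y-m)$, now with $y=x^{(\beta)}$ rather than $\mu(x)$. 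The decomposition $x=y\vee R$ from the induction hypothesis plays the role that Lemma \ref{lemma.cover.complements} plays in Lemma \ref{lemma.surface.tree}.

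\textbf{Limit case.} For a limit $\lambda$ we have $x^{(\lambda)}=\bigwedge_{\alpha<\lambda}x^{(\alpha)}$. The dual infinite distributivity of the coframe gives
\[
x^{(\lambda)}\vee R=\bigwedge_{\alpha<\lambda}\bigl(x^{(\alpha)}\vee R\bigr)=\bigwedge_{\alpha<\lambda} x = x,
\]
which closes the induction.
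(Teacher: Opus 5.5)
Your proof is correct and follows the route the paper indicates but does not write out: iterate Lemma~\ref{lemma.cover.complements} along the residual derivative sequence, absorb each new stratum of residues into $\partial x$ by the contradiction argument of Lemma~\ref{lemma.surface.tree}, and pass limit stages by dual infinite distributivity. Your observation that this argument needs only the identity $x = y \vee \partial x$, not $y=\mu(x)$, is what makes the successor step go through after limit ordinals, where Lemma~\ref{lemma.surface.tree} cannot be applied verbatim.
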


This core-residues decomposition plays a role analogous to the decomposition of topological spaces into a perfect kernel and isolated layers.

\subsection{\label{section.def.boundary.poset}The boundary poset}

{In this section, we see that the residues of the successive derivatives of an element form a hierarchical structure, each level of this hierarchy corresponding to an element of the residual derivative sequence. We begin by introducing the boundary of an element and the associated boundary poset, which records all residues appearing throughout its residual stratification.
\begin{definition}
For every \(x\in L\), the \textbf{boundary} of \(x\) is the element 
\[\partial x:=\bigvee_{m\in\mathcal{M}(x)}(x-m).\]
\end{definition}

\begin{remark}By Proposition~\ref{lemma.core.complements}, we have \(x=\texttt{c}(x)\vee\partial x\).\end{remark}}

\begin{lemma}\label{lemma.char.outcasts}
    For $x \in L$, the following conditions are equivalent: (i) $x$ has an outcast; (ii) $\texttt{c}(x) \not \le \partial x$; (iii) $\partial x < x$. When these conditions are satisfied, the outcasts of $x$ are exactly the elements of $\mathop{\uparrow} (\partial x) \setminus \{x\}$.
\end{lemma}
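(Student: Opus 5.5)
The plan is to prove the cycle (ii)$\Leftrightarrow$(iii), (iii)$\Rightarrow$(i), (i)$\Rightarrow$(iii). The description of the outcasts will come out of the last two steps. Throughout, $H=L$, so an outcast of $x$ is an element $z<x$ lying below no element of $\mathcal{M}(x)$. The main tool is the remark following the definition of the boundary, namely $x=\texttt{c}(x)\vee\partial x$ (Proposition~\ref{lemma.core.complements}).

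\textbf{(ii)$\Leftrightarrow$(iii).} By construction $\partial x\le x$. If $\texttt{c}(x)\le\partial x$, then $x=\texttt{c}(x)\vee\partial x=\partial x$, so (iii) fails. Conversely, if $\partial x=x$, then $\texttt{c}(x)\le x=\partial x$, so (ii) fails. Hence (ii) and (iii) are equivalent.

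\textbf{(iii)$\Rightarrow$(i), and $\mathop{\uparrow}(\partial x)\setminus\{x\}$ consists of outcasts.} Assume $\partial x<x$ and let $z$ satisfy $\partial x\le z<x$. We show that $z\not\le m$ for every $m\in\mathcal{M}(x)$. Suppose instead $z\le m$. Then $x-m\le\partial x\le z\le m$. Lemma~\ref{lemma.prop.fund.cosubstr} then gives $x=m\vee(x-m)=m$, contradicting $m<x$. So every such $z$ is an outcast; in particular $\partial x$ itself is one.

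The degenerate case $\mathcal{M}(x)=\emptyset$ needs a separate check. Here $\partial x=\varepsilon$ as an empty join, and the condition "below no maximal subelement" is vacuous. So every $z<x$ is an outcast, which is consistent with the claim.

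\textbf{(i)$\Rightarrow$(iii), and every outcast lies in $\mathop{\uparrow}(\partial x)$.} Let $z$ be an outcast of $x$ and fix $m\in\mathcal{M}(x)$. Since $z\not\le m$, we have $m<m\vee z\le x$. Maximality of $m$ then forces $m\vee z=x$. Hence $z$ belongs to the set $\{y\in\mathop{\downarrow}x : m\vee y=x\}$ whose meet defines $x-m$, and so $x-m\le z$. Taking the join over all $m\in\mathcal{M}(x)$ gives $\partial x\le z<x$. Therefore $\partial x<x$, and $z\in\mathop{\uparrow}(\partial x)\setminus\{x\}$.

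Together with the previous step, this shows that the outcasts are exactly the elements of $\mathop{\uparrow}(\partial x)\setminus\{x\}$.

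No step is a serious obstacle. The only points needing care are that the strictness $z<x$ is what makes $m\vee z=x$ follow from maximality, and that the empty case $\mathcal{M}(x)=\emptyset$ must be treated separately as above.
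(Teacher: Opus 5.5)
Your proof is correct. The steps (ii)$\Leftrightarrow$(iii) and (iii)$\Rightarrow$(i) are essentially the paper's. The paper shows that $\partial x$ is an outcast and deduces that every $z$ with $\partial x\le z<x$ is one; you argue for each such $z$ directly.

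Where you genuinely differ is (i)$\Rightarrow$(iii), together with the inclusion of the outcasts in $\{z:\partial x\le z<x\}$.
\begin{itemize}
\item The paper argues contrapositively. If $\partial x\not\le z$, then $z\wedge(x-m)<x-m$ for some $m\in\mathcal{M}(x)$. Hence $z\wedge(x-m)\le\mu(x-m)\le m$ by Lemma~\ref{lemma.complement.type1}, and distributivity with $x=m\vee(x-m)$ yields $z\le m$.
\item You instead observe that an outcast $z$ satisfies $m\vee z=x$ for every $m\in\mathcal{M}(x)$, by maximality of $m$. So $z$ belongs to the set whose meet defines $x-m$, which gives $x-m\le z$ at once.
\end{itemize}
Your route is shorter and gets (i)$\Rightarrow$(iii) and the inclusion in one stroke. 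It uses neither distributivity nor the structure of residues from Lemma~\ref{lemma.complement.type1}, so that direction holds in any complete lattice with this definition of subtraction. The paper's route instead leans on the residue machinery it needs elsewhere.

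Three small remarks:
\begin{itemize}
\item The separate treatment of $\mathcal{M}(x)=\emptyset$ is unnecessary, since both of your general arguments cover it vacuously.
\item In your closing comment, $m\vee z=x$ follows from $z\le x$ and $z\not\le m$ alone. Strictness $z<x$ is needed only to conclude $\partial x<x$.
\item Like the paper, you read $\mathop{\uparrow}(\partial x)\setminus\{x\}$ as $\{z:\partial x\le z<x\}$. Taken literally, the set would also contain elements not below $x$, so this is the intended meaning.
\end{itemize}
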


\begin{proof}
    $(ii) \Rightarrow (iii)$. If $\texttt{c}(x) \not \le \partial x$, since $x = \texttt{c}(x) \vee \partial x$, we must have $\partial x < x$. 
    $(iii) \Rightarrow (i)$. If $\partial x < x$, $\partial x$ is an outcast: if there was any $m \in \mathcal{M}(x)$ such that $\partial x \le m$, we would have $(x-m) \le m$  {and since $x=(x-m)\vee m$, we obtain $m = x$, a contradiction.}
    $(i) \Rightarrow (iii)$ If $\partial x = x$, then for all $z < x$, 
    there must be some $m \in \mathcal{M}(x)$ such that $z \wedge (x-m) < (x-m)$ (otherwise we would have $z = x$), which implies that $z \le m$. This means that $x$ has no outcast. $(iii) \Rightarrow (ii)$. If $\texttt{c}(x) \le \partial x$, since $x =\texttt{c}(x) \vee \partial x$, we get $x = \partial x$.  
    Assume that these conditions are satisfied. For all $z < x$ such that 
    $z \ge \partial x$, $z$ is an outcast because $\partial x$ is an outcast. For all $z < x$ such that $z \not \ge \partial x$, there exists $m$ such that $z \wedge (x-m) < (x-m)$, which means that $z \le m$, and $z$ is not an outcast of $x$.
\end{proof}

\begin{remark}
    It follows from Lemma \ref{lemma.char.outcasts} that any $x \in L$ with $\texttt{c}(x) = \varepsilon$ has no outcast.
\end{remark}

\begin{definition}
    For $x \in L$  and an ordinal $\alpha$, we define $\alpha$-th \textbf{stratum} of $x$ to be the set
    \[\texttt{s}_{\alpha}(x) := \{x^{(\alpha)} - m : m \in \mathcal{M}(x^{(\alpha)})\}.\]
    We also define the \textbf{boundary poset} of $x$ as the set $\delta(x) := \bigcup_{\alpha} \texttt{s}_{\alpha}(x)$.
\end{definition}

\begin{remark}
     Observe that by definition we have that $\texttt{s}_{\alpha}(x) = \emptyset$ whenever $\alpha \geq \texttt{r}(x)$. Observe as well that for any $\alpha$, $\bigvee \texttt{s}_{\alpha}(x) = \partial x^{(\alpha)}$.
\end{remark}

{
The boundary poset consists in all residues that arise during the construction of the residual derivative sequence of \(x\), grouped according to the ordinal at which they appear. We first prove that the strata make it a transfinite ranked poset.}


For  $x \in L$, the residues of $x$ form a ``minimal cover'' of $\delta x$, in the following sense.

\begin{lemma}\label{lemma.noncomparable}
    For all $x \in L$ and $m \in \mathcal{M}(x)$: \[(x-m) \not \le \left(\bigvee_{\underset{n \neq m}{n \in \mathcal{M}(x)}} (x-n) \right).\]
\end{lemma}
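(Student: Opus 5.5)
Assume, for contradiction, that $(x-m) \le \bigvee_{n \neq m} (x-n)$ for some $m \in \mathcal{M}(x)$, and call the right-hand side $y$. The plan is to show that $y \le m$, and then use Lemma \ref{lemma.prop.fund.cosubstr} to conclude $x = m$.

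First, I check that $x - n \le m$ for every $n \in \mathcal{M}(x)$ with $n \neq m$. This is the observation already used in the proof of Lemma \ref{lemma.formula.maximal}. By Lemma \ref{lemma.cover.max} (applied with $H = L$), $m \vee n = x$. Hence $m \in \mathop{\downarrow}x$ belongs to the set $\{z \in \mathop{\downarrow}x : n \vee z = x\}$, whose meet defines $x - n$. Therefore $x - n \le m$. Taking the join over all $n \neq m$ gives $y \le m$. This join exists by Lemma \ref{lemma.upper.complete.ideals}, since all the terms lie in $\mathop{\downarrow}x$.

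Next, combine this with the hypothesis $(x-m) \le y$ to get $(x-m) \le m$. Then Lemma \ref{lemma.prop.fund.cosubstr} gives
\[x = m \vee (x-m) = m,\]
which contradicts $m < x$, since $m$ is a maximal (hence strict) subelement of $x$. This proves the inequality in the statement.

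An alternative, essentially equivalent route goes through Lemma \ref{lemma.formula.maximal}. It gives $m = \mu(x) \vee y$, so $y \le m$ directly, and the same contradiction follows. I do not expect a genuine obstacle. The only points needing care are the existence of the join defining $y$ (handled by Lemma \ref{lemma.upper.complete.ideals}) and the degenerate case where $m$ is the unique maximal subelement. In that case $y$ is the empty join, which equals $\varepsilon$, and the claim $(x-m) \not\le \varepsilon$ holds because $x - m = \varepsilon$ would force $x = m \vee \varepsilon = m$.
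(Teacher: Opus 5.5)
Your proof is correct, and it takes a somewhat more elementary route than the paper. The paper's proof is a one-liner deducing the statement from Lemma~\ref{lemma.formula.maximal}: since $m = \mu(x)\vee\bigvee_{n\neq m}(x-n)$, the assumption $(x-m)\le\bigvee_{n\neq m}(x-n)$ gives $(x-m)\le m$, hence $x = m\vee(x-m) = m$.

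Your main argument reaches the same contradiction through the same final step (Lemma~\ref{lemma.prop.fund.cosubstr}). However, it uses only the inequality $\bigvee_{n\neq m}(x-n)\le m$, which you derive directly from Lemma~\ref{lemma.cover.max} and the definition of the co-Heyting subtraction. This inequality is exactly the easy half of Lemma~\ref{lemma.formula.maximal}, observed at the start of its proof.

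Your version is therefore independent of the full equality in Lemma~\ref{lemma.formula.maximal}. That equality needs Lemma~\ref{lemma.complement.type1}, Lemma~\ref{lemma.cover.complements} and a two-step maximality argument. The paper's version gains only brevity, since that lemma is already available at that point.

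Your separate treatment of the case $\mathcal{M}(x)=\{m\}$ is harmless but unnecessary. The empty join is $\varepsilon\le m$, so the main argument already covers it.
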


\begin{proof}
    This derives from Lemma \ref{lemma.formula.maximal}: if this was false, we would have $m = x$, which is not possible.
\end{proof}




\begin{definition}
    We call transfinite ranked poset a triple $(X,\le,\rho)$ 
    such that $(X,\le)$ is a poset and $\rho : X \rightarrow \lambda$, where $\lambda$ is an ordinal, such that for all $x,y \in X$, if $x < y$, $\rho(x) < \rho(y)$.
\end{definition}

\begin{notation}
    For $x \in L$, we denote by $\rho_x$ the function $\delta(x) \rightarrow \texttt{r}(x)$ such that 
    for every $s \in \delta(x)$, $\rho_x(s)$ is the unique $\alpha < \texttt{r}(x)$ such that $s \in \texttt{s}_{\alpha}(x)$.
\end{notation}

\begin{proposition}
    The triplet $(\delta(x),\ge,\rho_x)$ is a transfinite ranked poset.
\end{proposition}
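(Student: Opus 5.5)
The plan is to use one elementary fact about residues repeatedly: a residue of an element never lies below the maximal subelement that produces it. Concretely, if $y\in L$, $n\in\mathcal{M}(y)$ and $t\le n$ with $t$ the residue $y-n$, then by Lemma~\ref{lemma.prop.fund.cosubstr} we get $y=n\vee(y-n)=n$, contradicting $n<y$. Hence $y-n\not\le n$. I will also use the monotonicity of the residual derivative sequence shown in the proof of Theorem~\ref{thm:existence.rank}: for $\alpha<\beta$ we have $x^{(\beta)}\le x^{(\alpha+1)}=\mu(x^{(\alpha)})\le m$ for every $m\in\mathcal{M}(x^{(\alpha)})$.

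\textbf{Step 1: $\rho_x$ is well defined.} Suppose some $s$ lies in both $\texttt{s}_\alpha(x)$ and $\texttt{s}_\beta(x)$ with $\alpha<\beta$. Write $s=x^{(\alpha)}-m$ with $m\in\mathcal{M}(x^{(\alpha)})$. Since $s\in\texttt{s}_\beta(x)$, we have $s\le x^{(\beta)}\le x^{(\alpha+1)}\le m$, which contradicts the basic fact. So each $s\in\delta(x)$ lies in exactly one stratum. That stratum has index $<\texttt{r}(x)$, since $\texttt{s}_\alpha(x)=\emptyset$ for $\alpha\ge\texttt{r}(x)$.

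\textbf{Step 2: strictness of the ranking.} Strict order in the poset $(\delta(x),\ge)$ means $s>t$ in $L$, so I must show that $t<s$ in $L$ implies $\rho_x(s)<\rho_x(t)$. Let $s=x^{(\alpha)}-m\in\texttt{s}_\alpha(x)$ and $t=x^{(\beta)}-n\in\texttt{s}_\beta(x)$ with $t<s$. I exclude the two cases other than $\alpha<\beta$.
\begin{itemize}
\item If $\beta<\alpha$, then $t<s\le x^{(\alpha)}\le x^{(\beta+1)}\le n$, so the residue $t$ lies below its own maximal subelement $n$. This is impossible.
\item If $\beta=\alpha$, set $y:=x^{(\alpha)}$. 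Then $m\neq n$, since otherwise $s=t$. By Lemma~\ref{lemma.cover.max}, $m\vee n=y$, so $n$ belongs to the set $\{z\in\mathop{\downarrow}y : m\vee z=y\}$ whose meet defines $y-m$. This gives $s=y-m\le n$, exactly as in the first step of the proof of Lemma~\ref{lemma.formula.maximal}. Then $t<s\le n$, again contradicting the basic fact.
\end{itemize}
Hence $\alpha<\beta$, that is, $\rho_x(s)<\rho_x(t)$ as required.

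The only step needing care is the case $\beta=\alpha$, where the inequality $y-m\le n$ for distinct maximal subelements must be justified. This follows directly from the definition of the co-Heyting subtraction together with Lemma~\ref{lemma.cover.max}, so I do not expect a real obstacle. Everything else reduces to the monotonicity $x^{(\beta)}\le\mu(x^{(\alpha)})$ for $\alpha<\beta$ and the basic fact.
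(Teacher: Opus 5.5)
Your proof is correct. Its skeleton matches the paper's: a case analysis on the two ranks, and your case $\beta<\alpha$ is exactly the paper's case $\rho_x(s)<\rho_x(t)$. In both, the residue of lower rank would lie below $\mu$ of its own stratum element, hence below its own maximal subelement, which contradicts Lemma~\ref{lemma.prop.fund.cosubstr}.

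The genuine difference is the equal-rank case.
\begin{itemize}
\item The paper cites Lemma~\ref{lemma.noncomparable}. That lemma rests on the full formula of Lemma~\ref{lemma.formula.maximal}, and hence, through its proof, on Lemmas~\ref{lemma.complement.type1} and~\ref{lemma.cover.complements}.
\item You use only two facts: $y-m\le n$ for distinct $m,n\in\mathcal{M}(y)$, which follows from the definition of the co-Heyting subtraction and Lemma~\ref{lemma.cover.max}; and $y-n\not\le n$.
\end{itemize}
Together these show directly that distinct residues of the same element are pairwise incomparable, which is all the proposition needs. The paper's lemma gives the stronger ``minimal cover'' statement that no residue lies below the join of the others, but that strength is not used here. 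Your route is therefore shorter and more self-contained.

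Your Step~1 also adds something the paper omits. It verifies that the strata $\texttt{s}_\alpha(x)$ are pairwise disjoint, so that $\rho_x$ is a well-defined map into $\texttt{r}(x)$. The paper simply asserts this uniqueness when it introduces $\rho_x$ in the notation preceding the proposition.
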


\begin{proof}
    We prove that for $s,t \in \delta(x)$, if $s < t$
    then $\rho_x(s) > \rho_x(t)$. 
    {First, observe that if \(\rho_x(s)=\rho_x(t)\), then \(s\) and \(t\) are in \(\texttt{s}_{\rho_x(s)}(x)\). Therefore, there exist \(m,n\in\mathcal{M}(x^{(\rho_x(s))})\) with \(m\neq n\) and such that \(s=x^{(\rho_x(s))}-m\) and \(t=x^{(\rho_x(s))}-n\). {Since \(s< t\), we have
    \[x^{(\rho_x(s))}-m\leq x^{(\rho_x(s))}-n,\]
    which contradicts Lemma \ref{lemma.noncomparable}.}
    Hence, \(\rho_x(s)\neq\rho_x(t)\).}
    
    Now assume, ad absurdum, that $\rho_x(s)<\rho_x(t)$, and set
    $\alpha:=\rho_x(s)$. Since $t\in \texttt{s}_{\rho_x(t)}(x)$ and
    $\rho_x(t)>\alpha$, we have
    \[
        t\le x^{(\alpha+1)}=\mu(x^{(\alpha)}).
    \]
    Because $s<t$, this implies
    \[
        s\le \mu(x^{(\alpha)}).
    \]
    But $s\in \texttt{s}_\alpha(x)$, so $s=x^{(\alpha)}-m$ for some
    $m\in\mathcal M(x^{(\alpha)})$. By Lemma~\ref{lemma.prop.fund.cosubstr},
    \[
        m\vee s=x^{(\alpha)}.
    \]
    If $s\le \mu(x^{(\alpha)})$, then, since $\mu(x^{(\alpha)})\le m$, we get $s\le m$. Hence
    \[
        x^{(\alpha)}=m\vee s=m,
    \]
    contradicting $m<x^{(\alpha)}$. Therefore $\rho_x(s)<\rho_x(t)$ is impossible,
    and thus
    \[
        s<t \Longrightarrow \rho_x(s)>\rho_x(t).
    \]
\end{proof}

{Recall that a ranked poset $(X,\le,\rho)$ is said to be graded when 
for all $x$ and $y$ in $X$, if $y$ covers $x$ then $\rho(y) = \rho(x) +1$.}

\begin{question}
    Is $(\delta(x),\ge,\rho_x)$ a graded poset?
\end{question}

{This would mean that for all \(s \in \delta(x)\) and \(t \in \mathcal{M}(s)\), there exists $\alpha$ such that $s$ and $t$ belong to consecutive strata \(\texttt{s}_{\alpha}(x)\) and \(\texttt{s}_{\alpha+1}(x)\). The proposition above only establishes strict monotonicity of \(\rho_x\) along the order, which is weaker: some ordinals may be skipped. Whether the graded property holds seems to depend on finer properties of the poset \((L,\le)\).}

{The ranked poset structure on \(\delta(x)\) derives directly from its definition from the strata, which are themselves defined in terms of the residual derivative sequence 
\((x^{(\alpha)})_{\alpha<\texttt{r}(x)}\). It appears that this $\delta(x)$
can be expressed without reference 
to the residual derivative sequence.}

\begin{notation}
We denote by $\mathcal{I}(L,\le)$ the set $\{x \in L : |\mathcal{M}(x)|=1, z < x \Rightarrow z \le \mu(x)\}$, {that is, the set of completely co-irreducible elements in $\mathop{\downarrow}x$}, and by ${\delta}^+(x)$, \(x\in L\), the set of $s\in \mathcal{I}(L,\le)$ such that $s \le x$ and $s \not \le \texttt{c}(x)$.
\end{notation}

{We prove below that $\delta(x) = \delta^+(x)$, which is a key ingredient to prove that every element above $\texttt{c}(x)$ and below $x$ can be written as the join of $\texttt{c}(x)$ with elements of $\delta(x)$. Before this, we need the following lemmas.}


\begin{lemma}\label{lemma.complement.uniqueness}
    For $x \in L$ and $s \in \texttt{s}_0(x)$, there is a unique $m \in \mathcal{M}(x)$ such that $s \vee m = x$.
\end{lemma}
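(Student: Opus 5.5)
Write $s = x - m_0$ with $m_0 \in \mathcal{M}(x)$; this is possible because $\texttt{s}_0(x)$ consists exactly of the residues $x - m$ with $m\in\mathcal{M}(x)$. The proof has two parts: show that $m_0$ itself satisfies $s \vee m_0 = x$, and show that no other maximal subelement does.

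\textbf{Existence.} This follows immediately from Lemma~\ref{lemma.prop.fund.cosubstr} applied to $z = m_0 \le x$, which gives $m_0 \vee (x-m_0) = x$.

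\textbf{Uniqueness.} Take any $n \in \mathcal{M}(x)$ with $n \neq m_0$. We show $s \le n$, so that $s \vee n = n < x$.

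Since $m_0$ and $n$ are distinct maximal subelements, Lemma~\ref{lemma.cover.max} (with $H = L$, an upper semilattice) gives $m_0 \vee n = x$. So $n$ belongs to the set $\{y \in \mathop{\downarrow}x : m_0 \vee y = x\}$. By definition, $x - m_0$ is the meet of this set, hence $s = x-m_0 \le n$.

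Alternatively, the same conclusion can be read off from Lemma~\ref{lemma.formula.maximal}. That formula writes $n$ as $\mu(x)$ joined with all residues $x - n'$ for $n' \neq n$, and this join includes $x - m_0$.

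\textbf{Obstacle.} There is little difficulty here. The one point to check is that $\mathcal{M}(x)$ is nonempty, which holds because $s \in \texttt{s}_0(x)$ requires a maximal subelement $m_0$ to exist. One should also be explicit that uniqueness rests on $m_0 \vee n = x$ for distinct maximal subelements, and that this comes from the lattice (upper semilattice) structure of $L$.
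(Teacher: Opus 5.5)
Your proof is correct, but it follows a different route from the paper's.

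\textbf{The paper's route.} The paper never writes $s = x-m_0$. It takes $m_1,m_2\in\mathcal{M}(x)$ with $s\vee m_1 = s\vee m_2 = x$ and notes that $s\not\le m_i$. By the co-irreducibility of residues (Lemma~\ref{lemma.complement.type1}) this gives $s\wedge m_i\le\mu(s)$. It then uses $\mu(s)\le\mu(x)\le m_j$, which comes from the second step of the proof of Lemma~\ref{lemma.formula.maximal}. Finally, distributivity gives $m_2=(m_2\wedge s)\vee(m_2\wedge m_1)\le m_1$, and symmetrically $m_1\le m_2$.

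\textbf{Your route.} You read everything off the definition of $x-m_0$ as a meet, together with Lemma~\ref{lemma.cover.max}. Each $n\in\mathcal{M}(x)\setminus\{m_0\}$ satisfies $m_0\vee n=x$, hence $s\le n$ and $s\vee n=n\neq x$.

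\textbf{What each buys.} Your version is more elementary, since it needs neither Lemma~\ref{lemma.complement.type1} nor the inequality $\mu(x-n)\le\mu(x)$. It makes existence explicit via Lemma~\ref{lemma.prop.fund.cosubstr}, which the paper leaves implicit. It also gives the sharper fact that $m_0$ is the only maximal subelement of $x$ not lying above $s$. The paper's version, in exchange, never uses the presentation of $s$ as a residue. It only needs $s$ to be completely co-irreducible with $\mu(s)$ below the maximal subelements involved. That is the situation in the second half of the proof of Lemma~\ref{lemma.completeness}, where uniqueness is invoked for an $s\in\delta^+(x)$ not yet known to lie in $\texttt{s}_0(x)$.
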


\begin{proof}
Fix $x \in L$, $s \in \texttt{s}_0(x)$, and suppose that $m_1, m_2 \in \mathcal{M}(x)$ are such that $s \vee m_1 = x$ and $s \vee m_2 = x$. We prove that $m_1 = m_2$. We have $s \wedge m_1 \le \mu(s) \le m_2$ and $s \wedge m_2 \le \mu(s) \le m_1$. By distributivity, $m_2 = m_2 \wedge (s \vee m_1) = (m_2 \wedge s) \vee (m_2 \wedge m_1) \le m_1$. An analogous argument shows that $m_1 \le m_2$.
\end{proof}


\begin{lemma}\label{lemma.completeness}
     For every $x$, we have 
    \[\texttt{s}_0(x) = \left\{s \in {\delta}^+(x) : s \not \le \bigvee_{t \in \delta^+(x) \setminus \{s\}}t\right\},\]
    and for every $s \in \texttt{s}_0(s)$, $m=\texttt{c}(x) \vee \bigvee_{t \in \delta^+(x) \setminus \{s\}}t$ is the unique element in $\mathcal{M}(x)$ such that $s \vee m = x$.
\end{lemma}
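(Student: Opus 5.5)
We prove the two inclusions separately, and then the claim about $m$.

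\textbf{Step 1: $\texttt{s}_0(x)\subseteq\delta^+(x)$.} Let $s=x-m$ with $m\in\mathcal{M}(x)$. By Lemma~\ref{lemma.complement.type1}, $\mathcal{M}(s)$ is a singleton and $s$ has no outcast, so every $z<s$ lies below the unique maximal subelement, which is $\mu(s)$. Hence $s\in\mathcal{I}(L,\le)$. Clearly $s\le x$. If $s\le\texttt{c}(x)$, then since $\texttt{c}(x)\le\mu(x)\le m$ we would get $x=m\vee s=m$ by Lemma~\ref{lemma.prop.fund.cosubstr}, which is absurd. So $s\in\delta^+(x)$.

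\textbf{Step 2: elements of $\texttt{s}_0(x)$ are not below the join of the others.} Fix $s=x-m\in\texttt{s}_0(x)$ and put
\[m^\sharp:=\texttt{c}(x)\vee\bigvee_{t\in\delta^+(x)\setminus\{s\}}t .\]
The key claim is that every $t\in\delta^+(x)\setminus\{s\}$ satisfies $t\le m$. Take such a $t$. By distributivity and Lemma~\ref{lemma.prop.fund.cosubstr},
\[t=(t\wedge m)\vee(t\wedge s).\]
Since $t$ is completely co-irreducible, one of the two terms equals $t$.
\begin{itemize}
\item If $t\wedge m=t$, then $t\le m$, as wanted.
\item Otherwise $t\le s$. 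Since $t\neq s$, we have $t<s$, so $t\le\mu(s)\le m$ by Lemma~\ref{lemma.complement.type1}.
\end{itemize}
We also have $\texttt{c}(x)\le m$. Therefore $m^\sharp\le m$. Since $s\not\le m$, it follows that $s\not\le\bigvee_{t\neq s}t$. This gives $\subseteq$ in the displayed equality.

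\textbf{Step 3: $m^\sharp=m$.} Lemma~\ref{lemma.formula.maximal} gives $m=\mu(x)\vee\bigvee_{n\neq m}(x-n)$, and each $x-n$ with $n\neq m$ belongs to $\delta^+(x)\setminus\{s\}$ by Step 1. It remains to show $\mu(x)\le m^\sharp$.

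This is the main obstacle. The plan is to decompose $\mu(x)$ using the core-residues decomposition iterated along the derivative sequence:
\[\mu(x)=\texttt{c}(x)\vee\bigvee_{\alpha\ge1}\partial x^{(\alpha)}.\]
This is Proposition~\ref{lemma.core.complements} applied to $\mu(x)$, together with $\texttt{c}(\mu(x))=\texttt{c}(x)$. For this we need every residue of a later derivative to lie in $\delta^+(x)$. Step 1 applied to $x^{(\alpha)}$, plus the equality $\texttt{c}(x^{(\alpha)})=\texttt{c}(x)$, gives this: such a residue is co-irreducible, lies below $x$, and is not below $\texttt{c}(x)$. Each such residue differs from $s$ by the strict rank inequality proved above for the boundary poset. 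Hence $\mu(x)\le m^\sharp$, so $m^\sharp=m$.

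Uniqueness of $m$ with $s\vee m=x$ is exactly Lemma~\ref{lemma.complement.uniqueness}.

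\textbf{Step 4: the reverse inclusion.} Let $s\in\delta^+(x)$ with $s\not\le\bigvee_{t\neq s}t$. By Step 3,
\[x=\texttt{c}(x)\vee\partial x\le\texttt{c}(x)\vee\bigvee_{n}(x-n),\]
where every residue $x-n$ lies in $\delta^+(x)$. Suppose $s$ is not a residue. Then all residues are in $\delta^+(x)\setminus\{s\}$, and also $\mu(x)\le\texttt{c}(x)\vee\bigvee_{\text{deeper residues}}$. So $s\le x\le\texttt{c}(x)\vee\bigvee_{t\neq s}t$.

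To conclude $s\le\bigvee_{t\neq s}t$, meet both sides with $s$ and use distributivity:
\[s=(s\wedge\texttt{c}(x))\vee\Bigl(s\wedge\bigvee_{t\neq s}t\Bigr).\]
By co-irreducibility of $s$ one of the two terms equals $s$. The first would give $s\le\texttt{c}(x)$, which is excluded by $s\in\delta^+(x)$. So the second equals $s$, contradicting the hypothesis on $s$. Hence $s$ is a residue, that is, $s\in\texttt{s}_0(x)$.

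One caveat: if the hypothesis is read with $\texttt{c}(x)$ already absorbed into the join, the conclusion $s\le m^\sharp$ follows directly from the display above.
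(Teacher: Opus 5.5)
Your proof is correct. Your inclusion $\subseteq$ (Steps 1--2) is essentially the paper's part 1, done more carefully. The paper asserts $s\wedge t\le\mu(s)$ without ruling out $s\le t$; your case split via co-irreducibility of $t$ covers this. You also check $\texttt{s}_0(x)\subseteq\delta^+(x)$, which the paper leaves implicit.

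For the rest you take a different route.

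\textbf{The paper's route.} It proves $\supseteq$ constructively. For $s$ in the right-hand set, it shows directly that $m=\texttt{c}(x)\vee\bigvee_{t\neq s}t$ is a maximal subelement of $x$ with $s\vee m=x$. To do so it bounds $\mu(s)\le m$, which tacitly uses $\texttt{c}(s)\le\texttt{c}(x)$, a fact only proved later in Section~\ref{section.core.operator.properties}. It then checks the interval between $m$ and $x$ by hand. Since part 1 puts every residue into the right-hand set, the same construction together with Lemma~\ref{lemma.complement.uniqueness} gives the second claim at once. The paper never states that $s=x-m$. This does follow: $x-m\le s$, and $x-m\neq s$ would force $x-m\le m$.

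\textbf{Your route.} You get $\supseteq$ by a short contradiction. If $s$ is not a residue, then $x=\texttt{c}(x)\vee\partial x\le\texttt{c}(x)\vee\bigvee_{t\neq s}t$, and join-irreducibility of $s$ finishes the argument. You identify $m^\sharp=m$ separately, via Lemma~\ref{lemma.formula.maximal} together with Proposition~\ref{lemma.core.complements} applied to $\mu(x)$ and $\texttt{c}(\mu(x))=\texttt{c}(x)$.

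\textbf{Trade-off.} Your version avoids any hand verification of maximality and the forward reference to monotonicity of $\texttt{c}$, but it depends on Lemma~\ref{lemma.formula.maximal}. The paper's single construction treats both claims uniformly and exhibits the complementary maximal subelement for every $s$ in the right-hand set.

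\textbf{Citations to fix.} In Step 4, $x=\texttt{c}(x)\vee\partial x$ comes from Proposition~\ref{lemma.core.complements}, not Step 3. The residues of $\mu(x)$ differ from $s$ simply because they lie below $\mu(x)\le m$ while $s\not\le m$, not because of the rank inequality. The same observation gives $x-n\neq s$ for $n\neq m$.
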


\begin{proof}
    \textbf{1.} We prove first the inclusion $\subset$. Consider $s \in \texttt{s}_0(x)$, and $m \in \mathcal{M}(x)$ such that $s = (x-m)$. We have $s \vee m = x$.
    For all $t \in \delta^+(x) \setminus \{s\}$, we have $s \wedge t \le \mu(s)$, which means, 
    since $t = (t \wedge s) \vee (t \wedge m)$, 
    that $t \le \mu(s) \vee m \le m$. This implies that: 
    \[\bigvee_{t \in \delta^+(x) \setminus \{s\}}t \le m, \qquad \text{and thus }\qquad s \not \le \bigvee_{t \in \delta^+(x) \setminus \{s\}}t.\]
    {\textbf{2.} We prove now the inclusion $\supset$. Fix $s \in \delta^+(x)$ such that \[s \not \le \bigvee_{t \in \delta^+(x) \setminus \{s\}}t.\]
    Then $m:= \texttt{c}(x) \vee \bigvee_{t \in \delta^+(x) \setminus \{s\}}t$ is an element of $\mathcal{M}(x)$. Indeed, we know by Lemma \ref{lemma.core.complements} and point 1 that $x = \texttt{c}(x) \vee \bigvee_{t \in \delta^+(x)} t$. 
    Considering any $z$ such that $m \le z \le x$, if $s \le z$, clearly $z = x$. Otherwise $z \wedge s \le \mu(s)$. We have $\mu(s) \le \texttt{c}(s) \vee \bigvee_{t \in \texttt{s}_0(s)} t \le \texttt{c}(x) \vee \bigvee_{t \in \delta^+(x) \setminus \{s\}}t$.
    This means that $z \wedge s \le m$.
    Thus, since $z \wedge x = (z \wedge s) \vee (z \wedge m) = (z\wedge s) \vee m \le m \vee m \le m$.
    We conclude that $z = m$. 
    {We have proved that $m \vee s = x$. Uniqueness derives from Lemma \ref{lemma.complement.uniqueness}}.
    }
\end{proof}

As a consequence:

\begin{lemma}
    For $x\in L$, we have $\delta(x) = \delta^+(x)$.
\end{lemma}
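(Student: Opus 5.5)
We prove the two inclusions separately, arguing by transfinite induction along the residual derivative sequence $(x^{(\alpha)})_{\alpha}$ of $x$. The key observation is that the sets $\delta$ and $\delta^+$ behave well under passing from $x$ to $\mu(x)$: we have $\texttt{c}(\mu(x)) = \texttt{c}(x)$ (the derivative sequence of $\mu(x)$ is the tail of that of $x$), hence $\delta(x) = \texttt{s}_0(x) \cup \delta(\mu(x))$. Similarly, for limit stages, $x^{(\lambda)} = \bigwedge_{\alpha<\lambda} x^{(\alpha)}$ and $\texttt{c}(x^{(\lambda)}) = \texttt{c}(x)$. So the statement will follow if we establish the one-step identity
\[\delta^+(x) = \texttt{s}_0(x) \cup \delta^+(\mu(x))\]
for every $x$, together with $\delta^+(x) = \bigcup_{\alpha<\lambda}\delta^+(x^{(\alpha)})$-type compatibility at limits. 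Equivalently, we show directly that every $s\in\delta^+(x)$ lies in $\texttt{s}_\alpha(x)$ for the unique $\alpha$ with $s\le x^{(\alpha)}$ and $s\not\le x^{(\alpha+1)}$.

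For $\delta(x)\subseteq\delta^+(x)$: an element $s = x^{(\alpha)} - m$ with $m\in\mathcal{M}(x^{(\alpha)})$ is completely co-irreducible by Lemma~\ref{lemma.complement.type1} (unique maximal subelement, no outcast, so every $z<s$ lies below $\mu(s)$), clearly $s\le x$, and $s\not\le \texttt{c}(x)$ because $\texttt{c}(x)\le x^{(\alpha+1)}=\mu(x^{(\alpha)})\le m$ and $s\le m$ would force $x^{(\alpha)} = m\vee s = m$ via Lemma~\ref{lemma.prop.fund.cosubstr}.

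For $\delta^+(x)\subseteq\delta(x)$: fix $s\in\delta^+(x)$. Since $s\not\le\texttt{c}(x)$ and $\texttt{c}(x)=\bigwedge_\alpha x^{(\alpha)}$, the set of ordinals $\beta$ with $s\not\le x^{(\beta)}$ is nonempty; its least element $\beta$ is not $0$, and it is not a limit (else $s\le x^{(\gamma)}$ for all $\gamma<\beta$ gives $s\le x^{(\beta)}$). So $\beta=\alpha+1$, with $s\le y:=x^{(\alpha)}$ and $s\not\le\mu(y)$. We must show $s\in\texttt{s}_0(y)$. Since $s\not\le\mu(y)$, there is $m\in\mathcal{M}(y)$ with $s\not\le m$. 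By distributivity and Lemma~\ref{lemma.cover.complements}, $s = \bigl(s\wedge\mu(y)\bigr)\vee\bigvee_{n}(s\wedge(y-n))$; as $s$ is completely co-irreducible, one of these joinands equals $s$, so $s\le y-n$ for some $n$ (the first option being excluded). Then $s\le y-n$, and $s\vee n$ lies strictly above $n$ (since $s\not\le n$: otherwise $s\le y-n$ and $s \le n$ give $s\le \mu(y-n)$ by co-irreducibility of $y-n$ applied to $s\wedge n$, contradicting... ) hence $s\vee n=y$, and by minimality in the definition of co-Heyting subtraction $y-n\le s$. Thus $s=y-n\in\texttt{s}_\alpha(x)$.

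The main obstacle is the step "$s\le y-n$ and $s\not\le n$": one must pick $n$ correctly. I will handle it by choosing $n$ from the decomposition so that $s\wedge(y-n)=s$, and then arguing $s\not\le n$: if $s\le n$ then $s\le n\wedge(y-n)\le\mu(y-n)\le n$ for \emph{all} such $n$, and combined with $s\le y-n'$ for the chosen $n'$ this gives $s\le\mu(y-n')$, while Lemma~\ref{lemma.formula.maximal} (giving $\mu(y-n')\le\mu(y)$, established in its proof) yields $s\le\mu(y)$, a contradiction. When $\mathcal{M}(y)$ is infinite the joinand argument needs the complete co-irreducibility (infinite joins) rather than mere distributivity, which is exactly why $\mathcal{I}(L,\le)$ is defined via completely co-irreducible elements; I would double-check that the infinite distributive law $s\wedge\bigvee=\bigvee(s\wedge\cdot)$ is available here, and otherwise fall back on Lemma~\ref{lemma.completeness} applied to $y$.
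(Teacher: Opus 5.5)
Your overall structure is the paper's. You get $\delta(x)\subseteq\delta^+(x)$ from Lemma~\ref{lemma.complement.type1}, spelling out what the paper calls direct. You get the reverse inclusion from the least $\beta=\alpha+1$ with $s\not\le x^{(\beta)}$, which is the paper's transfinite induction phrased as a minimal counterexample. Both proofs then rest on one dichotomy: if $s\in\mathcal{I}(L,\le)$ and $s\le y$, then $s\in\texttt{s}_0(y)$ or $s\le\mu(y)$. The paper asserts this only with ``since $s\in\mathcal{I}(L,\le)$''.

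Your proof of that dichotomy has a genuine gap when $\mathcal{M}(y)$ is infinite. The decomposition $s=(s\wedge\mu(y))\vee\bigvee_n(s\wedge(y-n))$ needs the frame law $s\wedge\bigvee_i a_i=\bigvee_i(s\wedge a_i)$. A coframe only satisfies the dual law, and the frame law fails inside the paper's own class, even for $s\in\mathcal{I}(L,\le)$. Take the dual algebraic coframe of closed subsets of $K=\{0\}\cup\{1/k:k\ge1\}$: there $s=\{0\}\in\mathcal{I}(L,\le)$, yet $s\wedge\bigvee_k\{1/k\}=\{0\}$ while $\bigvee_k(s\wedge\{1/k\})=\emptyset$. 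Your particular identity does turn out to hold in the lemma's situation, but only as a consequence of the dichotomy you are trying to prove. Your fallback on Lemma~\ref{lemma.completeness} does not close the gap either. That lemma only rephrases membership in $\texttt{s}_0(y)$ as a non-domination condition, which you would still have to verify.

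The missing idea is to reverse the order of your steps:
\begin{enumerate}
\item Since $s\not\le\mu(y)$, the set $\mathcal{M}(y)$ is nonempty and there is $n\in\mathcal{M}(y)$ with $s\not\le n$.
\item Maximality of $n$ gives $n\vee s=y$, so $y-n\le s$ by the definition of $y-n$.
\item By Lemma~\ref{lemma.prop.fund.cosubstr} and binary distributivity, $s=s\wedge(n\vee(y-n))=(s\wedge n)\vee(y-n)$.
\item Since $s\wedge n<s$, we have $s\wedge n\le\mu(s)$. If $y-n<s$, then also $y-n\le\mu(s)$, giving $s\le\mu(s)<s$, a contradiction. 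Hence $s=y-n\in\texttt{s}_0(y)$.
\end{enumerate}
This uses only finite distributivity. It makes your ``main obstacle'' paragraph, including the appeal to $\mu(y-n)\le\mu(y)$, unnecessary, and it supplies the justification the paper leaves out.
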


\begin{proof}
    {From the definitions, we directly have $\delta(x)\subset \delta^+(x)$. We need to prove that $\delta(x)\supset \delta^+(x)$. Assume that $\delta^+(x) \backslash \delta(x)$ is not empty, and denote by $s$ one of its elements. By transfinite induction, for $\alpha < \texttt{r}(x)$, we have that $s \le x^{(\alpha)}$. Indeed, assuming this is true for $\alpha$, since $s \in \mathcal{I}(L,\le)$, we have $s \in \texttt{s}_{\alpha}(x)$ or $s \le \mu(x^{(\alpha)})$. Since by hypothesis $s \notin \delta(x)$, we have $s \le \mu(x^{(\alpha)}) = x^{(\alpha+1)}$.
    This means that $s \le \texttt{c}(x)$, which is not possible, by definition of $\delta^+(x)$. If it holds for every $\alpha < \lambda$, where $\lambda$ is a limit ordinal, then it also holds for $\lambda$.
    }
\end{proof}

{Having established an intrinsic characterisation of \(\delta(x)\) that does not depend on the residual sequence, we now turn to the analysis of the behavior of the boundary with respect to lattice operations. The following lemma shows that the boundary set is additive under join.}

\begin{lemma}\label{prelemma.subadditivity.type}
{For $x, z \in L$, we have $\delta^+(x \vee z) = \delta^+(x) \cup \delta^+(z)$.}
\end{lemma}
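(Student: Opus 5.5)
We prove the two inclusions separately. Throughout we use the following characterization, which combines the definition of $\delta^+$ with the equality $\delta(\cdot)=\delta^+(\cdot)$: an element $s \in \mathcal{I}(L,\le)$ with $s \le y$ belongs to $\delta^+(y)$ if and only if $s \not\le \texttt{c}(y)$. The key property of $s \in \mathcal{I}(L,\le)$ is the following consequence of distributivity. Suppose $s \le a \vee b$. Then
\[s = (s\wedge a)\vee(s\wedge b).\]
If both terms were strictly below $s$, both would lie below $\mu(s)$, and then $s \le \mu(s) < s$, which is absurd. Hence $s \le a$ or $s \le b$. We call this \emph{join-primality} of $s$.

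\textbf{Step 1: $\delta^+(x\vee z) \subset \delta^+(x)\cup\delta^+(z)$.} Let $s \in \delta^+(x\vee z)$. Then $s \le x \vee z$, so by join-primality $s \le x$ or $s \le z$; say $s \le x$. We must show $s \not\le \texttt{c}(x)$. For this it suffices to prove monotonicity of the core, namely $\texttt{c}(x) \le \texttt{c}(x\vee z)$, since then $s \le \texttt{c}(x)$ would force $s \le \texttt{c}(x\vee z)$, contradicting $s \in \delta^+(x\vee z)$. I would obtain monotonicity as follows. Set $y = x\vee z$ and prove by transfinite induction on $\alpha$ that $\texttt{c}(x) \le y^{(\alpha)}$.
\begin{itemize}
\item The limit step is immediate from the definition of meets.
\item For the successor step, assume $\texttt{c}(x)\le y^{(\alpha)}$ and let $m \in \mathcal{M}(y^{(\alpha)})$. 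Suppose $\texttt{c}(x)\not\le m$. Then $m \vee \texttt{c}(x) = y^{(\alpha)}$, and by Lemma~\ref{lemma.prop.fund.cosubstr} and the definition of the co-Heyting subtraction, $y^{(\alpha)}-m \le \texttt{c}(x)$. But $y^{(\alpha)}-m$ lies in $\mathcal{I}(L,\le)$ (Lemma~\ref{lemma.complement.type1} with $H=L$) and in $\delta(y)$. We would then need a contradiction with $\texttt{c}(x)$ being a fixed point of $\mu$.
\end{itemize}
Here I would use the description of the core announced for Section~\ref{section.core.operator.properties}: $\texttt{c}(x)$ is a join of elements without maximal subelements, and such elements should lie below every maximal subelement of anything above them. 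If that description is not yet available, I would argue directly. The residue $r = y^{(\alpha)}-m$ is below $\texttt{c}(x) = \texttt{c}(x)^{(\beta)}$ for all $\beta$. Using Lemma~\ref{lemma.surface.tree} and Lemma~\ref{lemma.completeness} applied to $\texttt{c}(x)$, one gets $r \le \texttt{c}(\texttt{c}(x))$ while also $r \in \delta(\texttt{c}(x)\vee m)$, from which one derives a contradiction.

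\textbf{Step 2: $\delta^+(x)\cup\delta^+(z)\subset\delta^+(x\vee z)$.} Let $s \in \delta^+(x)$. Then $s \le x\vee z$, and it remains to show $s\not\le \texttt{c}(x\vee z)$. Suppose instead that $s \le \texttt{c}(x\vee z)$. I would show that $\texttt{c}(x\vee z) \le \texttt{c}(x)\vee\texttt{c}(z)$, again by a transfinite induction tracking the residues of $x\vee z$. At each stage, the residues of $(x\vee z)^{(\alpha)}$ are join-prime, so by Step 1 each lies in $\delta(x)$ or in $\delta(z)$, and they are peeled off in the same way. Once this is established, join-primality gives $s\le\texttt{c}(x)$ or $s\le\texttt{c}(z)$. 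The first option contradicts $s\in\delta^+(x)$. In the second case we have $s\le x\wedge\texttt{c}(z)$, and we need to show that this forces $s\le \texttt{c}(x)$. The natural route is $x\wedge\texttt{c}(z)\le\texttt{c}(x)$: since $\texttt{c}(z)$ is a fixed point of $\mu$, no element of $\mathcal{I}(L,\le)$ below it can be a residue of anything. Making this precise is the main obstacle.

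\textbf{Main obstacle.} The crux of the whole argument is the interaction between the core and joins and meets, that is, showing that an element of $\mathcal{I}(L,\le)$ lying below a core is never a residue. I expect this to require the characterization of the core as the join of the elements below $x$ with no maximal subelement, together with the observation that such elements $w$ satisfy $w\le m$ for every maximal subelement $m$ of any element above $w$. The reason is that if $w\not\le m$, then $w\wedge m$ would be a maximal subelement of $w$, by a variant of Lemma~\ref{lemma.max.of.max}. I would try to prove this observation first, since both steps reduce to it.
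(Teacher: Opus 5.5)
\textbf{Step 2 has a gap that cannot be closed, because the inclusion it targets is false.} Your Step 1 is sound. Join-primality of elements of $\mathcal{I}(L,\le)$, together with monotonicity of $\texttt{c}$, gives $\delta^+(x\vee z)\subset\delta^+(x)\cup\delta^+(z)$. The cleanest proof of monotonicity is the observation at the end of your proposal, applied to $w=\texttt{c}(x)$. This $w$ lies in $\mathcal{T}_0(L,\le)$, since $\mu(w)=w$ forces $\mathcal{M}(w)=\emptyset$. The detour through Lemmas~\ref{lemma.surface.tree} and~\ref{lemma.completeness} is unnecessary.

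In Step 2, your reduction $x\wedge\texttt{c}(z)\le\texttt{c}(x)$ is false, and so is $\delta^+(x)\cup\delta^+(z)\subset\delta^+(x\vee z)$. A counterexample:
\begin{itemize}
\item Let $L$ be the closed subsets of the Cantor space $2^{\mathbb{N}}$, ordered by inclusion. This is a dual algebraic coframe. Its dually compact elements are the clopen sets, a countable family, so $\mathcal{K}(L,\le)$ has a choice function.
\item For a point $p$, put $x=\{p\}$. Then $\mathcal{M}(x)=\{\emptyset\}$, so $x\in\mathcal{I}(L,\le)$ and $\texttt{c}(x)=\emptyset$. Hence $\delta^+(x)=\{x\}$; indeed $x=x-\emptyset$ is a residue.
\item Put $z=2^{\mathbb{N}}$. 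It has no isolated point, hence no maximal proper closed subset. So $\texttt{c}(z)=z$ and $\delta^+(z)=\emptyset$.
\item Since $x\vee z=z$, we get $\delta^+(x\vee z)=\emptyset\neq\{x\}=\delta^+(x)\cup\delta^+(z)$. In particular $x\wedge\texttt{c}(z)=x\not\le\texttt{c}(x)$.
\end{itemize}
Your observation that elements of $\mathcal{T}_0(L,\le)$ lie below every maximal subelement of anything above them is correct. It says nothing about $x\wedge\texttt{c}(z)$, however, which need not lie in $\mathcal{T}_0(L,\le)$. As the example shows, an element of $\mathcal{I}(L,\le)$ below a core can perfectly well be a residue, so the ``main obstacle'' you isolated is a false claim.

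The paper's proof has the same defect. It declares this inclusion immediate from the definition. It argues only the other inclusion, via the same join-primality, and omits the core condition that you handle with monotonicity. What actually holds, by join-primality and monotonicity of $\texttt{c}$ alone, is
\[
\delta^+(x\vee z)\subset\bigl(\delta^+(x)\setminus\mathop{\downarrow}\texttt{c}(z)\bigr)\cup\bigl(\delta^+(z)\setminus\mathop{\downarrow}\texttt{c}(x)\bigr).
\]
Equality holds whenever $\texttt{c}(x\vee z)=\texttt{c}(x)\vee\texttt{c}(z)$, as Lemma~\ref{lemma.lattice.op} asserts, again by join-primality. So the statement should be replaced either by this corrected form or by the single inclusion your Step 1 establishes.
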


\begin{proof}
    Indeed, the inclusion $\delta^+(x) \cup \delta^+(z) \subset \delta^+(x \vee z)$ is immediate from the definition. In order to see that $\delta^+(x \vee z) \subset \delta^+(x) \cup \delta^+(z)$, it is sufficient to see that for all $s \le x \vee z$ such that $s \in \mathcal{I}(L,\le)$, we have $s \le x$ or $s \le z$. We prove this. For all $s \in \mathcal{I}(L,\le)$ such that $s \le x \vee z$, we have $s = s \wedge (x \vee z)$. By distributivity, 
    $s = (s \wedge x) \vee (s \wedge z)$. Since $s \in \mathcal{I}(L,\le)$, this implies that $s \wedge x = s$ or $s \wedge z = s$, meaning that $s \le x$ or $s \le z$. From Lemma \ref{lemma.completeness}, 
    we obtain that 
    $\texttt{s}_0(x\vee z) \subset \texttt{s}_0(x) \cup \texttt{s}_0(z)$. The statement follows.
\end{proof}

{As a direct consequence:}

\begin{lemma}\label{lemma.subadditivity.type}
For $x, z \in L$, $|\mathcal{M}(x \vee z)| \le |\mathcal{M}(x)| + |\mathcal{M}(z)|$. In particular $\bigcup_{n < \infty} \mathcal{T}_n(L,\le)$ is an upper semilattice.
\end{lemma}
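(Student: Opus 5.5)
The plan is to reduce the count of maximal subelements to a count of first-level residues, and then use the inclusion of strata from Lemma~\ref{prelemma.subadditivity.type}.

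\textbf{Step 1: identify $\mathcal{M}(y)$ with $\texttt{s}_0(y)$.} For any $y \in L$, consider the map $m \mapsto y - m$ from $\mathcal{M}(y)$ to $\texttt{s}_0(y)$. It is surjective by definition of $\texttt{s}_0(y)$. For injectivity, suppose $y - m = y - m' =: s$. By Lemma~\ref{lemma.prop.fund.cosubstr}, $s \vee m = y = s \vee m'$. Lemma~\ref{lemma.complement.uniqueness} then forces $m = m'$. Hence $|\mathcal{M}(y)| = |\texttt{s}_0(y)|$; Lemma~\ref{lemma.completeness} also gives this uniqueness.

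\textbf{Step 2: the inequality.} The proof of Lemma~\ref{prelemma.subadditivity.type} establishes $\texttt{s}_0(x \vee z) \subset \texttt{s}_0(x) \cup \texttt{s}_0(z)$. To check this, write $\delta^+(x\vee z) = \delta^+(x)\cup\delta^+(z)$ and use the characterisation of $\texttt{s}_0$ in Lemma~\ref{lemma.completeness}. An element $s\in\texttt{s}_0(x\vee z)$ lies in, say, $\delta^+(x)$. It is not below the join of the other elements of $\delta^+(x \vee z)$, so a fortiori it is not below the join of the other elements of $\delta^+(x)\subset\delta^+(x\vee z)$. Therefore $s\in \texttt{s}_0(x)$. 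Combining with Step 1,
\[|\mathcal{M}(x\vee z)| = |\texttt{s}_0(x\vee z)| \le |\texttt{s}_0(x)|+|\texttt{s}_0(z)| = |\mathcal{M}(x)|+|\mathcal{M}(z)|.\]
This holds as cardinal arithmetic, and trivially when a side is infinite.

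\textbf{Step 3: the semilattice claim.} If $x\in\mathcal{T}_n$ and $z\in\mathcal{T}_k$ with $n,k$ finite, then $x\vee z \in \mathcal{T}_j$ for some $j\le n+k$. So $\bigcup_{n<\infty}\mathcal{T}_n(L,\le)$ is closed under the binary join of $L$. Since the join in $L$ is the least upper bound in $L$, it remains the least upper bound within the subset, so the subset is an upper semilattice.

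\textbf{Main obstacle.} The delicate point is Step 2's inclusion of strata. In particular, Lemma~\ref{lemma.completeness} uses the sets $\delta^+(x)$ relative to $\texttt{c}(x)$, and these differ for $x$, $z$ and $x\vee z$. One must check that $s\in\delta^+(x\vee z)$ with $s\le x$ indeed satisfies $s\not\le\texttt{c}(x)$. This follows from $\texttt{c}(x)\le\texttt{c}(x\vee z)$, which I would derive from the $\vee$-homomorphism property of $\texttt{c}$ or from monotonicity of the derivative sequence. If that is unavailable at this point, I would instead argue directly: an element $s\le x$ with $s\not\le\texttt{c}(x\vee z)$ cannot be below $\texttt{c}(x)$, provided $\texttt{c}$ is monotone.
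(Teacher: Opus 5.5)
Your route is the paper's. You identify $\mathcal{M}(y)$ with $\texttt{s}_0(y)$, prove $\texttt{s}_0(x\vee z)\subset\texttt{s}_0(x)\cup\texttt{s}_0(z)$ through Lemma~\ref{lemma.completeness}, and count. Step~1 is correct (Lemmas~\ref{lemma.prop.fund.cosubstr} and~\ref{lemma.complement.uniqueness}), and so is Step~3.

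\textbf{The gap.} The ``a fortiori'' in Step~2 uses $\delta^+(x)\subset\delta^+(x\vee z)$, and that inclusion is false in general. The core can grow under joins and swallow elements of $\delta^+(x)$.

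Take $L$ to be the closed subsets of the Cantor space $2^{\mathbb N}$. This is a dual algebraic coframe; its dually compact elements are the clopen sets, a countable family. Let $x=\{p,q\}$ with $p\neq q$, and let $z$ be a clopen set with $p\in z$ and $q\notin z$. Then $\texttt{c}(x)=\emptyset$ and $\delta^+(x)=\{\{p\},\{q\}\}$. On the other hand $\texttt{c}(x\vee z)=z$ and $\delta^+(x\vee z)=\texttt{s}_0(x\vee z)=\{\{q\}\}$. For $s=\{q\}$, the join of the other elements of $\delta^+(x)$ is $\{p\}$, which is not below the empty join of the other elements of $\delta^+(x\vee z)$. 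So the comparison of joins you invoke fails. The conclusion $s\in\texttt{s}_0(x)$ still holds here, but not for the reason you give.

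This is exactly the half of Lemma~\ref{prelemma.subadditivity.type} that is asserted to be ``immediate from the definition''. The half you flagged as delicate, $\delta^+(x\vee z)\subset\delta^+(x)\cup\delta^+(z)$, is the one that actually holds: it follows from monotonicity of $\texttt{c}$ (immediate from Lemma~\ref{lemma.core.union}) and join-primeness.

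\textbf{The repair.} It needs one extra idea: join-primeness of $s$ combined with $s\not\le\texttt{c}(x\vee z)$.
\begin{enumerate}
\item Every $t\in\delta^+(x)\setminus\delta^+(x\vee z)$ satisfies $t\le\texttt{c}(x\vee z)$. So with $J:=\bigvee_{t\in\delta^+(x\vee z)\setminus\{s\}}t$ you get $\bigvee_{t\in\delta^+(x)\setminus\{s\}}t\le\texttt{c}(x\vee z)\vee J$.
\item Since $s\in\mathcal{I}(L,\le)$ and $L$ is distributive, $s\le a\vee b$ gives $s=(s\wedge a)\vee(s\wedge b)$. If both terms were strictly below $s$, both would lie below $\mu(s)<s$. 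Hence $s$ is join-prime.
\item You have $s\not\le\texttt{c}(x\vee z)$ because $s\in\delta^+(x\vee z)$, and $s\not\le J$ by Lemma~\ref{lemma.completeness} applied to $x\vee z$.
\item Therefore $s\not\le\bigvee_{t\in\delta^+(x)\setminus\{s\}}t$, that is, $s\in\texttt{s}_0(x)$.
\end{enumerate}

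\textbf{A simpler alternative.} You can avoid $\delta^+$ and cores altogether. Take $s=(x\vee z)-m$; by join-primeness you may assume $s\le x$.
\begin{itemize}
\item Distributivity gives $x=x\wedge(m\vee s)=(m\wedge x)\vee s$.
\item $m\wedge x\in\mathcal{M}(x)$: if $m\wedge x\le w<x$, then $m\vee w$ is either $m$ or $x\vee z$. The first forces $w=m\wedge x$. The second forces $s\le w$ by join-primeness, hence $w\ge (m\wedge x)\vee s=x$, a contradiction.
\item $x-(m\wedge x)=s$: the inequality $\le$ follows from $(m\wedge x)\vee s=x$. For $\ge$, note $m\vee\bigl(x-(m\wedge x)\bigr)=m\vee x=x\vee z$.
\end{itemize}
This gives the inclusion of first strata using only Lemmas~\ref{lemma.prop.fund.cosubstr} and~\ref{lemma.complement.type1}.
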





\begin{lemma}\label{lemma.minmax}
Let \(S\) be any set. Let \((x_s)_{s \in S}\) and \((z_s)_{s \in S}\) be families in \(L\). We have 
\[\bigwedge_{s \in S} (x_s \vee z_s) \leq(\bigvee_{s \in S} x_s)\vee(\bigwedge_{s \in S} z_s).\]
\end{lemma}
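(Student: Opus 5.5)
Set $X:=\bigvee_{s \in S} x_s$ and $Z:=\bigwedge_{s \in S} z_s$. The argument is a one-line application of the dual infinite distributivity of the coframe $(L,\le)$, combined with monotonicity of $\vee$. I will expand the right-hand side as a meet, and then compare it termwise with the left-hand side.

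\textbf{Step 1 (expand the right-hand side).} By dual infinite distributivity, applied with the fixed element $X$ and the family $(z_s)_{s\in S}$, we get
\[
X \vee Z \;=\; X \vee \bigwedge_{s\in S} z_s \;=\; \bigwedge_{s \in S} (X \vee z_s).
\]

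\textbf{Step 2 (termwise comparison).} For each $t \in S$ we have $x_t \le X$, and hence $x_t \vee z_t \le X \vee z_t$. The left-hand side satisfies $\bigwedge_{s} (x_s \vee z_s) \le x_t \vee z_t$ for every $t$. Therefore $\bigwedge_{s} (x_s\vee z_s)$ is a lower bound of the family $(X\vee z_t)_{t\in S}$.

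\textbf{Step 3 (conclude).} Since the meet is the greatest lower bound, Step 2 gives $\bigwedge_{s}(x_s\vee z_s) \le \bigwedge_{t}(X \vee z_t)$. By Step 1 the right side equals $X \vee Z$, which is the claimed inequality.

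\textbf{Points to check.} The only delicate issue is whether $X=\bigvee_s x_s$ exists: a coframe as defined here is only assumed to be an upper lattice and a lower complete lattice. Lower completeness settles this. The join of an arbitrary family exists as the meet of its set of upper bounds; this set is nonempty because the top element is $\bigwedge\emptyset$, and the meet is an upper bound by the argument of Lemma~\ref{lemma.upper.complete.ideals}. The degenerate case $S=\emptyset$ also holds, since then both sides equal the top element.
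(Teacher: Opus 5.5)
Your proof is correct and is the same argument as the paper's: apply dual infinite distributivity to $\bigl(\bigvee_s x_s\bigr) \vee \bigwedge_s z_s$, then compare term by term using $x_s \vee z_s \le \bigl(\bigvee_t x_t\bigr) \vee z_s$. Your extra check that $\bigvee_s x_s$ exists in a lower complete lattice, which the paper uses without comment, is a welcome bit of care.
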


\begin{proof}
    {Set $x = \bigvee_s x_s$. We prove that $\bigwedge_{s \in S} (x_s \vee z_s) \le x \vee \left( \bigwedge_{s \in S} z_s\right)$. By dual infinite distributivity,
    we have \[x \vee \left( \bigwedge_s x_s\right) = \bigwedge_s (x \vee z_s).\]
    Since we have $x \vee z_s \ge x_s \vee z_s$, 
    we have 
    \[\bigwedge_{s \in S} (x_s \vee z_s) \le x \vee \left(\bigwedge_{s} z_s\right).\]}
\end{proof}

{This lemma is a key tool for handling limits of joins of a non-decreasing and a non-increasing sequences in the transfinite induction in the proof of Lemma~\ref{lemma.subelement.decomposition}. 
}

\begin{lemma}\label{lemma.subelement.decomposition}
    For every $x \in L$ and $ z \in \mathop{\downarrow}x$, we have 
    \[z = (z \wedge \texttt{c}(x)) \vee \bigvee_{\underset{s \le z}{s \in \delta(x)}} s.\]
\end{lemma}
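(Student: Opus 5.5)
The inequality $\ge$ is immediate, since both $z\wedge\texttt{c}(x)$ and every $s\in\delta(x)$ with $s\le z$ lie below $z$. For $\le$, we set
\[R(z):=\bigvee_{s\in\delta(x),\, s\le z} s\]
and prove by transfinite induction on $\alpha\le\texttt{r}(x)$ the statement
\[P(\alpha):\qquad z=(z\wedge x^{(\alpha)})\vee R(z)\quad\text{for all } z\le x.\]
Taking $\alpha=\texttt{r}(x)$ gives the lemma, because $x^{(\texttt{r}(x))}=\texttt{c}(x)$. The base case $\alpha=0$ is trivial, since $z\wedge x=z$.

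For the successor step, assume $P(\alpha)$ and put $y:=x^{(\alpha)}$ and $w:=z\wedge y\le y$. By distributivity and Lemma~\ref{lemma.cover.complements} applied to $y$,
\[w=w\wedge\Bigl(\mu(y)\vee\bigvee_{m\in\mathcal{M}(y)}(y-m)\Bigr).\]
When $\mathcal{M}(y)$ is empty there is nothing to do. Otherwise we want $w\le (w\wedge\mu(y))\vee\bigvee\{y-m: y-m\le w\}$. Each residue $s=y-m$ is completely co-irreducible (Lemma~\ref{lemma.complement.type1} together with the remark following it), so either $s\le w$ or $w\wedge s\le\mu(s)\le m$. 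The key point is to show that $w\wedge\bigvee_{m\in\mathcal{M}(y)}(y-m)$ is below $\mu(y)\vee\bigvee_{y-m\le w}(y-m)$. For finitely many residues, distributivity gives $w\wedge\bigvee_m(y-m)=\bigvee_m\bigl(w\wedge(y-m)\bigr)$, and each term is either a residue $\le w$ or lies below $\mu(y-m)$. By part 2 of the proof of Lemma~\ref{lemma.formula.maximal}, $\mu(y-m)\le\mu(y)$. Hence $w\le(w\wedge\mu(y))\vee R(z)$, and $P(\alpha+1)$ follows because every residue of $y$ lies in $\texttt{s}_\alpha(x)\subset\delta(x)$.

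For infinitely many residues, finite distributivity no longer applies directly, and I would instead argue through maximal subelements. Let $u:=(w\wedge\mu(y))\vee\bigvee_{y-m\le w}(y-m)\le w$. For each $m$ with $y-m\not\le w$, we have $w\wedge(y-m)\le m$. Using the decomposition $w=(w\wedge m)\vee(w\wedge(y-m))$ coming from Lemma~\ref{lemma.prop.fund.cosubstr}, this gives $w\le m$. Also $w\le y$, and we may write $w=\bigwedge_m(w\vee\dots)$ via the dual infinite distributivity. Concretely, I would show that $w\le \mu(y)\vee\bigvee_{y-m\le w}(y-m)$ by intersecting over the $m$ with $y-m\not\le w$: the element $\bigwedge_{m}\bigl(m\vee \bigvee_{y-n\le w}(y-n)\bigr)$ equals $\mu(y)\vee\bigvee_{y-n\le w}(y-n)$ by dual infinite distributivity and Lemma~\ref{lemma.formula.maximal}. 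Intersecting the result with $w$ and using distributivity once more yields $w\le u$. This handling of the infinite join is the step I expect to be the most delicate.

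For the limit step at $\lambda$, assume $P(\alpha)$ for all $\alpha<\lambda$. Then $z=\bigwedge_{\alpha<\lambda}\bigl((z\wedge x^{(\alpha)})\vee R(z)\bigr)$. Lemma~\ref{lemma.minmax} (or directly dual infinite distributivity with the constant join $R(z)$) turns this into $\bigl(\bigwedge_\alpha z\wedge x^{(\alpha)}\bigr)\vee R(z)=(z\wedge x^{(\lambda)})\vee R(z)$. This closes the induction.
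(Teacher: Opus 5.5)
Your proof is correct. It follows the same transfinite induction along $x^{(\alpha)}$ as the paper, but handles the technical core differently.

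\textbf{How the paper proceeds.} The paper proves $z\le u_\alpha:=x^{(\alpha)}\vee\bigl(\text{residues from strata }<\alpha\text{ lying below }z\bigr)$, first for $z\ge\texttt{c}(x)$. At limit stages it needs Lemma~\ref{lemma.minmax}, because the join grows while $x^{(\beta)}$ shrinks. At successor stages it works with finite subsets $P$ of residues: it distributes over the residues in $P$, keeps the rest as a tail, and applies Lemma~\ref{lemma.minmax} again over the net of $P$'s to push the tail below $\mu(x^{(\lambda)})$. General $z$ is then recovered from $z\vee\texttt{c}(x)$.

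\textbf{How your route compares.} Your hypothesis $P(\alpha)$ is equivalent at each stage. Indeed, $P(\alpha)$ amounts to $z\le x^{(\alpha)}\vee R(z)$, and every residue from a stratum $\beta\ge\alpha$ already lies below $x^{(\beta)}\le x^{(\alpha)}$, so $x^{(\alpha)}\vee R(z)=u_\alpha$. Keeping the constant join $R(z)$ makes the limit step a one-line application of dual infinite distributivity. Your successor argument first shows $w\le m$ whenever $y-m\not\le w$, then takes a single meet over $m$. This treats finite and infinite $\mathcal{M}(y)$ uniformly, so your separate finite case is redundant. It also yields the statement for all $z\le x$ at once, with no reduction through $z\vee\texttt{c}(x)$.

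\textbf{One justification to repair.} Set $B=\{m\in\mathcal{M}(y): y-m\not\le w\}$ and $T=\bigvee_{y-n\le w}(y-n)$. The identity $\bigwedge_{m\in B}(m\vee T)=\mu(y)\vee T$ does not come from Lemma~\ref{lemma.formula.maximal}. The correct reason is as follows. For $m\notin B$, Lemma~\ref{lemma.prop.fund.cosubstr} gives $m\vee T\ge m\vee(y-m)=y\ge w$. So $w\le m\vee T$ holds for \emph{every} $m\in\mathcal{M}(y)$. Dual infinite distributivity then gives $w\le\bigwedge_{m\in\mathcal{M}(y)}(m\vee T)=\mu(y)\vee T$ directly. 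Phrasing it this way also covers the edge case $B=\emptyset$, where a meet over $B$ would be the top element rather than $\mu(y)\vee T$.
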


\begin{proof}
    \textbf{1.} 
    {We first prove the statement when \(z\ge \texttt{c}(x)\). For every \(\alpha<\texttt r(x)\), set

\[u_{\alpha} := x^{(\alpha)}\vee \left(\bigvee_{\beta < \alpha} \bigvee_{\underset{s \le z}{s \in \texttt{s}_{\beta}(x)}}s\right)\]

We prove by transfinite induction on \(\alpha\) that \(z\leq u_\alpha\). We have $u_0 = x$, therefore $z \le u_{0}$ is straightforward. Suppose now that the statement holds for every \(\beta<\alpha\). Assume that \(\alpha\) is a limit ordinal. We apply Lemma \ref{lemma.minmax}, where $S = \texttt{r}(x)$, and for all $\beta \in \texttt{r}(x)$,
\[x_{\beta} := \bigvee_{\lambda < \alpha} \bigvee_{\underset{s \le z}{s \in \texttt{s}_{\lambda}(x)}}s \qquad \text{and} \qquad z_{\beta} := x^{(\beta)}.\]
We have $\bigvee_{\beta < \alpha} x_{\beta} = x_{\alpha}$ and 
$\bigwedge_{\beta < \alpha} z_{\beta} = z_{\alpha}$.
We thus have \(u_\alpha \ge \bigwedge_{\beta<\alpha}u_\beta\). Since \(z\leq u_\beta\) for every \(\beta<\alpha\), we obtain \(z\le u_\alpha\).} 
Otherwise we set $\lambda$ such that $\lambda + 1 = \alpha$. By distributivity, for every finite subset {$P \subset \texttt{s}_{\lambda}(x)$}:
{
\[
\begin{aligned}
z = z \wedge u_{\lambda}
  ={}&(z \wedge \mu(x^{(\lambda)}))
   \vee \left(z \wedge \bigvee_{\beta < \lambda}
        \bigvee_{\substack{s \in \texttt{s}_{\beta}(x)\\ s \le z}} s\right) \\
   &\vee \left(\bigvee_{\underset{s \le z}{s \in P}} z \wedge s\right) \vee \left(z \wedge\bigvee_{\underset{s \not \le z}{s \in P}} s\right)
   \vee \left(z \wedge \bigvee_{s \in \texttt{s}_{\lambda}(x)\setminus P} s\right).
\end{aligned}
\]

For $s \in P$, if $z \wedge s < s$ then $z \wedge s \le \mu(x^{(\lambda)})$, and thus 
$z \wedge s \le z \wedge \mu(x^{(\lambda)})$. We can thus rewrite: 

\[
\begin{aligned}
z \wedge u_{\lambda}
  ={}&(z \wedge \mu(x^{(\lambda)}))
   \vee \left(z \wedge \bigvee_{\beta < \lambda}
        \bigvee_{\substack{s \in \texttt{s}_{\beta}(x)\\ s \le z}} s\right) \\
   &\vee \left(\bigvee_{\underset{s \le z}{s \in P}} z \wedge s\right) \vee \left(z \wedge \bigvee_{s \in \texttt{s}_{\lambda}(x)\setminus P} s\right).
\end{aligned}
\]

We apply again Lemma \ref{lemma.minmax}, with $S$ equal to the set of finite subsets of $\texttt{s}_{\lambda}(x)$, and for all $P \in S$, 
\[x_P := (z \wedge \mu(x^{(\lambda)}))
   \vee \left(z \wedge \bigvee_{\beta < \lambda}
        \bigvee_{\substack{s \in \texttt{s}_{\beta}(x)\\ s \le z}} s\right) \vee \left(\bigvee_{\underset{s \le z}{s \in P}} z \wedge s\right),\]
and 
\[z_P := \left(z \wedge \bigvee_{s \in \texttt{s}_{\lambda}(x)\setminus P} s\right)\]

By application of the lemma, 
we have $z \wedge u_{\lambda} \le \left(\bigvee_{P \in S} x_P\right) \vee \left(\bigwedge_{P \in S} z_P\right)$. We have 
\[\bigwedge_{P \in S} z_P \le z \wedge \bigwedge_{\underset{P finite}{P \subset \mathcal{M}(x^{(\lambda)})}} \bigwedge_{m \in \mathcal{M}(x^{(\lambda)})\setminus P} m \le z \wedge \mu(x^{(\lambda)})\]
We thus have also 
\[z \le z \wedge \left(\mu(x^{(\lambda)}) \vee  \left(\bigvee_{\beta \le \lambda}
        \bigvee_{\substack{s \in \texttt{s}_{\beta}(x)\\ s \le z}} s\right)\right) = z \wedge u_{\alpha}.\]
In particular, $z \le u_{\alpha}$.
}


{
\textbf{2.} We apply the first point on $z \vee \texttt{c}(x)$, and get: 
\[z \vee \texttt{c}(x) = \texttt{c}(x) \vee \bigvee_{\underset{s \le z}{s \in \delta(x)}} s.\]
Then by distributivity 
\[z = z \wedge (z \vee \texttt{c}(x)) = (z \wedge \texttt{c}(x)) \vee \left(z \wedge  \bigvee_{\underset{s \le z}{s \in \delta(x)}} s\right).\]

Since the last term is below $z$, 
we get 
\[z = (z \wedge \texttt{c}(x)) \vee \bigvee_{\underset{s \le z}{s \in \delta(x)}} s.\]
}
\end{proof}

\begin{lemma}
    For every $x \in L$ and every finite subset $P \subset \delta(x)$, 
    there exists a finite sequence $x_0 , \ldots , x_k$ 
    such that $x_0 = x$,
    \[x_k = \texttt{c}(x) \vee \bigvee_{s \in \delta(x) \setminus P} s.\]
    Moreover, $x_{i+1} \in \mathcal{M}(x_i)$, for any $i<k$.
\end{lemma}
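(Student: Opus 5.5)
Since $P$ is finite, I will remove the elements of $P$ one at a time, each removal being a single step down to a maximal subelement. Set $y_Q := \texttt{c}(x) \vee \bigvee_{s \in \delta(x) \setminus Q} s$ for $Q \subseteq P$. Note that $y_\emptyset = x$: by Proposition \ref{lemma.core.complements}, and since $\delta(x)=\delta^+(x)$, the residues together with the core recover $x$.

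I will order $P = \{s_1,\dots,s_k\}$ so that lower strata come first, i.e.\ $\rho_x(s_1) \le \rho_x(s_2) \le \dots$. Equivalently, whenever $s_i > s_j$ in $L$ we have $i<j$; the ranked-poset proposition, with its reversed order, gives $s<t \Rightarrow \rho_x(s)>\rho_x(t)$. Then set $x_i := y_{\{s_1,\dots,s_i\}}$. It remains to show $x_{i+1} \in \mathcal{M}(x_i)$. If some step is degenerate, i.e.\ $x_{i+1}=x_i$, I simply skip it, so the resulting sequence is still finite.

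The key step is the following. Put $y = x_i$ and $s = s_{i+1}$, and let $D = \delta(x)\setminus\{s_1,\dots,s_{i+1}\}$, so that $x_{i+1} = \texttt{c}(x) \vee \bigvee_{t\in D} t$. I claim first that $\delta(y) \subseteq \delta(x)\setminus\{s_1,\dots,s_i\}$ and $s\in\delta(y)$. Elements of $\mathcal{I}$ below $y$ are below $\texttt{c}(x)$ or below some generator; this follows from distributivity as in the proof of Lemma \ref{prelemma.subadditivity.type}, extended to infinite joins via Lemma \ref{lemma.subelement.decomposition} applied to $z=s\le x$. Next I check that $s \not\le \bigvee_{t \in \delta(y)\setminus\{s\}} t$. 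Every other generator $t$ satisfies $s\not\le t$ except possibly those of higher rank, which are strictly below $s$ by the ordering. By the argument in Lemma \ref{lemma.completeness}, part 1, $t\le \mu(s)$ whenever $t<s$. So if $s$ were below the join, the distributive splitting $s = (s\wedge \texttt{c}(x))\vee\ldots$ together with the co-irreducibility of $s$ would force $s\le$ some single $t\neq s$ or $s\le \texttt{c}(x)$. Both are excluded: the first by Lemma \ref{lemma.noncomparable} within a stratum and by rank monotonicity across strata, the second by the definition of $\delta^+$.

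Once this is established, Lemma \ref{lemma.completeness}, applied to $y$, gives $s \in \texttt{s}_0(y)$ and identifies $\texttt{c}(y)\vee\bigvee_{t\in\delta(y)\setminus\{s\}}t$ as the unique maximal subelement $m$ of $y$ with $s\vee m=y$. To conclude I need $m = x_{i+1}$, which requires $\texttt{c}(y)\le \texttt{c}(x)$ and, conversely, $\texttt{c}(x)\le y$ decomposed appropriately. Since $y\le x$, monotonicity of the derivative sequence would give $\texttt{c}(y)\le\texttt{c}(x)$. I would prove this monotonicity by induction, showing $y^{(\alpha)}\le x^{(\alpha)}$ from the fact that each $t\in\texttt{s}_\alpha(x)$ not below $y$ can only shrink things. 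Alternatively, I would use Lemma \ref{lemma.subelement.decomposition} directly on $z=x_{i+1}\le x$ to express it as $\texttt{c}(x)\vee$ (the elements of $\delta(x)$ below it) and compare with $m$.

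The main obstacle is exactly this identification of the core of $y$ with something below $\texttt{c}(x)$, together with showing that $\texttt{c}(x)\vee\bigvee D$ does not accidentally lie above $s$. That is the noncomparability claim above, and the rank-ordering of $P$ is designed to make it work.
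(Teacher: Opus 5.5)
\textbf{There is a genuine gap in your key step.} Your skeleton is the same as the paper's: remove the elements of $P$ one at a time in order of increasing $\rho_x$, and discard the steps where nothing changes. Discarding degenerate steps is the right way to handle them. The trouble is the justification of each remaining step.

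\emph{The non-comparability claim is false.} The claim $s\not\le\bigvee_{t\in\delta(y)\setminus\{s\}}t$ fails in general. Take the chain $0<1<2$ with $x=2$. Then $\delta(x)=\{2,1\}$ with $1\in\texttt{s}_1(x)$, and for $P=\{1\}$ you get $s=1<2$. Rank monotonicity only excludes $s<t$ when $\rho_x(t)\ge\rho_x(s)$. An unremoved $t$ of lower rank lying above $s$ is perfectly possible, and by Lemma~\ref{lemma.surface.tree} it is the typical situation.

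\emph{The single-generator reduction uses a distributive law a coframe lacks.} Reducing to ``$s\le t$ for a single $t$, or $s\le\texttt{c}(x)$'' distributes $s\wedge(\cdot)$ over an infinite join. A coframe only lets $\vee$ distribute over arbitrary meets. Among the closed subsets of $\omega+1$, $\{\omega\}\le\bigvee_n\{n\}$ although $\{\omega\}\wedge\{n\}=\varepsilon$ for every $n$. The same objection defeats the ``below some generator'' justification of $\delta(y)\subseteq\delta(x)\setminus\{s_1,\dots,s_i\}$. Lemma~\ref{lemma.subelement.decomposition} with $z=s$ does not help, since $s$ itself occurs in that join. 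In fact this inclusion fails at every later step once some removed $s_j$ lies below an element of $\delta(x)\setminus P$.

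\emph{What is missing is the one place where the ordering of $P$ must be used.} In a non-degenerate step ($s\not\le x_{i+1}$), co-irreducibility of $s$ and finite distributivity show that $x_{i+1}\in\mathcal{M}(x_i)$ if and only if $\mu(s)\le x_{i+1}$. For instance, the $m$ produced by Lemma~\ref{lemma.completeness} equals $x_{i+1}\vee\mu(s)$. Without the ordering this fails: in the chain above with $P=\{1,2\}$, removing $1$ and then $2$ yields the step $2\to 0$, while $0\notin\mathcal{M}(2)$.

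The paper proves $\mu(s)\le x_{i+1}$ directly:
\begin{itemize}
\item $\mu(s)=\texttt{c}(\mu(s))\vee\partial\mu(s)$ by Proposition~\ref{lemma.core.complements}.
\item $\texttt{c}(\mu(s))\le\texttt{c}(x)$.
\item Each residue of $\mu(s)$ belongs to $\mathcal{I}(L,\le)$ and lies strictly below $s$. So it is either below $\texttt{c}(x)$, or an element of $\delta(x)$ of rank $>\rho_x(s)$, hence not yet removed.
\end{itemize}
Maximality then follows at once. For $x_{i+1}\le z\le x_i$ one has $z=x_{i+1}\vee(z\wedge s)$, and either $s\le z$ or $z\wedge s\le\mu(s)\le x_{i+1}$. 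Lemma~\ref{lemma.completeness} is not needed.

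\emph{The core identification is immediate.} What you call the main obstacle follows from two lemmas: $\texttt{c}(x)\in\mathcal{T}_0(L,\le)$ and $\texttt{c}(x)\le y\le x$, so Lemmas~\ref{lemma.core.union} and~\ref{lemma.lattice.op} give $\texttt{c}(y)=\texttt{c}(x)$.
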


\begin{corollary}\label{corollary.submax.dominant}
    If $x$ is dually compact and $P \subset \delta(x)$ is finite, then $\bigvee_{s \in \delta(x) \setminus P} s$ is also dually compact.
\end{corollary}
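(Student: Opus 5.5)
The plan is to reduce to the corresponding statement in which the core is added back, and then to remove the core. Write
\[
y:=\bigvee_{s\in\delta(x)\setminus P}s,\qquad x_P:=\texttt{c}(x)\vee y .
\]
By the preceding lemma there is a finite chain $x=x_0,x_1,\dots,x_k=x_P$ with $x_{i+1}\in\mathcal{M}(x_i)$. The earlier lemma says that a maximal subelement of a dually compact element is dually compact. A finite induction along the chain therefore shows that $x_P$ is dually compact. This part is routine.

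The real content is passing from $x_P$ to $y$, and I would use the definition of dual compactness directly. Let $G$ be a filtered set with $\bigwedge G\le y$. Since $y\le x_P$ and $x_P$ is dually compact, there is $g_0\in G$ with $g_0\le x_P$. Restricting to $G_0:=\{g\in G: g\le g_0\}$, which is still filtered with the same meet, we may assume every $g$ satisfies $g\le x_P$. Now apply Lemma \ref{lemma.subelement.decomposition} to each $g\in\mathop{\downarrow}x$:
\[
g=(g\wedge\texttt{c}(x))\vee\bigvee_{\substack{s\in\delta(x)\\ s\le g}}s .
\]
Every $s\in\delta(x)$ with $s\le g\le x_P$ lies in $\delta(x)\setminus P$. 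Indeed, by the proof of Lemma \ref{lemma.completeness}, an $s\in P$ is not below $\texttt{c}(x)\vee\bigvee_{t\in\delta^+(x)\setminus\{s\}}t\ge x_P$. Hence the second term is $\le y$, and it suffices to find $g\in G_0$ with $g\wedge\texttt{c}(x)\le y$.

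This is the step I expect to be the main obstacle. The family $\{g\wedge\texttt{c}(x)\}_{g\in G_0}$ is filtered, with meet $\bigwedge G\wedge\texttt{c}(x)\le y\wedge\texttt{c}(x)$. So everything reduces to showing that $y\wedge\texttt{c}(x)$ is \emph{dually compact relative to $\mathop{\downarrow}\texttt{c}(x)$}, in the sense that filtered families below $\texttt{c}(x)$ with meet $\le y\wedge\texttt{c}(x)$ eventually enter $\mathop{\downarrow}(y\wedge\texttt{c}(x))$. My first attempt is to exploit that $\texttt{c}(x)$ is a fixed point of $\mu$ (it has no maximal subelements in the relevant sense). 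Each $s\in\delta(x)$ is completely co-irreducible with $s\not\le\texttt{c}(x)$, so $s\wedge\texttt{c}(x)\le\mu(s)$.

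Iterating this inside $s$, using $\mu(s)\le\texttt{c}(s)\vee\bigvee\texttt{s}_0(s)$ and $\delta(s)\subset\delta(x)$, I would try to show that $y\wedge\texttt{c}(x)=\texttt{c}(y)$. I would also try to show that $\texttt{c}(y)$ is a join of residues-free pieces of the elements of $\delta(x)\setminus P$, lying below finitely many of them, so that the relative compactness follows from the dual compactness of $x_P$ via distributivity: $g\wedge\texttt{c}(x)=g\wedge x_P\wedge\texttt{c}(x)$. If this direct route stalls, the fallback is topological. I would use Lemma \ref{lemma.finite.type.included} with the dual Lawson topology (Proposition \ref{proposition.lawson.dominated}) and show that $\mathop{\downarrow}y=\mathop{\downarrow}x_P\cap\{z: z\wedge\texttt{c}(x)\le y\}$ is open. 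The openness of the second set would be proved by the same core analysis, now phrased through convergence of filtered nets.
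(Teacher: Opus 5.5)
Your first paragraph is exactly the paper's proof. The argument given after the corollary builds the chain $x=x_0,\dots,x_k$ with $x_{l+1}\in\mathcal{M}(x_l)$, ending at $\texttt{c}(x)\vee\bigvee_{s\in\delta(x)\setminus P}s$. Dual compactness then follows from the lemma that maximal subelements of dually compact elements are dually compact. The paper never removes the core. Where the corollary is used (Proposition~\ref{proposition.reverse.from.above}), it is applied to $\texttt{c}(z)\vee\bigvee_{t\in\delta(z)\setminus P}t$. So the statement should be read with $\texttt{c}(x)\vee$ in front, and for that version your first paragraph is already a complete proof.

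The step you call the real content is where the proposal fails, and it cannot be repaired. With the core dropped, the claim is false under the standing hypotheses of Section~\ref{section.decomposition}. Let $K\subset[0,1]$ be the middle-thirds Cantor set, $X=K\cup\{-1/n:n\ge1\}$, and $L$ the closed subsets of $X$ ordered by inclusion. This is a dual algebraic coframe whose dually compact elements are exactly the clopen sets; there are countably many, so a choice function exists. For $x=X$ we get $\mathcal{M}(X)=\{X\setminus\{-1/n\}:n\ge1\}$, $\mu(X)=K=\texttt{c}(X)$ and $X-(X\setminus\{-1/n\})=\{-1/n\}$, so $\delta(X)=\{\{-1/n\}:n\ge1\}$. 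Take $P=\emptyset$. Then $y=\bigvee_{s\in\delta(X)}s=\{-1/n:n\ge1\}\cup\{0\}$, and this set is not clopen. Indeed, the clopen sets $g_n=\{-1/m:m\ge1\}\cup(K\cap[0,3^{-n}])$ decrease with $\bigcap_n g_n=y$, yet no $g_n$ is contained in $y$.

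In this example each of your routes visibly breaks:
\begin{itemize}
\item $y\wedge\texttt{c}(x)=\{0\}$, whereas $\texttt{c}(y)=\varepsilon=\emptyset$, so your proposed identity fails.
\item $\{0\}$ is not dually compact relative to $\mathop{\downarrow}K$, as the sets $K\cap[0,3^{-n}]$ show.
\item $\mathop{\downarrow}y$ is not open (Lemma~\ref{lemma.finite.type.included}), so the topological fallback fails too.
\end{itemize}

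A smaller, moot point: your claim that every $s\in\delta(x)$ below $x_P$ avoids $P$ is false. For instance, take $P=\{s\}$ with $s\in\texttt{s}_1(x)$; then $s\le\partial x\le x_P$ by Lemma~\ref{lemma.surface.tree}. The inequality you actually need there still holds. Since $s\in\mathcal{I}(L,\le)$, $s\le\texttt{c}(x)\vee y$ and $s\not\le\texttt{c}(x)$, distributivity forces $s\le y$.
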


\begin{proof}
If $\delta(x)$ is empty, the statement is trivial. Assume that it is not empty. There exists a sequence $s_1, \ldots, s_k$ of elements of $\delta^+(x)$ such that for every $l < k$, we have: 
\[s_l \not \le \bigvee_{s \in \delta^+(x) \setminus \{s_i : i \le l\}} s.\]
For $l \le k$, we set 
\[x_l := \texttt{c}(x) \vee \bigvee_{s \in \delta^+(x) \setminus \{s_i : i \le l\}} s.\]
We prove that for $l < k$, $x_{l+1} \in \mathcal{M}(x_l)$, which ends the proof. For this, it is sufficient to see that $\mu(s_l) \le x_l$. {Indeed, we have $\texttt{s}_0(\mu(s_l)) \cap \{s_i : i \le l\} = \emptyset$. Since $\delta(x) = \delta^+(x)$, this implies that $\partial \mu(s_l) \le x_l$. Furthermore, since $s_l \le x$, $\texttt{c}(\mu(s_l)) \le \texttt{c}(x) \le x_l$. We thus have $\mu(s_l) = \texttt{c}(s_l) \vee \partial \mu(s_l) \le x_l$.}
\end{proof}


\begin{remark}
    The results in this section become easier to prove when the lattice is assumed to be completely distributive. However, the family of lattices that we consider in this text includes important examples which do not satisfy this hypothesis.
\end{remark}


\subsection{\label{section.residual.derivative.vee.homomorphism}The residual derivative is a $\vee$-homomorphism}

{ 
We prove here that the residual derivative \(\mu\) is a \(\vee\)-homomorphism, that is, the operator \(\mu\) preserves joins. 
This property is particularly significant because it shows that the residual derivative is not merely defined in terms of the order structure, but is compatible with the algebraic operations of the coframe. In this sense, \(\mu\) behaves as a canonical lattice-theoretic operator, analogous to familiar closure and interior operators arising in topology and order theory.}

\begin{proposition}\label{proposition.increase.derivative}
    For $x \in L$ and $z \in \mathop{\downarrow}x$, we have $\mu(z) \le \mu(x)$. 
    \end{proposition}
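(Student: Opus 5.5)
The plan is to show that $\mu(z)$ lies below every maximal subelement of $x$; since $\mu(x)$ is the meet of these, this gives $\mu(z)\le\mu(x)$. First the degenerate case. If $\mathcal{M}(x)=\emptyset$, then $\mu(x)=x$, and $\mu(z)\le z\le x$ gives the claim at once, because $\mu(w)\le w$ for every $w$ (as noted in the proof of Theorem~\ref{thm:existence.rank}). So assume $\mathcal{M}(x)\neq\emptyset$, fix $m\in\mathcal{M}(x)$, and aim to prove $\mu(z)\le m$. I split according to whether $z\le m$.

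\textbf{Case $z\le m$.} Here $\mu(z)\le z\le m$ directly.

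\textbf{Case $z\not\le m$.} Then $m<z\vee m\le x$, so maximality of $m$ forces $z\vee m=x$. I claim that $z\wedge m\in\mathcal{M}(z)$. This is the argument of Lemma~\ref{lemma.max.of.max}, redone with $H=L$ and with only $m$ assumed maximal.

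\begin{itemize}
\item First, $z\wedge m<z$, since otherwise $z\le m$.
\item Next, take $w$ with $z\wedge m\le w\le z$. Then $m\le w\vee m\le x$, so by maximality either $w\vee m=m$ or $w\vee m=x$.
\item If $w\vee m=m$, then $w\le m$, hence $w\le z\wedge m$, so $w=z\wedge m$.
\item If $w\vee m=x$, distributivity of the coframe gives $z=z\wedge(w\vee m)=(z\wedge w)\vee(z\wedge m)=w\vee(z\wedge m)=w$.
\end{itemize}

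So $z\wedge m$ is maximal in $\mathop{\downarrow}z\setminus\{z\}$. In particular $\mathcal{M}(z)\neq\emptyset$, so $\mu(z)$ is the meet of $\mathcal{M}(z)$, and therefore $\mu(z)\le z\wedge m\le m$.

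In both cases $\mu(z)\le m$. Taking the meet over all $m\in\mathcal{M}(x)$ yields $\mu(z)\le\mu(x)$.

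I do not expect a real obstacle. The only point needing care is the second case: $\mathcal{M}(z)$ might a priori be empty, in which case $\mu(z)=z$, and we would need $z\le m$, which fails there. The argument above shows that in this case $z\wedge m$ is an actual maximal subelement of $z$, so $\mathcal{M}(z)$ is nonempty and the meet definition of $\mu(z)$ applies.
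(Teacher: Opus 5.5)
Your proof is correct, and it takes a different, more elementary route than the paper.

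You argue one maximal subelement at a time. For $m\in\mathcal{M}(x)$, either $z\le m$, or $z\vee m=x$; in the latter case $z\wedge m\in\mathcal{M}(z)$, so $\mu(z)\le z\wedge m\le m$. This uses only the existence of the meets defining $\mu$ and binary distributivity. It needs nothing about residues, dual compactness or choice functions.

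The paper instead goes through the residue machinery of Section~\ref{section.decomposition}. It writes $x=\mu(x)\vee\bigvee_{s\in\texttt{s}_0(x)}s$ (Lemma~\ref{lemma.cover.complements}) and meets both sides with $\mu(z)$. It then bounds each piece $s\wedge\mu(z)$ by three facts: $s\not\le\mu(z)$, because a residue of $x$ lying below $z$ is asserted to be a residue of $z$; proper subelements of a residue lie below $\mu(s)$ (Lemma~\ref{lemma.complement.type1}); and $\mu(s)\le\mu(x)$.

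That route places monotonicity inside the core--residue picture that is reused for the $\vee$-homomorphism theorem, but it costs more in two places.
\begin{enumerate}
\item The assertion that a residue $x-m\le z$ is a residue of $z$ is not justified in the paper's proof. Its natural justification is exactly your observation: one checks that $x-m=z-(z\wedge m)$ with $z\wedge m\in\mathcal{M}(z)$.
\item The first step distributes $\mu(z)\wedge(\cdot)$ over the possibly infinite join $\bigvee_{s\in\texttt{s}_0(x)}s$. That is a frame law, which coframes need not satisfy, since only the dual law is assumed. So, as written, that step is only justified when $\mathcal{M}(x)$ is finite.
\end{enumerate}
Your argument avoids both issues and proves the statement in any distributive lattice in which the meets defining $\mu$ exist.
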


\begin{proof}
     {We know that $x = \mu(x) \vee \left(\bigvee_{s \in \texttt{s}_0(x)} s\right)$.}
     This implies that 
    $\mu(z) = (\mu(x) \wedge \mu(z)) \vee \left(\bigvee_{s \in \texttt{s}_0(x)} s\wedge \mu(z)\right)$. For all $s \in \texttt{s}_0(x)$, {if $s \le z$, then $s \in \texttt{s}_0(z)$ and thus $s \not \le \mu(z)$. If $s \not \le z$, $s \not \le \mu(z)$. This condition implies that $\mu(z) \wedge s < s$ and therefore $\mu(z) \wedge s \le \mu(s)$. Since $\mu(s) \le \mu(x)$, for all $s \in \texttt{s}(x)$, $s \wedge \mu(z) \le \mu(x)$, and since $s \wedge \mu(z) \le \mu(z)$, we also have $s \wedge \mu(z) \le \mu(x) \wedge \mu(z)$. We have $\mu(z) \le \mu(x) \wedge \mu(z)$, which implies that $\mu(z) \le \mu(x)$.}
\end{proof}

{The monotonicity of \(\mu\) established above immediately gives one half of the homomorphism identity: for all $x,z \in L$, since \(x,z\leq x\vee z\), we have \(\mu(x)\leq\mu(x\vee z)\) and \(\mu(z)\leq\mu(x\vee z)\), and thus \(\mu(x)\vee\mu(z)\leq\mu(x\vee z)\). The following theorem states that the equality holds.}
\begin{theorem}
{The residual derivative is a \(\vee\)-homomorphism, that is,} for any $x,z \in L$, $\mu(x \vee z) = \mu(x) \vee \mu(z)$. 
\end{theorem}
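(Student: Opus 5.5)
The inequality $\mu(x)\vee\mu(z)\le\mu(x\vee z)$ already follows from Proposition~\ref{proposition.increase.derivative}, so the plan is to prove the reverse inequality $\mu(x\vee z)\le\mu(x)\vee\mu(z)$. Set $y:=x\vee z$ and $w:=\mu(x)\vee\mu(z)$. We express $y$ through its first stratum using Lemma~\ref{lemma.cover.complements}:
\[y=\mu(y)\vee\bigvee_{s\in\texttt{s}_0(y)}s .\]
By the argument in the proof of Lemma~\ref{prelemma.subadditivity.type}, every $s\in\texttt{s}_0(y)$ is completely co-irreducible, so $s\le x$ or $s\le z$. The aim is to show that $\mu(y)$ lies below $w$ by meeting it against the same kind of decomposition of $x$ and $z$.

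\textbf{Step 1.} Apply Lemma~\ref{lemma.cover.complements} to $x$ and $z$:
\[y=\mu(x)\vee\mu(z)\vee\bigvee_{s\in\texttt{s}_0(x)\cup\texttt{s}_0(z)}s=w\vee\bigvee_{s\in\texttt{s}_0(x)\cup\texttt{s}_0(z)}s .\]
Meet both sides with $\mu(y)$. Distributivity gives $\mu(y)=(\mu(y)\wedge w)\vee\bigvee_s(\mu(y)\wedge s)$. One caveat: infinite joins only distribute over meets in the form allowed by Lemma~\ref{lemma.minmax}, so if $\texttt{s}_0(x)\cup\texttt{s}_0(z)$ is infinite, we instead distribute over finite subsets $P$ and pass to the limit with Lemma~\ref{lemma.minmax}, exactly as in the successor step of Lemma~\ref{lemma.subelement.decomposition}. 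The complementary tails $\bigvee_{s\notin P}s$ meet down below $\mu(x)\vee\mu(z)$, because the infimum over $P$ of the corresponding maximal subelements is $\mu$.

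\textbf{Step 2.} It then suffices to show $\mu(y)\wedge s\le w$ for each $s\in\texttt{s}_0(x)$, the case of $\texttt{s}_0(z)$ being symmetric. Since $s\in\mathcal{I}(L,\le)$, either $s\le\mu(y)$ or $\mu(y)\wedge s\le\mu(s)$.

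In the second case, Lemma~\ref{lemma.complement.type1} gives $\mu(s)\le m$ for the $m\in\mathcal{M}(x)$ with $s=x-m$. Step 2 of the proof of Lemma~\ref{lemma.formula.maximal} gives $\mu(x-m)\le\mu(x)$, so $\mu(y)\wedge s\le\mu(x)\le w$.

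The first case, $s\le\mu(y)$, is the main obstacle. We must show that such an $s$ is already below $w$, and since $s\not\le\mu(x)$ the task is really to rule this case out unless $s\le\mu(z)$. The idea is that $s\in\delta^+(y)$ with $s\not\le\mu(y)$ is required for $s$ to be in $\texttt{s}_0(y)$. By Lemma~\ref{lemma.completeness}, $s\in\texttt{s}_0(y)$ holds iff $s\not\le\bigvee_{t\in\delta^+(y)\setminus\{s\}}t$. By Lemma~\ref{prelemma.subadditivity.type}, $\delta^+(y)=\delta^+(x)\cup\delta^+(z)$. Since $s\in\texttt{s}_0(x)$, $s$ is not below the join of $\delta^+(x)\setminus\{s\}$.

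Hence if $s\notin\texttt{s}_0(y)$, then $s$ lies below a join involving elements of $\delta^+(z)$. Distributing $s$ over that join and using co-irreducibility, $s$ should fall below some element of $\delta^+(z)$. Because $s\not\le\mu(x)$ and $s\le x$, we then need to compare it with the strata of $z$: either $s\in\texttt{s}_0(z)$, contradicting $s\le\mu(y)$ via the previous argument applied to $z$, or $s\le\mu(z)\le w$.

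This comparison is where I expect most of the care to be needed, together with the possibly infinite joins. I would first try it with $\delta^+(z)$ replaced by $\texttt{s}_0(z)\cup\{t\in\mathcal{I}:t\le\mu(z)\}$, using Lemma~\ref{lemma.subelement.decomposition} to write $z$ as $\texttt{c}(z)$ joined with these pieces.

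Combining Steps 1 and 2 yields $\mu(y)\le w$, and hence $\mu(x\vee z)=\mu(x)\vee\mu(z)$.
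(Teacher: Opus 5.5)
Your overall strategy works, but it takes a different route from the paper's.

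\textbf{How the two routes differ.} The paper takes each first-level residue $s$ of $x$ with $s\not\le\mu(z)$ (and symmetrically for $z$). It considers the element $\mu(x)\vee\mu(z)\vee(\text{join of the other such residues})$ and asserts that this is a maximal subelement of $x\vee z$ ``by similar arguments as Lemma~\ref{lemma.formula.maximal}''. It leaves implicit why the meet of these maximal subelements lies below $\mu(x)\vee\mu(z)$. You instead meet $\mu(y)$ against the decomposition of $y$ and use Lemma~\ref{lemma.minmax}, which reduces everything to the bounds $\mu(y)\wedge s\le w$, one residue at a time. This handles infinitely many residues explicitly, which the paper's sketch does not. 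Step~1 is correct. To merge the two tails, use $\bigwedge_P(a_P\vee b_P)=\bigwedge_P a_P\vee\bigwedge_P b_P$ for antitone $a_P,b_P$ on a directed index set, which follows from dual infinite distributivity applied twice. The case $s\not\le\mu(y)$ of Step~2 is also correct.

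\textbf{The gap.} The case $s\le\mu(y)$ is exactly where the paper's idea of a maximal subelement of $x\vee z$ missing $s$ is needed, and as written it has a gap. ``Distributing $s$ over that join'' uses $s\wedge\bigvee_i t_i=\bigvee_i(s\wedge t_i)$, which is the frame law, and a coframe need not satisfy it. Complete co-irreducibility also does not put $s$ below a single joinand. For example, in the dual algebraic coframe of closed subsets of Cantor space, $s=\{p\}$ lies in $\mathcal{I}(L,\le)$. It is below $\overline{\bigcup_n t_n}$ for clopen sets $t_n\not\ni p$ accumulating at $p$, yet below no $t_n$. You also silently assume $s\in\delta^+(y)$, which Lemma~\ref{lemma.completeness} needs.

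\textbf{Repair within your framework.} Both problems can be fixed using binary distributivity only.
If $s\le\texttt{c}(y)=\texttt{c}(x)\vee\texttt{c}(z)$ (Lemma~\ref{lemma.lattice.op}), then $s\le\texttt{c}(z)\le\mu(z)$, because $s\not\le\texttt{c}(x)$.
Otherwise, let $A$ and $B$ be the joins of $\delta^+(x)\setminus\{s\}$ and $\delta^+(z)\setminus\{s\}$. Then $s\le A\vee B$ by Lemmas~\ref{lemma.completeness} and~\ref{prelemma.subadditivity.type}, and $s\not\le A$ by Lemma~\ref{lemma.completeness} for $x$. So $s=(s\wedge A)\vee(s\wedge B)$ forces $s\le B\le z$. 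You do not need $s$ to lie below a single element of $\delta^+(z)$.
Now suppose $s\not\le\mu(z)$, and pick $n\in\mathcal{M}(z)$ with $s\not\le n$. Then $s\vee n=z$, so $z-n\le s$. If $z-n<s$, then $s=(s\wedge(z-n))\vee(s\wedge n)\le\mu(s)$, which is impossible. Hence $s=z-n\in\texttt{s}_0(z)$, and Lemma~\ref{lemma.completeness} for $z$ contradicts $s\le B$.

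\textbf{A shorter route.} You can also avoid cores and $\delta^+$ entirely. Let $s=x-m$ with $s\not\le\mu(z)$. If $s\not\le z$, take $M=m\vee z$. If $s\le z$, take $M=m\vee n$ with $n$ as above. In both cases $s\vee M=y$, and $s\not\le M$ by co-irreducibility. Any $u$ with $M\le u\le y$ and $s\not\le u$ satisfies $u=(u\wedge s)\vee M\le\mu(s)\vee M=M$, since $\mu(s)\le m$ by Lemma~\ref{lemma.complement.type1}. Hence $M\in\mathcal{M}(y)$ and $s\not\le\mu(y)$, which is the contrapositive of what your Step~2 needs.
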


\begin{proof}
    We have $\mu(x) \vee \mu(z) \le \mu(x\vee z)$ as a consequence of Proposition \ref{proposition.increase.derivative}.
    We prove that $\mu(x) \vee \mu(z) \ge \mu(x\vee z)$. From Lemma \ref{lemma.cover.complements}, we have 
    \[x \vee z = \mu(x)\vee \mu(z) \vee \left(\underset{{m \in \mathcal{M}(x)}}{\bigvee}(x-m)\right) \vee \left(\underset{{m \in \mathcal{M}(z)}}{\bigvee}(z-m)\right).\]
    We can rewrite this as: 
    \[x \vee z = \mu(x)\vee \mu(z) \vee \left(\underset{\underset{x-m \not \le \mu(z)}{m \in \mathcal{M}(x)}}{\bigvee}(x-m)\right) \vee \left(\underset{\underset{z-m \not \le \mu(x)}{m \in \mathcal{M}(z)}}{\bigvee}(z-m)\right).\]
    It is thus sufficient to see that for all $m' \in \mathcal{M}(x)$ such that $x-m' \not \le \mu(z)$, 
    \[\mu(x)\vee \mu(z) \vee \left(\underset{\underset{x-m \not \le \mu(z)}{m \in \mathcal{M}(x) \setminus \{m'\}}}{\bigvee}(x-m)\right) \vee \left(\underset{\underset{z-m \not \le \mu(x)}{m \in \mathcal{M}(z)}}{\bigvee}(z-m)\right)\]
    and for all $m' \in \mathcal{M}(z)$ such that $z-m' \not \le \mu(x)$, 
    \[\mu(x)\vee \mu(z) \vee \left(\underset{\underset{x-m \not \le \mu(z)}{m \in \mathcal{M}(x)}}{\bigvee}(x-m)\right) \vee \left(\underset{\underset{z-m \not \le \mu(x)}{m \in \mathcal{M}(z)\setminus \{m'\}}}{\bigvee}(z-m)\right)\]
    are maximal subelements of $x \vee z$. This uses similar arguments as the proof of Lemma \ref{lemma.formula.maximal}.
\end{proof}


\subsection{Properties of the core operator\label{section.core.operator.properties}}

{In this section, we prove some properties of the core operator $\texttt{c}$. In particular, for $x$, $\texttt{c}(x)$ is the join of all elements below $x$ that have no maximal subelement (Lemma \ref{lemma.core.union}). We also prove that $\texttt{c}$ is a $\vee$-homomorphism (Lemma \ref{lemma.lattice.op}).}

{We start by showing that the join of all elements below $x$ that have no maximal subelements is well-defined.}


\begin{lemma}\label{lemma.t0.uppersemilattice}
    {For every $z \in L$,} the pair $(\mathcal{T}_0 (L) {\cap (\mathop{\downarrow}z)}, \le)$ is a complete upper semilattice. 
\end{lemma}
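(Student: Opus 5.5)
The plan is to prove directly that for every family $(y_i)_{i\in I}$ in $\mathcal{T}_0(L)\cap\mathop{\downarrow}z$, the join $y:=\bigvee_i y_i$, which exists in $\mathop{\downarrow}z$ by Lemma~\ref{lemma.upper.complete.ideals}, again has no maximal subelement. Since the empty join is $\varepsilon$, which trivially has no maximal subelement ($\mathop{\downarrow}\varepsilon\setminus\{\varepsilon\}=\emptyset$), this will show that $\mathcal{T}_0(L)\cap\mathop{\downarrow}z$ is closed under arbitrary joins computed in $\mathop{\downarrow}z$. Hence it is a complete upper semilattice, with the join inherited from $L$.

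The core step is an argument by contradiction. Suppose $m\in\mathcal{M}(y)$. Then some $y_i\not\le m$, since otherwise $y\le m<y$. For such an $i$, I will show that $y_i\wedge m\in\mathcal{M}(y_i)$, which contradicts $y_i\in\mathcal{T}_0(L)$. Clearly $y_i\wedge m<y_i$. Take $w$ with $y_i\wedge m\le w\le y_i$ and form $m\vee w$, which satisfies $m\le m\vee w\le y$. By maximality of $m$, there are two cases. If $m\vee w=m$, then $w\le m$, so $w\le y_i\wedge m$ and hence $w=y_i\wedge m$. If $m\vee w=y$, then distributivity gives $y_i=y_i\wedge(m\vee w)=(y_i\wedge m)\vee(y_i\wedge w)=(y_i\wedge m)\vee w=w$. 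So $y_i\wedge m$ is a maximal subelement of $y_i$. This is the same pattern as the proof of Lemma~\ref{lemma.max.of.max}, and it only requires finite distributivity, which holds since $L$ is a coframe.

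Finally, I will check that the join obtained in $\mathop{\downarrow}z$ agrees with the join taken inside the subposet $\mathcal{T}_0(L)\cap\mathop{\downarrow}z$. This holds because the former lies in the subposet and is the least upper bound among all elements of $\mathop{\downarrow}z$. I expect no real obstacle here. The only point needing care is the existence of arbitrary joins in $\mathop{\downarrow}z$, which comes from lower completeness via Lemma~\ref{lemma.upper.complete.ideals}, and which is why the statement is localized below $z$.
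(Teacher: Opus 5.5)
Your proof is correct and takes essentially the same approach as the paper. The paper shows directly that any $x<\bigvee S$ can be strictly enlarged to $x\vee w$, where $x\wedge s<w<s$ for some $s\in S$ with $s\not\le x$; your argument, that a maximal subelement $m$ of the join would make $y_i\wedge m$ a maximal subelement of some $y_i$, is the contrapositive of this and uses the same distributivity computation. It also avoids the paper's preliminary reduction to families with $\bigvee S'<\bigvee S$ for every $S'\subsetneq S$, which is not needed.
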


\begin{proof}
    It is sufficient to see that for all $S \subset \mathcal{T}_0(L,\le){ \cap (\mathop{\downarrow}z)}$, $\bigvee S \in \mathcal{T}_0(L,\le)$. Fix $S \subset \mathcal{T}_0(L,\le){\cap (\mathop{\downarrow}z)}$, and $x < \bigvee S$. 
    We can assume that for all $S' \subsetneq S$, $\bigvee S' < \bigvee S$. 
    {We have $x \ge \bigvee_{s \in S}(x \wedge s)$.}
    If $(x \wedge s) = s$ for all $s \in S$, then
    {$x \ge \bigvee S$, which is a contradiction.}
    Therefore, there is some $s \in S$ such that 
    $x \wedge s < s$. Since $s \in \mathcal{T}_0(L,\le)$, there exists $z$ such that $x \wedge s < z < s$. Then $x < x \vee z < \bigvee S$. Indeed, if $x = x \vee z$, then $z \le x$ and thus $z = z \wedge s \le x \wedge s$, which is false. If $x \vee z = \bigvee S$, we have $(x \vee z) \wedge s = s$, and thus $z = (x \wedge s) \vee (z \wedge s) = s$, which is also false.
\end{proof}

\begin{lemma}\label{lemma.core.union}
For $x \in L$, we have: 
\[\texttt{c}(x) = \bigvee_{z \in \mathcal{T}_0(L,\le) \cap (\mathop{\downarrow} x)} z.\]
\end{lemma}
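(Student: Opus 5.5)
We prove the two inequalities separately. Set
\[t := \bigvee_{z \in \mathcal{T}_0(L,\le)\cap(\mathop{\downarrow}x)} z.\]
By Lemma \ref{lemma.t0.uppersemilattice}, $t \in \mathcal{T}_0(L,\le)$ and $t \le x$, so $t$ is the largest element of $\mathop{\downarrow}x$ having no maximal subelement.

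\textbf{Inequality $\texttt{c}(x) \le t$.} By definition of the core, $\texttt{c}(x) = x^{(\texttt{r}(x))}$ and $\mu(\texttt{c}(x)) = \texttt{c}(x)$. We would like to conclude that $\texttt{c}(x) \in \mathcal{T}_0(L,\le)$. Suppose instead that $\mathcal{M}(\texttt{c}(x))$ is nonempty and pick $m$ in it. Then $\mu(\texttt{c}(x)) \le m < \texttt{c}(x)$, which contradicts $\mu(\texttt{c}(x)) = \texttt{c}(x)$. (If $\mathcal{M}$ is empty, then $\mu$ is the identity by convention, so there is nothing to check.) Hence $\texttt{c}(x)$ has no maximal subelement. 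Since $\texttt{c}(x) \le x$, it is one of the terms of the join, and so $\texttt{c}(x) \le t$.

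\textbf{Inequality $t \le \texttt{c}(x)$.} We show by transfinite induction on $\alpha$ that $z \le x^{(\alpha)}$ for every $z \in \mathcal{T}_0(L,\le)$ with $z \le x$.
\begin{itemize}
\item The base case $\alpha = 0$ holds by assumption.
\item At limit ordinals the claim passes to the meet $x^{(\lambda)}=\bigwedge_{\beta<\lambda}x^{(\beta)}$ directly.
\item For the successor step we must show that $z \le x^{(\alpha)}$ implies $z \le \mu(x^{(\alpha)})$.
\end{itemize}
For the successor step, the cleanest route is Proposition \ref{proposition.increase.derivative}: from $z \le x^{(\alpha)}$ it gives $\mu(z) \le \mu(x^{(\alpha)})$. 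Since $z \in \mathcal{T}_0(L,\le)$, the convention gives $\mu(z) = z$, hence $z \le x^{(\alpha+1)}$.

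As a fallback that avoids Proposition \ref{proposition.increase.derivative}, we can argue directly, for each $m \in \mathcal{M}(x^{(\alpha)})$, that $z \le m$. Suppose $z \not\le m$. Using $x^{(\alpha)} = m \vee (x^{(\alpha)}-m)$ and distributivity,
\[z = (z\wedge m) \vee (z \wedge (x^{(\alpha)}-m)).\]
The element $x^{(\alpha)}-m$ is completely co-irreducible (Lemma \ref{lemma.complement.type1}), with $\mu(x^{(\alpha)}-m) \le m$. It follows that $z \wedge m$ is a maximal subelement of $z$. The argument is the one in the proof of Lemma \ref{lemma.max.of.max}: any $w$ with $z\wedge m \le w \le z$ satisfies $m \le m \vee w \le x^{(\alpha)}$, so either $w \le m$ or $m \vee w = x^{(\alpha)}$, and in the latter case distributivity forces $w = z$. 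This contradicts $z \in \mathcal{T}_0(L,\le)$, so $z \le m$ for every $m$, and therefore $z \le \mu(x^{(\alpha)})$.

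Taking $\alpha = \texttt{r}(x)$ gives $z \le \texttt{c}(x)$ for every such $z$, hence $t \le \texttt{c}(x)$. The main point requiring care is this successor step. We would rely on Proposition \ref{proposition.increase.derivative}, keeping the direct maximal-subelement argument as a backup in case that proposition's proof is considered fragile.
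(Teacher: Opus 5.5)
Your proof is correct. The argument you call a fallback is the paper's own proof. The paper also inducts on $\alpha$ and passes limits through the meet. At the successor step it shows that $z\in\mathcal{T}_0(L,\le)$ with $z\not\le m$ for some $m\in\mathcal{M}(x^{(\alpha)})$ contradicts the maximality of $m$. It does so by picking $z'$ with $z\wedge m<z'<z$ and checking $m<m\vee z'<x^{(\alpha)}$, which is the contrapositive of your claim that $z\wedge m\in\mathcal{M}(z)$. Only the maximality of $m$ and finite distributivity are used there, so your detour through $x^{(\alpha)}-m$ and Lemma~\ref{lemma.complement.type1} can be dropped.

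Where you differ is in preferring to cite Proposition~\ref{proposition.increase.derivative}. That gives a one-line successor step, but you should reverse your priorities. The paper's proof of that proposition expands $\mu(z)\wedge\bigl(\mu(x)\vee\bigvee_{s\in\texttt{s}_0(x)}s\bigr)$ termwise over a possibly infinite family. That is the frame law $a\wedge\bigvee S=\bigvee_{s\in S}(a\wedge s)$, which a coframe does not supply; the dual algebraic coframe of closed subsets of $\omega+1$ already violates it. Your direct argument avoids this, and in fact proves that proposition in general. For $z\le x$ and $m\in\mathcal{M}(x)$, either $z\le m$ or $z\wedge m\in\mathcal{M}(z)$, and in both cases $\mu(z)\le m$; the case $\mathcal{M}(x)=\emptyset$ is trivial.

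Two small points in your favour. You justify $\texttt{c}(x)\in\mathcal{T}_0(L,\le)$ from $\mu(\texttt{c}(x))=\texttt{c}(x)$, which the paper only asserts. Your induction also reaches $\alpha=\texttt{r}(x)$ itself, whereas the paper's is stated only for $\alpha<\texttt{r}(x)$.
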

 
\begin{proof}
Since $\texttt{c}(x) \in \mathcal{T}_0(L,\le) \cap (\mathop{\downarrow} x)$, we have $\texttt{c}(x) \le \bigvee_{z \in \mathcal{T}_0(L,\le) \cap (\mathop{\downarrow} x)} z$. 
We have to prove that for all $z \in \mathcal{T}_0(L,\le) \cap (\mathop{\downarrow} x)$, $z \le \texttt{c}(x)$. We do this using transfinite induction to prove that $P(\alpha)$: for any $\alpha < \texttt{r}(x)$, we have $z \le x^{(\alpha)}$ for all $z \in \mathcal{T}_0(x)$. The base case  $P(0)$ is trivial. Fix $\alpha < \texttt{r}(X)$ such that $P(\alpha)$ holds and $\alpha+1 < \texttt{r}(X)$. As a consequence of $P(\alpha)$, in order to prove $P(\alpha +1)$ it is sufficient to prove that every maximal element of $(\mathop{\downarrow} x^{(\alpha)}) \setminus \{x^{(\alpha)}\}$ contains every element of $\mathcal{T}_0(L,\le) \cap (\mathop{\downarrow}x^{(\alpha)})$. Assume that it is not the case, and fix $z \in \mathcal{T}_0(x^{(\alpha)})$ and $m$ a maximal subelement of $(\mathop{\downarrow} x^{(\alpha)}) \setminus \{x^{(\alpha)}\}$ such that $z \not \le m$. In particular, 
$z \wedge m < z$. Since $z \in \mathcal{T}_0(x^{(\alpha)})$, there exists $z' < z$ such that $z \wedge m < z'$.
This implies that $m < m \vee z' < x^{(\alpha)}$, which is impossible, since $m$ is a maximal subelement of $x^{(\alpha)}$. Indeed, if $m = m \vee z'$, then $z' \le m$ and thus, since $z' \le z$, we have $z' \le z \wedge m$, which is not true; if $x^{(\alpha)} = m \vee z'$, then $z = (m \wedge z) \vee (z \wedge z') \le z'$, which is not true. 
We have now proved $P(\alpha +1)$. Consider a limit ordinal $\lambda < \texttt{r}(X)$, and assume that $P(\beta)$ holds for all $\beta < \lambda$. Then $P(\lambda)$ holds. This comes directly from the fact that $x^{(\lambda)} = \bigwedge_{\beta < \lambda} x^{(\beta)}$. 
\end{proof}

\begin{remark}
   Lemma \ref{lemma.core.union} implies that for all $x \notin \mathcal{T}_0(L,\le)$, 
$\texttt{c}(x) = \mu_{\mathcal{T}_0(L,\le)}(x)$.
\end{remark}

{
We prove that \(\texttt{c}\) interacts cleanly with the lattice operations. The key ingredient in the proof of Lemma \ref{lemma.lattice.op} is the following result.}

\begin{lemma}\label{lemma.core.decomposition}
    For all $x,y,z \in L$ such that  $y \le x \vee z$ and $y \in \mathcal{T}_0(L,\le)$, we have $y = \texttt{c}(x \wedge y) \vee \texttt{c}(z \wedge y)$.
\end{lemma}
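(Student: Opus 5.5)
The plan is to prove the two inequalities separately. The easy direction uses only monotonicity of the core. Since $x \wedge y \le y$ and $z \wedge y \le y$, Lemma \ref{lemma.core.union} gives $\texttt{c}(x\wedge y) \le \texttt{c}(y)$ and $\texttt{c}(z \wedge y) \le \texttt{c}(y)$: the set $\mathcal{T}_0(L,\le)\cap \mathop{\downarrow}(x\wedge y)$ is contained in $\mathcal{T}_0(L,\le)\cap\mathop{\downarrow}y$. Since $y \in \mathcal{T}_0(L,\le)$, we have $\mathcal{M}(y)=\emptyset$, so $\mu(y) = y$ and $\texttt{c}(y)=y$. Hence $\texttt{c}(x\wedge y)\vee\texttt{c}(z\wedge y) \le y$.

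For the reverse inequality, set $w := \texttt{c}(x \wedge y) \vee \texttt{c}(z\wedge y)$ and suppose, for contradiction, that $w < y$. By distributivity and $y \le x \vee z$ we get $y = (x\wedge y)\vee(z\wedge y)$. Applying Proposition \ref{lemma.core.complements} to $a := x\wedge y$ and $b := z\wedge y$ gives
\[y = w \vee \partial a \vee \partial b ,\]
with $\partial a = \bigvee \texttt{s}_0(a)$ and more generally $a = \texttt{c}(a)\vee\bigvee\delta(a)$. Because $w<y$, some element of $\delta(a)\cup\delta(b)$ is not below $w$. Each such $s$ lies in $\mathcal{I}(L,\le)$, so it is completely co-irreducible with a unique maximal subelement $\mu(s)$, and $s \le y$.

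The goal is to turn such an $s$ into a maximal subelement of $y$, contradicting $y\in\mathcal{T}_0(L,\le)$. I would argue exactly as in Lemma \ref{lemma.completeness} and Corollary \ref{corollary.submax.dominant}. Using Lemma \ref{prelemma.subadditivity.type}, $\delta^+(a\vee b) = \delta^+(a)\cup\delta^+(b)$, so the relevant residues are the elements of $\mathcal{I}(L,\le)$ below $y$ that are not below $w$. Choose $s$ among them so that
\[s \not\le m := w \vee \bigvee_{t\in (\delta(a)\cup\delta(b))\setminus\{s\}} t .\]
Then $m < y$, and the argument of Lemma \ref{lemma.completeness} shows $m \in \mathcal{M}(y)$: for $m\le u\le y$, either $s\le u$, giving $u=y$, or $u\wedge s\le\mu(s)\le m$, and distributivity gives $u=m$.

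The main obstacle is producing such an $s$, i.e.\ an element of the boundary posets that is not below the join of the others together with $w$. If the residues are all mutually redundant, the choice fails. A transfinite chain of residues could also have its join absorbed into $w$ only in the limit. I would handle this by taking $s$ in the highest possible stratum. Concretely, choose $s\in\texttt{s}_0(a)$ with $s \not\le \texttt{c}(a)\vee w\vee \partial b$ whenever $\partial a\not\le w\vee \partial b$; Lemma \ref{lemma.noncomparable} guarantees such an $s$ by separating level-zero residues of $a$. Otherwise $y = w\vee\partial b$, and we repeat the choice symmetrically in $b$.

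If both boundaries are absorbed, then $y = w\vee(\text{elements of lower strata})$. In that case I would instead pass to $\mu(a)\vee\mu(b)$ and apply the $\vee$-homomorphism theorem, $\mu(a\vee b)=\mu(a)\vee\mu(b)$, with $a\vee b = y$. This gives $\mu(y) = \mu(a)\vee\mu(b)$. Iterating transfinitely, using Lemma \ref{lemma.minmax} at limit stages as in Lemma \ref{lemma.subelement.decomposition}, yields $y^{(\alpha)} \le a^{(\alpha)}\vee b^{(\alpha)}$ for every $\alpha$. The ordinals at which $a$ and $b$ stabilize are both at most $\max(\texttt{r}(a),\texttt{r}(b))$, so $y = \texttt{c}(y)\le \texttt{c}(a)\vee\texttt{c}(b) = w$. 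This second route is cleaner and is the one I would actually pursue. Its key step is the limit-stage inequality $\bigwedge_\beta(a^{(\beta)}\vee b^{(\beta)}) \le a^{(\lambda)}\vee b^{(\lambda)}$ for non-increasing sequences. Lemma \ref{lemma.minmax} together with dual distributivity gives it, by applying distributivity twice to $\bigwedge_{\beta,\gamma}(a^{(\beta)}\vee b^{(\gamma)})$ and using that both sequences are non-increasing.
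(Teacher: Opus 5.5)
Your second route is correct, and it differs from the paper's proof.

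\textbf{Your route.} You run the residual sequences of $a=x\wedge y$ and $b=z\wedge y$ in parallel. At successor steps you use the theorem of Section~\ref{section.residual.derivative.vee.homomorphism}, $\mu(a\vee b)=\mu(a)\vee\mu(b)$. At limit steps you use the diagonal identity $\bigwedge_{\beta<\lambda}(a^{(\beta)}\vee b^{(\beta)})=a^{(\lambda)}\vee b^{(\lambda)}$. That identity is right: apply dual infinite distributivity twice to get $\bigwedge_{\beta,\gamma}(a^{(\beta)}\vee b^{(\gamma)})$, then note each term dominates the diagonal term at $\max(\beta,\gamma)$. So Lemma~\ref{lemma.minmax} is not actually needed.

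\textbf{The paper's route.} The paper peels one side at a time using an elementary observation. If $m\in\mathcal{M}(a)$ and $a-m\not\le b$, then $b\vee m$ is a maximal subelement of $y$; this follows from finite distributivity and the maximality of $m$ in $\mathop{\downarrow}a$. That is impossible since $y\in\mathcal{T}_0(L,\le)$. Hence every first-level residue of $a$ lies below $b$, and Lemma~\ref{lemma.cover.complements} gives $y=\mu(a)\vee b$. Iterating on $a$ alone needs only plain dual infinite distributivity at limits, $\bigwedge_\beta(a^{(\beta)}\vee b)=a^{(\lambda)}\vee b$. This yields $y=\texttt{c}(a)\vee b$, and the symmetric argument finishes.

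\textbf{What your route buys.} If you drop the hypothesis $y\in\mathcal{T}_0(L,\le)$, the same induction gives $(a\vee b)^{(\alpha)}=a^{(\alpha)}\vee b^{(\alpha)}$ for all $\alpha$. Hence $\texttt{c}(a\vee b)=\texttt{c}(a)\vee\texttt{c}(b)$, which is Lemma~\ref{lemma.lattice.op}, follows directly, and the present lemma becomes a corollary.

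\textbf{What it costs.} It rests on the general $\vee$-homomorphism theorem, whose proof in the paper is only sketched ("similar arguments as Lemma~\ref{lemma.formula.maximal}"). Yet your induction only ever applies $\mu$ to $a^{(\alpha)}\vee b^{(\alpha)}=y\in\mathcal{T}_0(L,\le)$. So you only need the special case: if $a\vee b\in\mathcal{T}_0(L,\le)$, then $a\vee b=\mu(a)\vee\mu(b)$. That special case is exactly what the paper's observation proves, applied once to $(a,b)$ and once to $(b,\mu(a))$. Substituting it would make your argument self-contained.

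\textbf{Your first sketch.} You were right to abandon it. As you noted, nothing guarantees a residue $s$ not below $w$ joined with all the other residues, once infinitely many residues or deeper strata are involved.
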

\begin{proof}
    By distributivity of $L$, $y = (x \wedge y) \vee (z \wedge y)$. If $x \wedge y$ has some maximal subelement $m$, we have $(x \wedge y) - m \le z \wedge y$. Otherwise, $(z \wedge y) \vee m$ would be a maximal subelement of $y$, which is not possible since $y \in \mathcal{T}_0(L,\le)$. Repeating this reasoning, we obtain $y = \mu(x \wedge y) \vee (z \wedge y)$. Using transfinite induction, we then get 
    $y = \texttt{c}(x \wedge y) \vee (z \wedge y)$. Symmetrically, we obtain $y = \texttt{c}(x \wedge y) \vee \texttt{c}(z \wedge y)$.
\end{proof}

{
Combining Lemmas \ref{lemma.core.union} and \ref{lemma.core.decomposition} gives the distributivity of \(\texttt{c}\) over joins.}

\begin{lemma}\label{lemma.lattice.op}
    For all $x, z \in L$, we have $\texttt{c}(x \vee z) = \texttt{c}(x) \vee \texttt{c}(z)$. In particular, when $x \le z$, we have $\texttt{c}(x) \le \texttt{c}(z)$.
\end{lemma}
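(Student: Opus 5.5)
Since $\texttt{c}(x)$ is characterized by Lemma \ref{lemma.core.union} as a join, the plan is to compare the two index sets and prove the two inequalities separately. By Lemma \ref{lemma.core.union},
\[\texttt{c}(x \vee z) = \bigvee_{y \in \mathcal{T}_0(L,\le) \cap \mathop{\downarrow}(x\vee z)} y, \qquad \texttt{c}(x) = \bigvee_{y \in \mathcal{T}_0(L,\le) \cap \mathop{\downarrow}x} y, \qquad \texttt{c}(z) = \bigvee_{y \in \mathcal{T}_0(L,\le) \cap \mathop{\downarrow}z} y.\]

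\textbf{Monotonicity and the easy inequality.} If $x \le z$, then $\mathcal{T}_0(L,\le)\cap\mathop{\downarrow}x \subset \mathcal{T}_0(L,\le)\cap\mathop{\downarrow}z$, so the displayed formula gives $\texttt{c}(x)\le\texttt{c}(z)$ directly. This proves the ``in particular'' clause. Applying it to $x \le x\vee z$ and $z \le x \vee z$ then yields $\texttt{c}(x)\vee\texttt{c}(z)\le\texttt{c}(x\vee z)$.

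\textbf{The reverse inequality.} It suffices to show that every $y \in \mathcal{T}_0(L,\le)$ with $y \le x\vee z$ satisfies $y \le \texttt{c}(x)\vee\texttt{c}(z)$. Lemma \ref{lemma.core.decomposition} gives exactly
\[y = \texttt{c}(x\wedge y)\vee\texttt{c}(z\wedge y).\]
Since $x \wedge y \le x$, monotonicity gives $\texttt{c}(x\wedge y)\le \texttt{c}(x)$, and likewise $\texttt{c}(z\wedge y)\le\texttt{c}(z)$. Hence $y \le \texttt{c}(x)\vee\texttt{c}(z)$. Taking the join over all such $y$ and using Lemma \ref{lemma.core.union} once more gives $\texttt{c}(x\vee z)\le\texttt{c}(x)\vee\texttt{c}(z)$.

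\textbf{Expected difficulties.} The substantive work is already contained in Lemma \ref{lemma.core.decomposition}; once it is available, the argument is essentially formal. One point needs care: monotonicity must be derived from Lemma \ref{lemma.core.union} before the main argument, not from the equality being proved, to avoid circularity. The ordering above respects this. A second point is that the joins must be well defined. Lemma \ref{lemma.t0.uppersemilattice} guarantees that the join of $\mathcal{T}_0(L,\le)\cap\mathop{\downarrow}(x\vee z)$ exists, and Lemma \ref{lemma.upper.complete.ideals} makes all the joins meaningful inside $\mathop{\downarrow}(x\vee z)$.
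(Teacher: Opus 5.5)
Your proof is correct and follows the paper's argument. The paper also obtains $\texttt{c}(x)\vee\texttt{c}(z)\le\texttt{c}(x\vee z)$ from Lemma~\ref{lemma.core.union} and the reverse inequality from Lemma~\ref{lemma.core.decomposition}. You additionally spell out the monotonicity step $\texttt{c}(x\wedge y)\le\texttt{c}(x)$, which the paper leaves implicit.
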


\begin{proof}
    Lemma \ref{lemma.core.union} implies that $\texttt{c}(x) \vee \texttt{c}(z) \le \texttt{c}(x \vee z)$. From Lemma \ref{lemma.core.decomposition}, we have $\texttt{c}(x \vee z) \le \texttt{c}(x) \vee \texttt{c}(z)$.
\end{proof}

\begin{remark}
    The results in this section imply that the set of elements of $L$ whose core is equal to $\epsilon$ is a sublattice of $(L,\le)$. 
\end{remark}

\section{\label{section.lattice.proofs.1}Characterization of $\mathcal{S}_0(L,\tau) \setminus \mathcal{S}_1(L,\tau)$}

In this section, we focus on the first level of the Cantor--Bendixson hierarchy. Throughout this section, \((L,\leq)\) denotes a dual algebraic coframe and \(\tau\) an order-compatible topology on \(L\).

Our goal is to characterize the isolated points of \((L,\tau)\) purely in terms of the residual structure developed in the previous sections. More precisely, we shall prove (Theorem \ref{thm.characterization.isolated}) that an element is isolated if and only if it satisfies {three} 
order-theoretic conditions: it is dually compact, has no outcast and possesses only finitely many maximal subelements---extending Theorem A from \cite{GN25}. Thus the first Cantor-Bendixson level admits a description that makes no explicit reference to the topology.

{
We first prove that the points satisfying the {three} conditions above are isolated (Proposition \ref{prop.no.outcast}), and then prove the converse implication with a series of propositions.
}

\begin{remark}
Notice that there is one subtle difference between the framework introduced in the present text and the one of \cite{GN25}: here the empty set is considered to be a shift, which implies that minimal shifts have a unique maximal subsystem, whereas in \cite{GN25} they are considered to have no subsystem and therefore no maximal subsystem. This convention turns out to be more natural in the lattice-theoretic setting and simplifies several definitions and constructions. Observe also that $(\mathcal{H}^d,\tau_{\mathcal{H}}^d)$ is a T1 space, so its Cantor-Bendixson structure coincides with its residual structure.
\end{remark}


\begin{proposition}\label{prop.no.outcast}
    If $x \in L$ is dually compact, $x$ has no outcast and $\mathcal{M}(x)$ is finite, then $x$ is isolated. 
\end{proposition}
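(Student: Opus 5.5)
The idea is to build an explicit open neighbourhood of $x$ that contains no other element. Let $\mathcal{M}(x)=\{m_1,\dots,m_k\}$, which is finite by hypothesis. The candidate is
\[
U := \mathop{\downarrow}x \setminus \bigcup_{i=1}^{k} \mathop{\downarrow} m_i .
\]
Since $x$ is dually compact, $\mathop{\downarrow}x$ is open; the reverse implication of Lemma~\ref{lemma.finite.type.included} (the $(\Leftarrow)$ direction) uses neither the choice function nor anything beyond order-compatibility, and its argument shows this directly. Each $\mathop{\downarrow} m_i$ is closed because the order is closed (condition (iii) of Definition~\ref{def.comp.top}). A finite union of closed sets is closed, so $U$ is open, and finiteness of $\mathcal{M}(x)$ is used exactly here. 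Finally $x\in U$, because $x\not\le m_i$ for every $i$.

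It then remains to show $U=\{x\}$. Take $z\in U$ and suppose $z\neq x$. Then $z<x$. As $z\not\le m_i$ for all $i$, $z$ lies below no maximal subelement of $x$, so $z$ is an outcast of $x$. This contradicts the hypothesis. Hence $U=\{x\}$ and $x$ is isolated.

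One degenerate case needs checking. If $\mathcal{M}(x)=\emptyset$, then $U=\mathop{\downarrow}x$, and the no-outcast hypothesis forces $\mathop{\downarrow}x\setminus\{x\}=\emptyset$, i.e.\ $x=\varepsilon$; the same argument covers this.

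The step I expect to need the most care is the openness of $\mathop{\downarrow}x$ for dually compact $x$. It should follow from condition (i): a net converging to $x$ from outside $\mathop{\downarrow}x$ must be ruled out. The clean route is to reuse the $(\Leftarrow)$ part of Lemma~\ref{lemma.finite.type.included}, or equivalently the observation already used in the Hausdorff lemma that down-sets of dually compact elements are open. Everything else is formal.
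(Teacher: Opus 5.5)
Your argument is correct, but it is organised differently from the paper's. The paper argues by contradiction with nets. If $x$ were not isolated, openness of $\mathop{\downarrow}x$ gives a net $\boldsymbol{x}_d<x$ converging to $x$. The no-outcast hypothesis and finiteness of $\mathcal{M}(x)$ let one pass to a subnet lying below a single $m\in\mathcal{M}(x)$. Continuity of $\vee$ then forces $x\vee m=m$, which is impossible.

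You instead write down the isolating neighbourhood $U=\mathop{\downarrow}x\setminus\bigcup_i\mathop{\downarrow}m_i$ explicitly, using closedness of the order (condition (iii)). The paper uses continuity of $\vee$ at this point, but to pass from the limit to the equality $x\vee m=m$ it also needs uniqueness of limits, which again comes from (iii). Your version avoids the pigeonhole/subnet step that the paper compresses into ``without loss of generality''. It also shows exactly where finiteness of $\mathcal{M}(x)$ is used: a finite union of closed sets is closed.

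The point to correct is your justification that $\mathop{\downarrow}x$ is open. The $(\Leftarrow)$ direction of Lemma~\ref{lemma.finite.type.included} proves the converse: an open down-set implies dual compactness. The implication you need is $(\Rightarrow)$. The paper proves that direction only under the hypothesis that $\mathcal{K}(L,\le)$ has a choice function, so your claim that no choice function is needed is unsupported. Nor does the fact simply ``follow from condition (i)''. A net approaching some $z\le x$ from outside $\mathop{\downarrow}x$ is not monotone, so (i) says nothing about it directly. The paper's own proof uses the same fact with the bare phrase ``since $x$ is dually compact, $\mathop{\downarrow}x$ is open''. Once you cite the correct direction of the lemma, with its hypothesis, your proof stands on the same footing as the paper's.
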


\begin{proof}
    {Since $x$ is dually compact, $\mathop{\downarrow}x$ is open. Thus if $x$ were not isolated, there would exist a net $\boldsymbol{x} : D \rightarrow L$ converging to $x$ and such that $\boldsymbol{x}_d < x$ for all $d \in D$. Since $x$ has no outcast and $\mathcal{M}(x)$ is finite, without loss of generality we can assume that there exists $m \in \mathcal{M}(x)$ such that $\boldsymbol{x}_d \le m$ for all $d \in D$. We thus have 
    $\boldsymbol{x}_d \vee m = m$. 
    By continuity of $\vee$, we get $x \vee m = m$, which is impossible since $x \vee m = x$ and $m < x$.}
\end{proof}

{
We prove the converse implication. We begin by proving that isolated points must be dually compact and have finitely many maximal subelements.
}
\begin{proposition}\label{isolated.finite.type}
    Every isolated element of $(L,\tau)$ is 
    {dually compact}. 
\end{proposition}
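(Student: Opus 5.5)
We argue by contraposition: assuming $x$ is not dually compact, we will show that $x$ is not isolated. Since $(L,\le)$ is dual algebraic, there is a filtered set $F_x \subset \mathcal{K}(L,\le)$ with $\bigwedge F_x = x$. We view $F_x$ as a non-increasing net, indexed by $F_x$ with the reverse order. By condition (i) of Definition~\ref{def.comp.top}, this net converges to $x$ in $\tau$.

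The key observation is that $x \notin F_x$. Indeed, if $x \in F_x$, then $x$ would itself be dually compact, since every element of $F_x$ lies in $\mathcal{K}(L,\le)$. Hence every $f \in F_x$ satisfies $f > x$, in particular $f \neq x$. So we have a net of points distinct from $x$ converging to $x$.

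If $x$ were isolated, $\{x\}$ would be open, and convergence would force some $f \in F_x$ to lie in $\{x\}$. This contradicts the previous paragraph, so $x$ is not isolated. Contrapositively, every isolated element is dually compact.

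There is no real obstacle here. The only points to check are that condition (i) applies, which holds because $L$ is lower complete so $\bigwedge F_x$ exists, and that the filtered set indexing gives a genuine net, which holds because the reverse order on a filtered set is directed. Neither the choice function nor Lemma~\ref{lemma.finite.type.included} is needed.
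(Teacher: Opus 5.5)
Your proof is correct and follows essentially the same route as the paper: index a filtered family $F_x\subset\mathcal{K}(L,\le)$ with $\bigwedge F_x=x$ as a non-increasing net, use condition (i) of order-compatibility to get convergence to $x$, and observe that no member can equal $x$ because $x$ is not dually compact. Your explicit justification that $x\notin F_x$, so that every member lies strictly above $x$, spells out the step the paper states in a single line.
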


\begin{proof}
{Consider any $x \in L$ which is not 
{dually compact}. Since $(L,\le)$ is a dual algebraic lattice, there is a non-increasing net $\boldsymbol{x} : D \rightarrow \mathcal{K}(L,\le)$ such that $\bigwedge_d \boldsymbol{x}_d = x$. Since $\tau$ is order-compatible, we have $\boldsymbol{x}_d \rightarrow x$. Since $x$ is not dually compact, for all $d \in D$, we have $\boldsymbol{x}_d > x$. This implies that $x$ is not isolated.}
\end{proof}



\begin{proposition}\label{prop.inf.type}
    Assume that $(H,\le)$ is an upper semilattice. For all $x \in L$, if $\mathcal{M}_H(x)$ is infinite, $x$ is not isolated.  
\end{proposition}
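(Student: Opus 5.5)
We will show that $x$ is the limit of a net of elements strictly below $x$, so $x$ cannot be isolated. The net will be built from the infinitely many maximal $H$-subelements of $x$, using the order-compatibility axiom (i), which says that monotone nets converge to their joins.

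Let $\mathcal{M}_H(x)$ be infinite and let $D$ be the set of finite subsets $P\subset\mathcal{M}_H(x)$, directed by inclusion. For each $P\in D$ put
\[\boldsymbol{y}_P:=\bigvee_{m\in P} m,\]
a finite join, which lies in $H$ when $P\neq\emptyset$ because $(H,\le)$ is an upper semilattice. By Lemma \ref{lemma.cover.max}, as soon as $P$ contains two distinct elements we get $\boldsymbol{y}_P=x$. So these finite joins do not approximate $x$ from below, and we use them differently.

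For each $m\in\mathcal{M}_H(x)$ the element $m$ is strictly below $x$. We therefore look for a net taking values in $\mathcal{M}_H(x)$ that converges to $x$. The simplest candidate is a filter-type argument on the meets. Set
\[\boldsymbol{z}_P:=\bigwedge_{m\in\mathcal{M}_H(x)\setminus P} m.\]
This net is non-decreasing in $P$. By order-compatibility (i) it converges to $w:=\bigvee_P \boldsymbol{z}_P$, and every $\boldsymbol{z}_P\le m<x$ for any $m\notin P$.

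The key step is to show that $w=x$. Since $\boldsymbol{z}_P\le x$ for all $P$, we only need $w\ge x$. Fix two distinct $m_1,m_2\in\mathcal{M}_H(x)$. For any $P$ containing neither of them, both $m_1$ and $m_2$ occur in the meet defining $\boldsymbol{z}_{P}$. We want instead to compare $\boldsymbol{z}_P$ with the elements of $P$. Dual distributivity gives
\[\boldsymbol{z}_P\vee m_1=\bigwedge_{m\notin P}(m\vee m_1)=x\quad\text{whenever } m_1\in P,\]
because $m\vee m_1=x$ for every $m\neq m_1$ by Lemma \ref{lemma.cover.max}. This uses that $(H,\le)$ is an upper semilattice, and also that $\mathcal{M}_H(x)\setminus P$ is nonempty, which holds because $\mathcal{M}_H(x)$ is infinite. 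Hence $w\vee m=x$ for every $m\in\mathcal{M}_H(x)$.

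This identity alone does not give $w=x$, so we have to extract more. If $w<x$, the identity $w\vee m=x$ says that $w$ lies under no maximal $H$-subelement. Then $\boldsymbol{z}_P\le w$ for all $P$, while $\boldsymbol{z}_P\le m$ for all $m\notin P$. I expect this to be the main obstacle, because $w$ itself need not belong to $H$.

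To get around it, I would use the continuity of $\vee$ instead of proving $w=x$ directly. Pick an injective sequence $(m_k)$ in $\mathcal{M}_H(x)$ and consider the net $d\mapsto m_{k}$ indexed along the directed set $D$, choosing for each $P$ some $m_P\notin P$. Each $m_P$ lies above $\boldsymbol{z}_P$ and below $x$. Because the order is closed (axiom (iii)), any cluster point $c$ of $(m_P)$ satisfies $w\le c\le x$. Continuity of $\vee$ then gives $c\vee m=x$ for all $m$.

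It remains to rule out that $(m_P)$ clusters at some $c<x$, and here the plan is to argue via the combination of $\boldsymbol{z}_P\to w$ and $m_P\ge\boldsymbol{z}_P$. If $w=x$, the net $(m_P)$ is squeezed between $\boldsymbol{z}_P\to x$ and the constant $x$. Closedness of the order together with $\boldsymbol{z}_P\le m_P\le x$ then yields $m_P\to x$, which finishes the proof since every $m_P<x$. The case $w<x$ is the delicate one, and I would attack it by showing that $w$ is an $H$-outcast-type element contradicting the maximality of the $m$'s via Lemma \ref{lemma.max.of.max}.
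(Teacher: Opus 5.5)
Your net $\boldsymbol{z}_P=\bigwedge_{m\in\mathcal{M}_H(x)\setminus P}m$ is a good choice. It is non-decreasing, each $\boldsymbol{z}_P$ lies below some $m\notin P$ and hence strictly below $x$, and axiom (i) gives $\boldsymbol{z}_P\to w=\bigvee_P\boldsymbol{z}_P$. But the whole argument rests on $w=x$, and you never prove it. You explicitly leave the case $w<x$ open, and the suggested attack (an ``outcast-type'' argument via Lemma~\ref{lemma.max.of.max}) is not carried out.

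The detour through $(m_P)$ cannot replace this step. Cluster points need not exist, since no compactness is assumed. Closedness of $\le$ alone does not give the sandwich step from $\boldsymbol{z}_P\le m_P\le x$ and $\boldsymbol{z}_P\to x$ to $m_P\to x$; that would need a base of convex sets. And once $w=x$ is known, $(\boldsymbol{z}_P)$ itself is the required net, so $(m_P)$ is superfluous.

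The missing step is one line, and your remark that $w\vee m=x$ for all $m\in\mathcal{M}_H(x)$ ``does not give $w=x$'' is wrong in a coframe. By dual infinite distributivity,
\[
w\vee\mu_H(x)=w\vee\bigwedge_{m\in\mathcal{M}_H(x)}m=\bigwedge_{m\in\mathcal{M}_H(x)}(w\vee m)=x,
\]
while $\mu_H(x)=\boldsymbol{z}_\emptyset\le w$ makes the left-hand side equal to $w$. Hence $w=x$, and the case $w<x$ never arises. With this line added, your argument is a valid variant of the paper's.

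The paper instead uses the residue net $x_P=\mu_H(x)\vee\bigvee_{m\in P}(x-m)$, whose join is $x$ by Lemma~\ref{lemma.cover.complements}. Since $x-n\le m$ whenever $n\neq m$ (because $n\vee m=x$), one has $x_P\le\boldsymbol{z}_P$, so your net dominates the paper's. Your route avoids co-Heyting subtraction entirely and makes $\boldsymbol{z}_P<x$ immediate. The paper has to derive $x_P<x$ from the irreducibility of the residues; in exchange, its route reuses the core--residue decomposition that drives the rest of the article.
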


\begin{proof}
   {Let us assume that $\mathcal{M}_H(x)$ is infinite. By Lemma \ref{lemma.cover.complements}, we have 
   \[x = \mu_H(x) \vee \left(\bigvee_{m \in \mathcal{M}_H(x)} (x-m)\right).\]
   The elements $x_P := \mu_H(x) \vee \left(\bigvee_{m \in P} (x-m)\right)$,
    where $P$ is a finite subset of $\mathcal{M}_H(x)$, form a directed set 
    which converges to $x$. Moreover for all $P$, $x_P < x$. Indeed, assume ad absurdum that there exists $P$ such that $x_P = x$. Fix $m \in \mathcal{M}_H(x) \setminus P$. Since $x_P = x$, we must have $(x-m) \wedge \mu_H(x) = (x-m)$ or $(x-m) \wedge (x-n) = (x-m)$ for some $n \in P$, meaning $(x-m) \le \mu_H(x) \le m$ or $x-m \le x-n$, both impossible.
    }
\end{proof}


{We now prove the third condition: an isolated point has no outcast.}

\begin{proposition}\label{prop.no.outcast.rev}
    Assume that $\mathcal{K}(L,\le)$ has a choice function. Every $x \in L$ which has an outcast is not isolated.
\end{proposition}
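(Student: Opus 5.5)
We plan to construct, for any $x$ with an outcast, a net of elements different from $x$ that converges to $x$. By Proposition \ref{isolated.finite.type} and Proposition \ref{prop.inf.type}, we may assume that $x$ is dually compact and that $\mathcal{M}(x)$ is finite; otherwise $x$ is already non-isolated. By Lemma \ref{lemma.char.outcasts}, having an outcast means $\partial x < x$ and $\texttt{c}(x) \not\le \partial x$. In particular $\texttt{c}(x)\neq \varepsilon$, and $\texttt{c}(x)\wedge \partial x < \texttt{c}(x)$.

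The key idea is to exploit the fact that $\texttt{c}(x)\in \mathcal{T}_0(L,\le)$: it has no maximal subelement. Set $c:=\texttt{c}(x)$. We aim to produce a net $(c_d)$ of elements strictly below $c$ that converges to $c$, and then take $x_d := c_d \vee \partial x$. By continuity of $\vee$ (condition (ii) of order-compatibility), $x_d \to c\vee\partial x = x$, using the core-residues decomposition (Proposition \ref{lemma.core.complements}). It then remains to guarantee $x_d \neq x$. For this we want $c_d \ge c\wedge \partial x$. Then, if $c_d\vee \partial x = x$, distributivity would give $c = (c_d\wedge c)\vee(\partial x\wedge c) = c_d$, a contradiction. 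So the plan reduces to building an increasing net in the interval $[c\wedge\partial x, c)$ whose supremum is $c$.

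Since $c$ has no maximal subelement, every $y$ with $c\wedge\partial x\le y<c$ admits some $y'$ with $y<y'<c$. Using the choice function on $\mathcal{K}(L,\le)$ (as in Lemma \ref{lemma.finite.type.included} and Theorem \ref{thm:existence.rank}), we build by transfinite recursion a strictly increasing chain $(y_\alpha)$ in $[c\wedge\partial x,c)$. Successor steps pick a strictly larger element below $c$, encoded via a choice of a dually compact witness. Limit steps take joins, which exist by Lemma \ref{lemma.upper.complete.ideals}. We continue as long as the join stays strictly below $c$. By Hartogs' lemma the recursion must stop, and it can only stop at a stage where the join of the chain equals $c$. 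At that point the chain, viewed as a non-decreasing net with supremum $c$, converges to $c$ by condition (i). Alternatively, one can bypass limit issues by taking the directed set of all $y$ in the interval: it is directed under joins, since the join of two elements strictly below $c$ remains strictly below $c$ by the argument of Lemma \ref{lemma.t0.uppersemilattice}. Its supremum is $c$, because otherwise that supremum would be a maximal subelement of $c$. This directed-set version is cleaner, and we would present it.

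The main obstacle is the last point: checking that the family $\{y : c\wedge \partial x\le y<c\}$ is upward directed and that its join is $c$ rather than some $y^\ast<c$. If $y^\ast<c$ were the join, then every $y<c$ in the interval would lie below $y^\ast$. Since $c\in\mathcal{T}_0$, there exists $y'$ with $y^\ast<y'<c$, and $y'$ still lies in the interval, which gives a contradiction. Directedness must be proved without knowing that $y_1\vee y_2<c$ a priori. If it fails, we fall back on the transfinite-chain construction above, which only needs joins of chains and the choice function.
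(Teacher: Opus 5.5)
Your reduction is sound, but the version you say you would present fails. The set $\{y : \texttt{c}(x)\wedge\partial x\le y<\texttt{c}(x)\}$ is in general not upward directed, so it is not a net and condition (i) cannot be applied to it. Lemma~\ref{lemma.t0.uppersemilattice} does not help: it says that joins of elements of $\mathcal{T}_0(L,\le)$ stay in $\mathcal{T}_0(L,\le)$, not that a join of two proper subelements of $c$ stays proper. For a concrete failure, take the coframe of closed subsets of Cantor space $C$. It is dual algebraic, and its dually compact elements are the countably many clopen sets. Here $\mathcal{M}(C)=\emptyset$, so $\partial C=\varepsilon$, $\texttt{c}(C)=C$, and $C$ has outcasts. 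Yet two complementary clopen halves lie in $[\varepsilon,C)$ and have join $C$.

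So you must commit to the transfinite chain, and its successor step needs to be made precise. The $y_\alpha$ are not dually compact, so $\theta$ cannot be applied to them directly. Here is one way to do it:
\begin{itemize}
\item Given $y_\alpha<c$, pick $y'$ with $y_\alpha<y'<c$ (possible because $c$ has no maximal subelements).
\item Write $y'=\bigwedge F$ with $F\subset\mathcal{K}(L,\le)$ filtered. Some $f\in F$ satisfies $c\not\le f$, so $W_\alpha:=\{f\in\mathcal{K}(L,\le) : c\not\le f,\ y_\alpha<f\wedge c\}$ is nonempty.
\item Put $f_\alpha:=\theta(W_\alpha)$ and $y_{\alpha+1}:=f_\alpha\wedge c$.
\end{itemize}
Strict increase of the chain makes $\alpha\mapsto f_\alpha$ injective into $\mathcal{K}(L,\le)$. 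Hartogs' lemma then yields a limit $\lambda$ with $\bigvee_{\alpha<\lambda}y_\alpha=c$. Your distributivity argument correctly gives $y_\alpha\vee\partial x<x$. Condition (i) applies directly to the non-decreasing net $(y_\alpha\vee\partial x)_{\alpha<\lambda}$, whose join is $c\vee\partial x=x$, so you do not even need continuity of $\vee$.

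With this repair your proof is correct, and it differs from the paper's mainly in where the chain lives. The paper runs the same Hartogs recursion directly in $[\partial x,x)$. Every element of that interval lies above the outcast $\partial x$, so it is itself an outcast and hence not maximal in $(\mathop{\downarrow}x)\setminus\{x\}$. The chain therefore cannot stall at a successor stage, nor at a limit whose join is below $x$. This route avoids the core altogether, and with it your dependence on Proposition~\ref{lemma.core.complements} and on part (ii) of Lemma~\ref{lemma.char.outcasts}.

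In exchange, your witness trick $f\wedge c$ does not need dually compact elements strictly between a given $y$ and the top. The paper's chain in $\mathcal{K}(L,\le)\cap\mathop{\downarrow}x$ does need them; they are available once $x$ is dually compact, which Proposition~\ref{isolated.finite.type} lets one assume. In your argument, the preliminary reductions via Propositions~\ref{isolated.finite.type} and~\ref{prop.inf.type} are never actually used.
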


\begin{proof}
     Following Hartogs lemma, there is an ordinal $\kappa$ such that 
    there is no injection from $\kappa$ to $\mathcal{K}(L,\le)\cap (\mathop{\downarrow} x)$. Assume that $x$ has an outcast, meaning that $\partial x < x$. We define a net $\boldsymbol{s} : \kappa \rightarrow \mathcal{K}(L,\le)\cap (\mathop{\downarrow} x)$ by transfinite induction. Denote by $\theta$ a choice function of $\mathcal{K}(L,\le)$. We set $\boldsymbol{s}_0 := \partial x$ and 
    for all ordinal $\alpha < \kappa$, if $D_{\alpha} := \{z \in \mathcal{K}(L,\le) \cap (\downarrow x) \setminus \{x\} : \forall \lambda < \alpha, z > \boldsymbol{s}_{\lambda}\}$ is not empty, we set 
    $\boldsymbol{s}_{\alpha} := \theta(D_{\alpha})$,
    otherwise $\boldsymbol{s}_{\alpha} := x$. There exists $\alpha < \kappa$ such that $\boldsymbol{s}_{\alpha} = x$, otherwise there would be an injection from $\kappa$ to $\mathcal{K}(L,\le)\cap (\downarrow x)$.
    We can assume it to be minimal. We claim that $\alpha$ is a limit ordinal and that $\bigvee_{\lambda < \alpha} \boldsymbol{s}_{\lambda}= x$. Since for all $\lambda < \alpha$, $\boldsymbol{s}_{\lambda} < x$, this implies that $x$ is not isolated.
    Let us prove the claim. Assume ad absurdum that $\bigvee_{\lambda < \alpha} \boldsymbol{s}_{\lambda} < x$, while $\alpha$ is a limit ordinal. In that case, 
    $\bigvee_{\lambda < \alpha} s_{\lambda}$ is a maximal element of $(\mathop{\downarrow}x) \setminus \{x\}$. 
    {Indeed, for all $y \in L$ such that 
    $\bigvee_{\lambda < \alpha} \boldsymbol{s}_{\lambda} \le y < x$,
    there exists $y \le s < x$. Since $\boldsymbol{s}_\alpha = x$, $D_{\alpha}$ is empty, which means that  
    $\bigvee_{\lambda < \alpha} \boldsymbol{s}_{\lambda} = s = y$. 
    This implies that $\partial x$ is not an outcast of $x$, which is a contradiction. 
    For similar reasonons, assuming that $\alpha$ is not a limit ordinal, $\bigvee_{\lambda < \beta} \boldsymbol{s}_{\lambda}$, where $\alpha = \beta +1$, is a maximal subelement of $x$, which is impossible. This ends the proof.} 
\end{proof}

The following is an extension of Proposition \ref{prop.no.outcast.rev} which will be useful later.

\begin{lemma}\label{lemma.f.outcast}
     Assume that $(H,\le)$ is an upper semilattice such that for all $x \in L$, $(H\cap (\mathop{\downarrow}x),\le)$ is a complete upper semilattice. Every $x \in L$ which has an $H$-outcast is not isolated.
\end{lemma}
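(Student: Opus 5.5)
The approach is to adapt the argument of Proposition \ref{prop.no.outcast.rev}, but to replace the transfinite construction through dually compact elements by a single join inside $H$, which the completeness hypothesis on $H$ makes available. Fix $x\in L$ with an $H$-outcast $y$, so $y\in H$, $y<x$, and $y\not\le m$ for every $m\in\mathcal{M}_H(x)$. The goal is a net of elements strictly below $x$ that converges to $x$. By condition (i) of Definition \ref{def.comp.top}, any non-decreasing net with supremum $x$ converges to $x$, so it suffices to find a directed family of elements strictly below $x$ whose join is $x$.

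First consider the set $U:=\{z\in H\cap \mathop{\downarrow}x : y\le z<x\}$ of $H$-elements lying between $y$ and $x$. Every element of $U$ is itself an $H$-outcast: if $z\le m$ with $m\in\mathcal{M}_H(x)$, then $y\le m$, which is impossible. The key claim is that $U$ has no maximal element. Indeed, a maximal element $u$ of $U$ would be a maximal element of $H\cap(\mathop{\downarrow}x\setminus\{x\})$. To check this, take $w\in H$ with $u\le w<x$. Then $w\ge y$, so $w\in U$, and maximality of $u$ in $U$ gives $w=u$. Hence $u\in\mathcal{M}_H(x)$, contradicting the fact that $u$ is an outcast.

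Next I use that $H$ is an upper semilattice. For $z,z'\in U$, the join $z\vee z'$ lies in $H$ and is $\le x$. If $z\vee z'<x$, then $z\vee z'\in U$. So either $U$ is directed, or some pair in $U$ has join equal to $x$. In the second case I take the constant-free net indexed by $U$ itself, ordered by inclusion of finite subsets, using the elements $\bigvee P$ for finite $P\subseteq U$. This requires a little care, and a cleaner route is to pass to the complete upper semilattice $H\cap\mathop{\downarrow}x$. Let $s:=\bigvee^{H} U$ be the join computed there. If $s<x$, then $s\in U$ and $s$ is maximal in $U$, which contradicts the previous paragraph. Hence $s=x$, and by Lemma \ref{lemma.upper.complete.ideals} this join is also the join computed in $L$.

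The main obstacle is turning this into a non-decreasing net of elements strictly below $x$ with join $x$ when $U$ is not directed. I would handle it as follows. If some $z,z'\in U$ satisfy $z\vee z'=x$, then the constant net $(z\vee z')$ does not help, so instead I use a Zorn/Hartogs chain argument as in Proposition \ref{prop.no.outcast.rev}. With the choice function, I build a strictly increasing transfinite chain in $U$ starting from $y$, taking the $H$-join at limit stages. This join stays below $x$, since otherwise we are done. By Hartogs' lemma the chain must stop, and it can only stop because the join of the chain equals $x$. A stop with join strictly below $x$ would produce a maximal element of $U$, which is impossible by the claim above. The chain is then the required increasing net, its elements are strictly below $x$, and condition (i) gives convergence to $x$. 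Therefore $x$ is not isolated.
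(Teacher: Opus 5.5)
Your argument is correct in substance, but it does not follow the paper's route.

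\textbf{How the paper argues.} It never builds anything inside $H$; it splits on $\mathcal{M}(x)$. If $\mathcal{M}(x)$ is infinite, Proposition~\ref{prop.inf.type} with $H=L$ already gives non-isolation. If $\mathcal{M}(x)=\{m_1,\dots,m_l\}$ is finite and $x$ has no outcast, a finite bookkeeping argument pushes any $z\in H$ with $z<x$ up to some $z_k\ge z$ such that all $H$-elements between $z_k$ and $x$ lie below a single $m_t$. An $H$-join inside $\mathop{\downarrow}m_t$ is then a maximal $H$-subelement above $z$. So an $H$-outcast forces an ordinary outcast, and Proposition~\ref{prop.no.outcast.rev} finishes.

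\textbf{How your route differs.} You rerun the transfinite construction of Proposition~\ref{prop.no.outcast.rev} directly in $U\subseteq H$, with completeness of $H$ replacing dually compact elements. Your observation that $U$ has no maximal element is exactly what makes this work. The result is shorter and more general: it uses neither the coframe structure, nor dual compactness, nor the case split, nor the binary-join hypothesis on $H$. The detour through directedness of $U$ and the computation of $\bigvee U$ can simply be deleted.

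\textbf{What it costs.} The paper's standing hypothesis is a choice function on $\mathcal{K}(L,\le)$. Your successor steps pick elements of $U$, which need not be dually compact, so you are really using choice on $H$. That is harmless in ZFC, but it is not what the paper's hypotheses supply.

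\textbf{One justification must be repaired.} You cite Lemma~\ref{lemma.upper.complete.ideals} to identify the join computed in $H$ with the join in $L$. That lemma only says joins exist in $\mathop{\downarrow}x$; it says nothing about joins computed in $H$. Yet condition (i) of Definition~\ref{def.comp.top} needs the join \emph{in $L$} of your chain to equal $x$; otherwise the chain converges to something else.

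The correct argument applies the completeness hypothesis at a different point. For $C\subseteq H$, set $j:=\bigvee C$ in $L$. The join $s'$ of $C$ in the complete upper semilattice $H\cap\mathop{\downarrow}j$ satisfies $s'\le j$. Being an upper bound of $C$, it also satisfies $s'\ge j$. Hence $j\in H$, and joins in $H\cap\mathop{\downarrow}x$ coincide with joins in $L$.

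With this, take $L$-joins at limit stages. They lie in $H$, hence in $U$ as long as they stay strictly below $x$. When the chain finally reaches $x$, condition (i) gives the convergence you need.
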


\begin{proof}
    It is sufficient to see that if $x$ has an $H$-outcast and $\mathcal{M}(x)$ is finite, then $x$ has an outcast. We then  
    apply Proposition \ref{prop.no.outcast.rev}. 
    Let us assume that $x$ has no outcast and that $\mathcal{M}(x)$ is finite, and prove that $x$ has no $H$-outcast.
    Fix some $z \in H$ such that $z < x$. We prove that there exists some $m \in \mathcal{M}(x)$ such that $P(z,m)$: for all $z' \in H$ such that $z \le z' < x$, $z' \le m$. 
    This implies that $\bigvee_{\underset{z' \in F}{z \le z' \le m}} z'$ is a maximal $H$-subelement of $x$, and ends the proof.
    Let us denote by $m_1 , \ldots , m_l$ the elements of $\mathcal{M}(x)$. We construct a sequence 
    $(t_i)_i$ in $\llbracket 1 , l \rrbracket$ and a sequence $(z_i)_{i=0}^k$ in $H$ as follows:  $t_0 = 1$ and $z_0 = z$. For all $i$, if $P(z_i,m_{t_i})$ is not satisfied, there exists $z' \in F$ such that $z < z' < x$ with $z' \not \le m_{t_i}$. We set $z_{i+1} = z'$ and $t_{i+1}$ is the smallest $t > t_i$ such that $z' \le m_{t}$ if it exists. Otherwise we set $t_i = l$ and end the definition. If $P(z_i,m_{t_i})$ is satisfied, we simply end the definition. 
    We have $P(z_{k},m_{t_k})$, which implies $P(z,m_{t_k})$. Otherwise, $z_k < x$ while $z_k \not \le m$ for every $m \in \mathcal{M}(x)$, which is not possible since $x$ has no outcast.
\end{proof}

We can now collect the pieces into the characterization announced at the beginning of the section.

\begin{theorem}\label{thm.characterization.isolated}
Let $(L,\le)$ be a dual algebraic coframe and let $\tau$ be an order-compatible topology on $L$. An element $x\in L$ is isolated in $(L,\tau)$ if and only if {it is dually compact}, has no outcast and $\mathcal M(x)$ is finite.
\end{theorem}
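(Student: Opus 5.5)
The plan is to assemble the theorem from the propositions proved just before it, handling the two implications separately. Throughout, I use the standing assumptions of this section, together with the hypothesis (used in Section~\ref{section.decomposition} and in Proposition~\ref{prop.no.outcast.rev}) that $\mathcal{K}(L,\le)$ has a choice function. This assumption is what makes the core, the boundary $\partial x$ and the outcast characterisation of Lemma~\ref{lemma.char.outcasts} available.

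For the sufficiency direction I will simply invoke Proposition~\ref{prop.no.outcast}. If $x$ is dually compact, then $\mathop{\downarrow}x$ is open by Lemma~\ref{lemma.finite.type.included}. Any net converging to $x$ eventually lies below $x$. Since $x$ has no outcast and $\mathcal{M}(x)$ is finite, after passing to a subnet all its terms lie below a single $m\in\mathcal{M}(x)$. Continuity of $\vee$ then forces $x\vee m=m$, which is a contradiction. One step needs care here. A net converging to a non-isolated point need not be eventually different from $x$ a priori, so I will build it explicitly from the neighbourhood filter, choosing $\boldsymbol{x}_o\in o\cap\mathop{\downarrow}x\setminus\{x\}$. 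I will also make the "subnet" step precise: by finiteness of $\mathcal{M}(x)$, some fixed $m$ is frequently an upper bound of the net, and restricting to those indices gives a cofinal subnet that still converges to $x$.

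For the necessity direction I argue by contraposition, treating each of the three conditions in turn. If $x$ is not dually compact, Proposition~\ref{isolated.finite.type} shows it is the limit of a strictly larger non-increasing net of dually compact elements, so it is not isolated. If $\mathcal{M}(x)$ is infinite, Proposition~\ref{prop.inf.type} with $H=L$ gives a directed family $x_P<x$ converging to $x$. Finally, if $x$ has an outcast, Proposition~\ref{prop.no.outcast.rev} yields a transfinite increasing net of elements strictly below $x$ whose join is $x$, and order-compatibility (i) makes it converge to $x$.

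The main obstacle is the outcast case, which relies on the transfinite construction via Hartogs' lemma in Proposition~\ref{prop.no.outcast.rev}. Its key claim is that the stopping ordinal is a limit whose supremum equals $x$; otherwise the supremum, or the last element of the chain, would be a maximal subelement lying above the outcast $\partial x$ (Lemma~\ref{lemma.char.outcasts}), which contradicts the fact that $\partial x$ is an outcast. I would check this claim carefully in two places. First, the chain is taken inside $\mathcal{K}(L,\le)\cap\mathop{\downarrow}x$. Second, the maximality argument needs every $y$ with $\bigvee_\lambda \boldsymbol{s}_\lambda\le y<x$ to lie below a dually compact element $s<x$ above the chain. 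I expect to get this from the dual algebraicity of $L$, approximating $y$ from above by dually compact elements and using dual compactness of $x$ to keep them strictly below $x$. Once the three contrapositive cases and Proposition~\ref{prop.no.outcast} are combined, the equivalence follows.
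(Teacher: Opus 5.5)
Your proposal is correct and follows the paper's proof exactly. Sufficiency is Proposition~\ref{prop.no.outcast}, and necessity is assembled from Propositions~\ref{isolated.finite.type}, \ref{prop.inf.type} and~\ref{prop.no.outcast.rev}. Your extra care is well placed, in particular where you say Proposition~\ref{prop.no.outcast.rev} needs every $y$ with $\bigvee_\lambda \boldsymbol{s}_\lambda\le y<x$ to lie below a dually compact $s<x$; the paper leaves this step implicit, and you correctly justify it from dual algebraicity together with dual compactness of $x$, which you may use because the non-dually-compact case has already been disposed of.
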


{
\begin{proof}
Direction $(\Rightarrow)$ follows from Propositions \ref{isolated.finite.type}, \ref{prop.inf.type}, and \ref{prop.no.outcast.rev}. Direction $(\Leftarrow)$ is exactly Proposition \ref{prop.no.outcast}.
\end{proof}

The theorem provides the first direct connection between the Cantor-Bendixson stratification and the residual structure of the lattice. Isolation, a purely topological notion, is completely determined by the order-theoretic notions of maximal subelements and outcasts.

As an application, we recover Theorem~A of \cite{GN25} for \((\mathcal H^d,\leq)\) equipped with \(\tau_{\mathcal H}^d\). The characterization of the next Cantor-Bendixson level, \(\mathcal S_1(L,\tau)\setminus\mathcal S_2(L,\tau)\), will be carried out in the following section.}

\section{\label{section.lattice.proofs.2}Characterization of $\mathcal{S}_1(L,\tau) \setminus \mathcal{S}_2(L,\tau)$}

In the previous section we obtained a purely order-theoretic characterization of isolated points. We now turn to the second level of the Cantor-Bendixson stratification. The analysis reveals additional structural constraints, expressed in terms of 
local {``residual patterns''}.
This clarifies how higher-order isolation phenomena are encoded in the {order-theoretic} 
structure. {We begin this section with additional context definitions (Section \ref{section.add.defs}) necessary to formulate the main result. This result is described informally in Section \ref{section.outline}, where we outline the remainder of this section, devoted to its proof.}

{
\subsection{Additional context definitions\label{section.add.defs}}
}

\subsubsection{Locally convex topologies}

Recall that a subset $S$ of a poset $(L,\le)$ is said to be \textit{convex} when for all $x,z \in S$ and $y \in L$ such that $x \le y \le z$, we have $y \in S$. 

\begin{definition}
    A topology $\tau$ on $L$ is said to be \textbf{locally convex} when it has a base of convex sets.
\end{definition}

\begin{lemma}\label{lemma.lawson.loc.convex}
    The dual Lawson topology is locally convex. 
\end{lemma}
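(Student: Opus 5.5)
The plan is to exhibit an explicit base for $\tau^*(L,\le)$ made of convex sets. By definition, $\tau^*(L,\le)$ is generated by the subbase
\[\left\{\mathop{\downarrow} x : x \in \mathcal{K}(L,\le)\right\} \cup \left\{L \setminus (\mathop{\downarrow} x) : x \in \mathcal{K}(L,\le)\right\}.\]
So the canonical base consists of finite intersections
\[B = \bigcap_{i=1}^{k} \mathop{\downarrow} x_i \;\cap\; \bigcap_{j=1}^{l} \bigl(L \setminus \mathop{\downarrow} y_j\bigr),\]
where $x_i, y_j \in \mathcal{K}(L,\le)$, together with $L$ itself as the empty intersection. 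It therefore suffices to show that every such $B$ is convex. Intersections of convex sets are convex, so the argument reduces to checking each type of subbasic set separately.

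For the first type, take $a, c \in \mathop{\downarrow} x$ and $a \le b \le c$. Then $b \le c \le x$, so $b \in \mathop{\downarrow} x$. In fact $\mathop{\downarrow} x$ is a lower set, hence convex.

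For the second type, take $a, c \in L \setminus \mathop{\downarrow} y$ and $a \le b \le c$. If we had $b \le y$, then $a \le b \le y$ would give $a \in \mathop{\downarrow} y$, which is a contradiction. So $b \notin \mathop{\downarrow} y$. In fact $L \setminus \mathop{\downarrow} y$ is an upper set, hence convex.

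Finally, $L$ is trivially convex. Every basic open set is therefore a finite intersection of convex sets and is convex, which proves that $\tau^*(L,\le)$ is locally convex.

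There is no real obstacle here. The only point to state explicitly is that finite intersections of a subbase form a base, and that convexity is preserved under arbitrary intersections. No hypothesis on $(L,\le)$, such as being a dual algebraic coframe, is needed for this argument.
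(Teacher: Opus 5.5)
Your proof is correct and follows the paper's argument: the paper notes that $\mathop{\downarrow}x$ and its complement are convex by transitivity of $\le$. You additionally spell out the step the paper leaves implicit, namely that the basic open sets are finite intersections of these subbasic sets and therefore remain convex.
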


\begin{proof}
    It is sufficient to see that for all $x$, the sets $\mathop{\downarrow}x$ and $(\mathop{\downarrow}x)^c$ are convex, which is a consequence of transitivity of the relation $\le$.
\end{proof}

The topology $\tau_{\mathcal{H}}^d$ is also clearly locally convex, as the open balls form a base of convex sets. 

\subsubsection{Relative sections}

Consider $(L,\le)$ a dual algebraic coframe whose set of dually compact elements admits a choice function.

\begin{definition}
    For all $x,z \in L$ such that $x \le z$, and all $\alpha < \texttt{r}(z)$, we set $\texttt{s}_{\alpha}(x,z) = \{s \in \texttt{s}_{\alpha}(z) : s \not \le x\}$ and $\texttt{r}(x,z)$ the smallest ordinal $\alpha < \texttt{r}(z)$ such that $\texttt{s}_{\alpha}(x,z) = \emptyset$. We call \textbf{relative section} of index $x$ in $\delta(z)$ the set $\delta(x,z) := \bigcup_{\alpha} \texttt{s}_{\alpha}(x,z)$. 
\end{definition}

{The relative section essentially consist in the part of the boundary poset of \(z\) which lies outside of $x$.}


{
\subsection{Informal description of the main theorem\label{section.outline}}
}

Throughout the remainder of this section, \((L,\leq)\) is a dual algebraic coframe whose set of compact elements admits a choice function, and \(\tau\) is an order-compatible locally convex topology on \(L\). Our goal is to characterize the elements of \(\mathcal S_1(L,\tau)\setminus\mathcal S_2(L,\tau),\) that is, the points $x \in \mathcal{S}_1(L,\tau)$ which become isolated after removing the isolated points of this space. Unlike the characterization of isolated points, this description involves both the local residual structure above and below elements. 
{It can be written informally as follows. First, an element \(x\) of $\mathcal S_1(L,\tau)\setminus\mathcal S_2(L,\tau)$ has a neighborhood whose elements are comparable to \(x\) (Section \ref{section.isolated.from.above}). From there, we have two necessary conditions for an element $x \in L$ to be in $\mathcal S_1(L,\tau)\setminus\mathcal S_2(L,\tau)$: isolation from above and isolation from below---in $\mathcal S_1(L,\tau)$. Here is an informal snapshot of these two conditions: 
\begin{itemize}
\item\textbf{Isolation from above in $\mathcal S_1(L,\tau)$} (Section \ref{section.isolated.from.above}): { The point $x$ must be isolated from above in $\mathcal{S}_1(L,\tau)$, which means that there is a neighborhood of $x$ such that every $y \ge x$ in this neighborhood must be in $\mathcal{S}_0(L,\tau)$. This is translated into the following conditions.}
If it is not dually compact (which implies that it is not necessarily isolated from above in $\mathcal S_0(L,\tau)$), there is some $z > x$ isolated in $\mathcal{S}_0(L,\le)$
such that $\texttt{c}(x) = \texttt{c}(z)$, that the relative section 
consists of index $x$ in $z$ in $\omega$ finite strata and there is a subsequence of these strata such that every element of each strata is below every element of the previous ones.

\item\textbf{Isolation from below in \(\mathcal{S}_1(L,\tau)\)} (Section \ref{section.isolated.from.below}):  {The point $x$ must also be isolated from below in $\mathcal{S}_1(L,\tau)$, which means that there is a neighborhood of $x$ such that every $y \le x$ in this neighborhood must be in $\mathcal{S}_0(L,\tau)$. This is translated into the following conditions.}
If $x$ has no maximal subelement, it must have a maximal $\mathcal{T}_0(L,\le)$-subelement $t$, and every other such subelement must be below it. Furthermore, the net of finite joins of completely co-irreducible elements outside of $t$ converge to $x$ and ultimately cover every subelement of $x$. Moreover, they all differ by finitely many completely co-irreducible elements from a fixed element of the net which is dually compact. When $x$ has at least one maximal subelement and has an outcast, the same properties apply on \(x-\partial x\), the net being shifted by \(\vee\, \partial x\) to recover \(x\).
\end{itemize}





Isolation from above relies on a local rigidity property of the core operator, that locally, every element above $x$ has the same residual core as $x$. With technical additional refinements, these necessary conditions are also sufficient.}

\subsection{Isolation from above in $\mathcal{S}_1(L,\tau)$\label{section.isolated.from.above}}


{
Any element sufficiently close to some \(x\in\mathcal S_1(L,\tau)\setminus\mathcal S_2(L,\tau)\) and lying strictly above it must be isolated in $\mathcal S_0(L,\tau)$. The purpose of this subsection is to translate this topological requirement into a collection of order-theoretic conditions involving the relative boundary structure above \(x\) as described informally in Section \ref{section.outline}. 

}

We first establish that the core operator \(\texttt{c}\) is locally constant around the elements of \(\mathcal{S}_1(L,\tau)\setminus\mathcal{S}_2(L,\tau)\). {This relies on the properties of the core operator established in Section \ref{section.core.operator.properties}.}


\begin{proposition}\label{proposition.locally.constant.points}
    For all $x \in \mathcal{S}_{1}(L,\tau) \setminus \mathcal{S}_{2}(L,\tau)$, there exists $o_x \in \tau$ such that for all $z \in o_x \setminus (\mathop{\downarrow}x)$, $c(z) = c(x)$.
\end{proposition}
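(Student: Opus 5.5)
The approach is to choose a small convex neighbourhood of $x$ in which every point other than $x$ is isolated, and then use the join operation to push every $z$ in it up to an isolated element lying above $x$. Since $x \in \mathcal{S}_1(L,\tau)\setminus\mathcal{S}_2(L,\tau)$, there is $o\in\tau$ with $o\cap\mathcal{S}_1(L,\tau)=\{x\}$, so every $y\in o\setminus\{x\}$ is isolated in $(L,\tau)$. By Theorem~\ref{thm.characterization.isolated}, each such $y$ is dually compact, has no outcast, and has finitely many maximal subelements.

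I then shrink $o$ twice. By local convexity I may assume $o$ is convex. By continuity of $\vee$ at $(x,x)$ (condition (ii)) together with $x\vee x=x$, there is $o_x\subset o$ with $x\in o_x$, $o_x$ convex and $o_x\vee o_x\subset o$. Now fix $z\in o_x\setminus(\mathop{\downarrow}x)$ and set $w:=x\vee z$. Then $w\in o$ and $w>x$, so $w$ is isolated. By Lemma~\ref{lemma.lattice.op}, $\texttt{c}(w)=\texttt{c}(x)\vee\texttt{c}(z)$. The statement therefore reduces to two inequalities: (a) $\texttt{c}(z)\le\texttt{c}(x)$, which follows if $\texttt{c}(w)=\texttt{c}(x)$; and (b) $\texttt{c}(x)\le\texttt{c}(z)$.

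For (a), suppose $\texttt{c}(x)<\texttt{c}(w)$. By Lemma~\ref{lemma.core.union} there is $t\in\mathcal{T}_0(L,\le)\cap\mathop{\downarrow}w$ with $t\not\le x$. The element $y:=x\vee t$ satisfies $x<y\le w$, so $y\in o$ by convexity, and hence $y$ is isolated. I then show that $y$ has a $\mathcal{T}_0(L,\le)$-outcast, which contradicts Lemma~\ref{lemma.f.outcast}. That lemma applies with $H=\mathcal{T}_0(L,\le)$, because Lemma~\ref{lemma.t0.uppersemilattice} makes $H\cap\mathop{\downarrow}y$ a complete upper semilattice. The candidate outcast is an element of $\mathcal{T}_0$ built from $t$, for instance $t$ itself or $\texttt{c}(y)$ when $\texttt{c}(y)<y$. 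The mechanism I would use is the argument of Lemma~\ref{lemma.core.union}: a $\mathcal{T}_0$-element $t'$ with $t'\le y$ that sits under a maximal $\mathcal{T}_0$-subelement $m$ of $y$ yields an element strictly between $m$ and $y$, which is impossible. I expect this to force every $\mathcal{T}_0$-element below $y$ to lie below every such $m$, so that a $\mathcal{T}_0$-element not below $x$ is an outcast. If this direct route fails, the fallback is to use the net of Proposition~\ref{prop.inf.type} on $x\vee t_P$ to exhibit a non-isolated point of $o$ different from $x$.

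For (b), I use the same template with $z$ in place of $x$. The element $z\vee\texttt{c}(x)$ lies between $z$ and $w$, so it is in $o$ by convexity. Moreover $\texttt{c}(z\vee\texttt{c}(x))=\texttt{c}(z)\vee\texttt{c}(x)$, and if $\texttt{c}(x)\not\le z$ it strictly exceeds $z$. A $\mathcal{T}_0$-element below $\texttt{c}(x)$ but not below $z$ then plays the role that $t$ played in (a), and again contradicts Lemma~\ref{lemma.f.outcast}. This gives $\texttt{c}(x)\le z$ and then $\texttt{c}(x)\le\texttt{c}(z)$.

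The main obstacle is the core step in (a) and (b): turning the existence of a $\mathcal{T}_0$-element not below the smaller element into a genuine $\mathcal{T}_0$-outcast of an element of $o$ other than $x$. The difficulty is that isolated elements are allowed to have finitely many maximal subelements, so the outcast has to be located carefully. Here I would lean on Lemmas~\ref{lemma.char.outcasts} and~\ref{lemma.core.decomposition}.
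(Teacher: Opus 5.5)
Your outer architecture is sound. You take a convex $o$ whose points other than $x$ are all isolated, choose $o_x\subseteq o$ with $o_x\vee o_x\subseteq o$, and use convexity to place $x\vee t$ (between $x$ and $w$) and $z\vee\texttt{c}(x)$ (between $z$ and $w$) in $o\setminus\{x\}$. The gap is the step you yourself flag as the main obstacle, and the route you propose for it cannot work. You try to exhibit a $\mathcal{T}_0(L,\le)$-outcast of $y$ and then invoke Lemma~\ref{lemma.f.outcast}, with $t$ or $\texttt{c}(y)$ as candidates. But by Lemma~\ref{lemma.core.union}, $\texttt{c}(y)$ is the greatest element of $\mathcal{T}_0(L,\le)\cap\mathop{\downarrow}y$. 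So whenever $y\notin\mathcal{T}_0(L,\le)$, $\texttt{c}(y)$ is the unique maximal $\mathcal{T}_0(L,\le)$-subelement of $y$ and dominates every other one, and $y$ has no $\mathcal{T}_0(L,\le)$-outcast at all. Nor are $t$ or $\texttt{c}(y)$ ordinary outcasts, since $t\le\texttt{c}(y)\le\mu(y)\le m$ for every $m\in\mathcal{M}(y)$. Your sentence ``so that a $\mathcal{T}_0$-element not below $x$ is an outcast'' draws the wrong conclusion from the right observation. That observation, the argument of Lemma~\ref{lemma.core.union}, concerns ordinary maximal subelements $m\in\mathcal{M}(y)$, not maximal $\mathcal{T}_0(L,\le)$-subelements. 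The vague fallback via Proposition~\ref{prop.inf.type} does not close this either.

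The missing idea is that the outcast is the \emph{other} joinand. Since $t\le m$ for all $m\in\mathcal{M}(y)$, and $y=x\vee t$ with $t\not\le x$, having $x\le m$ would give $y\le m<y$. Hence $x$ is an ordinary outcast of $y$, and Proposition~\ref{prop.no.outcast.rev} contradicts the isolation of $y$. Likewise in (b), $z$ is an outcast of $z\vee\texttt{c}(x)$, and this element differs from $x$ because $z\not\le x$; this gives $\texttt{c}(x)\le z$ and hence $\texttt{c}(x)\le\texttt{c}(z)$. This is exactly the device in the paper's proof, where $x$ is an outcast of $\texttt{c}(\boldsymbol{x}_d)\vee x$. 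In (a) you may also take $t=\texttt{c}(z)$ directly and skip Lemma~\ref{lemma.lattice.op}.

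Once repaired, your route genuinely differs from the paper's. The paper approximates $x$ from above by dually compact $\boldsymbol{x}_d$, shows $\texttt{c}(\boldsymbol{x}_d)\le x$ for some $d$, and takes $o_x=\mathop{\downarrow}\boldsymbol{x}_d$. Its chain $\texttt{c}(x)\le\texttt{c}(z)\le\texttt{c}(\boldsymbol{x}_d)\le\texttt{c}(x)$ needs $x\le z$, so it only covers $z\ge x$, which is how the proposition is used later. Your join-continuity step together with part (b) also handles $z$ incomparable with $x$, so it proves the statement as written.
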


\begin{remark}
    {This implies that} the operator $\texttt{c}$ {is locally maximal around on the elements}
    of $\mathcal{S}_1(L,\tau) \setminus \mathcal{S}_2(L,\tau)$. Note that is is straightforwardly the case as well for elements of $\mathcal{S}_0(L,\tau) \setminus \mathcal{S}_1(L,\tau)$. Natural questions follow, that we leave aside for further exploration. What are exactly the local maxima of $\texttt{c}$? What are the points where $\texttt{c}$ is locally constant (note that this includes isolated points)? Is this set of points included in the complement of the perfect kernel, or does it contain it? By extension, notice that the operator $\texttt{c}$ is a priori not continuous. What are the points where it is continuous?
\end{remark}

\begin{proof}
    When $x$ is dually compact, this is trivial. We thus assume that $x$ is not dually compact. There exists a non-increasing net $\boldsymbol{d} : D \rightarrow \mathcal{K}(L,\le)$ such that $\boldsymbol{x}_d \rightarrow x$. Recall that $\mathcal{O}$ is a base of convex sets of $\tau$. Assume ad absurdum that $\texttt{c}(\boldsymbol{x}_d) \not \le x$ for all $d \in D$. Then 
    the net $d \mapsto \boldsymbol{z}_d := \texttt{c}(\boldsymbol{x}_d) \vee x$ converges to $x$. Indeed, for all $o \in \mathcal{O}$ such that $x \in o$, there is some $d \in D$ such that $\boldsymbol{x}_d \in o$. 
    Since $x \le \boldsymbol{z}_d \le \boldsymbol{x}_d$ and $o$ is convex, 
    $\boldsymbol{z}_d \in o$. Moreover, for all $d \in D$, $\boldsymbol{z}_d$ is not isolated, since $x$ is an outcast of $\boldsymbol{z}_d$. This contradicts the hypothesis $ x \in \mathcal{S}_{1}(L,\tau) \setminus \mathcal{S}_{2}(L,\tau)$ and we just proved that there is some $d \in D$
    such that $\texttt{c}(\boldsymbol{x}_d) \le x$. 
    Since $\boldsymbol{x}_d \in \mathcal{K}(L,\le)$, $\mathop{\downarrow}\boldsymbol{x}_d$ is an open set, and for every $z$ in this set, by Lemma \ref{lemma.lattice.op}, $\texttt{c}(x) \le \texttt{c}(z) \le \texttt{c}(\boldsymbol{x}_d) \le \texttt{c}(x)$. This finishes the proof.
\end{proof}

{The following proposition expresses formally the condition that a non dually compact element of $L$ must satisfy to be in isolated from above in $\mathcal{S}_1(L,\tau)$.}

\begin{proposition}\label{proposition.isolated.from.above.direct}
    Assume that $\mathcal{K}(L,\le)$ has a choice function. {For all $x \in \mathcal{S}_1(L,\tau) \setminus \mathcal{S}_2(L,\tau)$ which is not dually compact, there exists $z \in \mathcal{K}(L,\le)$ with $x < z$ such that: (i) $\mathcal{M}(z)$ is finite and for all $\alpha < \texttt{r}(x,z)$, $\texttt{s}_{\alpha}(x,z)$ is finite; (ii) $\texttt{c}(x) = \texttt{c}(z)$; (iii) $\texttt{r}(x,z) = \omega$; (iv) for all $k$, there exists $l > k$ such that for all $s \in \texttt{s}_{k}(x,z)$ and all $t \in \texttt{s}_{l}(x,z)$, $t \le s$; 
    (v) if $x$ has an outcast, then for all $s \in \delta(x,z)$, $\texttt{c}(x) \le s {\vee \partial x}$; (vi) (for all $s \in \delta(x,z)$, $x \not \le s$) or (for all $s \in \delta(x,z)$, $x \le s$).}
\end{proposition}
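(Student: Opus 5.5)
The plan is to obtain $z$ as a suitable member of a non-increasing net of dually compact elements converging to $x$, and then read off conditions (i)--(vi) from the requirement that every element strictly above $x$ and close to $x$ is isolated.

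\textbf{Step 1: choice of $z$.} Since $x \in \mathcal{S}_1(L,\tau)\setminus\mathcal{S}_2(L,\tau)$, there is an open set $o$, which we take convex, with $o \cap \mathcal{S}_1(L,\tau) = \{x\}$. Since $(L,\le)$ is dual algebraic and $x$ is not dually compact, there is a non-increasing net $\boldsymbol{x}:D\to\mathcal{K}(L,\le)$ with $\bigwedge_d \boldsymbol{x}_d = x$ and $\boldsymbol{x}_d > x$. By order-compatibility this net converges to $x$. Using Proposition~\ref{proposition.locally.constant.points}, we pick $d$ large enough that $z:=\boldsymbol{x}_d \in o\cap o_x$. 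Then $\texttt{c}(z)=\texttt{c}(x)$, which is (ii). By convexity of $o$, the whole interval $[x,z]\setminus\{x\}$ lies in $o$ and therefore consists of isolated points. Theorem~\ref{thm.characterization.isolated} then says that every $y$ with $x<y\le z$ is dually compact, has no outcast, and has $\mathcal{M}(y)$ finite. Applying this to $z$ gives the first half of (i).

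\textbf{Step 2: structure of the relative section.} By Lemma~\ref{lemma.subelement.decomposition} applied to $y$ in $[x,z]$, every such $y$ is $x\vee\texttt{c}(z)\vee$ (a join of elements of $\delta(z)$ not below $x$). So elements of $[x,z]$ are essentially encoded by down-closed subsets of $\delta(x,z)$.

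\emph{Finiteness of strata.} If some $\texttt{s}_\alpha(x,z)$ were infinite, we would take the element $y=x\vee\texttt{c}(z)\vee\bigvee_{\beta\ge\alpha}\bigvee \texttt{s}_\beta(x,z)$ together with $z^{(\alpha)}$, argue as in Proposition~\ref{prop.inf.type}, and obtain an element of $(x,z]$ with infinitely many maximal subelements, a contradiction.

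\emph{(iii).} If $\texttt{r}(x,z)$ were finite, then $x$ would be obtained from $z$ by finitely many removals of finitely many residues. Using the lemma producing chains $x_{i+1}\in\mathcal{M}(x_i)$ and Corollary~\ref{corollary.submax.dominant}, this would make $x$ dually compact, which is false. If $\texttt{r}(x,z)>\omega$, then the element $x\vee\texttt{c}(z)\vee\bigvee_{\alpha\ge\omega}\bigvee\texttt{s}_\alpha(x,z)$, or the limit stage $x\vee z^{(\omega)}$, is strictly above $x$ and lies in $[x,z]$. It is not dually compact, because it is the infimum of a strictly decreasing sequence of the finite-stage elements. It is therefore non-isolated and lies in $o$, a contradiction.

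\emph{(iv).} Fix $k$. The element $y_k:=x\vee\bigvee_{j>k}\bigvee\texttt{s}_j(x,z)$ lies in $(x,z]$, so it must have no outcast and be dually compact. Dual compactness forces $y_k\ge$ some finite-stage element. Since $\texttt{s}_k$ is finite, we use Lemma~\ref{lemma.surface.tree} (the finite case) to conclude that all of $\texttt{s}_k(x,z)$ is dominated by a single later stratum $\texttt{s}_l(x,z)$. I would then need to invert this domination into the stated order $t\le s$. I expect this to be the main obstacle: one has to extract the comparison direction from the no-outcast condition on suitable intermediate elements, and I would try elements of the form $x\vee\bigvee_{j\neq k}\bigvee\texttt{s}_j(x,z)$.

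\textbf{Step 3: conditions (v) and (vi).} For (v), suppose $x$ has an outcast, so $\partial x<x$, and suppose some $s\in\delta(x,z)$ has $\texttt{c}(x)\not\le s\vee\partial x$. Then $s\vee x$ is an element of $(x,z]$. Using Lemma~\ref{lemma.char.outcasts}, we exhibit $s\vee\partial x$, or its join with a suitable element, as an outcast of $s\vee x$, contradicting isolation. For (vi), if both alternatives failed, we would pick $s,s'\in\delta(x,z)$ with $x\le s$ and $x\not\le s'$. Then $s\vee s'\vee x$ either has an outcast or has a non-dually-compact element in the interval, again a contradiction.

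The delicate points throughout are the limit-stage arguments. For these I would rely on Lemma~\ref{lemma.minmax} and on the transfinite decomposition of Lemma~\ref{lemma.subelement.decomposition}.
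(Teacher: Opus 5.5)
Your Step~1 matches the paper's starting point: a convex neighbourhood in which every point other than $x$ is isolated, plus local constancy of $\texttt{c}$. So every $y\in(x,z]$ is dually compact, has no outcast, and has finitely many maximal subelements. Your treatment of finiteness of strata, (iii) and (v) follows the paper's points 1, 2, 4 and 6 in outline. There are, however, two genuine gaps.

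\textbf{(iv) is not proved.} You say so yourself, and the route you sketch cannot give it.
\begin{itemize}
\item $y_k$ is just $x\vee z^{(k+1)}$, so its dual compactness carries no information.
\item Lemma~\ref{lemma.surface.tree} only says that each element of a later stratum lies below \emph{some} element of the previous one. That is the wrong order of quantifiers.
\end{itemize}
The missing idea is to use isolation of $s\vee x$ for each single $s\in\texttt{s}_k(x,z)$.
\begin{itemize}
\item $s\vee x$ lies in $(x,z]$, so it is dually compact.
\item Since $\texttt{c}(z)=\texttt{c}(x)$ and $\texttt{s}_\omega(x,z)=\emptyset$, we have $z^{(\omega)}\le x$. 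Dual infinite distributivity then gives $s\vee x=\bigwedge_l (z^{(l)}\vee s\vee x)$.
\item Hence $z^{(l)}\le s\vee x$ for some $l$.
\item Every $t\in\texttt{s}_{l'}(x,z)$ with $l'\ge l$ satisfies $t=(t\wedge s)\vee(t\wedge x)$. Since $t$ is completely co-irreducible and $t\not\le x$, this forces $t\le s$.
\item Finiteness of $\texttt{s}_k(x,z)$ gives a uniform $l$.
\end{itemize}
The paper argues equivalently by contradiction. Pigeonhole gives a fixed $s_k$ and infinitely many later $t_i\not\le s_k$. Then the points $t_i\vee s_k\vee x\neq s_k\vee x$ converge to $s_k\vee x\in(x,z]$, which is therefore not isolated.

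\textbf{(vi) can fail for the $z$ you fixed in Step~1, and the contradiction you claim does not exist.} Here is a counterexample.
\begin{itemize}
\item Let $P=\{c,p,q\}\cup\{r_n:n\in\omega\}$ with $r_0>r_1>\cdots$, $p>c$, and $p,q>r_n$ for all $n$.
\item Let $L$ be the down-sets of $P$ under inclusion, with the dual Lawson topology; here this is the product topology of $2^P$.
\item Take $x=\{c\}$, which is not dually compact. The set $\{B: c\in B\}$ is a convex open set in which every point except $x$ is isolated, and $\texttt{c}$ is identically $\varepsilon$.
\item The chain $P\supset\{c\}\cup\{r_m:m\ge 1\}\supset\{c\}\cup\{r_m:m\ge 2\}\supset\cdots$ is an admissible net, and $z=P$ passes every requirement of your Step~1.
\item Yet $\texttt{s}_0(x,z)=\{\mathop{\downarrow}p,\mathop{\downarrow}q\}$, with $x\le\mathop{\downarrow}p$ and $x\not\le\mathop{\downarrow}q$.
\item Meanwhile $s\vee s'\vee x=z$ has no outcast, and $(x,z]$ consists only of isolated points.
\end{itemize}

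So (vi) requires re-selecting $z$, which the paper does in its point~7.
\begin{itemize}
\item If some $\texttt{s}_n(x,y)$ has no element above $x$, replace $y$ by $y^{(n)}\vee x$. Later relative residues lie below elements of $\texttt{s}_n(x,y)$ by Lemma~\ref{lemma.surface.tree}, so none of them is above $x$. In the example this gives $z=\{c\}\cup\{r_m:m\ge 0\}$.
\item Otherwise every stratum contains an element above $x$, and (iv) propagates this to all of $\delta(x,y)$.
\end{itemize}
Alternatively, you can shrink to the open set $\mathop{\downarrow}(x\vee s')$ for some $s'\in\delta(x,z)$ with $x\not\le s'$. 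Note that the paper's second case again depends on (iv), so the first gap propagates here.
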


\begin{proof}
As a direct consequence of the definition of $\mathcal{S}_1(L,\tau)$ and $\mathcal{S}_2(L,\tau)$, there exists $o_0 \in \mathcal{O}$ such that $x \in o_0$ and for all $z \in o_0 \setminus \{x\}$, $z$ is isolated. By Proposition \ref{proposition.locally.constant.points}, there exists another open set $o_1 \subset o_0$ in $\mathcal{O}$ such that for all $z \in o_1$ such that $x < z$, we have $\texttt{c}(x) = \texttt{c}(z)$.
\textbf{1.} \textbf{For all $z$ dually compact in $o_1$ such that $x < z$, the stratum of index $x$ in $\delta(z)$ is infinite}.
Otherwise, $x$ would be dually compact, contradicting the hypotheses. 
Indeed, fix such $z$ and assume ad absurdum that the stratum of index $x$ in $\delta(z)$ is finite. Since $z$ has no outcast (as $z \in o_0$), there exists $z_1 \in \mathcal{M}(z)$ such that $x \le z_1$. 
If $x = z_1$, we have that $x$ is dually compact. Otherwise, since $o_0$ is convex, $z_1$ has no outcast, hence there exists $z_2 \in \mathcal{M}(z_1)$ such that $x \le z_2$. By repeating this, since the stratum of index $x$ in $\delta(z)$ is finite, we obtain a finite sequence $z = z_0, z_1, \ldots , z_n = x$ such that for all $i$, $z_{i+1} \in \mathcal{M}(z_i)$. This implies that $x$ is dually compact.
\textbf{2.} \textbf{There exists a neighborhood $o_3 \in \mathcal{O}$, $o_3 \subset o_2$ such that whenever $z \in o_3$ is dually compact with $x < z$, $\texttt{r}(x,z) = \omega$}. Assume ad absurdum that it were not the case. Then by point 1, we would have that for all $o_3$, there is $z \in o_3$ dually compact such that $x < z$, $\texttt{r}(x,z) > \omega$. Therefore $z^{(\omega)} \vee x$ is not isolated and $x < z^{(\omega)} \vee x < z$. Since $\mathcal{K}(L,\le)$ has a choice function, this would mean that there exists a net of non isolated elements converging to $x$, which is impossible.
    \textbf{3.} \textbf{For all $z \in o_3$ and all decreasing sequence $(c_k)_{k < \omega}$ such that for all $k$,  
    $c_k \in \texttt{s}_{k}(x,z)$, 
    we have $\bigwedge_{k} c_{k} \le x$}. Indeed, point 2 implies that $z^{(\omega)} \le x$ and $\bigwedge_{k} c_{k} \le z^{(l)}$ for all $l$, meaning that $\bigwedge_{n} c_{n} \le z^{(\omega)} \le x$. 
    \textbf{4.} \textbf{For all $z \in o_3$ dually compact such that $x < z$, for all $k$, $\texttt{s}_k(x,z)$ is finite.} {Fix such $z$. Since every element of $o_3$ different from $x$ is isolated and $o_3$ is convex, for all integer $l$, $z^{(l)} \vee x$ is isolated. On the other hand, 
    we have $\texttt{s}_l(x,z) \subset \texttt{s}_0(z^{(l)}\vee x)$. In order to prove this, it is sufficient to see 
    that for all $s \in \texttt{s}_l(x,z)$, 
    \[s \not \le \bigvee_{\underset{s \le z^{(l)}\vee x}{t \in \delta(z) \setminus \{s\}}}t = \partial x \vee \left(\bigvee_{\underset{}{t \in \texttt{s}_l(x,z)\setminus \{s\}}}t\right) \vee \left(\bigvee_{k > l} \bigvee_{t \in \texttt{s}_k(x,z)} t\right).\]
    If we didn't have this, using distributivity, we would get:
    \[s \le \left(\bigvee_{\underset{}{t \in \texttt{s}_l(z)\setminus \{s\}}}t\right) \vee \left(\bigvee_{k > l} \bigvee_{t \in \texttt{s}_k(x,z)} t\right).\]
    This can be rewritten as: 
    \[s \le \left(\bigvee_{\underset{}{t \in \delta(z)\setminus \{s\}}}t\right),\]
    which is not possible since $s \in \texttt{s}_l(x,z) \subset \texttt{s}_l(z)$.
    }
    This implies that for all $l$, $\texttt{s}_l(x,z)$ is finite. 
    \textbf{5.} \textbf{For all $z \in o_3$ dually compact such that $x < z$, for all $k$, 
    there exists $l > k$ such that for all $s \in \texttt{s}_{k}(x,z)$ and $t \in \texttt{s}_{l}(x,z)$, $t \le s$.} Indeed, assume ad absurdum that there exists 
    $z \in o_3$ which contradicts this condition, meaning that there exists $k$ such that for all $l > k$, there exists $s \in \texttt{s}_k(x,z)$ and $t \in \texttt{s}_l(x,z)$ such that $t \not \le s$. By point 4 and the pigeon-hole principle,
    there exists some $s_k \in \texttt{s}_k(x,z)$ and an increasing sequence $(l^k_i)_i$ of integers such that for all $i$, there is some $t_i \in \texttt{s}_{l^k_i}(z) \cap \delta(x,z)$ 
    such that $t^k_i \not \le s$. As a consequence of point 2, 
    $t^k_i \vee s_k \vee x \rightarrow s_k \vee x$. On the other hand, by distributivity, and since $t^k_i \not \le s_k$ and $t^k_i \not \le x$, we have $t^k_i \not \le s_k \vee x$. This 
    means that $t^k_i \vee s_k \vee x \neq s_k \vee x$ for all $k$ and $i$ and implies that for all $k$, $s_k \vee x$ is not isolated. For similar reasons, $s_k \vee x \rightarrow x$ while for all $k$, $s_k \vee x \neq x$. This is impossible since 
    $x \in \mathcal{S}_1(L,\tau) \setminus \mathcal{S}_2(L,\tau)$.
    \textbf{6.} \textbf{If $x$ has an outcast, there is a neighborhood $o_4 \subset o_3$ of $x$ such that for all $z \in o_4$ dually compact such that $x < z$, for all $s \in \delta(x,z)$, $\texttt{c}(x) \le s { \vee \partial x}$.} Assume ad absurdum that it is not the case, meaning that for all $o \subset o_3$ such that $x \in o$, there exists $z_o \in o$ dually compact such that $x < z$ and $s_o \in \delta(x,z_o)$ such that $\texttt{c}(x) \not \le s_o { \vee \partial x}$. In particular, $x \vee s_o > x$. By point 3,
    $\bigwedge_o s_o \le \texttt{c}(x) \le x$, which means that 
    $x \vee s_o$ converges to $x$. It is then sufficient to see that $\texttt{c}(x\vee s_o) \ge \texttt{c}(x)$ and $\partial (x \vee s_o) \le s_o \vee \partial x$, meaning that 
    $\partial (x \vee s_o) < x \vee s_o$, and that $x \vee s_o$ has an outcast. This contradicts again the hypothesis that 
    $x \in \mathcal{S}_1(L,\tau) \setminus \mathcal{S}_2(L,\tau)$.
    If $x$ has no outcast, we set $o_4 := o_3$.
    \textbf{7.} \textbf{There exists $z \in o_4$ dually compact such that $x < z$ and (for all $s \in \delta(x,z)$, $x \not \le s$) or (for all $s \in \delta(x,z)$, $x \le s$).} 
    Indeed, fix $y \in o_4$. If there is some $n$ such that 
    for all $s \in \texttt{s}_n(x,y)$, $x \not \le s$, then $z:= y^{(n)} \vee x \in o_4$ satisfies the condition. Otherwise for all $n$ there is some $s_n \in \texttt{s}_n(x,y)$ such that $x \le s$. As a consequence of point 5, $z:=y$ itself satisfies the condition. 
\end{proof}

\begin{remark}
    Assume that there exists a dually compact element $z$ such that for all $n$, $\mathcal{M}(z^{(n)})$ has a unique element. Then $z_0$ satisfies the conditions of Proposition \ref{proposition.isolated.from.above.direct} for $x = z^{(\omega)}$. Note that when $x \in (\mathcal{S}_1(L,\tau) \setminus \mathcal{S}_2(L,\tau)) \cap \mathcal{T}_0(L,\le)$,
    $x$ is dually compact, or $x = z^{(\omega)}$ for $z$ such that for all $n$, $\mathcal{M}(z^{(n)})$ has a unique element.
\end{remark}

{
We now show that the previous condition is also sufficient for isolation from above in $\mathcal{S}_1(L,\tau)$.}


\begin{proposition}\label{proposition.comparable.neighborhood}
    {If $x \in L$ satisfies the conditions of Proposition \ref{proposition.isolated.from.above.direct}, there is a neighborhood $o\in \tau$ of $x$ such that 
     for all $y \in o$, $x \le y$ or $y \le x$.}
\end{proposition}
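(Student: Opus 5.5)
The neighbourhood will be $o := \mathop{\downarrow} z$, where $z \in \mathcal{K}(L,\le)$ is the element with $x<z$ provided by the conditions of Proposition \ref{proposition.isolated.from.above.direct}. By Lemma \ref{lemma.finite.type.included}, $\mathop{\downarrow} z$ is open, and it contains $x$. It is probably necessary to shrink it to a convex basic open set $o' \subset \mathop{\downarrow} z$ around $x$, obtained from local convexity, so that the elements $z^{(n)}\vee x$ stay available. The task is then to show that every $y \le z$ lying in $o$ satisfies $y \le x$ or $x \le y$.

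\textbf{Step 1: decomposition.} For $y \le z$, I will use Lemma \ref{lemma.subelement.decomposition} applied to $z$:
\[
y=(y\wedge \texttt{c}(z))\vee \bigvee_{s\in\delta(z),\, s\le y} s .
\]
By (ii), $\texttt{c}(z)=\texttt{c}(x)\le x$. Every $s \in \delta(z)\setminus \delta(x,z)$ satisfies $s\le x$. Hence if $y$ lies above no element of $\delta(x,z)$, then $y\le x$. So the only remaining case is a $y \le z$ that lies above some $s_0\in \texttt{s}_k(x,z)$.

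\textbf{Step 2: excluding the bad case.} For such a $y$, I will use (iv) and (iii) to pass to a deep stratum. Choose $l>k$ with every $t\in\texttt{s}_l(x,z)$ below every element of $\texttt{s}_k(x,z)$. Then $y \ge s_0 \ge t$ for all $t\in \texttt{s}_l(x,z)$, and iterating (iv) gives that $y$ dominates every stratum of index at least $l$ along a cofinal subsequence. The aim is to conclude $y\ge x$ from $y\ge \texttt{c}(x)$ together with $y$ dominating the tail of $\delta(x,z)$, which amounts to showing
\[
x \le \texttt{c}(x)\vee \bigvee_{k\ge l}\bigvee_{t\in\texttt{s}_k(x,z)} t
\quad\text{when } y \ge \texttt{c}(x).
\]
Here (vi) enters: in the branch where $x\le s$ for all $s\in\delta(x,z)$, we get $x \le s_0 \le y$ directly. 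In the other branch, $x\not\le s$ for all $s$, and $y$ need not be above $x$. I would then use (v) together with $\partial x$, and finiteness (condition (i)), to show that such a $y$ that is not above $x$ must have $y \wedge x$ ``far'' from $x$. The idea is to find a dually compact $w\ge x$ with $x\in\mathop{\downarrow}w$ but $y\notin \mathop{\downarrow} w$, or else to use the closed set $\mathop{\uparrow}$ of some residue to exclude $y$. Concretely, I would take $w = z^{(n)} \vee x$ for large $n$. These elements are dually compact, since they are reached from $z$ by finitely many maximal-subelement steps (Corollary \ref{corollary.submax.dominant}), and they decrease to $x$: their meet is $x$ by dual distributivity and $z^{(\omega)}\le x$, using (iii). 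Replacing $o$ by $\mathop{\downarrow}(z^{(n)}\vee x)$ for a suitable $n$ kills the elements $s_0\in\texttt{s}_k(x,z)$ with $k<n$. This forces any $y$ in $o$ that is not below $x$ to dominate only strata of index at least $n$.

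\textbf{Main obstacle.} The hard step is the last case: a $y\le z^{(n)}\vee x$ that is above some deep residues but neither above nor below $x$. I expect to handle it by showing that $y\wedge x$ still contains $\texttt{c}(x)$ and, via (v), $\texttt{c}(x)\le s\vee\partial x$, so that $y \vee x$ and $y$ differ only on $\partial x$. If needed, I would additionally intersect $o$ with the open complement of $\mathop{\downarrow} c$ for suitable dually compact $c$ separating $x$ from the candidates, which works because the order is closed. A single uniform choice of $n$, and of such a $c$, is where the finiteness of the strata in (i) must be used.
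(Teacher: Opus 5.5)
Your Step 1, and the branch of (vi) in which every $s\in\delta(x,z)$ lies above $x$, are correct and match the paper's argument. Your use of $y\wedge\texttt{c}(z)$ from Lemma~\ref{lemma.subelement.decomposition} is slightly more careful than the paper, which only treats $y$ with $\texttt{c}(y)=\texttt{c}(x)$. Your remark that no $y\le z^{(n)}\vee x$ lies above a residue in $\texttt{s}_k(x,z)$ with $k<n$ is also right, by complete co-irreducibility of residues.

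The gap is the other branch of (vi), where no $s\in\delta(x,z)$ lies above $x$. This is the whole content of the proposition, and your ``main obstacle'' paragraph lists hopes rather than an argument. Shrinking to $\mathop{\downarrow}(z^{(n)}\vee x)$ still leaves incomparable elements that dominate deep residues. Nor is $y\ge\texttt{c}(x)$ available: local constancy of $\texttt{c}$ (Proposition~\ref{proposition.locally.constant.points}) was derived from $x\in\mathcal{S}_1(L,\tau)\setminus\mathcal{S}_2(L,\tau)$, which is not assumed here.

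The missing idea, and the paper's route, is to go through $\mathcal{M}(x)$.
\begin{itemize}
\item If $y\le z$ is incomparable with $x$, Step 1 gives some $s\in\delta(x,z)$ with $s\le y$.
\item Were $y\ge\partial x$, condition (v) would give $y\ge s\vee\partial x\ge\texttt{c}(x)\vee\partial x=x$, which is impossible.
\item So $y\wedge x<x$ is not an outcast (Lemma~\ref{lemma.char.outcasts}), and hence $y\wedge x\le m$ for some $m\in\mathcal{M}(x)$. This is where (v) is genuinely used.
\item Then $y$ lies in a closed down-set $\mathop{\downarrow}(m\vee w)$. For instance $w=z^{(n)}$ works by distributivity when $y\le z^{(n)}\vee x$; the paper uses $w=s_l=\bigvee\texttt{s}_l(x,z)$.
\item The paper chooses $l$ with $x\not\le m\vee s_l$ for every $m\in\mathcal{M}(x)$. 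Closedness of $\le$, applied to a net of incomparable elements converging to $x$, then gives the contradiction.
\end{itemize}

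The uniformity you hoped to extract from the finiteness of the strata in (i) cannot come from (i)--(vi) alone. It needs control of $\mathcal{M}(x)$, and without that the statement is false. Here is a counterexample.
\begin{itemize}
\item Let $Q$ consist of a chain $q_0>q_1>\cdots$, a point $e$ below every $q_j$, a point $b$, and points $p_n<q_{n-1}$ for $n\ge1$, with no other relations.
\item Let $L$ be the lattice of down-sets of $Q$ with the dual Lawson topology. Here this is the topology of pointwise convergence.
\item Put $z=Q$ and $x=\{b,e\}\cup\{p_n:n\ge1\}$, which is not dually compact. Write $\mathop{\downarrow}q$ for principal down-sets of $Q$.
\item Then $\mathcal{M}(z)$ has two elements, $\texttt{s}_k(x,z)=\{\mathop{\downarrow}q_k\}$ for all $k<\omega$, $\texttt{c}(x)=\texttt{c}(z)=\emptyset$ and $\texttt{r}(x,z)=\omega$.
\item Moreover $x$ has no outcast and is contained in no $\mathop{\downarrow}q_k$. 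So (i)--(vi) all hold.
\item Yet $y_n=(x\setminus\{p_n\})\cup\mathop{\downarrow}q_n$ is incomparable with $x$, lies in $\mathop{\downarrow}(z^{(n)}\vee x)$, and converges to $x$.
\end{itemize}
In this example $\mathcal{M}(x)$ is infinite, and $x\le m\vee s_l$ whenever $m=x\setminus\{p_j\}$ with $j>l$, so the paper's choice of $l$ fails too. A complete proof therefore needs an extra hypothesis controlling $\mathcal{M}(x)$. Finiteness is the natural candidate, since it lets you pass to a subnet with a fixed $m$.
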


\begin{proof}
        Fix $z$ provided by Proposition \ref{proposition.isolated.from.above.direct}. Let us first prove that there is a neighborhood $o \subset \mathop{\downarrow}z$ of $x$ such that for all $y \in o$, $y \le x$ or $x \le y$. 
    By Lemma \ref{lemma.subelement.decomposition}, for every $y \in \mathop{\downarrow}z$ such that $\texttt{c}(y) = \texttt{c}(x)$, 
    \[y = \texttt{c}(x) \vee \left(\bigvee_{\underset{s \le y}{s \in \delta(z)}} s \right).\]
    If for all $s \in \delta(x,z)$, $x \le s$, then 
    we get from this formula that else $y \le x$ or $x \le y$. Otherwise, by the property (vi), for all $s \in \delta(x,z)$, $x \not \le s$. We thus have $\texttt{c}(x) < x$, as otherwise we would have $\partial x = \varepsilon$, and, by property (v), $x \le s$ for all $s \in \delta(x,z)$.
    Since $\texttt{c}(z) = \texttt{c}(x)$ (property (ii)) there exists $k$ such that $z^{(k)} \wedge x < x$. For all $l$, 
    we set $s_l := \bigvee_{s \in \texttt{s}_l(x,z)} s$. Since $\bigwedge_l s_l \le \texttt{c}(x)$, there exists $l \ge k$ such that for all $m \in \mathcal{M}(x)$, $x \not \le m \vee s_l$. Assume ad absurdum that for all $o$ such that $x \in o$, we have some $y_o \in o$ such that $y_o \not \le x$ and $x \not \le y_o$. The element $y_o$ can be chosen as $\theta(\{y : y \in o \cap \mathcal{K}(L,\le) , x \not \le y , y \not \le x\})$. For all $o \subset \mathop{\downarrow}(z^{(l)}\vee x)$, $y_o \le m \vee s_l$ for some $l$. 
    Since the net $(y_o)_o$ converges to $x$ and $\le$ is closed, this is impossible. 
\end{proof}
{
\begin{remark}
Observe that as a consequence of Proposition~\ref{proposition.comparable.neighborhood}, whether a point \(x\) is isolated in \(\mathcal{S}_1(L,\leq)\) depends only on the elements immediately above and below \(x\) and not on elements incomparable to \(x\). This allows us to fully characterize isolated points in \(\mathcal{S}_1\) in terms of their isolation from above and below. 
\end{remark}
}
\begin{proposition}\label{proposition.reverse.from.above}
    If $x \in L$ satisfies the conditions of Proposition \ref{proposition.isolated.from.above.direct}, there is a neighborhood $o\in \tau$ of $x$ such that 
    for all $y \in o$, if $x < y$, then $y$ is isolated.
\end{proposition}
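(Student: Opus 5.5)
We use the witness $z\in\mathcal{K}(L,\le)$ given by the conditions of Proposition \ref{proposition.isolated.from.above.direct}. The neighborhood will be $o := o'\cap \mathop{\downarrow}z$, where $o'$ is the neighborhood from Proposition \ref{proposition.comparable.neighborhood}. Since $z$ is dually compact, $\mathop{\downarrow}z$ is open by Lemma \ref{lemma.finite.type.included}, so $o$ is open. We may shrink $o$ further to a convex basic neighborhood.

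Fix $y\in o$ with $x<y\le z$. We must show that $y$ is dually compact, that $\mathcal{M}(y)$ is finite, and that $y$ has no outcast; then Theorem \ref{thm.characterization.isolated} gives that $y$ is isolated.

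\textbf{Step 1: writing $y$ in terms of finitely many residues.} Since $x\le y\le z$, Lemma \ref{lemma.lattice.op} and condition (ii) give $\texttt{c}(y)=\texttt{c}(x)=\texttt{c}(z)$. Lemma \ref{lemma.subelement.decomposition} then gives
\[y=\texttt{c}(x)\vee\bigvee_{s\in\delta(z),\,s\le y}s = x\vee\bigvee_{s\in\delta(x,z),\,s\le y}s .\]
Since $y\neq x$, some $s_0\in\texttt{s}_k(x,z)$ satisfies $s_0\le y$. Condition (iv) gives an $l>k$ such that every $t\in\texttt{s}_l(x,z)$ satisfies $t\le s_0\le y$. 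Applying (iv) repeatedly, all strata of some cofinal sequence of indices lie below $y$. We then argue that every element of $\delta(x,z)$ of sufficiently large rank is below $y$, using the ranked structure and that $\delta(z)$ is generated downward through the residual sequence. Hence only finitely many $s\in\delta(x,z)$ fail $s\le y$, because by (i) each stratum $\texttt{s}_j(x,z)$ with $j<l$ is finite and by (iii) there are only $\omega$ strata. Write $P$ for this finite set.

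\textbf{Step 2: reaching $y$ from $z$ by maximal subelements.} We show that $y=\texttt{c}(z)\vee\bigvee_{s\in\delta(z)\setminus P}s$. The inequality $\ge$ uses $x\le y$ and the decomposition of $x$. The inequality $\le$ follows from Step 1. The lemma preceding Corollary \ref{corollary.submax.dominant} then gives a finite chain $z=x_0,\dots,x_k=y$ with $x_{i+1}\in\mathcal{M}(x_i)$. Corollary \ref{corollary.submax.dominant}, together with the lemma that maximal subelements of dually compact elements are dually compact, shows that $y$ is dually compact.

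\textbf{Step 3: finitely many maximal subelements.} By Lemma \ref{lemma.completeness}, $\texttt{s}_0(y)$ consists of those elements of $\delta^+(y)\subseteq\delta(z)$ that are not below the join of the others. Elements of $\delta(z)$ below $x$ cannot be such elements of $\texttt{s}_0(y)$, since they are covered by $\texttt{c}(x)\vee\partial x$ and the remaining residues. The elements of $\delta(x,z)$ below $y$ of large rank are dominated, via (iv), by elements of lower strata. So $\texttt{s}_0(y)$ lies inside finitely many strata $\texttt{s}_j(x,z)$, which are finite by (i). Hence $\mathcal{M}(y)$ is finite.

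\textbf{Step 4: no outcast.} This is the main obstacle. By Lemma \ref{lemma.char.outcasts}, we need $\texttt{c}(y)\le\partial y$, that is, $\texttt{c}(x)\le\partial y$.
\begin{itemize}
\item If $x$ has no outcast, $\texttt{c}(x)\le\partial x$. We then show $\partial x\le\partial y$ using condition (vi). In the case where every $s\in\delta(x,z)$ satisfies $x\le s$, we get $x\le s\le\partial y$ for some $s\in\texttt{s}_0(y)$, so $\texttt{c}(x)\le x\le\partial y$. In the other case, no residue of $x$ is absorbed, so the residues of $x$ remain residues of $y$.
\item If $x$ has an outcast, condition (v) gives $\texttt{c}(x)\le s\vee\partial x$ for every $s\in\delta(x,z)$. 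Taking $s\in\texttt{s}_0(y)\cap\delta(x,z)$, which is nonempty because $y>x$, it remains to check $\partial x\le\partial y$.
\end{itemize}
I expect the inclusion of the residues of $x$ not covered by the newly added $s$'s into $\texttt{s}_0(y)$ to require the most care. I would verify it via Lemma \ref{lemma.completeness}, the distributivity argument of Lemma \ref{prelemma.subadditivity.type}, and the fact that elements of $\mathcal{I}(L,\le)$ are join-prime.
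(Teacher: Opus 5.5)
\textbf{Step 3 rests on a false claim.} Your Steps 1 and 2 follow the paper's own route: Lemma \ref{lemma.subelement.decomposition}, then condition (iv) to put all but finitely many elements of $\delta(x,z)$ below $y$, then Corollary \ref{corollary.submax.dominant} for dual compactness. The trouble is Step 3's assertion that no element of $\delta(z)$ lying below $x$ can belong to $\texttt{s}_0(y)$. It is true when $x\le s$ for all $s\in\delta(x,z)$, because such an element is then strictly below some $s\in\delta(x,z)$ with $s\le y$. It is false when $x\not\le s$ for all $s\in\delta(x,z)$: there, as you yourself say in Step 4, residues of $x$ survive as residues of $y$, which contradicts Step 3. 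In general, Lemma \ref{lemma.completeness} together with $\texttt{c}(x)=\texttt{c}(y)$ and $\delta^+(x)\subseteq\delta^+(y)$ only gives $\texttt{s}_0(y)\cap\mathop{\downarrow}x\subseteq\texttt{s}_0(x)$. None of (i)--(vi) bounds $|\mathcal{M}(x)|$: condition (i) controls only the strata of $\delta(x,z)$, that is, the part of $\delta(z)$ not below $x$.

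\textbf{The statement fails under (i)--(vi) alone.} Let $L$ be the lattice of down-sets, ordered by inclusion, of the countable poset $Q=\{T_1,T_2,e\}\cup\{a_n\}_{n\in\mathbb N}\cup\{b_i\}_{i\in\mathbb N}$, where $T_1,T_2>a_0>a_1>\cdots$, $T_1>b_{2i}$, $T_2>b_{2i+1}$, and $e$ lies below every other point. This $L$ is a dual algebraic coframe whose dually compact elements are the complements of finitely generated up-sets; equip it with, for example, the dual Lawson topology. Write $[q]$ for the principal down-set of $q\in Q$, and set $z=Q$ and $x=\{e\}\cup\{b_i\}_i$. Residues are principal down-sets, and one computes:
$\texttt{s}_0(x,z)=\{[T_1],[T_2]\}$;
$\texttt{s}_n(x,z)=\{[a_{n-1}]\}$ for $1\le n<\omega$;
$\texttt{s}_\omega(x,z)=\emptyset$;
$\texttt{c}(x)=\texttt{c}(z)=\varepsilon$.
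Moreover $x$ has no outcast, $x$ is not dually compact, and $x\not\le s$ for every $s\in\delta(x,z)$. So (i)--(vi) all hold. Now put $y_n:=x\cup[a_n]$. The sequence is non-increasing with $\bigwedge_n y_n=x$, so $y_n\to x$ for any order-compatible $\tau$. Yet the maximal points of $y_n$ are $a_n$ and every $b_i$, so $\mathcal{M}(y_n)$ is infinite and $y_n$ is not isolated (Proposition \ref{prop.inf.type}).

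In the alternative where $x\not\le s$ for all $s$, you therefore need an extra hypothesis controlling the residues of $x$. Finiteness of $\mathcal{M}(x)$ would do, since it makes $\texttt{s}_0(y)\cap\mathop{\downarrow}x$ finite. The paper's proof does not supply this either. It rewrites $y$ as a finite join of elements of $\delta(x,z)$ and concludes that $y$ has finitely many subelements, which ignores the part of $y$ below $x$ — exactly where your Step 3 breaks.
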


\begin{proof}
    Fix $z$ provided by Proposition \ref{proposition.isolated.from.above.direct}. We prove that for all $y \in o$ such that $x < y$, $y$ is isolated. By Lemma \ref{lemma.subelement.decomposition} we have: 
\[y = \texttt{c}(z) \vee \bigvee_{\underset{t \le y}{t \in \delta(z)}} t.\]
    Since $y > x$, there exists $s \in \delta(x,z)$ such that $s \le y$. Let us denote by $k$ the integer such that $s \in \texttt{s}_k (x,z)$. We know that there exists $l>k$ such that for all $t \in \texttt{s}_{l}(x,z)$, $t \le s$. This implies that there exists a finite set $P \subset \delta(z)$ such that 
    \[y = \texttt{c}(z) \vee \bigvee_{{t \in \delta(z) \setminus P}} t.\]
    By Corollary \ref{corollary.submax.dominant}, since $z$ is dually compact, $y$ is also dually compact.
    In the case $x \le s'$ for all $s' \in \delta(x,z)$, then $\mathcal{M}(y)$ has finitely many elements and $\texttt{c}(y) = \texttt{c}(x) \le \mu(y)$. This implies that $y$ is isolated. Assume that for all $s \in \delta(x,z)$, $x \not \le s$. This means that $\mathcal{M}(z)$ has at least two elements. Otherwise, $\texttt{s}_{0}(z)$ would have a unique element which would be also in $\delta(x,z)$. Since $z$ has no outcast, $z$ is equal to this element which thus contains $x$, contradicting the hypothesis. For similar reasons, there is an element $m \in \mathcal{M}(z)$ such that $z-m \le x$. 
    From Lemma \ref{lemma.subelement.decomposition}, we can write: 
     \[y = (s_1 \vee \ldots \vee s_k) \vee \left(\bigvee_{s' \in \texttt{s}_0(z) \cap \delta(x,z)} s' \right),\]
     for some $s_1, \ldots , s_k \in \delta(x,z)$. 
     Then every $y' < y$ is smaller than 
     \[(s_1 \vee \ldots \vee \mu(s_i) \vee \ldots \vee s_k)  \vee \left(\bigvee_{s' \in \texttt{s}_0(z) \setminus \{s\}} s' \right) \quad \text{or} \quad (s_1 \vee \ldots \vee s_k) \vee \left(\bigvee_{s' \in \texttt{s}_0(x,z) \setminus \{s''\}} s' \right),\]
     where $s'' \neq s$. This implies that $y$ has finitely many subelements and has no outcast, thus $y$ is isolated.
\end{proof}

{
Together, the previous propositions provide a complete description of the residual configurations above a point that are compatible with membership in \(\mathcal S_1(L,\tau)\setminus\mathcal S_2(L,\tau)\).
}

\subsection{Isolation from below in $\mathcal{S}_1(L,\tau)$\label{section.isolated.from.below}}


{
Any element sufficiently close to some \(x\in\mathcal S_1(L,\tau)\setminus\mathcal S_2(L,\tau)\) and lying strictly below it must be isolated in $\mathcal S_0(L,\tau)$. The purpose of this subsection is to translate this topological requirement into a collection of order-theoretic conditions that involve the relative boundary structure above \(x\) as described informally in Section \ref{section.outline}. 
The analysis splits naturally according to whether \(x\) belongs to \(\mathcal T_0(L,\leq)\) or not. 

}
\begin{lemma}
    {Let $(L,\le)$ be a dual algebraic lattice and let $x \in L$. Then $\mathop{\downarrow}x$ is compact for the topology $\tau^*(L)$.}
\end{lemma}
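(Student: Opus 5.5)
We show that $\mathop{\downarrow}x$ is compact by the Alexander subbase lemma. The topology $\tau^*(L)$ is generated by the subbase consisting of the sets $\mathop{\downarrow}k$ and $L\setminus\mathop{\downarrow}k$ with $k\in\mathcal{K}(L,\le)$. So it suffices to take a cover of $\mathop{\downarrow}x$ by such subbasic sets and extract a finite subcover. Write the cover as
\[\{\mathop{\downarrow}k : k\in A\}\cup\{L\setminus\mathop{\downarrow}k : k\in B\},\qquad A,B\subset\mathcal{K}(L,\le).\]
Two observations organise the argument.

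\textbf{Step 1: a subbasic set containing $x$.} Some member of the cover contains $x$.
\begin{itemize}
\item If $x\in\mathop{\downarrow}k$ for some $k\in A$, then $\mathop{\downarrow}x\subset\mathop{\downarrow}k$, and this single set is already a finite subcover.
\item Otherwise $x\in L\setminus\mathop{\downarrow}k_0$ for some $k_0\in B$, that is, $x\not\le k_0$.
\end{itemize}

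\textbf{Step 2: reduce to the region left uncovered.} In the second case, the points of $\mathop{\downarrow}x$ not covered by $L\setminus\mathop{\downarrow}k_0$ form the set $\mathop{\downarrow}(x\wedge k_0)$. The aim is to iterate, pushing the uncovered element strictly downward. Pure iteration need not terminate, however, so I would run a maximality or filter argument instead.

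\textbf{Step 3: the contradiction argument.} Suppose no finite subcover exists. For each finite $P\subset B$, set
\[y_P:=x\wedge\bigwedge_{k\in P}k.\]
By Lemma \ref{lemma.loc.dominant.semilattice}, finite meets of dually compact elements are dually compact, so the meet over $P$ stays in $\mathcal{K}(L,\le)$. The family $(y_P)_P$ is filtered, and its infimum is
\[y:=x\wedge\bigwedge B.\]
This $y$ lies in $\mathop{\downarrow}x$ and in every $\mathop{\downarrow}k$ with $k\in B$. Since the subbasic sets cover $\mathop{\downarrow}x$, we get $y\in\mathop{\downarrow}a$ for some $a\in A$, i.e. $y\le a$.

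Now apply dual compactness of $a$ to the filtered set $\{y_P\}$, whose infimum is $y\le a$. This gives a finite $P$ with $y_P\le a$. The finitely many sets $\mathop{\downarrow}a$ and $L\setminus\mathop{\downarrow}k$ for $k\in P$ then cover $\mathop{\downarrow}x$. Indeed, take $w\le x$ not in any $L\setminus\mathop{\downarrow}k$ with $k\in P$. Then $w\le k$ for all $k\in P$, so $w\le y_P\le a$. This contradicts the assumption, so a finite subcover exists.

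\textbf{Main obstacle.} The delicate point is confirming that Alexander's subbase lemma is legitimate in this setting, since it relies on the ultrafilter lemma or choice. I would simply invoke it, as the paper already freely uses choice functions. I would also make sure the filtered set argument uses the definition of dual compactness exactly as stated: a filtered set $F$ with $\bigwedge F\le a$. With that checked, the argument is direct. Notably, Step 3 does not even need $x$ or the $y_P$ to be dually compact.
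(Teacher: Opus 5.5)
Your argument is correct, and it takes a genuinely different route from the paper's.

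\textbf{The paper's route.} It works with a cover by basic open sets $\mathop{\downarrow}z_i\setminus\bigcup_{z\in F_i}\mathop{\downarrow}z$ and assumes there is no finite subcover. For each finite $J$ it picks an uncovered point $y_J\le x$. It then derives a contradiction from the assertion that the net $J\mapsto y_J$ converges to $y=\bigvee_J y_J$.

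\textbf{Your route.} You use Alexander's subbase lemma to reduce to covers by the sets $\mathop{\downarrow}k$ and $L\setminus\mathop{\downarrow}k$. The meet $x\wedge\bigwedge B$ of the filtered family $y_P=x\wedge\bigwedge_{k\in P}k$ must lie in some $\mathop{\downarrow}a$ with $a\in A$. A single application of the dual compactness of that $a$ then yields the finite subcover $\{\mathop{\downarrow}a\}\cup\{L\setminus\mathop{\downarrow}k : k\in P\}$ directly.

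\textbf{What your route buys.} First, rigour: every step follows from the definitions. In the paper, the net $J\mapsto y_J$ is not monotone, so Lemma \ref{lemma.conv.filtered.set} (or condition (i) of order-compatibility), which only concerns directed or filtered families, does not give its convergence to its join. The extra requirement $y_J\in\mathcal{K}(L,\le)$ is likewise unjustified, since $\mathop{\downarrow}x$ may contain no dually compact element at all. Second, generality: you only use lower completeness and the definition of dual compactness. Taking $x$ to be the top element, your argument shows that $L$ itself is $\tau^*$-compact, without any appeal to dual algebraicity. The cost is dependence on Alexander's lemma, hence on the ultrafilter lemma, which is harmless given the paper's free use of choice.

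\textbf{Cosmetic points.} Steps 1--2 and the ``suppose no finite subcover'' framing are superfluous, since Step 3 is a direct construction. You should also drop the appeal to Lemma \ref{lemma.loc.dominant.semilattice}: it is proved for dual algebraic coframes, not for dual algebraic lattices. As you observe yourself, dual compactness of the $y_P$ is never used.
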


\begin{proof}
    Fix $x \in L$ and $(o_i)_{i \in I}$ an open cover of $\mathop{\downarrow}x$ in the topology $\tau^*(L)$ such that for all $i$, there exists a dually compact element $z_i \in L$ and $F_i$ a finite set of dually compact elements in $\mathop{\downarrow}x$ such that $o_i = \mathop{\downarrow}z_i \setminus \left(\bigcup_{z \in F_i} \mathop{\downarrow}z \right)$. Assume ad absurdum that no finite subfamily of $(o_i)_{i \in I}$ covers $\mathop{\downarrow}x$, meaning that for every finite $J \subset I$, there exists $y_J \in \mathop{\downarrow}x \cap \mathcal{K}(L,\le)$ such that $y_J \notin \bigcup_{j \in J} o_j$. Let us set $y := \bigvee_{\underset{J finite}{J \subset I}} y_J$. 
    Since $y \le x$, there exists $i$ such that $y \in o_i$. The net $(y_J)_{\underset{J finite}{J \subset I}}$ converges to $y$, so there exits $J$ such that for all $J' \supset J$, $y_{J'} \in o_i$. In particular there exists $J' \subset I$ finite such that $i \in J'$ and $y_{J'} \in o_i \subset \bigcup_{j \in J'} o_j$, which is impossible, because by definition $y_{J'} \notin \bigcup_{j \in J'} o_j$.
\end{proof}

It is also clear that for the topology $\tau_{\mathcal{H}}^d$, for all $x \in \mathcal{H}^d$, $\mathop{\downarrow}x$ is compact. \bigskip 

For all $x \in L$, if $\mathcal{M}(x)$ is finite and $x$ has no outcast, there exists some $o \in \tau$ such that 
$x \in o$ and for all $y \in o$, $y \ge x$. We consider the other cases below, meaning when $x$ has an outcast.


\begin{proposition}\label{proposition.isolated.below.direct}
    Assume that $\mathcal{K}(L,\le)$ has a choice function. Consider some $x \in \mathcal{S}_1(L,\tau) \setminus \mathcal{S}_2(L,\tau)$ such that $x \in \mathcal{T}_0(L,\le)$. The set $\mathcal{M}_{\mathcal{T}_0(L,\le)}(x)$ has a unique element, $x$ has no $\mathcal{T}_0(L,\le)$-outcast.
    Denote by $\delta x$ the set $(\mathop{\downarrow}x) \cap \mathcal{I}(L,\le) \setminus (\mathop{\downarrow} \mu_{\mathcal{T}_0(L,\le)}(x))$. The net $\boldsymbol{h} : P \subset \delta x \mapsto \bigvee_{s \in P} s$ satisfies the following conditions: 
    (i) for all $P$, $\boldsymbol{h}_P < x$ and $\bigvee_{P} \boldsymbol{h}_P = x$; 
    (ii) for all $z < x$, there exists $P$ such that $z \le \boldsymbol{h}_P$; (iii) there exists $P^*$ such that $\boldsymbol{h}_{P^*}$ is isolated and $\texttt{c}(\boldsymbol{h}_{P^*}) = \mu_{\mathcal{T}_0(L,\le)}(x)$; (iv) for all $P \supset P^*$, $\boldsymbol{h}_P$ is dually compact; 
    (v) for all $P \supset P^*$, 
    $\delta(\boldsymbol{h}_P,\boldsymbol{h}_{P^*})$ is finite; (vi) for all $o \in \tau$ such that $x \in o$, there exists $P \supset P^*$ such that for all $(\delta x \setminus P) \subset o$; (vii) for all $s \in \delta x \setminus P^*$, $\delta(s \wedge \boldsymbol{h}_{P^*} , \boldsymbol{h}_{P^*})$ is finite, $\mu_{\mathcal{T}_0(L,\le)}(x) \le s$, and $s \wedge \boldsymbol{h}_{P^*}$ has finitely many subelements.
\end{proposition}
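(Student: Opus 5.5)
We write $t:=\mu_{\mathcal{T}_0(L,\le)}(x)$ and $T:=\mathcal{T}_0(L,\le)$. Since $x\in\mathcal{S}_1(L,\tau)\setminus\mathcal{S}_2(L,\tau)$, we fix a convex basic open $o_0\ni x$ all of whose points other than $x$ are isolated. We then use Theorem~\ref{thm.characterization.isolated} throughout: every $y\in o_0\setminus\{x\}$ is dually compact, has no outcast and has finitely many maximal subelements.

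\textbf{Structure below $x$.} For the claims on $\mathcal{M}_T(x)$, we note that $(T\cap\mathop{\downarrow}x,\le)$ is a complete upper semilattice (Lemma~\ref{lemma.t0.uppersemilattice}). If $x$ had a $T$-outcast, we would like to repeat the net construction of Proposition~\ref{prop.no.outcast.rev} inside $o_0$: it produces a net of non-isolated points below $x$ converging to $x$, which is impossible. The point is that the elements obtained are joins of a $T$-outcast with dually compact elements, and these still have an outcast. Hence $x$ has no $T$-outcast, so $\mathcal{M}_T(x)\neq\emptyset$. If it had two distinct elements $t_1,t_2$, Lemma~\ref{lemma.cover.max} would give $t_1\vee t_2=x$. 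Then the joins $t_1\vee h$, with $h$ ranging over finite joins of co-irreducibles outside $t_1$, would form a net converging to $x$ whose terms contain $t_1\in T$ strictly below them without being maximal there; by Lemma~\ref{lemma.core.union} their cores would not be $\varepsilon$ in the required way, and Lemma~\ref{lemma.f.outcast} would make them non-isolated. This contradiction gives uniqueness. Finally, since $x\in T$, Lemma~\ref{lemma.core.union} gives $\texttt{c}(x)=x$, and $t=\texttt{c}$ of every proper subelement lying in $o_0$.

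\textbf{Conditions (i) and (ii).} By Lemma~\ref{lemma.subelement.decomposition} applied to subelements of $x$, together with $\delta=\delta^+$, every $z<x$ decomposes as $(z\wedge t)\vee\bigvee\{s\in\delta x: s\le z\}$. Taking $z$ arbitrary close to $x$ and using that $x$ is not the join of $t$ with finitely many co-irreducibles, we get $\boldsymbol{h}_P<x$ for every finite $P$: otherwise $x$ would have a maximal subelement by Lemma~\ref{lemma.complement.type1}. Moreover $\bigvee_P\boldsymbol{h}_P=x$, since the join is not in $\mathop{\downarrow}t$ and equals $x$ by the absence of $T$-outcasts. Condition (ii) comes from the same decomposition combined with the fact that $z\wedge t\le t$. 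At this point we need $t\le\boldsymbol{h}_P$ for large $P$; this will follow from (vii). By order-compatibility (i), the directed net $\boldsymbol{h}$ converges to $x$, so $\boldsymbol{h}_P\in o_0$ eventually.

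\textbf{Conditions (iii)--(vii).} We choose $P^*$ so that $\boldsymbol{h}_{P^*}\in o_0$ and, using convexity of $o_0$, so that all $\boldsymbol{h}_P$ with $P\supset P^*$ lie in $o_0$. These elements are then isolated, which gives (iii) and (iv) via Theorem~\ref{thm.characterization.isolated}. Their cores equal $t$ by Lemma~\ref{lemma.lattice.op} and Lemma~\ref{lemma.core.union}: a core is a join of $T$-elements below $x$, hence lies below $t$, and it is at least $t$ because of (vii). Condition (v) follows from Corollary~\ref{corollary.submax.dominant}, read in reverse together with the finiteness of $\mathcal{M}$ for isolated points, applied along finite maximal chains. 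Condition (vi) is the convergence of $\boldsymbol{h}$ restated for single co-irreducibles; here we apply convexity between $\boldsymbol{h}_{P^*}$ and $\boldsymbol{h}_{P^*}\vee s$. For (vii), $s\wedge\boldsymbol{h}_{P^*}$ lies between isolated elements in $o_0$, which gives the finiteness statements. To get $t\le s$, we argue that otherwise $s\vee\boldsymbol{h}_{P^*}$ has a core strictly smaller than the limit, which contradicts Proposition~\ref{proposition.locally.constant.points}.

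The main obstacle is expected to be the uniqueness of the maximal $T$-subelement together with $t\le s$ in (vii). Both require constructing nets of non-isolated points converging to $x$ from below. For these we would rely on the transfinite choice-function construction of Proposition~\ref{prop.no.outcast.rev}.
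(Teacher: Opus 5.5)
Your skeleton matches the paper's: a convex neighbourhood $o_0$ of isolated points, $P^*$ chosen so that every $\boldsymbol{h}_P$ with $P\supset P^*$ lies in $o_0$, and (iii)--(v) read off from isolation. However, the steps that carry the real content do not go through.

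\textbf{The structure below $x$.} Your ``point'' that joins of a $T$-outcast with dually compact elements still have an outcast is false. If $w$ is a $T$-outcast and $s$ is a dually compact co-irreducible with $w<s<x$, then $w\vee s=s$ has exactly one maximal subelement and no outcast, so it is isolated. The no-$T$-outcast claim is better obtained inside $T$. Every element of $T\setminus\{\varepsilon\}$ has $\varepsilon$ as an outcast, so it is not isolated. A $T$-outcast yields, via Lemma~\ref{lemma.t0.uppersemilattice} and Zorn's lemma, a chain in $T\cap(\mathop{\downarrow}x\setminus\{x\})$ with join $x$, and this chain converges to $x$.

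Your uniqueness argument fails for the same reason. Lemma~\ref{lemma.f.outcast} needs a $T$-outcast of $t_1\vee h$, and $t_1$ being a non-maximal $T$-subelement of it is not one. Indeed, for $h=s$ a dually compact co-irreducible with $t_1<s$ (compare (vii)), $t_1\vee h=s$ is isolated.

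The missing idea is the paper's. Since $t_1,t_2\in T$, the element $t_1\wedge t_2$ is an outcast of both. So there are nets $\boldsymbol{z}_d\to t_1$ and $\boldsymbol{z}'_{d'}\to t_2$ with $t_1\wedge t_2\le\boldsymbol{z}_d<t_1$ and $t_1\wedge t_2\le\boldsymbol{z}'_{d'}<t_2$. Because $\boldsymbol{z}_d\wedge t_2=t_1\wedge t_2$, the points $\boldsymbol{z}_d\vee\boldsymbol{z}'_{d'}$ differ from $\boldsymbol{z}_d\vee t_2$ and converge to it. Hence each $\boldsymbol{z}_d\vee t_2$ is non-isolated, and $\boldsymbol{z}_d\vee t_2\to t_1\vee t_2=x$ with $\boldsymbol{z}_d\vee t_2\neq x$, contradicting $x\notin\mathcal{S}_2(L,\tau)$.

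\textbf{Conditions (iii) and (vii).} Your argument for $t\le s$ in (vii) misapplies Proposition~\ref{proposition.locally.constant.points}. That result only controls cores on $o_x\setminus\mathop{\downarrow}x$, whereas $s\vee\boldsymbol{h}_{P^*}\le x$. Moreover, once the first part is known, every $y<x$ satisfies $\texttt{c}(y)\le t<x=\texttt{c}(x)$, so a smaller core is no contradiction.

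You defer $t\le\boldsymbol{h}_P$ and $\texttt{c}(\boldsymbol{h}_{P^*})=t$ to (vii), which would not give $t\le\boldsymbol{h}_{P^*}$ anyway since it concerns $s\notin P^*$. So (iii) is unsupported. Likewise, your claim $t=\texttt{c}(z)$ for every proper subelement $z\in o_0$ fails without $t\le z$.

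The paper constructs $P^*$ directly:
\begin{itemize}
\item it takes $z\in o_0$ with $t\le z<x$;
\item it takes a dually compact $\boldsymbol{x}_{d_0}\in o_0$ with $z\le\boldsymbol{x}_{d_0}<x$, using the comparability neighbourhood of Proposition~\ref{proposition.comparable.neighborhood}, so that $\texttt{c}(\boldsymbol{x}_{d_0})=t$;
\item being isolated, $\boldsymbol{x}_{d_0}$ is the finite join of its residues, which lie in $\delta x$, and this join is $\boldsymbol{h}_{P^*}$.
\end{itemize}
Condition (vii) is then reached by enlarging $P^*$.

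\textbf{Conditions (ii) and (vi).} These are not formal consequences of the decomposition and of $\boldsymbol{h}_P\to x$. The decomposition of $z<x$ may involve infinitely many co-irreducibles, and it gives no finite $P$ with $z\le\boldsymbol{h}_P$. Convergence of the finite joins says nothing about single $s\in\delta x$ approaching $x$; convexity only places $\boldsymbol{h}_{P^*}\vee s$ in $o_0$.

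The paper proves (vi) from the compactness of $\mathop{\downarrow}x$. A cluster point $s^\infty<x$ of co-irreducibles outside $o$ would give $\boldsymbol{h}_P\vee s^\infty=x$ for some $P$, and then a maximal subelement $\boldsymbol{h}_P\vee m$ of $x\in\mathcal{T}_0(L,\le)$, which is impossible. It then derives (ii) from (vi) together with the isolation of $x$ in $\mathcal{S}_1(L,\tau)$.
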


\begin{proof}
\textbf{1.} The facts that $\mathcal{M}_{\mathcal{T}_0(L,\le)}(x)$ is finite and that $x$ has no $\mathcal{T}_0(L,\le)$-outcast come from the fact that $(\mathcal{T}_0(L,\le) \cap (\mathop{\downarrow x}),\le)$ is a complete upper semilattice (Lemma \ref{lemma.t0.uppersemilattice}) and Proposition \ref{prop.inf.type} and Lemma \ref{lemma.f.outcast}. Let us prove that $\mathcal{M}_{\mathcal{T}_0(L,\le)}(x)$ has a unique element. Assume ad absurdum that it has at least two distinct elements, that we denote by $m,m'$. The element $m \wedge m'$ is an outcast both for $m$ and $m'$, because $m,m' \in \mathcal{T}_0(L,\le)$. This implies that 
there are two nets $\boldsymbol{z}: D \rightarrow \mathop{\downarrow}m \setminus \{m\}$ and $\boldsymbol{z}' : D' \rightarrow \mathop{\downarrow}m' \setminus \{m'\}$ such that $\boldsymbol{z}_d \rightarrow m$, $\boldsymbol{z}'_d \rightarrow m'$ with $m \wedge m' < \boldsymbol{z}_d$ and $m \wedge m' < \boldsymbol{z}'_d$ for all $d$.
For all $d$, we have $\boldsymbol{z}_d \vee \boldsymbol{z}'_{d'} \rightarrow \boldsymbol{z}_d \vee m$, while for all $d,d'$, $\boldsymbol{z}_d \vee \boldsymbol{z}'_{d'} < \boldsymbol{z}_d \vee m'$. 
Indeed, if there were some $d,d'$ such that $\boldsymbol{z}_d \vee \boldsymbol{z}'_{d'} = \boldsymbol{z}_d \vee m'$, we would have $m' \wedge (\boldsymbol{z}_d \vee \boldsymbol{z}'_{d'})= m' \wedge (\boldsymbol{z}_d\vee m')$, meaning, since $\boldsymbol{z}_d \wedge m' = m \wedge m'$, that 
$(m \wedge m') \vee (m' \wedge \boldsymbol{z}'_{d'}) = (m\wedge m') \vee m' = m'$, which can be rewritten 
as $\boldsymbol{z}'_{d'} = m'$, which is impossible.
We have proved that $\boldsymbol{z}_d \vee m'$ is not isolated. Similarly, $\boldsymbol{z}_d \vee m' \rightarrow m \vee m' = x$, which contradicts the hypothesis $x \in \mathcal{S}_1(L,\tau) \setminus \mathcal{S}_2(L,\tau)$.
 \textbf{2.} There exists an element $o^*$ of a convex base of $\tau$ such that $x \in o^*$ which, except for $x$, contains only isolated elements, and in particular no element of $\mathcal{T}_{\infty}(L)$. By Proposition \ref{proposition.comparable.neighborhood}, we can assume that for all $y \in o^*$, $y \le x$ or $x \le y$. Since $x \in \mathcal{T}_0(L,\le)$, there exists $z \in o^* \cap (\mathop{\downarrow}x \setminus \{x\})$ such that $\mu_{\mathcal{T}_0(L,\le)}(x) \le z$. There exists a non-increasing net $\boldsymbol{x} : D \rightarrow \mathcal{K}(L,\le)$ such that $\boldsymbol{x}_d \rightarrow z$.
We can assume that for all $d$, $\boldsymbol{x}_d \in o^*$. 
On the other hand, it is not possible that for all $d$, $x \le \boldsymbol{x}_d$, since $z < x$. This implies that there exists $d_0 \in D$ such that $\boldsymbol{x}_{d_0} < x$. 
For all $f \in \mathcal{K}(L,\le)$ such that $f < x$, $\texttt{c}(f) \le \mu_{\mathcal{T}_0(L,\le)}(x)$.
If we also have $f \ge \mu_{\mathcal{T}_0(L,\le)}(x)$, 
then $\texttt{c}(f) = \mu_{\mathcal{T}_0(L,\le)}(x)$.
Since $x$ has no outcast, there exists 
a non-decreasing net $\boldsymbol{z} : D \rightarrow \mathop{\downarrow}x\setminus \{x\}$, such that for all $d$, $\boldsymbol{x}_{d_0} \le \boldsymbol{z}_{d}$ and $\boldsymbol{z}_{d} \rightarrow x$. For all $d \in D$, we have $\texttt{c}(\boldsymbol{z}_d) = \mu_{\mathcal{T}_0(L,\le)}(x)$. 
Since $o^*$ is convex, for all $d$, $\boldsymbol{z}_d$ is isolated. In particular, for all $d,d' \in D$ such that $d < d'$, we have that $\delta(\boldsymbol{z}_d,\boldsymbol{z}_{d'})$ is finite. 
In particular, for all $d \ge d_0$, there exists $P \in \delta x$ finite such that $\boldsymbol{z}_d = \boldsymbol{h}_P$. We thus have $\boldsymbol{h}_P \rightarrow x = \bigvee_{P} \boldsymbol{h}_P$. Since $x \in \mathcal{T}_0(L,\le)$, for all $P$, $\boldsymbol{h}_P < x$. We just proved condition (i).
\textbf{3.} Denote by $P^*$ finite subset of $\delta x$
such that $\boldsymbol{x}_{d_0} = \boldsymbol{h}_{P^*}$.
Since $o^*$ is convex, for all $P \supset P^*$, 
we have $\boldsymbol{h}_P \in o^*$, which means 
that $\boldsymbol{h}_P$ is isolated. This implies both condition (iii) and (iv). Since every element $z$
such that $\boldsymbol{h}_{P^*} \le z \le \boldsymbol{h}_P$ is isolated, we have $\delta(\boldsymbol{h}_P,\boldsymbol{h}_{P^*})$ is finite (condition (v)). 

\textbf{3.} {If condition (vi) were not true, there would exist some neighborhood $o \in \tau$ of $x$, 
such that for all $P$ finite such that $P \supset P^*$, 
there exists $s \in \delta x \setminus P$ such that 
$s \notin o$. We set $s_P := \theta((\mathcal{D}(L) \cap (\delta x \setminus P)) \setminus o)$.
By compactness of $\mathop{\downarrow}x$, 
there exists a directed subset $E$ of the set of finite subsets of $\delta x$ such that 
$P \in E \mapsto s_P$ converges to some $s \le x$.
Since $o$ is open, we have $s < x$. Moreover, there exists $P$ such that 
$\boldsymbol{h}_P \vee s = x$. Indeed, for all $P \subset \delta x$ finite, since $\boldsymbol{h}_P$ is dually compact, and for all $Q \supsetneq P$, $s_{Q} \not \le \boldsymbol{h}_P$, 
we must have $s \not \le \boldsymbol{h}_{P}$. If for all $P$ we had $\boldsymbol{h}_P \vee s < x$, we would have a net of non-isolated elements in $\mathop{\downarrow x} \setminus\{x\}$ converging to $x$. It is also not possible that $s \in \mathcal{T}_0(L,\le)$, otherwise we would have $s \le \mu_{\mathcal{T}_0(L,\le)}(x)$ since $x$ has not $\mathcal{T}_0(L,\le)$-outcast. Thus $s\le \boldsymbol{h}_P$ and consequently $\boldsymbol{h}_P = x$, which is false. 
We just proved that $\mathcal{M}(s)$ has at least one element. Since $\texttt{c}(s) \le \mu_{\mathcal{T}_0(L,\le)}(x)$ and $\mu_{\mathcal{T}_0(L,\le)}(x) \le \boldsymbol{h}_P$, which is isolated, it not possible that for all $m \in \mathcal{M}(s)$, $s-m \le \boldsymbol{h}_P$. Otherwise we would have $s \le \boldsymbol{h}_P$. Set $m \in \mathcal{M}(s)$ such that $s-m \not \le \boldsymbol{h}_P$. Then $\boldsymbol{h}_P \vee m$ is a maximal subelement of $x$, which is not possible.
}
\textbf{4.} In order to prove condition (ii), 
assume ad absurdum that there exists $z < x$ such that 
for all $P \supset P^*$, $z \not \le \boldsymbol{h}_{P}$.
This implies that $z \vee \boldsymbol{h}_{P} = x$ or 
$\delta(z,z \vee \boldsymbol{h}_{P}$ is infinite.
It is not possible that for all $P\supset P^*$ there exists $Q \supset P$ such that $z \vee \boldsymbol{h}_{Q} < x$, since $x \in \mathcal{S}_1(L,\tau) \setminus \mathcal{S}_2(L,\tau)$. Therefore, there exists 
$P \supset P^*$ such that $z \vee \boldsymbol{h}_{P} = x$. Since for all $P$, $\boldsymbol{h}_P < x$, there is some $s \in \delta x \setminus P$, $s \le z$. As a consequence of condition (vi) and by continuity, $z \ge x$, which contradicts hypothesis $z < x$. 
\textbf{5.} We prove (vii) by reasoning ad absurdum. 
Assuming that for all $P \supset P^*$ there exists
$s_P \in \delta x \setminus P$ such that  $\mu_{\mathcal{T}_0(L,\le)}(x) \not \le s$, we would have 
a net of non-isolated elements different from $x$ converging to it. There is thus some $P^{0}$ 
such that for all $P \supset P^{0}$, $\mu_{\mathcal{T}_0(L,\le)}(x) \le s$. Assume that 
for all $P \supset P^{0}$, there exists $s_P \in \delta x \setminus P$ such that $\delta(s_P \wedge \boldsymbol{h}_{P^*},\boldsymbol{h}_{P^*})$ is infinite. This implies again the existence of a net of non-isolated elements different from $x$ converging to it.
We proved that there exists $P^{1} \supset P^0$ such that for all $s \in \delta x \setminus P^1$, $\delta(s \wedge \boldsymbol{h}_{P^*},\boldsymbol{h}_{P^*})$ is finite. For similar reasons there exists $P^2 \supset P^1$ such that for all $s \in \delta x \setminus P^2$, 
$s \wedge \boldsymbol{h}_{P^*}$ has finitely many maximal subelements. As a consequence of condition (v), we also have that for all such $s$, 
$\delta(s \wedge \boldsymbol{h}_{P^2},\boldsymbol{h}_{P^2})$ is finite. 
As a consequence of condition (iii) and (v), the condition (iii) is true also when replacing $P^*$ with $P^1$. The other conditions are trivially satisfied replacing $P^*$ with $P^2$. This ends the proof.
\end{proof}

\begin{proposition}\label{proposition.reverse.isolated.below.1}
    Consider $x \in \mathcal{T}_0(L,\le)$ 
    which satisfies the conditions of Proposition \ref{proposition.isolated.below.direct}. There exists a neighborhood $o \in \tau$ of $x$ such that 
    $o \cap (\mathop{\downarrow}x) \setminus \{x\}$ contains only isolated elements of $L$. 
\end{proposition}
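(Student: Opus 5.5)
Write $t:=\mu_{\mathcal{T}_0(L,\le)}(x)$ and let $P^*$ be as in condition (iii). The neighborhood will be built from condition (vi) and from the open set $\mathop{\downarrow}\boldsymbol{h}_{P^*}$, which is open because $\boldsymbol{h}_{P^*}$ is dually compact. The plan has three steps.

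\textbf{Step 1: choosing $o$.} Let $o_0\in\tau$ be a convex neighborhood of $x$. Apply (vi) to get a finite $P\supset P^*$ with $\delta x\setminus P\subset o_0$. I would take $o$ to be a convex open neighborhood of $x$ avoiding the finitely many closed sets $\mathop{\downarrow}(\boldsymbol{h}_{P}\vee \mu(s))$ for $s\in P$. Each such set is closed by Definition \ref{def.comp.top}(iii), and it misses $x$ because every $\boldsymbol{h}_Q<x$ by (i), so $x\not\le \boldsymbol{h}_P\vee\mu(s)$. In the same way I would also exclude $\mathop{\downarrow}\boldsymbol{h}_P$.

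\textbf{Step 2: locating $y$.} Fix $y\in o$ with $y<x$. By (ii) there is a finite $Q\supset P$ with $y\le\boldsymbol{h}_Q$. Applying Lemma \ref{lemma.subelement.decomposition} inside $\boldsymbol{h}_Q$ and using $\texttt{c}(\boldsymbol{h}_Q)=t$ from (iii), (v) and Lemma \ref{lemma.lattice.op}, I would write
\[y=(y\wedge t)\vee\bigvee_{s\in\delta(\boldsymbol{h}_Q),\, s\le y}s .\]
Since $y\notin\mathop{\downarrow}\boldsymbol{h}_P$, some $s\in Q\setminus P$ satisfies $s\le y$. By (vii), $t\le s$, so $t\le y$ and $\texttt{c}(y)=t$. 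Hence $y$ lies between $t$ and $\boldsymbol{h}_Q$, differing from $\boldsymbol{h}_{Q}$ by an element of $\delta(\boldsymbol{h}_Q)$.

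\textbf{Step 3: isolation of $y$.} It remains to check the three conditions of Theorem \ref{thm.characterization.isolated}.
\begin{itemize}
\item \emph{Dual compactness.} The relative section $\delta(y\wedge\boldsymbol{h}_{P^*},\boldsymbol{h}_{P^*})$ is finite by (vii), and the part above $\boldsymbol{h}_{P^*}$ is finite by (v). So $y$ is obtained from the dually compact $\boldsymbol{h}_Q$ by removing finitely many boundary elements, and Corollary \ref{corollary.submax.dominant} gives that $y$ is dually compact.
\item \emph{Finiteness of $\mathcal{M}(y)$.} This follows from Lemma \ref{lemma.subadditivity.type}, since $y$ is a join of finitely many completely co-irreducible pieces and the piece $s\wedge\boldsymbol{h}_{P^*}$, which has finitely many subelements by (vii).
\item \emph{No outcast.} By Lemma \ref{lemma.char.outcasts} it suffices to show $\texttt{c}(y)\le\partial y$, that is $t\le\partial y$. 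I would use $t\le s$ for the element $s\le y$ found in Step 2, together with $s\le\partial y$ whenever $s$ is a maximal residue of $y$.
\end{itemize}

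The main obstacle is the no-outcast verification, specifically showing that $t$ lies below $\partial y$ rather than merely below $y$. This is exactly why Step 1 excludes $\mathop{\downarrow}(\boldsymbol{h}_P\vee\mu(s))$: an element of $\delta x\setminus P$ below $y$ must then appear as a top-stratum residue of $y$. Otherwise $y$ would lie below some $\boldsymbol{h}_P\vee\mu(s)$, contradicting the choice of $o$. If this argument fails, I would fall back on a net argument as in Proposition \ref{prop.no.outcast.rev}. That is, I would suppose non-isolated $y_o\to x$ from below and derive, via (vi) and continuity of $\vee$, a net of non-isolated elements converging to $x$, contradicting $x\in\mathcal{S}_1\setminus\mathcal{S}_2$-type hypotheses encoded in (iii)–(v).
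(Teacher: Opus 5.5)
Most of your plan follows the paper's argument, and those parts are fine:
\begin{itemize}
\item a neighborhood missing a closed down-set $\mathop{\downarrow}\boldsymbol{h}_P$;
\item (ii) to place $y$ below some $\boldsymbol{h}_Q$, and (vii) to get $t\le y$;
\item (iv), (v), (vii) with Corollary \ref{corollary.submax.dominant} for dual compactness;
\item Lemma \ref{lemma.subadditivity.type} for finiteness of $\mathcal{M}(y)$.
\end{itemize}
There is one harmless slip. Lemma \ref{lemma.subelement.decomposition} gives an $s\in\delta(\boldsymbol{h}_Q)$ with $s\le y$ and $s\not\le\boldsymbol{h}_P$, so $s\in\delta x\setminus P$, not $s\in Q\setminus P$.

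\textbf{The gap is the no-outcast step.}
\begin{itemize}
\item As written, Step 1 excludes nothing new. For $s\in P$ one has $\mu(s)<s\le\boldsymbol{h}_P$, so $\boldsymbol{h}_P\vee\mu(s)=\boldsymbol{h}_P$.
\item If you meant $s$ to range over $\delta x\setminus P$, that is infinitely many sets, so no single open set can exclude them. Moreover, $y\le\boldsymbol{h}_P\vee\mu(s)$ is impossible anyway when $s\le y$ and $s\not\le\boldsymbol{h}_P$, by distributivity and $s\in\mathcal{I}(L,\le)$. So the exclusion cannot be what forces $s$ to be a top-stratum residue.
\item The claim itself fails in general. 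If two comparable elements $s<s'$ of $\delta x\setminus P$ lie below $y$, then $s\le\bigvee_{r\in\delta^+(y)\setminus\{s\}}r$, and Lemma \ref{lemma.completeness} gives $s\notin\texttt{s}_0(y)$. Then $t\le s$ alone does not yield $t\le\partial y$.
\item The fallback net argument is circular. Here you only assume the order-theoretic conditions of Proposition \ref{proposition.isolated.below.direct}, not $x\in\mathcal{S}_1(L,\tau)\setminus\mathcal{S}_2(L,\tau)$, which is what this converse direction is meant to establish.
\end{itemize}

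\textbf{The fix:} choose $s$ among the residues of $y$, not among arbitrary elements below $y$.
\begin{itemize}
\item By Proposition \ref{lemma.core.complements}, $y=\texttt{c}(y)\vee\partial y$.
\item We have $\texttt{c}(y)\le t\le\boldsymbol{h}_{P^*}$. Indeed, $\texttt{c}(y)\in\mathcal{T}_0(L,\le)$ lies strictly below $x$, and $x$ has $t$ as its unique maximal $\mathcal{T}_0(L,\le)$-subelement and no $\mathcal{T}_0(L,\le)$-outcast. Also $t=\texttt{c}(\boldsymbol{h}_{P^*})$ by (iii).
\item Since $y\not\le\boldsymbol{h}_{P^*}$, some $s_0\in\texttt{s}_0(y)$ satisfies $s_0\not\le\boldsymbol{h}_{P^*}$. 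Otherwise $y\le\boldsymbol{h}_{P^*}$.
\item By Lemma \ref{lemma.complement.type1}, $s_0\in\mathcal{I}(L,\le)$. Hence $s_0\in\delta x\setminus P^*$, and (vii) gives $t\le s_0\le\partial y$.
\item Thus $\texttt{c}(y)\le\partial y$, and Lemma \ref{lemma.char.outcasts} shows that $y$ has no outcast.
\end{itemize}
This uses only $y\notin\mathop{\downarrow}\boldsymbol{h}_{P^*}$. So $o=L\setminus\mathop{\downarrow}\boldsymbol{h}_{P^*}$ works; this is the neighborhood implicit in the paper's proof, which infers the absence of outcasts in one line from $t\le s$. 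Neither (vi) nor convexity is needed, and the same $s_0$ can serve as your $s$ in the dual-compactness and finiteness steps.
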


\begin{proof}
    Since for all $z < x$, there exists some $P$ such that $z \le \boldsymbol{h}_P$ (condition (ii)),
    every $z < x$ such that $z \not \le \boldsymbol{h}_{P^*}$ satisfies $z \le \boldsymbol{h}_{P}$ for some $P \supsetneq P^*$. 
    In this case, there is some $s \in \delta x \setminus P^*$ such that $s \le z$. As a consequence 
    of condition (vii), $\delta(z \wedge \boldsymbol{h}_{P^*},\boldsymbol{h}_{P^*}) \le \delta(s \wedge \boldsymbol{h}_{P^*},\boldsymbol{h}_{P^*})$ is finite. This implies that $z$ is dually compact by conditions (iv) and (v): $\delta(z)$ is finite if and only if . 
    Since $\mu_{\mathcal{T}_0(L,\le)}(x) \le s$, it has no outcast. Furthermore, for all $s \in \delta x \setminus P^*$, $s \wedge \boldsymbol{h}_{P^*}$ has finitely many maximal subelements, thus $z\wedge \boldsymbol{h}_{P^*}$ and $z$ also satisfy this property, by Lemma \ref{lemma.subadditivity.type}. We have thus proved that for $z < x$ close enough to $x$, $z$ is isolated.
\end{proof}

The following two propositions are a technical adaptation of the two last propositions. We omit the proof in order to not overcharge the article with technical details. 

\begin{lemma}\label{coheyting.in.T0}
    For all $x \in L$, we have $x - \partial x \in \mathcal{T}_0(L,\le)$. In particular, $x - \partial x \le \texttt{c}(x)$. 
\end{lemma}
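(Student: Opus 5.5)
Write $y := x - \partial x$. The plan is to show directly that $\mathcal{M}(y) = \emptyset$, by turning a hypothetical maximal subelement of $y$ into a maximal subelement of $x$ that lies above its own residue. The second assertion then follows at once from Lemma \ref{lemma.core.union}: $y \le x$ by definition of the co-Heyting subtraction and $y \in \mathcal{T}_0(L,\le)$, so $y \le \bigvee_{z \in \mathcal{T}_0(L,\le)\cap(\mathop{\downarrow}x)} z = \texttt{c}(x)$.

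Two preliminary facts come straight from the definitions. By Lemma \ref{lemma.prop.fund.cosubstr}, $\partial x \vee y = x$. By the definition of $y$ as $\bigwedge\{w \in \mathop{\downarrow}x : \partial x \vee w = x\}$, any $w \le x$ with $\partial x \vee w = x$ satisfies $y \le w$. Now suppose, for a contradiction, that $m \in \mathcal{M}(y)$, and set $m' := \partial x \vee m$. The first step is to show $m' < x$. If instead $\partial x \vee m = x$, then $m$ belongs to the set whose meet is $y$, so $y \le m$, contradicting $m < y$.

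The second step is to show that $m'$ is a maximal subelement of $x$. Take $z$ with $m' \le z \le x$. Since $m \le m' \le z$ and $m \le y$, we have $m \le z \wedge y \le y$, so by maximality of $m$ in $\mathop{\downarrow}y$ either $z \wedge y = y$ or $z \wedge y = m$.
\begin{itemize}
\item In the first case $y \le z$ and $\partial x \le m' \le z$, hence $z \ge \partial x \vee y = x$.
\item In the second case, distributivity gives $z = z \wedge (\partial x \vee y) = (z \wedge \partial x) \vee (z \wedge y) = \partial x \vee m = m'$.
\end{itemize}
Thus $m' \in \mathcal{M}(x)$.

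The last step derives the contradiction. By definition of the boundary, $x - m' \le \partial x \le m'$. Lemma \ref{lemma.prop.fund.cosubstr} then gives $x = m' \vee (x - m') = m'$, contradicting $m' < x$. Hence $\mathcal{M}(y) = \emptyset$, that is, $y \in \mathcal{T}_0(L,\le)$. This includes the degenerate case $y = \varepsilon$, which has no proper subelement.

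I expect no serious obstacle. The only point needing care is the maximality check, where the two-case split on $z \wedge y$ relies on $m$ being maximal in $\mathop{\downarrow}y$ and on the distributivity of the coframe.
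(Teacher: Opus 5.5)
Your proof is correct, and for the main claim it uses the same mechanism as the paper. A maximal subelement $m$ of $y=x-\partial x$ yields the candidate $m\vee\partial x$. This element cannot equal $x$, by the minimality in the definition of $x-\partial x$. Nor can it be a maximal subelement of $x$, since its residue would lie under $\partial x\le m\vee\partial x$.

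The paper runs these two facts in the opposite order and passes through the residue of $y$. It first asserts $(x-\partial x)-m\le\partial x$, with a brief appeal to maximality. It then obtains $m\vee\partial x=x$ from $x=\partial x\vee m\vee((x-\partial x)-m)$, which contradicts minimality.

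Your version never forms $y-m$, so it does not need the co-irreducibility of residues from Lemma \ref{lemma.complement.type1}. It uses only $\partial x\vee y=x$, the maximality of $m$ below $y$, and distributivity. It also covers the case $x\in\mathcal{T}_0(L,\le)$ (where $y=x$) without a separate argument.

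For the second assertion your route genuinely differs. You deduce $y\le\texttt{c}(x)$ directly from $y\in\mathcal{T}_0(L,\le)\cap\mathop{\downarrow}x$ and Lemma \ref{lemma.core.union}. The paper instead remarks that two distinct maximal $\mathcal{T}_0(L,\le)$-subelements of $x-\partial x$ would join to $x-\partial x$, which does not visibly yield the inequality. Your appeal to Lemma \ref{lemma.core.union} is the clean justification.
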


\begin{proof}
    If $x \in \mathcal{T}_0(L,\le)$, this is straightforward.
    Assume that $x \notin \mathcal{T}_0(L,\le)$.
    If we had $\mathcal{M}(x-\partial x) \neq \emptyset$, 
    we would have $((x-\partial x) - m) \le \partial x$ (otherwise $m \notin \mathcal{M}(x)$, which is obviously not true), 
    and thus $m \vee \partial x = x$, which is not possible, by definition of $x - \partial x$. The second part of the statement comes from the following: if we had two distinct elements $m,n \in \mathcal{M}_{\mathcal{T}_0(L,\le)}(x-\partial x)$, 
    we would have $m \vee n = x - \partial x$. 
\end{proof}

\begin{proposition}\label{proposition.isolated.below.nont0.direct}
    Assume that $\mathcal{K}(L,\le)$ has a choice function. Consider some $x \in \mathcal{S}_1(L,\tau) \setminus \mathcal{S}_2(L,\tau)$ such that $x \notin \mathcal{T}_0(L,\le)$ and $x$ has an outcast. The set $\mathcal{M}_{\mathcal{T}_0(L,\le)}(x-\partial x)$ has a unique element, $x$ has no $\mathcal{T}_0(L,\le)$-outcast, 
    and $\mu_{\mathcal{T}_0(L,\le)}(x-\partial x) \vee \partial x < x$ while $\mu_{\mathcal{T}_0(L,\le)}(x-\partial x) \not \le \partial x$.
    Denote by $\delta x$ the set $(\mathop{\downarrow}x) \cap \mathcal{I}(L,\le) \setminus (\mathop{\downarrow} (\mu_{\mathcal{T}_0(L,\le)}(x-\partial x))\vee \partial x)$. The net $\boldsymbol{h} : P \subset \delta x \mapsto \left(\bigvee_{s \in P} s\right) \vee \partial x$ satisfies the following conditions: 
    (i) for all $P$, $\boldsymbol{h}_P < x$ and $\bigvee_{P} \boldsymbol{h}_P = x$; 
    (ii) for all $z < x$, there exists $P$ such that $z \le \boldsymbol{h}_P$; (iii) there exists $P^*$ such that $\boldsymbol{h}_{P^*}$ is isolated and $\texttt{c}(\boldsymbol{h}_{P^*}) = \mu_{\mathcal{T}_0(L,\le)}(x-\partial x) \vee \texttt{c}(\partial x)$; (iv) for all $P \supset P^*$, $\boldsymbol{h}_P$ is dually compact; 
    (v) for all $P \supset P^*$, 
    $\delta(\boldsymbol{h}_P,\boldsymbol{h}_{P^*})$ is finite; (vi) for all $o \in \tau$ such that $(x-\partial x) \in o$, there exists $P \supset P^*$ such that for all $(\delta x \setminus P) \subset o$; (vii) for all $s \in \delta x \setminus P^*$, 
    the set $\{t \in \delta x : t \le \boldsymbol{h}_{P^*}, t \not \le s\}$ is finite, $\mu_{\mathcal{T}_0(L,\le)}(x-\partial x) \le s \vee \partial x$, and $s \wedge \boldsymbol{h}_{P^*}$ has finitely many subelements.
\end{proposition}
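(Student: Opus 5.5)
The plan is to reduce to Proposition~\ref{proposition.isolated.below.direct}, applied to the piece $y := x-\partial x$, and then transport everything back to $x$ by joining with $\partial x$. By Lemma~\ref{coheyting.in.T0}, $y\in\mathcal{T}_0(L,\le)$ and $y\le \texttt{c}(x)$. By Lemma~\ref{lemma.prop.fund.cosubstr}, $x = y\vee\partial x$. Since $x$ has an outcast, Lemma~\ref{lemma.char.outcasts} gives $\partial x<x$, so $y\not\le\partial x$.

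The key tool is the map $\phi: w\mapsto w\vee\partial x$ on $\mathop{\downarrow}y$. It is continuous by condition (ii) of order-compatibility, and by distributivity it is an order embedding on $\{w\le y: w\ge y\wedge\partial x\}$. The first step is to show that for $w$ close to $y$ with $w<y$, the element $\phi(w)$ is a point below $x$ close to $x$. We then compare isolation of $w$ (relative to $\mathop{\downarrow}y$) with isolation of $\phi(w)$, using Theorem~\ref{thm.characterization.isolated}, Lemma~\ref{prelemma.subadditivity.type} and Lemma~\ref{lemma.subadditivity.type}. The point is that $\delta^+(\phi(w)) = \delta^+(w)\cup\delta^+(\partial x)$, so finiteness of maximal subelements and absence of outcasts pass between the two, up to the finitely many residues of $x$ coming from $\partial x$.

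With this dictionary in hand, I will rerun point~\textbf{1} of the proof of Proposition~\ref{proposition.isolated.below.direct} to get uniqueness of the maximal $\mathcal{T}_0$-subelement $t$ of $y$. Suppose there are two, $m\neq m'$. Build nets below $m$ and below $m'$, join them with $\partial x$, and obtain non-isolated elements converging to $x$, which is a contradiction. The absence of $\mathcal{T}_0$-outcasts follows from Lemma~\ref{lemma.f.outcast} together with Lemma~\ref{lemma.t0.uppersemilattice}.

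Next, $t\vee\partial x<x$. Otherwise distributivity gives $y=(y\wedge t)\vee(y\wedge\partial x)\le t\vee\partial x$, and since $y$ is minimal with $y\vee\partial x=x$, we would get $y\le t$, a contradiction. Also $t\not\le\partial x$, because otherwise the convergence $\phi(\boldsymbol{z}_d)\to x$ would produce outcast-bearing elements, hence non-isolated ones.

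Conditions (i)--(vii) for the shifted net $\boldsymbol{h}_P=\bigvee P\vee\partial x$ are then obtained by repeating points \textbf{2}--\textbf{5} of the previous proof verbatim. Each time a net $w_d\to y$ appears, it is replaced by $\phi(w_d)\to x$, and each time an element $\le y$ is built, it is joined with $\partial x$. For (iii), the core of $\boldsymbol{h}_{P^*}$ becomes $t\vee\texttt{c}(\partial x)$ by Lemma~\ref{lemma.lattice.op}. For (vii), the ``finite relative section'' condition is rephrased as finiteness of $\{u\in\delta x: u\le\boldsymbol{h}_{P^*}, u\not\le s\}$ via Lemma~\ref{lemma.completeness}. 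Condition (vi) is stated at $x-\partial x$, exactly as it transfers through $\phi^{-1}$.

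The main obstacle I expect is the dictionary step. Isolation of $\phi(w)$ must really force the needed finiteness and outcast properties on $w$ and conversely, so that every contradiction ``non-isolated elements converging to $x$'' is legitimately derived. The hardest part of this is the outcast part of Lemma~\ref{lemma.char.outcasts}, since $\partial(w\vee\partial x)$ may differ from $\partial w\vee\partial x$. I would first try to prove $\partial(w\vee\partial x)=\partial w\vee\partial x$ whenever $w\ge t$, using the characterisation $\delta=\delta^+$ and Lemma~\ref{lemma.completeness}.
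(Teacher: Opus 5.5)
Your strategy is the one the paper intends: pass to $y=x-\partial x\in\mathcal{T}_0(L,\le)$ and transport the proof of Proposition~\ref{proposition.isolated.below.direct} along $\phi(w)=w\vee\partial x$. The paper omits its own proof and only says the same properties hold for $x-\partial x$, with the net shifted by $\vee\,\partial x$.

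However, two conclusions you claim to reach this way cannot be reached, because they are false as stated.

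First, (ii) contradicts (i) whenever $x\notin\mathcal{T}_0(L,\le)$. For $m\in\mathcal{M}(x)$, $m\le\boldsymbol{h}_P<x$ forces $m=\boldsymbol{h}_P\ge\partial x\ge x-m$, hence $x=m\vee(x-m)=m$ by Lemma~\ref{lemma.prop.fund.cosubstr}. This is also why point~4 cannot be repeated ``verbatim'': $\phi$ only reaches the interval $[\partial x,x]$, i.e.\ the outcasts of $x$, and (ii) can at best hold for $\partial x\le z<x$.

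Second, $t\not\le\partial x$ fails. Take $L=(\omega+1)\times\{0,1\}$ with the product order and the product of the order topologies. Every element is dually compact, $\mathcal{K}(L,\le)$ is countable, and this topology is the dual Lawson topology, so all standing hypotheses hold.
\begin{itemize}
\item The isolated points are the $(n,i)$ with $n<\omega$, so $\mathcal{S}_1(L,\tau)=\{(\omega,0),(\omega,1)\}$ is discrete and $x=(\omega,1)\in\mathcal{S}_1(L,\tau)\setminus\mathcal{S}_2(L,\tau)$.
\item $\mathcal{M}(x)=\{(\omega,0)\}$, $\partial x=(0,1)<x$ and $y=(\omega,0)$.
\item $\mathcal{T}_0(L,\le)=\{(0,0),(\omega,0)\}$, so $t=\varepsilon\le\partial x$.
\end{itemize}
The elements $\phi((n,0))=(n,1)\to x$ are isolated and have no outcast, so the ``outcast-bearing elements'' you invoke do not exist. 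Also, no $\boldsymbol{h}_P\in\{(n,1):n<\omega\}$ dominates $z=(\omega,0)$, so (ii) fails here too. What minimality of $x-\partial x$ does give is $t\vee\partial x<x$, and you prove that correctly.

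Beyond this, the dictionary that carries all the weight is left unproved, and the lemmas you cite point the wrong way. Lemmas~\ref{prelemma.subadditivity.type} and~\ref{lemma.subadditivity.type} only give $\texttt{s}_0(w\vee\partial x)\subseteq\texttt{s}_0(w)\cup\texttt{s}_0(\partial x)$. That is an upper bound, which transfers good behaviour to $w\vee\partial x$. Every contradiction you want to import instead needs a defect of $w$ (infinitely many maximal subelements, an outcast, failure of dual compactness) to survive the join with $\partial x$. That need not happen when the relevant residues of $w$ lie below $\partial x$.

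Moreover, $\phi$ is injective only on $[y\wedge\partial x,y]$. Its inverse there, $z\mapsto z\wedge y$, need not be continuous, since order-compatibility only requires continuity of $\vee$. Two consequences follow:
\begin{itemize}
\item The nets of point~1 only satisfy $m\wedge m'<\boldsymbol{z}_d<m$, so they may collapse after joining. If $m'\le\partial x$, the net $\boldsymbol{z}_d\vee\boldsymbol{z}'_{d'}\vee\partial x$ is constant and yields no non-isolated point.
\item Condition (vi) is a convergence statement at $y$ and cannot simply be pulled back from $x$.
\end{itemize}

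Finally, Lemma~\ref{lemma.f.outcast} says nothing about $\mathcal{T}_0$-outcasts here. Its conclusion, non-isolation, is already known for both $x$ and $y$; you would need to manufacture non-isolated points converging to $x$. Each step therefore has to be redone inside $[\partial x,x]$, against a corrected statement.
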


\begin{proposition}\label{proposition.reverse.last}
    Consider $x \in \mathcal{T}_0(L,\le)$ 
    which satisfies the conditions of Proposition \ref{proposition.isolated.below.nont0.direct}. There exists a neighborhood $o \in \tau$ of $x$ such that 
    $o \cap (\mathop{\downarrow}x) \setminus \{x\}$ contains only isolated elements of $L$. 
\end{proposition}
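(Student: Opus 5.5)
We plan to follow the proof of Proposition \ref{proposition.reverse.isolated.below.1}, shifted by $\vee\,\partial x$. The statement says $x\in\mathcal{T}_0(L,\le)$, but the hypotheses of Proposition \ref{proposition.isolated.below.nont0.direct} concern elements with an outcast and $x\notin\mathcal{T}_0(L,\le)$. So we read the statement as referring to such $x$, with the net $\boldsymbol{h}_P=\left(\bigvee_{s\in P}s\right)\vee\partial x$ and the index $P^*$ given by that proposition. We take $o$ to be a convex neighborhood of $x$ contained in $\mathop{\uparrow}\boldsymbol{h}_{P^*}\cup(L\setminus\mathop{\downarrow}x)$. Such an $o$ should exist because condition (i) gives $\boldsymbol{h}_P\to x$, and, via condition (ii) together with the closedness of $\le$, the elements below $x$ that are not above $\boldsymbol{h}_{P^*}$ cannot accumulate at $x$. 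Fix $z\in o$ with $z<x$. The goal is to show that $z$ is dually compact, has no outcast, and has finitely many maximal subelements, so that Theorem \ref{thm.characterization.isolated} applies.

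\textbf{Step 1 (reduction to the net).} By condition (ii), $z\le\boldsymbol{h}_P$ for some $P\supset P^*$. By Lemma \ref{lemma.subelement.decomposition} applied inside $\boldsymbol{h}_P$, we write $z=\texttt{c}(\boldsymbol{h}_P)\vee\bigvee\{t\in\delta(\boldsymbol{h}_P): t\le z\}$. Here $\texttt{c}(\boldsymbol{h}_P)=\texttt{c}(\boldsymbol{h}_{P^*})=\mu_{\mathcal{T}_0(L,\le)}(x-\partial x)\vee\texttt{c}(\partial x)$ by conditions (iii), (v) and Lemma \ref{lemma.lattice.op}. If $z\not\le\boldsymbol{h}_{P^*}$, there is $s\in\delta x\setminus P^*$ with $s\le z$. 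By condition (vii), only finitely many $t\le\boldsymbol{h}_{P^*}$ in $\delta x$ fail to lie below $s$, hence below $z$. Therefore $z$ is obtained from $\boldsymbol{h}_P$ by deleting finitely many elements of the boundary poset. By condition (iv) and Corollary \ref{corollary.submax.dominant}, $z$ is dually compact.

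\textbf{Step 2 (finiteness and no outcast).} To bound $\mathcal{M}(z)$, we write $z=(z\wedge\boldsymbol{h}_{P^*})\vee\bigvee_{s\in P\setminus P^*,\,s\le z}s$. We then combine condition (vii), which says $s\wedge\boldsymbol{h}_{P^*}$ has finitely many subelements, with Lemma \ref{lemma.subadditivity.type}. For the absence of an outcast, Lemma \ref{lemma.char.outcasts} requires $\texttt{c}(z)\le\partial z$. By condition (vii) we have $\mu_{\mathcal{T}_0(L,\le)}(x-\partial x)\le s\vee\partial x$. Since $\texttt{c}(\partial x)\le\partial x$ can be rewritten through the strata of $x$, the core of $z$ is covered by joins of residues of $z$, using Lemma \ref{lemma.complement.type1} for each completely co-irreducible $s$. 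The remaining case $z\le\boldsymbol{h}_{P^*}$ is excluded by the choice of $o$, or is absorbed by enlarging $P^*$.

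\textbf{Main obstacle.} The hardest step is Step 2, namely showing that the core of $z$ lies below $\partial z$. The shift by $\partial x$ means $\texttt{c}(z)$ contains $\texttt{c}(\partial x)$, and it is not obvious that this part is generated by residues of $z$ rather than sitting inside an outcast. For this step we would first try Lemma \ref{coheyting.in.T0}, giving $x-\partial x\le\texttt{c}(x)$, together with the distributive splitting $z=(z\wedge(x-\partial x))\vee(z\wedge\partial x)$. The goal is to treat the two pieces separately: the first by the $\mathcal{T}_0$ argument of Proposition \ref{proposition.reverse.isolated.below.1}, and the second by the fact that $\partial x$ has no outcast.
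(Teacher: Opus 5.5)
The paper omits the proof of this statement; it only calls it a technical adaptation of Propositions~\ref{proposition.isolated.below.direct} and~\ref{proposition.reverse.isolated.below.1}. Your proposal follows that suggested route, but it is not yet a proof.

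\textbf{Step~2 is left open.} Its decisive claim is that each $z\in o$ with $z<x$ has no outcast, i.e.\ $\texttt{c}(z)\le\partial z$ by Lemma~\ref{lemma.char.outcasts}. You flag this as the main obstacle and only sketch a plan. The splitting $z=(z\wedge(x-\partial x))\vee(z\wedge\partial x)$ does not settle it: you give no argument controlling $z\wedge(x-\partial x)$. Nothing in the paper shows that having no outcast is preserved by joins, so the two halves could not simply be recombined anyway.

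\textbf{The bound on $\mathcal{M}(z)$ is also missing.} Distributivity gives $z=(z\wedge\boldsymbol{h}_{P^*})\vee\bigvee_{s\in P\setminus P^*}(z\wedge s)$. Your formula silently drops the terms $z\wedge s$ with $s\not\le z$. Condition (vii) controls $s\wedge\boldsymbol{h}_{P^*}$, not these terms, so Lemma~\ref{lemma.subadditivity.type} cannot be applied.

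\textbf{Two earlier steps are asserted without proof.}
\begin{enumerate}
\item The existence of a convex neighborhood $o\subset\mathop{\uparrow}\boldsymbol{h}_{P^*}\cup(L\setminus\mathop{\downarrow}x)$. Closedness of $\le$ only makes $\mathop{\uparrow}\boldsymbol{h}_{P^*}$ closed, and (ii) says nothing about what accumulates at $x$.
\item The equality $\texttt{c}(\boldsymbol{h}_P)=\texttt{c}(\boldsymbol{h}_{P^*})$. Lemma~\ref{lemma.lattice.op} gives $\texttt{c}(\boldsymbol{h}_P)=\texttt{c}(\boldsymbol{h}_{P^*})\vee\bigvee_{s\in P\setminus P^*}\texttt{c}(s)$, and neither (iii) nor (v) bounds the extra terms.
\end{enumerate}

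\textbf{The missing idea: in your reading the hypotheses are inconsistent.} Suppose $x\notin\mathcal{T}_0(L,\le)$.
\begin{enumerate}
\item Lemma~\ref{coheyting.in.T0} gives $x-\partial x\in\mathcal{T}_0(L,\le)$, so $x-\partial x\neq x$, hence $x-\partial x<x$.
\item Condition (ii) then provides $P$ with $x-\partial x\le\boldsymbol{h}_P$.
\item Since $\partial x\le\boldsymbol{h}_P$ by definition, Lemma~\ref{lemma.prop.fund.cosubstr} yields $x=\partial x\vee(x-\partial x)\le\boldsymbol{h}_P$.
\item This contradicts $\boldsymbol{h}_P<x$ in (i).
\end{enumerate}
So, read as you read it, the statement holds vacuously. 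As printed, the conditions of Proposition~\ref{proposition.isolated.below.nont0.direct} cannot hold for any $x\notin\mathcal{T}_0(L,\le)$.

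\textbf{Under the literal reading $x\in\mathcal{T}_0(L,\le)$,} one has $\partial x=\varepsilon$, $x-\partial x=x$ and $\boldsymbol{h}_P=\bigvee_{s\in P}s$. Using (iii) and $\delta(\cdot)=\delta^+(\cdot)$, the set in (vii) is exactly $\delta(s\wedge\boldsymbol{h}_{P^*},\boldsymbol{h}_{P^*})$. The hypotheses therefore contain those of Proposition~\ref{proposition.isolated.below.direct}, and the claim is just Proposition~\ref{proposition.reverse.isolated.below.1}.

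In neither reading does the ``shift by $\vee\,\partial x$'' argument you set up carry the content.
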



\subsection{{Synthesis}}

{
We now have all the ingredients necessary to 
formulate our main result, which is essentially a combination of the results in the previous two subsections.} As the unpacked formulation is quite extensive, instead of rewriting them, we refer directly to the respective propositions. 


\begin{theorem}
Consider a dual algebraic coframe $(L,\le)$ such that $\mathcal{K}(L,\le)$ has a choice function, and $\tau$ an order-compatible topology on $L$ which is locally convex. 
An element \(x\in L\) is in \(\mathcal{S}_1(L,\tau)\setminus \mathcal{S}_2(L,\tau)\) if and only if: 
\begin{enumerate}
\item[(i)] $x$ is dually compact or satisfies the conditions of Proposition~\ref{proposition.isolated.from.above.direct}; 
\item[(ii)] $x$ satisfies the conditions of Proposition \ref{proposition.isolated.below.direct}
when $x \in \mathcal{T}_0(L,\le)$ and it satisfies the conditions of Proposition \ref{proposition.isolated.below.nont0.direct} when $x \notin \mathcal{T}_0(L,\le)$.
\end{enumerate}
\end{theorem}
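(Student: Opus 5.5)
The plan is to assemble the final theorem from the propositions of the two preceding subsections, using Proposition~\ref{proposition.comparable.neighborhood} to reduce isolation in $\mathcal{S}_1(L,\tau)$ to isolation from above and isolation from below. Recall that $x \in \mathcal{S}_1(L,\tau)\setminus\mathcal{S}_2(L,\tau)$ means that $x$ is not isolated in $L$, but some neighborhood $o$ of $x$ has all its points other than $x$ isolated in $L$.

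\textbf{Necessity.} Fix $x \in \mathcal{S}_1\setminus\mathcal{S}_2$. For condition (i), either $x$ is dually compact, or Proposition~\ref{proposition.isolated.from.above.direct} applies directly and provides the witness $z$. For condition (ii), I split according to whether $x \in \mathcal{T}_0(L,\le)$.
\begin{itemize}
\item If $x \in \mathcal{T}_0(L,\le)$, Proposition~\ref{proposition.isolated.below.direct} applies.
\item If $x \notin \mathcal{T}_0(L,\le)$, I first note that $x$ has an outcast. Otherwise, since $x$ is not isolated, Theorem~\ref{thm.characterization.isolated} forces $x$ to be non-dually-compact or to have $\mathcal{M}(x)$ infinite. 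Together with the remark preceding Proposition~\ref{proposition.isolated.below.direct}, this places $x$ in the degenerate case where the below-conditions are vacuous, and I read condition (ii) in that case accordingly. When $x$ does have an outcast, Proposition~\ref{proposition.isolated.below.nont0.direct} applies.
\end{itemize}

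\textbf{Sufficiency.} Assume (i) and (ii). The steps are as follows.
\begin{enumerate}
\item Show that $x$ is not isolated. In the $\mathcal{T}_0$ case this follows from the strict approximation $\boldsymbol{h}_P<x$ with $\bigvee_P\boldsymbol{h}_P=x$ (condition (i) of Proposition~\ref{proposition.isolated.below.direct}): this non-decreasing net converges to $x$ by order-compatibility. In the outcast case, Theorem~\ref{thm.characterization.isolated} applies.
\item Choose neighborhoods. Take $o_1$ from Proposition~\ref{proposition.comparable.neighborhood}, so that every point of $o_1$ is comparable to $x$. If $x$ is dually compact, instead take $o_1=\mathop{\downarrow}x$, which is open by Lemma~\ref{lemma.finite.type.included}; in that case there are no points strictly above $x$ to handle.
\item Handle points above $x$. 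Take $o_2$ from Proposition~\ref{proposition.reverse.from.above}, so that every $y\in o_2$ with $y>x$ is isolated.
\item Handle points below $x$. Take $o_3$ from Proposition~\ref{proposition.reverse.isolated.below.1} or Proposition~\ref{proposition.reverse.last}, so that every $y\in o_3$ with $y<x$ is isolated.
\item Conclude. Set $o=o_1\cap o_2\cap o_3$. Every point of $o\setminus\{x\}$ is comparable to $x$, hence lies strictly above or strictly below it, hence is isolated. Therefore $x$ is isolated in $\mathcal{S}_1$, i.e.\ $x\notin\mathcal{S}_2$.
\end{enumerate}

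The main obstacle is the bookkeeping of cases. Proposition~\ref{proposition.comparable.neighborhood} is stated only under the hypotheses of Proposition~\ref{proposition.isolated.from.above.direct}, so in the dually compact branch I must instead argue comparability from the openness of $\mathop{\downarrow}x$ together with the below-analysis. In addition, Proposition~\ref{proposition.reverse.last} is stated with the hypothesis $x\in\mathcal{T}_0$, which is apparently a typo for $x\notin\mathcal{T}_0$; I would read it in the latter form. I also need to check that the case where $x\notin\mathcal{T}_0$ has no outcast is consistent with the stated conditions, which I expect to be the most delicate point.
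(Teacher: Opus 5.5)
Your assembly matches the paper's proof. For $(\Rightarrow)$ you use Propositions~\ref{proposition.isolated.from.above.direct}, \ref{proposition.isolated.below.direct} and \ref{proposition.isolated.below.nont0.direct}. For $(\Leftarrow)$ you intersect the neighborhoods of Propositions~\ref{proposition.comparable.neighborhood} (or $\mathop{\downarrow}x$), \ref{proposition.reverse.from.above}, \ref{proposition.reverse.isolated.below.1} and \ref{proposition.reverse.last}. Your check that $x\in\mathcal{S}_1(L,\tau)$ and your fix of the hypothesis of Proposition~\ref{proposition.reverse.last} are correct, and the paper leaves both implicit.

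The case you defer, however, is a genuine gap, and the paper's ``$(\Rightarrow)$ is straightforward'' skips it too. Suppose $x\notin\mathcal{T}_0(L,\le)$ has no outcast. Then $\partial x=x$ by Lemma~\ref{lemma.char.outcasts}, so $x-\partial x=\varepsilon$ and $\boldsymbol{h}_P\ge\partial x=x$ for every $P$. Condition (i) of Proposition~\ref{proposition.isolated.below.nont0.direct} can therefore never hold. Such $x$ do lie in $\mathcal{S}_1\setminus\mathcal{S}_2$. Take the chain $\varepsilon<a_\omega<\cdots<a_1<a_0$ with $a_\omega=\bigwedge_n a_n$ and the dual Lawson topology: $a_\omega$ is the only non-isolated point, and $\mathcal{M}(a_\omega)=\{\varepsilon\}$. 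Removing $\varepsilon$ from this chain gives the same failure in the $\mathcal{T}_0$ branch: the bottom $a_\omega$ is then in $\mathcal{S}_1\setminus\mathcal{S}_2$, but $\mathcal{M}_{\mathcal{T}_0(L,\le)}(a_\omega)=\emptyset$. So ``reading (ii) accordingly'' amends the statement; it does not prove it.

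The amendment you sketch also needs two repairs.

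\textbf{Finiteness of $\mathcal{M}(x)$.} The remark you cite requires $\mathcal{M}(x)$ finite, but you only obtained ``not dually compact or $\mathcal{M}(x)$ infinite'', so infinite $\mathcal{M}(x)$ must be excluded separately. One way uses the approximants $x_P$ of Proposition~\ref{prop.inf.type}. By Lemma~\ref{lemma.formula.maximal} and dual infinite distributivity, $x_P=\bigwedge_{m\in\mathcal{M}(x)\setminus P}m$. Yet each finite meet of such $m$ lies above a residue $x-j$ with $j\notin P$, and $x-j\not\le x_P$ because $x_P\le j$. Hence the $x_P$ are not dually compact, hence not isolated. They converge to $x$ from strictly below, which forces $x\in\mathcal{S}_2(L,\tau)$.

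\textbf{Sufficiency.} A vacuous below-condition breaks the $(\Leftarrow)$ direction. In the chain, $a_0$ is dually compact, has no outcast, and $\mathcal{M}(a_0)=\{a_1\}$. So it satisfies (i) and your vacuous (ii), yet it is isolated. Your Step~1 never treats this case.

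A consistent version splits on whether $x$ has an outcast rather than on $\mathcal{T}_0(L,\le)$; note that every $x\in\mathcal{T}_0(L,\le)\setminus\{\varepsilon\}$ has $\varepsilon$ as an outcast. Keep (ii) as stated when $x$ has an outcast. For outcast-free $x$, require instead that $\mathcal{M}(x)$ is finite and $x$ is not dually compact. Necessity then follows from the argument above and Theorem~\ref{thm.characterization.isolated}. Sufficiency follows from Propositions~\ref{proposition.comparable.neighborhood} and \ref{proposition.reverse.from.above}, together with closedness of $\le$. Closedness, with finiteness of $\mathcal{M}(x)$ and absence of outcasts, leaves no point strictly below $x$ near $x$.
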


\begin{proof}
    Direction $(\Rightarrow)$ is straightforward from the cited propositions. We detail direction $(\Leftarrow)$. Consider $x \in L$ that satisfies (i) and (ii). If $x$ is dually compact, by definition it has a neighborhood contained in $\mathop{\downarrow}x$. Otherwise, by condition (i), $x$ satisfies the conditions of Proposition~\ref{proposition.isolated.from.above.direct}. In this case, $x$ has a neighborhood of points that are all below or above $x$, and $x$ is isolated from above in $\mathcal{S}_1(L,\tau)$, by Proposition \ref{proposition.comparable.neighborhood} and Proposition \ref{proposition.reverse.from.above}.In both cases, it suffices to see that it is isolated from below in $\mathcal{S}_1(L,\tau)$, which is ensured by conditions (ii) and Propositions \ref{proposition.reverse.isolated.below.1} and \ref{proposition.reverse.last}. 
\end{proof}

{Schematically, condition (i) ensures that the element $x$ is isolated from above in $\mathcal{S}_1(L,\tau)$, and condition (ii) ensures that it is isolated from below in $\mathcal{S}_1(L,\tau)$.}


The characterization just established is considerably more involved than its counterpart for isolated points. Whereas membership in \(\mathcal{S}_0(L,\tau)\setminus\mathcal{S}_1(L,\tau)\) depends only on the local residual structure below \(x\)---captured entirely by the finiteness of \(\mathcal{M}(x)\) and the absence of any outcast---membership in \(\mathcal{S}_1(L,\tau)\setminus\mathcal{S}_2(L,\tau)\) requires coordinating two independent families of conditions: one on the behaviour of the core operator and the strata above \(x\), and one on the directed systems of isolated elements accumulating below it. The asymmetry between these two directions reflects the fact that \(\tau\) is order-compatible but not symmetric.


The fact that both directions can nonetheless be expressed purely in order-theoretic terms---without reference to specific topological data beyond the coframe structure---confirms that the Cantor-Bendixson stratification, at least at this level, is an intrinsic feature of \((L,\leq)\) rather than of the choice of order-compatible topology. Whether this remains true at higher levels of the stratification is a natural question, which the methods developed here do not directly resolve.

\section{\label{section.comments}Comments}

The results obtained in this work show that the Cantor–Bendixson hierarchy, although defined topologically, can be reconstructed from purely order-theoretic structure in a broad class of structures. This suggests that similar correspondences may hold in other ordered settings. In particular, it raises the question of identifying minimal conditions under which a topological derivative can be represented by an intrinsic order-theoretic operator.

We formulate below some questions that have arisen during the course of this work, providing natural directions for further research.

\paragraph{Realization}
The characterization of $\mathcal{S}_1(L,\tau)\setminus \mathcal{S}_2(L,\tau)$ reveals that this set can be 
divided into elements are isolated from above, ones that are isolated from below and others. A natural question is whether each of them is non-trivial, in particular for our motivating example: 
\begin{question}
We know that there exist elements of $\mathcal{S}_1(\mathcal H^d,\tau_{\mathcal{H}}^d)\setminus \mathcal{S}_2(\mathcal H,\tau_{\mathcal{H}}^d)$ which are isolated from above, as they are dually compact (ie shifts of finite type). On the other hand, does there exist a non-finite-type shift (non isolated from above) in $\mathcal{S}_1(\mathcal H^d,\tau_{\mathcal{H}}^d)\setminus \mathcal{S}_2(\mathcal H,\tau_{\mathcal{H}}^d)$?
\end{question}
More generally, one may ask whether elements in this family can exhibit isolation from below, and how such properties can be detected in terms of residual structure.
\paragraph{Special subsets}
The objects introduced in this work - such as the core $\texttt{c}(x)$ and the boundary $\partial x$ - define natural subfamilies of elements. For instance, one may consider elements with trivial core, or those for which the decomposition $x = \texttt{c}(x) \vee \partial x$ is degenerate in a prescribed way.
A systematic study of these subfamilies could refine the understanding of the topological structure of the lattice, in a similar way as characterizing lattices in which certain of these subsets are empty.

\paragraph{Collapse of Cantor–Bendixson hierarchy}

The interaction between residual and Cantor–Bendixson derivatives raises the question of how Cantor–Bendixson levels can degenerate. Having characterized the first levels, it is natural to ask when some of them collapse entirely. For instance:
\begin{question}
Which dual algebraic coframes satisfy $\mathcal{T}_0(L,\le)={\varepsilon}$?
\end{question}
In such cases, non-isolated elements coincide with $\mathcal{T}_{\infty}(L)$, and under additional assumptions (see below), the Cantor–Bendixson rank is 0 or 1. These extremal cases provide simple models in which the relationship between residual structure and Cantor–Bendixson decomposition becomes particularly transparent.
\paragraph{Perfect kernel}
When $\mathcal{K}(L,\le)$ is countable, the topology becomes first countable, and one obtains partial information over the perfect kernel. In particular, $\mathcal{T}_{\infty}(L,\le) \subset \mathcal{S}_{\texttt{r}(L,\tau)}(L,\tau)$. It would be of interest to determine to what extent this relation depends on the countability assumption.
\medskip

Another natural research direction consists in interpreting existing results computing the Cantor-Bendixson rank of algebraic structures, such as the Grigorchuk group \cite{skipper2020cantorbendixsonrankgrigorchukgroup} in terms of our framework. Furthermore, it would be interesting to see how the Cantor-Bendixson structure varies depending on the order on the same set. For instance, in the hyperspace of shifts, Nathalie Aubrun and Mathieu Sablik \cite{AubrunSablik2009} defined an order different from the inclusion, based on the algorithmic enumerability of the language.

\bibliographystyle{plain}   
\bibliography{biblio.bib}

\end{document}